\numberwithin{equation}{subsection}
\theoremstyle{plain}
\newtheorem{theorem}{Theorem}[subsection]
\newtheorem{lemma}[theorem]{Lemma}
\newtheorem{claim}[theorem]{Claim}
\newtheorem{proposition}[theorem]{Proposition}
\newtheorem{corollary}[theorem]{Corollary}
\newtheorem{prediction}[theorem]{Prediction}
\newtheorem*{prediction*}{Prediction}
\theoremstyle{remark}
\newtheorem{remark}[theorem]{Remark}
\newtheorem*{remintro}{Remark}
\theoremstyle{definition}
\newtheorem{definition}[theorem]{Definition}
\newtheorem{notation}[theorem]{Notation}
\newtheorem{example}[theorem]{Example}
\newtheorem{recipe}[theorem]{Recipe}
\def\Id{\operatorname{Id}}
\def\Pic{\operatorname{Pic}}
\def\Hom{\operatorname{Hom}}
\newcommand{\bP}{\mathbb{P}}
\newcommand{\bR}{\mathbb{R}}
\newcommand{\bQ}{\mathbb{Q}}
\newcommand{\bZ}{\mathbb{Z}}
\newcommand{\sF}{\mathscr{F}}
\newcommand{\bC}{\mathbb{C}}
\newcommand{\fH}{\mathfrak{H}}
\newcommand{\gp}{\mathfrak{p}}
\newcommand{\calF}{\mathcal{F}}
\newcommand{\calM}{\mathcal{M}}
\newcommand{\calO}{\mathcal{O}}
\newcommand{\calL}{\mathcal{L}}
\newcommand{\Aut}{\mathrm{Aut}}
\newcommand{\Stab}{\mathrm{Stab}}
\newcommand{\Span}{\mathrm{Span}}
\newcommand{\PGL}{\mathrm{PGL}}
\newcommand{\Sat}{\mathrm{Sat}}
\newcommand{\SL}{\mathrm{SL}}
\newcommand{\sD}{\mathscr{D}}
\newcommand{\Proj}{\mathrm{Proj}}
\newcommand{\im}{\operatorname{Im}}
\newcommand{\Gr}{\mathrm{Gr}}
\newcommand{\id}{\mathrm{id}}
\newcommand{\gquot}{/\!\!/}
\newcommand{\II}{\textrm{II}}
\newcommand{\CC}{{\mathbb C}}
\newcommand{\cD}{{\mathscr D}}
\newcommand{\Center}{\mathtt{Center}}
\newcommand{\cF}{{\mathscr F}}
\newcommand{\cH}{{\mathscr H}}
\newcommand{\cK}{{\mathscr K}}
\newcommand{\cL}{{\mathscr L}}
\newcommand{\cO}{{\mathscr O}}
\newcommand{\cR}{{\mathscr R}}
\newcommand{\cS}{{\mathscr S}}
\newcommand{\cU}{{\mathscr U}}
\newcommand{\cX}{{\mathscr X}} 
\newcommand{\cY}{{\mathscr Y}}
\newcommand{\dra}{\dashrightarrow}
\newcommand{\es}{\varnothing}
\newcommand{\FF}{{\mathbb F}}
\newcommand{\gM}{\mathfrak{M}}
\newcommand{\hra}{\hookrightarrow}
\newcommand{\la}{\langle}
\newcommand{\lagr}{\mathbb{LG}(\bigwedge^ 3 \CC^6)}
\newcommand{\lagrdual}{\mathbb{LG}(\bigwedge^ 3 (\CC^6)^{\vee})}
\newcommand{\lra}{\longrightarrow}
\newcommand{\n}{\noindent}
\newcommand{\NN}{{\mathbb N}}
\newcommand{\ov}{\overline}
\newcommand{\PP}{{\mathbb P}}
\newcommand{\QQ}{{\mathbb Q}}
\newcommand{\ra}{\rangle}
\newcommand{\RR}{{\mathbb R}}
\newcommand{\Tower}{\mathtt{Tower}}
\newcommand{\wh}{\widehat}
\newcommand{\wt}{\widetilde}
\newcommand{\ZZ}{{\mathbb Z}}
\DeclareMathOperator{\CH}{CH}
\DeclareMathOperator{\cod}{cod}
\DeclareMathOperator{\Div}{Div}
\DeclareMathOperator{\divisore}{div}
\DeclareMathOperator{\mult}{mult}
\DeclareMathOperator{\sing}{sing}
\DeclareMathOperator{\supp}{supp}
\newcommand{\cit}[1]{{\rm \textbf{#1}}}
\newcommand{\Ref}[2]{\cit{%
\ifthenelse{\equal{#1}{thm}}{Theorem}{}%
\ifthenelse{\equal{#1}{ass}}{Assumption}{}%
%\ifthenelse{\equal{#1}{asswn}}{$W_n$-Assumption}{}%
%\ifthenelse{\equal{#1}{asswnplus}}{$W^{+}_n$-Assumption}{}%
\ifthenelse{\equal{#1}{chp}}{Chapter}{}%
\ifthenelse{\equal{#1}{prp}}{Proposition}{}%
\ifthenelse{\equal{#1}{lmm}}{Lemma}{}%
\ifthenelse{\equal{#1}{cnj}}{Conjecture}{}%
\ifthenelse{\equal{#1}{crl}}{Corollary}{}%
\ifthenelse{\equal{#1}{dfn}}{Definition}{}%
\ifthenelse{\equal{#1}{expl}}{Example}{}%
\ifthenelse{\equal{#1}{hyp}}{Hypothesis}{}%
\ifthenelse{\equal{#1}{rmk}}{Remark}{}%
\ifthenelse{\equal{#1}{clm}}{Claim}{}%
\ifthenelse{\equal{#1}{pred}}{Prediction}{}%
\ifthenelse{\equal{#1}{prb}}{Problem}{}%
\ifthenelse{\equal{#1}{table}}{Table}{}%
\ifthenelse{\equal{#1}{exe}}{Exercise}{}%
\ifthenelse{\equal{#1}{qst}}{Question}{}%
\ifthenelse{\equal{#1}{rcp}}{Recipe}{}%
\ifthenelse{\equal{#1}{sec}}{Section}{}%
\ifthenelse{\equal{#1}{subsec}}{Subsection}{}%
\ifthenelse{\equal{#1}{subsubsec}}{Subsubsection}{}%
\ifthenelse{\equal{#1}{univ}}{Universal Property}{}%
\ifthenelse{\equal{#1}{trm}}{Terminology}{}%
\ifthenelse{\equal{#1}{tbl}}{Table}{}%
\  \ref{#1:#2}%
}}
\begin{document}
 \title[Moduli of quartic $K3$s]{Birational geometry of the moduli space of quartic $K3$ surfaces }
 
 \author[R. Laza]{Radu Laza}
\address{Stony Brook University,  Stony Brook, NY 11794, USA}
\email{radu.laza@stonybrook.edu}

 \author[K. O'Grady]{Kieran O'Grady}
\address{``Sapienza'' Universit\'a di Roma, Rome, Italy}
\email{ogrady@mat.uniroma1.it}
\date{\today}
\begin{abstract}
By work of Looijenga and others, one has a good understanding of the relationship between GIT and  Baily-Borel compactifications for the moduli spaces of degree $2$ $K3$ surfaces, cubic fourfolds, and a few other related examples. The similar-looking cases of degree $4$ $K3$ surfaces and double EPW sextics  turn out to be much more complicated for arithmetic reasons. In this paper, we refine work of Looijenga to allow us to handle these cases. Specifically, in analogy with the so-called Hassett-Keel program for the moduli space of curves, we study the variation of log canonical models for locally symmetric varieties of Type IV associated to $D$ lattices. In particular, for the dimension $19$ case, we conjecturally obtain a continuous one-parameter interpolation (via a directed MMP) between the GIT and Baily-Borel compactifications for the moduli of  degree $4$ $K3$ surfaces. The predictions for the similar $18$ dimensional case, corresponding geometrically to hyperelliptic degree $4$ $K3$ surfaces, can be verified by means of VGIT -- this will be discussed elsewhere.
\end{abstract}

\thanks{Research of the first author is supported in part by NSF grants DMS-125481 and DMS-1361143. Research of the second author is supported in part by PRIN 2013.}
  \maketitle
  
  \bibliographystyle{amsalpha}
 %%%%%%%%%%%%%%%%%%%%%%%%%%%%%%%%%%%%%%% 
\section*{Introduction}
An important problem in algebraic geometry is to construct a geometric compactification for the moduli space of polarized degree $d$ $K3$ surfaces $\cK_d$. By Global Torelli, $\cK_d$ is isomorphic to a locally symmetric variety $\cF_d$, and hence it has natural compactifications, such as the Baily-Borel compactification $\sF_d^*$, Mumford's toroidal compactifications, and more generally Looijenga's semitoric compactifications. However, a priori,   these compactifications are only birational to the 
 \lq\lq geometric\rq\rq\ compactifications, obtained, for example, by  GIT. It is natural  to  compare the two kinds of compactifications. An understanding of the birational relationship between the Baily-Borel and GIT compactifications leads to deep results about the period map (e.g.~see \cite{shah}, \cite{lcubic}), and to results about the structure of the GIT quotient. The simplest instance of such comparison results is the isomorphism
$$(\fH/\SL(2,\bZ))^*\cong |\calO_{\bP^2}(3)|\gquot \SL(3)(\cong \bP^1)$$
between the compactified $j$-line and the GIT moduli space of plane cubic curves.
In  a vast generalization of this fact, Looijenga (\cite{looijenga1, looijengacompact}) has  devised a comparison framework that applies to locally symmetric varieties associated to type $IV$ or $I_{1,n}$  Hermitian symmetric domains. This framework was  successfully applied in the case of moduli of degree $2$ $K3$ surfaces (\cite{looijengavancouver}, \cite{shah}), cubic fourfolds (\cite{lcubic}, \cite{cubic4fold}), and  a few other related examples (e.g. cubic threefolds, see \cite{act},  \cite{ls}, del Pezzo surfaces, etc.). The nominal purpose of the present paper is to investigate the analogous cases of degree $4$ $K3$ surfaces (\cite{shah4}) and double EPW sextics (\cite{epwperiods, epw}). 
While attempting to study these cases in detail, we uncovered a rich and intriguing picture. 

\medskip

The starting point of our investigation are two limitations in Looijenga's construction. First of all, a certain technical assumption in Looijenga's work does not hold for quartic $K3$'s (nor for double EPW sextics),  while it does hold for $K3$ surfaces of arbitrary  degree $d\not=4$ (see Lemma 8.1 in~\cite{looijengacompact}). Namely, for arithmetic reasons,  the combinatorics of the hyperplane arrangement involved in Looijenga's construction \cite{looijengacompact} is much richer for polarized $K3$ surfaces   of degree $4$ than for polarized $K3$ surfaces of degree $d\not=4$ (similarly,  the hyperplane arrangement involved in the period map for double EPW sextics is much richer than  the hyperplane arrangement relevant to cubic fourfolds). Secondly, and this is a consideration which applies to $K3$ surfaces of any degree, 
 there exists a plethora of GIT models. In the low degree cases considered here and in the literature, there might be a \lq\lq natural\rq\rq\  choice for GIT, but this is misleading (see \cite{g4git} for a hint of what would happen already in degree $6$). The solution that we propose to handle these two issues is to give flexibility to Looijenga's construction by considering a continuous variation of models. More precisely, we recall that for a locally symmetric variety $\sF=\cD/\Gamma$, Baily-Borel have shown that the eponymous compactification $\sF^*$ is the $\Proj$ of the ring of automorphic functions, i.e.~$\sF^*=\Proj R(\sF,\lambda)$, where $\lambda$ is the Hodge bundle. Looijenga's deep insight was to observe that in certain situations of geometric interest, a certain GIT quotient $\overline \calM$ is nothing but the $\Proj$ of the ring of meromorphic automorphic forms with poles on a (geometrically meaningful) Heegner (or Noether--Lefschetz) divisor $\Delta$, and thus $\overline \calM=\Proj R(\sF,\lambda+\Delta)$. Furthermore, Looijenga has shown that under a certain assumption on $\Delta$ (which fails for quartic $K3$'s, and for double EPW sextics), $\Proj R(\sF,\lambda+\Delta)$ has an explicit combinatorial/arithmetic description. Our approach is to continuously interpolate between the two models by controlling the order of poles for the meromorphic automorphic function, i.e.~to consider $\Proj R(\sF,\lambda+\beta \Delta)$ where $\beta\in[0,1]$. This allows us to understand the case of quartics (or double EPW sextics) and, more importantly, to capture more GIT quotients. 

\medskip

While a variation of models $\Proj R(\sF,\lambda+\beta \Delta)$ makes sense  for general Type IV locally symmetric varieties (and also ball quotients), we focus here on the so called $D$-tower of locally symmetric varieties, i.e.~Type IV locally symmetric varieties associated to lattices $U^2\oplus D_n$. More precisely, we let $\cF(N)$ be the $N$-dimensional  locally symmetric variety corresponding 
to the lattice $\Lambda_N:=U^2\oplus D_{N-2}$ (so that $\dim\cF(N)=N$), and an arithmetic group $\Gamma_N$, which is intermediate between the   orthogonal group $O^{+}(\Lambda_N)$, and the stable orthogonal subgroup  $\wt{O}^{+}(\Lambda_N)$. 
With these definitions in place (see~\Ref{sec}{dtower} for details), $\cF(19)$ is the period space for quartic $K3$ surfaces, $\cF(20)$ is the period space for  double EPW sextics modulo the duality involution (or EPW cubes), and $\cF(18)$ is the period space for hyperelliptic quartic surfaces. Thus, we compare the Baily-Borel compactifications $\cF(N)^{*}$ for $N\in\{18,19,20\}$, to the GIT moduli spaces $\gM(N)$, where 
\begin{equation*}
\scriptstyle
 \gM(18)  :=  |\cO_{\PP^1}(4)\boxtimes\cO_{\PP^1}(4)|\gquot\Aut(\PP^1\times\PP^1),\quad \gM(19)  :=  |\cO_{\PP^3}(4)|\gquot\PGL(4),\quad 
 \gM(20)=\lagr\gquot\PGL_6(\CC).
\end{equation*}
(Actually, for $N=20$, we should take the quotient of $\gM(20)$ modulo the duality involution, see~\eqref{per20}.)
We let $\lambda(N)$ be the Hodge (or automorphic)  orbiline bundle on $\cF(N)$, and we choose  the boundary divisor $\Delta(N)$  so that $\Proj R(\sF(N),\lambda(N)+ \Delta(N))\cong\gM(N)$.

 The main result of the present paper is to predict the behavior of the models
 \begin{equation*}
 \sF(N,\beta):=\Proj R(\sF(N),\lambda(N)+\beta \Delta(N))
\end{equation*}
as $\beta$ varies between $0$ and $1$. Below we summarize our predictions.
\begin{prediction*}\label{pred:prelimpred}
Let $N\ge 15$. The ring of sections   $R(\cF(N), \lambda(N)+\beta\Delta(N))$ is finitely generated for $\beta\in[0,1]\cap \bQ$, and  the walls of the Mori chamber decomposition of the cone 
\begin{equation*}
\{\lambda(N)+\beta\Delta(N) \mid \beta\in\QQ,\ \beta>0\} 
\end{equation*}
 are generated by 
$\lambda(N)+\frac{1}{k}\Delta(N)$, where
$k\in\{1,\ldots,N-10\}$ and $k\not=N-11$.
The behaviour of $\lambda(N)+\frac{1}{k}\Delta(N)$, for $k$ as above, is described as follows. For $k=1$, $\cF(N,1)$ is obtained from $\cF(N,1-\epsilon)$ by contracting the strict transform of $\supp\Delta(N)$. 
If $2\le k$, then the birational map between $\cF(N,\frac{1}{k}-\epsilon)$ and 
$\cF(N,\frac{1}{k}+\epsilon)$ is a flip whose center is described in~\Ref{pred}{mainpred}.
\end{prediction*}

As explained below, while these predictions are only conjectural, we have strong evidence that they are accurate for $N\in\{18,19\}$, and they represent a significant improvement (both qualitative and quantitative) on what was previously known. Below we spell out our predictions for $N=19$, i.e.~$K3$ surfaces of degree $4$.
\begin{prediction*}
The variation of  models $\Proj R(\sF(19),\lambda(19)+\beta \Delta(19))$ interpolating between Baily-Borel ($\beta=0$) and GIT for quartic surfaces ($\beta=1$) undergoes birational transformations (flips, except the two boundary cases)
at the following critical values for $\beta$:
$$\beta\in\left\{0,\frac{1}{9},\frac{1}{7},\frac{1}{6},\frac{1}{5},\frac{1}{4},\frac{1}{3},\frac{1}{2},1\right\}.$$
Furthermore, the centers of the flips for $\beta<\frac{1}{5}$ correspond to $T_{3,3,4}$, $T_{2,4,5}$, and $T_{2,3,7}$ marked $K3$ surfaces (loci inside the period space $\cF(19)$) and those loci are flipped to the loci in $\gM(19)$ of quartics with $E_{14}$, $E_{13}$, an $E_{12}$ singularities  respectively (see Shah \cite{shah}). 
\end{prediction*}

\begin{remintro} In the companion announcement \cite{announcement}, we give a complete geometric (part conjectural, part provable) matching between  Shah's strata in the GIT quotient $\overline \calM$ (e.g. the $E_r$ stata mentioned, $r=12,13,14$) and the strata resulting from the flips predicted  above. Furthermore, the extremal cases can be inferred from the work of Shah and Looijenga. Specifically, the morphism $\sF(\epsilon)\to \sF(0)\cong \sF^*$ 
is  Looijenga's $\bQ$-factorialization of the (closure of the) Hegneer divisor $\Delta$ as discussed in \cite{looijengacompact} (in particular, $\sF(\epsilon)$ is a semi-toric compactification in the sense of Looijenga). At the other extreme, the map $\cF(1-\epsilon)\to \cF(1)\cong \overline \calM$ is the divisorial contraction (of the strict transforms of) hyperelliptic and unigonal divisors in the moduli of quartic $K3$s to the GIT polystable points corresponding to the double quadric and to the tangent developable to the twisted cubic respectively. Alternatively, $\cF(1-\epsilon)\to \cF(1)\cong \overline \calM$ is a Kirwan type blow-up of  the GIT quotient $\overline\calM$ at those two special points (as discussed in \cite[Sect. 3 and 4]{shah4}). For comparison's sake, we recall that in the case of  degree $2$ $K3$ surfaces, there is no intermediary flip and thus the two extremal cases suffice to compare the GIT and the Baily-Borel compactifications (see \cite{shah} and \cite{looijengavancouver}). Similarly, in the case of for cubic fourfolds (\cite{gitcubic,cubic4fold}, \cite{lcubic}), there is only one  flip, and hence the picture is much simpler. 
\end{remintro}

The variational approach to studying birational maps is the natural one from the perspective of contemporary MMP (\cite{bchmk}), and its power in the context of moduli has been in evidence since the appearance of  Thaddeus' work on VGIT~\cite{thaddeus}. An inspiration for our work is the so called Hassett-Keel program for moduli of curves (which studies the variation of log canonical models $\Proj R(\overline M_g,K+\alpha \Delta)$ of the Deligne-Mumford compactification $\overline M_g$). Experts in the field have speculated on the existence of an analogue of the Hassett-Keel program (see \cite{hh1,hh2}) for (special) surfaces; for instance, one can hope to understand the elusive KSBA compactification by starting with a GIT compactification and interpolating (see~\cite{gallardo}). Our study can be viewed as a first example of a Hassett-Keel program for surfaces. Indeed, beyond the obvious analogy (i.e. $\lambda+\beta \Delta$ can be easily rewritten as $K_\calF+\alpha \Delta'$), the modular behavior is also similar: for instance, the first birational wall crossing for quartic $K3$s is associated to Dolgachev (or triangle) singularities in a manner similar to the case of curves with cusp singularities (which gives the first birational modification for $\Proj R(\overline M_g,K+\alpha \Delta)$ at $\alpha=\frac{9}{11}$). The main point that we want to emphasize here is that the birational transformations that occur are controlled by the arithmetic and combinatorics of the hyperplane arrangement associated to $\Delta$. While the picture that we discover for quartic $K3$ surfaces is more complicated and subtle than that for  $K3$ surfaces of degree $2$, the fundamental insight of Looijenga that arithmetic controls the birational models of $\sF_d^*$ still holds true. We view our work as a quantitative and qualitative refinement of Looijenga's seminal work \cite{looijengacompact}. 

\medskip

The birational geometry of the moduli of $K3$ surfaces was previously studied from the perspective of the Kodaira dimension, most notably by Gritsenko--Hulek--Sankaran \cite{ghs}. We share with them the main technical tool (beyond standard lattice theory), namely Borcherds' construction (\cite{borcherds}) of automorphic forms for Type IV domains  (and subsequent improvements due to Bruinier \cite{bruinierbook} and Bergeron et al.~\cite{pick3nl}). One point that is worth noting in contrast to \cite{ghs} or Hassett-Keel is that for $K3$s it is more natural to focus on the Hodge bundle $\lambda$ versus the canonical bundle $K_{\calF}$, leading to a parameter ``$\beta$'' vs. the usual ``$\alpha$'' (of course, the two are tightly connected). 

\medskip

A purist' approach would be to analyze the   rings of sections $R(\sF(N), \lambda(N)+\beta \Delta(N))$  from the standpoint of 
 locally symmetric varieties, and their toroidal (or semitoric) compactifications. 
We  have not followed this path, in fact we failed to establish  the first step in a purist' approach, namely the proof that the ring of sections $R(\sF(N), \lambda(N)+\beta \Delta(N))$ is finitely generated for $\beta\in[0,1]\cap\QQ$.  We do expect finite generation   to hold in wide generality, and there exists a plausible approach based on \cite{haconxu} (improvements of \cite{bchmk}); this would also follow from the abundance conjecture.  
On the other hand, a purist' approach would only accomplish half of what is desirable. We are not content with a decomposition of the period map (or its inverse) as a composition of flips, we want to identify the centers of the flips as geometrically significant loci in the GIT moduli space. We can illustrate this point with the example of the locus in the GIT moduli space of quartics that parametrizes surfaces with an $E_{12}$ singularity, and the problem of deciding   which is the locus in the period space which corresponds to it.
Our methods predict that the relevant   locus is that of $T_{2,3,7}$-marked $K3$ surfaces (this  is related to another paper of Looijenga \cite{ltriangle1} and some unpublished work of Shepherd-Barron), and that the corresponding flip occurs for $\beta=1/9$. We believe that a \lq\lq mixed\rq\rq\ approach, involving both  the period space and the various GIT quotients will be more productive.  This will be the subject of future work. Following this approach  we have already confirmed part of the predictions for the period space of hyperelliptic quartics.

\subsection*{Structure of the paper}

In~\Ref{sec}{dtower}, we introduce the $D$-tower of period spaces $\{\cF(N)\}_{N\ge 3}$. The focus here is on the arithmetic of $D$ lattices. We define the main Heegner divisors, namely the nodal, hyperelliptic and unigonal divisors  in $\cF(N)$, denoted respectively $H_n(N)$,  $H_h(N)$ and $H_u(N)$. If $N=19$ (the period space for quartic surfaces) they have a well-known geometric meaning, in other dimensions they are the arithmetic analogue of the divisors for $N=19$. 
 The salient point in the $D$ tower  is that $\cF(N-1)$ is isomorphic to the hyperelliptic Heegner divisor $H_h(N)$  of $\cF(N)$. Our boundary divisor $\Delta(N)$ is equal to $H_h(N)/2$ except  when $N\equiv 3,4\pmod{8}$, in which case    $\Delta(N)=(H_h(N)+H_u(N))/2$. (N.B.~the factor $\frac{1}{2}$ appears because the quotient map from the relevant Type IV domain to $\cF(N)$ is ramified over the hyperelliptic divisor, and also over the  unigonal divisor, if $N\equiv 3,4\pmod{8}$.)
 This leads to an inductive behavior.

 In the following~\Ref{sec}{gitper}, we go through  results which are (more or less) known, namely that  $\cF(19)$ is the period space of degree $4$ polarized $K3$ surfaces, that $\cF(18)$  is the period space of 
  hyperelliptic quartics, and that $\cF(20)$  is the period space of double EPW sextics up to the duality involution (alternatively, it is the priod space of EPW cubes).
 
In~\Ref{sec}{picdtower} we study the Picard group of of $\cF(N)$, for $N\le 25$. First we compute the Picard groups for the D-tower (\Ref{thm}{thmrankpic}) for $N\le 20$, 
by applying  work of Borcherds, Brunier, and Bergeron. Next  we study the quasi-pullback of Borcherds' celebrated automorphic given by two embeddings 
of the lattice $\Lambda_N$ into $\II_{2,26}$. The resulting relations between the automorphic line bundle on $\cF(N)$, and the Heegner divisors $H_n(N)$, $H_h(N)$, $H_u(N)$ are key results for our work. 

\Ref{sec}{whydelta} contains a proof that   $\cF(19,1)=\Proj R(\cF(19), \lambda(19)+\Delta(19))$ is isomorphic to the GIT moduli space $\gM(19)$. An analogous proof shows that $\cF(18,1)$  is isomorphic to the GIT moduli space $\gM(18)$, and we expect that a similar result  holds for $N=20$, i.e.~that  $\cF(20,1)$  is isomorphic to  $\gM(20)$   modulo  the duality involution. 
In other words, if $N\in\{18,19\}$, then   the schemes $\cF(N,\beta)$, for $\beta\in(0,1)\cap\QQ$ interpolate between the Baily-Borel compactification $\cF(N)^{*}$, and the GIT compactification $\gM(N)$. 

In the last section, i.e.~\Ref{sec}{predictions},   we identify the critical values of $\beta$ and the corresponding centers. To describe our work, let us review the basic picture in Looijenga. The starting point for him is to note that frequently the GIT quotients $\overline \calM$ are obtained by contracting a certain Heegner divisor $H$ in $\calF$ (see also \cite{ls2} and \cite{zarhin} for some Hodge theoretic explanation of why this tends to happen). For instance, for degree $2$ $K3$ surfaces, the unigonal divisor $H_u$ (corresponding to elliptic $K3$s) is contracted to the point corresponding to the triple conic (inside the GIT moduli for plane sextics). Thus, the question is how can one achieve the contraction of $H$ starting from $\sF=\cD/\Gamma$? Now the second fundamental observation of Looijenga is the following: We have that the Type IV domain $\cD$ is an open subset of a quadric in $\bP^N$, i.e. $\cD\subset \check \cD\subset \bP^N$ where $\check\cD$ is the compact dual. In this realization, the Hodge or automorphic bundle $\lambda$ on $\sF$ is just the descent of the restriction of $\calO(-1)$ on $\bP^N$. On the other hand, the Heegner divisor $H$ is nothing but $\cH/\Gamma$ the descent of a $\Gamma$-invariant hyperplane arrangement $\cH$ in $\bP^N$, and thus should morally speaking correspond to $\lambda^{-1}$ (i.e. to $\calO_{\bP^N}(1)$). It follows that the line bundle that contracts $H$ is precisely (in a stack sense) $\lambda+H$, leading to the description of the GIT quotient as a $\Proj$ of a ring of meromorphic automorphic forms. The complication of recovering the GIT quotient, say $\overline \calM\cong \Proj R(\sF, \lambda+H)$, from $\sF^*=\Proj R(\sF,\lambda)$ comes from the fact that the hyperplanes in the arrangement $\cH$ intersect, according to \cite{looijengacompact} the intersection strata have to be flipped starting with the largest codimension. For degree $2$ $K3$s (\cite{looijengavancouver}, \cite{shah}) there are no intersections, and thus no flip. For cubic fourfolds (\cite{gitcubic,cubic4fold}, \cite{lcubic}) there is only intersection in codimension $2$, and thus a single flip that can be understood geometrically. For quartic surfaces (and the related examples, EPWs, etc.), we have the opposite behavior: the hyperelliptic hyperplanes whose union is $\cH_h$ intersect up to dimension $0$ -  in particular a critical assumption in~\cite{looijengacompact} is not satisfied. It is clear (inspired by \cite{thaddeus} and the subsequent related examples) that in order to understand the quartic examples, or other examples where lots of intersections occurs (not necessarily up to dimension 0, and thus even cases nominally covered by \cite{looijengacompact}) it is wise to consider a continuous variation of parameter $\lambda+\beta H$. Roughly, $\beta=0$ is the known case, $\beta=1$ is the target, and in between we understand the flipping behavior by wall-crossings. By Looijenga we know that the various intersection strata of $\cH$ are flipped roughly speaking in order of codimension. We  predict that the value of $\beta$ for which
a specified intersection stratum $Z$  will be flipped is determined by the log canonical threshold of $Z\subset \bP^N$ (in the sense of hyperplane arrangements). Of course, additionally, one has to  take into account the ramification behavior and the interaction with the hyperplane arrangement. 
We view these predictions as a quantitative refinement of Looijenga's work -- let us call them \emph{first-order} predictions. The surprising thing, and it is here that our extensive  analysis of the D-tower is essential, is that there exists also a qualitative refinement, as explained below. 

\medskip

By comparing the GIT quotient for quartic surfaces of Shah \cite{shah4} with the predicted Looijenga \cite{looijengacompact} model (while the full results on \cite{looijengacompact} do not apply here, the basic framework is still valid), we have observed a puzzling thing: the  flips associated to intersections of components of the hyperelliptic arrangement  predicted by Looijenga do  occur, but,  roughly speaking, only up to half the dimension (codimension 8).  The explanation for this is in the relations between the automorphic line-bundle and the Heegner divisors that were established   in~\Ref{sec}{picdtower}. In short, while, for large $N$, $H_h(N)$  is birationally contractible in $\cF(N)$,  
the opposite is true for $N$ small: in fact $H_h(N)$ is  ample on $\cF(N)$ (and proportional to $\lambda(N)$) for $N\le 10$.  This last fact was previously observed by Gritsenko \cite{gritsenko} in a different context. A related phenomenon occurs  in the work of Gritsenko--Hulek--Sankaran: the higher the dimension for the Type IV domain the easier is to establish general type results (e.g.~compare the bounds that GHS obtain for K3 surfaces and for polarized HK deformation equivalent to $K3^{[2]}$). Or in other words, the smaller the dimension of $\cD$, the more unexpected relations in $\Pic(\sF)$ occur leading to corrections to the first-order predictions. (In summary, Looijenga/first-order predictions come from combinatorics of the hyperplane arrangement - this is the dominating factor for high dimensions/small codimension, while for small dimensions there are arithmetic corrections that become dominating).

\subsection*{Future work} While the results of this paper only give predictions on the behavior of the variation of models given by $\Proj R(\cF(N), \lambda(N)+\beta\Delta(N))$, we believe that these are quite accurate. In particular, we are able to essentially verify them in the case of hyperelliptic quartics, i.e.~$N=18$. Specifically, in that situation we are discussing about the moduli of $(2,4)$ complete intersection curves. Thus, we can obtain the finite generation of the ring sections considered here, as well as the critical values by invoking GIT/VGIT for $(2,4)$ complete  intersections. The techniques here are quite similar to those \cite{g4git,g4ball}. The details will appear elsewhere. The verification of the predictions for quartics (and then EPW sextics) is more challenging and it might require new ideas. Specifically, the natural approach via GIT (analogue to the case of hyperelliptic quartics) would involve computing GIT quotients for quartic surfaces embedded by $\calO(\nu)$ (for $\nu\ge 2$), which is beyond the available technology (see however Gieseker \cite{gieseker}, Donaldson \cite{donaldson} for some results on asymptotic GIT for surfaces).  Our hope is that a mixture of GIT and birational geometry will allow us to complete the analysis for quartics (and double EPW sextics). For instance, control of GIT in codimension $1$ would suffice to obtain finite generation for $R(\cF(19), \lambda(19)+\beta \Delta(19))$. In fact, we believe it would be worthwhile to revisit the  case of $K3$ surfaces of degree $2$, and that of cubic fourfolds. 

\medskip

In a different direction, one can investigate the behavior of $\Proj R(\cF, \lambda+\beta\Delta)$ for more general Type IV locally symmetric varieties, with an eye on the asymptotic GIT for $K3$ surfaces. More specifically, as we explained, a key assumption that we are making here is the finite generation of the ring of sections of $R(\calF, \lambda+\beta \Delta)$. Since this is a strictly log canonical case, the finite generation does not follow from \cite{bchmk}, but would follow from Abundance Conjecture. As explained above, for the D-tower, one can probably obtain finite generation by means of GIT. In general, we believe that the finite generation can be obtained by using Hacon-Xu \cite{haconxu} improvements of \cite{bchmk} (essentially the dlt case), and appropriate toroidal compactifications. The striking consequence of finite generation is that if there exists a sequence of projective models $\sF^{(\nu)}$, isomorphic in codimension $1$ to the Baily-Borel model $\sF^*$,  and such that polarization $\calL^{(\nu)}$ converges to the Hodge bundle $\lambda$, then they eventually stabilize to $\sF^{(\infty)}$  which will further map to $\sF^*$. We expect this to have consequences on the asymptotic GIT for $K3$ surfaces (N.B. by Mumford \cite{mumford_stab}, Shepherd-Barron \cite{sbnef}, and Wang--Xu \cite{wangxu}, it is known that the naive stabilization doesn't work for $K3$ surfaces). Namely, by a computation essentially due to Mumford-Knudsen, it is known that the $\calL^{(\nu)}$  on the Chow GIT for $\nu$-embedded $K3$ surfaces is a polynomial in $\nu$ with coefficients the tautological classes ($\lambda$, $\kappa_{1,1}$ and $\kappa_{3,0}$) introduced in Marian-Oprea-Pandharipande \cite{mop}. The coefficient of the leading term is $\lambda$ and thus we are in the situation mentioned above once a certain codimension $1$ GIT was established.  

\medskip

Finally, a topic that we are not touching in this paper is the birational geometry of $\sF^*$ at the boundary. This is so because the boundary here is always of high codimension. However, by passing to a toroidal compactification $\overline{\sF}^\Sigma$, the boundary becomes divisorial, and in certain situations it might be of interest to vary the coefficient of the exceptional divisor(s) $E$ of $\overline{\sF}^\Sigma\to \sF^*$. The simpler case of ball quotients (and without an additional Heegner divisor $H$) was analyzed by Di Cerbo-Di Cerbo. In particular, this leads to the intriguing question of the existence of a canonical compactification (in sense of MMP) for $K3$ surfaces, analogous to the results of Shepherd-Barron for abelian varieties. 

\medskip

In summary, we view our paper as a proof of concept for a Hassett--Keel--Looijenga program for studying the log canonical models of moduli spaces of polarized $K3$ surfaces.  In fact, we believe that Hassett--Keel--Looijenga program for $K3$ surfaces has a richer structure than the Hassett--Keel program for curves, and it will lead to deeper applications. This is due to the fact that the confluence of geometry and arithmetic is essentially unique to the case of $K3$ (or rather $K3$ type) situation. (Of course, also for abelian varieties the moduli space is a locally symmetric variety. But, as observed by Looijenga, for $K3$ type (and ball quotients) the (Heegner) divisors control the geometry of the locally symmetric variety; this is not true for abelian varieties.
 This is due to (an essentially group theoretic fact) that the Hodge special loci inside the period domains for $K3$s are much simpler than in the abelian varieties case (see Zarhin \cite{zarhin}).)

%%%%%%%%%%%%%%%%%%%%%%%%%%%%%%%%%%%%%%%
\subsection*{Acknowledgement}
This project was started and first developed while both authors were visiting the IAS as participants in the special program on ``{\it Topology of Algebraic Varieties}''. We are grateful to the IAS for hosting us and offering an ideal research environment. The IAS stay of the first author was partially supported by NSF institute grant DMS-1128155. The stay of the second author was partially supported by the Giorgio and Elena Petronio Fellowship Fund, the Giorgio and Elena Petronio
Fellowship Fund II, and the Fund for Mathematics of the IAS.

\smallskip

We are indebted in more than one way to E.~Looijenga. First, our work has been inspired by his groundbreaking papers~\cite{looijenga1,looijengacompact}, secondly 
  we had several discussions with him on the subject of this work.  Lastly, Looijenga shared some unpublished notes with us (relevant to future investigation of the quartic case). Additionally, we have discussed with numerous people at various stages of the project. In particular, we thank V. Alexeev, S. Casalaina-Martin, G. Farkas, V. Gritsenko, K. Hulek, B. Hassett, S. Kondo, Zhiyuan Li, D. Oprea, R. Pandharipande, and Letao Zhang.

%%%%%%%%%%%%%%%%%%%%%%%%%%%%%%%%%%%%%%%
%%% Sect1
\section{$D$ lattices and the associated locally symmetric varieties}\label{sec:dtower}
We introduce   $D$  locally symmetric varieties. There is one such variety (of dimension $n+2$) for each $D_n$ lattice. The period spaces 
 of hyperelliptic quartic $K3$'s,  quartic $K3$'s, and desingularized EPW sextics   correspond to $D_{16}$,  $D_{17}$, and  $D_{18}$ respectively, see~\Ref{sec}{gitper}. We will introduce the Heegner divisors on $D$  locally symmetric varieties which are relevant for our work: the nodal, hyperelliptic, and unigonal divisors. These divisors are the generalization of the familiar divisors with the same name on the period space for quartic $K3$ surfaces. We will prove that the hyperelliptic Heegner divisor on a $D$ period space of dimension $N$  is isomorphic to the  $D$  locally symmetric variety of dimension $N-1$; thus we have an infinite tower of  nested  $D$  locally symmetric varieties. 
  
  $D$  locally symmetric varieties of dimension up to $10$ have appeared in work of Gritsenko, see~\cite[\S3]{gritsenko}.
\subsection{$D$ lattices}
\setcounter{equation}{0}
Let $\Lambda$ be a lattice, i.e.~a finitely generated torsion-free abelian group equipped with an integral non-degenerate bilinear symmetric form $(,)$. If $v\in \Lambda$ we let $v^2=(v,v)$.
  Given  a ring $R$ we let $\Lambda_R:=\Lambda\otimes_{\ZZ}R$, and we extend by linearity the quadratic form to $\Lambda_R$. We let   $A_{\Lambda}:=\Hom(\Lambda,\ZZ)/\Lambda$ be the \emph{discriminant group} of $\Lambda$.  The quadratic form defines an embedding   $\Hom(\Lambda,\ZZ)\subset \Lambda_{\QQ}$. Let $v\in \Lambda$.  The \emph{divisibility} of $v$ is defined to be the positive generator of $(v,\Lambda)$, and is denoted
 $\divisore(v)$. Thus $v/\divisore(v)$ is an element of $\Lambda_{\QQ}$ which belongs to $\Hom(\Lambda,\ZZ)$, and hence it determines an element   $v^{*}\in A_\Lambda$. Now suppose that $\Lambda$ is an \emph{even} lattice. The embedding   $\Hom(\Lambda,\ZZ)\subset \Lambda_{\QQ}$ induces a  \emph{discriminant quadratic form}   $q_{\Lambda}\colon A_{\Lambda}\to\QQ/2\ZZ$. We recommend the classical paper by Nikulin~\cite{nikulin} as a reference for definitions and results on lattices.  
 
 For us, $U$ is the standard hyperbolic plane, and the standard $ADE$ root lattices are negative definite. Thus $E_8$ is the unique even unimodular negative definite lattice of rank $8$; in the literature, sometimes, it is denoted by $E_8(-1)$ or $-E_8$. If $0<p\le q$ and  $p\equiv q \pmod 8$, then $\II_{p,q}$ stands for an even unimodular lattice of signature $(p,q)$; such a lattice  is unique up to isomorphism. For $m\in\ZZ$, we let $(m)$ be the rank one lattice with quadratic form that takes the value $m$ on a generator. We let 
 \begin{equation}\label{eccodi}
 D_{n}:=\left\{x\in\bZ^n\mid \sum_i x_i\equiv 0\pmod 2\right\}\subset \bZ^n,
\end{equation}
and we equip $D_n$ with the restriction of the negative of the standard Euclidean pairing on $\bZ^n$.  Thus $D_1\cong( -4)$, $D_2\cong A_1\oplus A_1$, $D_3=A_3$, and  $D_n$ is the negative definite root lattice with Dynkin  diagram $D_n$ if $n\ge 4$.
 Let $\alpha_n,\beta_n\in A_{D_n}$ be the classes  of $(1/2,1/2,\ldots,1/2)$ and   $(-1/2,1/2,\ldots,1/2)$  respectively.
If $n$ is odd then  $4\alpha_n=4\beta_n=0$, and $\alpha_n+\beta_n=0$. If $n$ is even, then $2\alpha_n=2\beta_n=0$. Moreover
\begin{equation}\label{discdien}
\scriptstyle
q_{D_{n}}(\alpha_n)=q_{D_{n}}(\beta_n)\equiv -n/4 \pmod{2\ZZ},\qquad q_{D_{n}}(\alpha_n+\beta_n)\equiv n+1 \pmod{2\ZZ}.
\end{equation}
The following result is an easy exercise.
\begin{claim}\label{clm:disdin}
If $n$ is odd, then $A_{D_n}$ is cyclic of order $4$, generated by  $\alpha_n$. If $n$ is even, then  $A_{D_{n}}$ is the Klein group,   generated by $\alpha_n$ and $\beta_n$.
\end{claim}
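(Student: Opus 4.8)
The plan is to make the discriminant group $A_{D_n}$ completely explicit by exploiting the standard inclusion $D_n\subset\bZ^n$, and then to read off both a generating set and the orders of the generators.

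The first step is a determinant count. Since $D_n$ is the kernel of the surjection $\bZ^n\to\bZ/2\bZ$, $x\mapsto\sum_i x_i$, it has index $2$ in $\bZ^n$, and $\bZ^n$ is unimodular (the overall sign of the form is irrelevant here), so $|A_{D_n}|=|\det D_n|=[\bZ^n:D_n]^2=4$. Hence $A_{D_n}$ is an abelian group of order $4$, so it is either cyclic of order $4$ or the Klein four-group; all that remains is to decide which and to exhibit the asserted generators.

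The second step is to compute the dual lattice. A vector $v\in\bQ^n$ pairs integrally with every element of $D_n$ if and only if $v_i-v_j$ and $v_i+v_j$ lie in $\bZ$ for all $i,j$, equivalently the coordinates $v_i$ are all integers or all half-integers; thus $D_n^{*}=\bZ^n+\bZ\cdot(1/2,\dots,1/2)$. Consequently $A_{D_n}=D_n^{*}/D_n$ is generated by the classes of the $e_i$ (which span $\bZ^n/D_n\cong\bZ/2\bZ$) together with $\alpha_n=(1/2,\dots,1/2)$; since $e_1=\alpha_n-\beta_n$, already $\alpha_n$ and $\beta_n$ generate $A_{D_n}$.

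The final step determines the relations by testing whether the relevant integral vectors lie in $D_n$, i.e.\ whether their coordinate sums are even: $2\alpha_n=(1,\dots,1)$ and $2\beta_n=(-1,1,\dots,1)$ lie in $D_n$ exactly when $n$ is even; $\alpha_n+\beta_n=(0,1,\dots,1)$ lies in $D_n$ exactly when $n$ is odd; and $\alpha_n-\beta_n=e_1\notin D_n$. Thus if $n$ is odd, $\beta_n\equiv-\alpha_n$ while $2\alpha_n\not\equiv 0$ in $A_{D_n}$, so $\alpha_n$ has order strictly greater than $2$, hence order $4$ by the count, and $A_{D_n}=\langle\alpha_n\rangle\cong\bZ/4\bZ$; if $n$ is even, $\alpha_n$ and $\beta_n$ are two distinct nonzero elements of order $2$, so $\{0,\alpha_n,\beta_n,\alpha_n+\beta_n\}$ is a Klein four-group, which by the count is all of $A_{D_n}$. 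I do not foresee any real obstacle: the only points needing a little care are the index-versus-determinant bookkeeping and the explicit identification of $D_n^{*}$; alternatively one can skip the latter for the odd case by noting, from \eqref{discdien}, that the bilinear form associated to $q_{D_n}$ has $b(\alpha_n,\alpha_n)\equiv-n/4\pmod{\bZ}$, of order $4$ in $\bQ/\bZ$ precisely when $n$ is odd, which already forces $\alpha_n$ to have order $4$ there.
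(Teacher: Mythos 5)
Your proof is correct. The paper offers no argument here (the claim is stated as ``an easy exercise''), and what you give is the standard one: the index/determinant count $|A_{D_n}|=[\bZ^n:D_n]^2=4$, the explicit dual $D_n^{*}=\bZ^n+\bZ\,\alpha_n$, and the parity checks on $2\alpha_n$, $2\beta_n$, $\alpha_n\pm\beta_n$ are exactly the computations one expects, and they are all carried out accurately (including the closing observation that \eqref{discdien} alone already forces $\alpha_n$ to have order $4$ when $n$ is odd).
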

For $N\ge 3$, let
\begin{equation}
\Lambda_N:=U^2\oplus D_{N-2}.
\end{equation}
Notice that 
\begin{equation}\label{rugby}
(A_{\Lambda_N},q_{\Lambda_N})\cong (A_{D_{N-2}},q_{D_{N-2}}).
\end{equation}
Below is the key definition of the present subsection.
\begin{definition}\label{dfn:decoro}
Let  $N\ge 3$. A  lattice  $\Lambda$ is a  \emph{dimension-$N$ $D$-lattice} if it is  isomorphic to $\Lambda_N$, and in that case a \emph{decoration} of  $\Lambda$  is an element $\xi\in A_{\Lambda}$  of square $1$ (modulo $2\ZZ$). A \emph{dimension-$N$ decorated $D$-lattice} is a couple $(\Lambda,\xi)$, where $\Lambda$ is a 
dimension-$N$ $D$-lattice, and  $\xi\in A_{\Lambda}$ is a decoration. 
\end{definition} 
\begin{remark}\label{rmk:unicadec}
Let $\Lambda$ be a dimension-$N$ $D$-lattice. By~\eqref{rugby} and~\eqref{discdien} there exists  a decoration of $\Lambda$, and it is unique unless $N\equiv 6 \pmod 8$. Of course, when $\xi$ is unique the decoration is irrelevant. However, including the decorations allows us to treat the exceptional cases (i.e. $N\equiv 6 \pmod 8$) uniformly, and to make certain hidden structures more transparent. 
\end{remark}
\begin{remark} 
The dimension $N$ in~\Ref{dfn}{decoro} refers to the dimension of the associated period space, see~\Ref{dfn}{decper},  \emph{not} to the rank of the lattice $\Lambda$ (which is $N+2$). 
\end{remark}
\begin{remark}\label{rmk:periodeight}
Let $N\ge 3$; then 
\begin{equation}\label{periodeight}
\Lambda_N\oplus E_8\cong \Lambda_{N+8}.
\end{equation}
In fact the two  lattices have  same rank and signature, they are   even, and their discriminant groups and discriminant quadratic forms are isomorphic; thus they are isomorphic lattices by Theorem 1.13.2 of~\cite{nikulin}. Now write $N-2=8k+a$ with $0\le k$ and $a\in\{0,\ldots,7\}$.   By repeated application of~\eqref{periodeight}, one gets the isomorphism 
\begin{equation}
\Lambda_N\cong II_{2,2+8k}\oplus D_a.
\end{equation}
\end{remark}
\begin{remark}\label{rmk:solodim}
Any two $N$-dimensional decorated $D$ lattices are isomorphic, i.e.~given decorations $\xi,\xi'$ of a $D$ lattice $\Lambda$, there exists an isometry $g\in O(\Lambda)$ such that $g(\xi)=\xi'$.  In fact the group $O(q_{\Lambda})$ acts transitively on the set of decorations of $\Lambda$ (easy), and  the natural homomorphism  
$O(\Lambda)\to O(q_{\Lambda})$ is surjective. The last statement follows from  Theorem 1.14.2 of~\cite{nikulin}, or one can argue directly. In fact the result is trivial if  $N\not\equiv 6\pmod 8$, because there is a unique   $\alpha\in A_{\Lambda}$ of square $1$ (modulo $2\ZZ$) (see~\Ref{rmk}{unicadec}),  while if  $N\equiv 6\pmod 8$ all non-zero elements have square $1$  (modulo $2\ZZ$). In the latter case, write $N-2=4+8k$, and hence  $\Lambda_N$ can be identified with $II_{2,2+8k}\oplus D_4$. In this case, the   non-zero elements, are  given by $v^{*}$, where $v$ is one of
$$(0_{4+k},(1,1,1,1)),\quad (0_{4+k},(-1,1,1,1)), \quad (0_{4+k},(2,0,0,0)).$$
Acting by reflections  in these vectors, one checks that   $O(\Lambda)$ permutes the non-zero elements of $A_{\Lambda}$. 
\end{remark}
\subsection{Locally symmetric varieties of Type IV}\label{subsec:spaziperiodi}
\setcounter{equation}{0}
Suppose that $\Lambda$ is a lattice of \emph{signature} $(2,m)$. We let
\begin{equation*}
\cD_\Lambda:=\{[\sigma] \in\PP(\Lambda_{\CC})\mid \sigma^2=0,\ (\sigma+\ov{\sigma})^2>0\}.
\end{equation*}
Then $\cD_\Lambda$ is a complex manifold of dimension $m$, with two connected components  $\cD^{\pm}_\Lambda$, interchanged by complex conjugation (the \lq\lq real part\rq\rq\   projection $\Lambda_{\CC}\to\Lambda_{\RR}$ identifies  $\cD_\Lambda$ with the set of oriented positive definite $2$-dimensional subspaces of $\Lambda_{\RR}$).
Each of  $\cD^{\pm}_\Lambda$  is a bounded symmetric domain of Type IV. The orthogonal group $O(\Lambda)$ acts naturally on $\cD_\Lambda$ (left action). Let $O^{+}(\Lambda)<O(\Lambda)$ be the  subgroup of elements fixing each of $\cD^{\pm}_\Lambda$. By definition  $O^{+}(\Lambda)$ acts on $\cD^{+}_\Lambda$.  For  a finite-index subgroup $\Gamma< O^{+}(\Lambda)$ we let
\begin{equation*}
\cF_\Lambda(\Gamma):=\Gamma\backslash \cD^{+}_\Lambda.
\end{equation*}
Then $\cF_\Lambda(\Gamma)$ is naturally a complex space of dimension $n$. By a celebrated result of Baily and Borel, there exists  a projective variety  $\cF_\Lambda(\Gamma)^{*}$ (the \emph{Baily-Borel compactification} of  $\cF_\Lambda(\Gamma)$) containing    an open dense subset isomorphic to $\cF_\Lambda(\Gamma)$ as analytic space. In particular  $\cF_\Lambda(\Gamma)$  has a compatible structure of 
quasi-projective variety. 
\begin{definition}\label{dfn:decper}
Let  $(\Lambda,\xi)$ be a dimension-$N$  decorated $D$-lattice. Then $O(\Lambda)$ acts naturally on $A_\Lambda$, and hence we may define
$$\Gamma_\xi:=\{\phi\in O^+(\Lambda)\mid \phi(\xi)=\xi\}.$$
  We let $\cF_{\Lambda}(\Gamma_\xi)$ be the associated  locally symmetric variety (of dimension $N$) - this is a \emph{$D$ locally symmetric variety}.
 \end{definition} 
To  simplify notation we will write $\cF(\Lambda,\xi)$  for $\cF_{\Lambda}(\Gamma_\xi)$.

We will compare $\Gamma_{\xi}$ to more familiar subgroups of $O^+(\Lambda)$. First we recall that if $\Lambda$ is an even lattice, the \emph{stable orthogonal} subgroup  $\wt{O}(\Lambda)< O(\Lambda)$ is the kernel of the natural homomorphism  $\wt{O}(\Lambda)\to O(A_{\Lambda})$. We let $\wt{O}^{+}(\Lambda):=O^+(\Lambda)\cap \wt{O}(\Lambda)$. Notice that $\wt{O}^{+}(\Lambda)$ is of finite index in $O(\Lambda)$. 
\begin{example}\label{expl:riflessione}
 Let  $\Lambda$ be an even lattice, and $r\in\Lambda$  non isotropic. Let
\begin{equation}\label{riflessione}
\begin{matrix}
\Lambda_{\QQ} & \overset{\rho_r}{\lra} & \Lambda_{\QQ} \\
x & \mapsto & x-\frac{2(x,r)}{r^2}r
\end{matrix}
\end{equation}
be the reflection in the hyperplane $r^{\bot}$. If $r$ is primitive, then $\rho_r(\Lambda)\subset\Lambda$ if and only if $r^2| 2\divisore(r)$; if this is the case we use the same symbol $\rho_r$ for the corresponding element of $O(\Lambda)$. One checks easily the following results:
\begin{enumerate}
\item
$\rho_r\in O^{+}(\Lambda)$ if and only if $r^2<0$.
\item
If $r^2=\pm 2$, then $\rho_r\in \wt{O}(\Lambda)$.
\end{enumerate}
\end{example}
Let  $(\Lambda,\xi)$ be a  decorated $D$-lattice; then
\begin{equation}
\widetilde O^+(\Lambda)<\Gamma_\xi< O^+(\Lambda).
\end{equation}
 \begin{proposition}\label{prp:propgroup}
 Let $(\Lambda,\xi)$ be a decorated $D$-lattice of dimension $N$. Then
  \begin{enumerate}
 \item $\widetilde O^+(\Lambda)<\Gamma_\xi$ is of index $2$.
 \item $\Gamma_\xi= O^+(\Lambda)$ unless $N\equiv 6\pmod 8$, in which case $\Gamma_\xi< O^+(\Lambda)$ is of index $3$. 
 \item If $N$ is odd, $\cF_\Lambda(O^+(\Lambda))=\cF_{\Lambda}(\Gamma_\xi)= \cF_\Lambda(\widetilde O^+(\Lambda))$.
 \item If $N$ is even, the map $\cF_\Lambda(\widetilde O^+(\Lambda))\to \cF_{\Lambda}(\Gamma_\xi)$
 is a double cover. 
  \end{enumerate}
 \end{proposition}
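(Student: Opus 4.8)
The entire statement concerns the reduction homomorphism $r\colon O^+(\Lambda)\to O(q_\Lambda)$, whose kernel is $\wt O^+(\Lambda)$ by the definition of the stable orthogonal group and which satisfies $r^{-1}\big(\Stab_{O(q_\Lambda)}(\xi)\big)=\Gamma_\xi$, the $O(q_\Lambda)$-action on $A_\Lambda$ being by definition the one induced by $r$. The first step of the plan is to prove that $r$ is surjective. Nikulin's Theorem 1.14.2, already used in \Ref{rmk}{solodim}, gives surjectivity of $O(\Lambda)\to O(q_\Lambda)$; to descend this to $O^+(\Lambda)$ it is enough to exhibit one element of $\wt O(\Lambda)$ lying outside $O^+(\Lambda)$, because then multiplying an arbitrary lift of a given $\psi\in O(q_\Lambda)$ by such an element corrects the component-swapping sign without changing the induced map on $A_\Lambda$. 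Writing $\Lambda=U^2\oplus D_{N-2}$ and letting $e,f$ be a standard basis of one of the copies of $U$, the vector $e+f$ has $(e+f)^2=2$ and divisibility $1$, so $\rho_{e+f}\in O(\Lambda)$; by \Ref{expl}{riflessione} it lies in $\wt O(\Lambda)$, and since $(e+f)^2>0$ it is not in $O^+(\Lambda)$. As $[O(\Lambda):O^+(\Lambda)]=2$, this proves $r|_{O^+(\Lambda)}$ is onto, whence $\Gamma_\xi/\wt O^+(\Lambda)\cong\Stab_{O(q_\Lambda)}(\xi)$ and $[O^+(\Lambda):\Gamma_\xi]=[O(q_\Lambda):\Stab_{O(q_\Lambda)}(\xi)]$.

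The second step is to compute $O(q_\Lambda)$ and $\Stab_{O(q_\Lambda)}(\xi)$ from \Ref{clm}{disdin}, \eqref{discdien}, and the isomorphism $(A_\Lambda,q_\Lambda)\cong(A_{D_{N-2}},q_{D_{N-2}})$. In every case the automorphism $\iota$ of $A_\Lambda$ induced by $-\Id_\Lambda$ fixes $\xi$ (which has order $2$), so $\iota\in\Stab_{O(q_\Lambda)}(\xi)$. If $N$ is odd, $A_\Lambda\cong\ZZ/4$, $\xi=2\alpha_{N-2}$ is the unique decoration, $\iota\neq\id$, and $O(q_\Lambda)\subseteq\Aut(\ZZ/4)\cong\ZZ/2$; hence $\Stab_{O(q_\Lambda)}(\xi)=O(q_\Lambda)\cong\ZZ/2$. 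If $N$ is even, $A_\Lambda\cong(\ZZ/2)^2$: when $N\not\equiv 6\pmod 8$ the decoration $\xi=\alpha_{N-2}+\beta_{N-2}$ is unique and $q$ takes a value $\ne 1$ at $\alpha_{N-2},\beta_{N-2}$, so $O(q_\Lambda)$ must fix $\xi$ and equals $\{\id,\tau\}\cong\ZZ/2$, where $\tau$ transposes $\alpha_{N-2}$ and $\beta_{N-2}$ (this $\tau$ preserves $q$ since $q(\alpha_{N-2})=q(\beta_{N-2})$), giving $\Stab_{O(q_\Lambda)}(\xi)=O(q_\Lambda)$; when $N\equiv 6\pmod 8$ all three non-zero elements have square $1$, so $O(q_\Lambda)=\Aut((\ZZ/2)^2)\cong S_3$ and $\Stab_{O(q_\Lambda)}(\xi)$ has order $2$ and index $3$. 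Thus $|\Stab_{O(q_\Lambda)}(\xi)|=2$ always, which gives (1), and $[O(q_\Lambda):\Stab_{O(q_\Lambda)}(\xi)]$ is $1$ except when $N\equiv 6\pmod 8$, where it is $3$, which gives (2). (Alternatively, (2) follows from \Ref{rmk}{unicadec} and \Ref{rmk}{solodim}: $\xi$ is the unique decoration when $N\not\equiv 6$, so $\Gamma_\xi=O^+(\Lambda)$, while for $N\equiv 6$ there are three decorations on which $O(q_\Lambda)$ acts transitively.)

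For (3) and (4) the plan is to analyse the $\Gamma_\xi$-action on $\cD^+_\Lambda$. On one hand $-\Id_\Lambda$ acts trivially on $\PP(\Lambda_\CC)$, hence on $\cD^+_\Lambda$; on the other hand an isometry fixing $\cD^+_\Lambda$ pointwise fixes the Zariski-dense subset $\cD^+_\Lambda$ of the quadric $\{[\sigma]\in\PP(\Lambda_\CC)\mid\sigma^2=0\}$, hence fixes all of $\PP(\Lambda_\CC)$, hence equals $\pm\Id_\Lambda$. If $N$ is odd, $-\Id_\Lambda$ induces $\iota\neq\id$ on $A_\Lambda\cong\ZZ/4$, so $-\Id_\Lambda\in\Gamma_\xi\setminus\wt O^+(\Lambda)$; being central and acting trivially on $\cD^+_\Lambda$, it makes the $\Gamma_\xi$- and $\wt O^+(\Lambda)$-orbits on $\cD^+_\Lambda$ coincide, so $\cF_\Lambda(\wt O^+(\Lambda))=\cF_\Lambda(\Gamma_\xi)$; and $\Gamma_\xi=O^+(\Lambda)$ by (2) since $N$ odd forces $N\not\equiv 6\pmod 8$. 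This proves (3). If $N$ is even, $A_\Lambda$ is $2$-torsion, so $-\Id_\Lambda$ acts trivially on $A_\Lambda$ and lies in $\wt O^+(\Lambda)$; therefore a generator of the index-$2$ group $\Gamma_\xi/\wt O^+(\Lambda)$ is not $\pm\Id_\Lambda$ and so does not act trivially on $\cD^+_\Lambda$, whence $\cF_\Lambda(\wt O^+(\Lambda))\to\cF_\Lambda(\Gamma_\xi)$ is genuinely $2$-to-$1$, i.e.\ a double cover. This proves (4).

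I expect the surjectivity of $r|_{O^+(\Lambda)}$ to be the step demanding the most care, as it is the only input that is not a direct discriminant-form computation; it turns on exhibiting the reflection $\rho_{e+f}$ in a positive-norm vector of a $U$-summand to witness $\wt O(\Lambda)\not\subseteq O^+(\Lambda)$. The second, more conceptual, subtlety is that the dichotomy between (3) and (4) is entirely dictated by whether $-\Id_\Lambda$ lies in $\wt O^+(\Lambda)$ — equivalently, by the parity of $N$ through the torsion of $A_{D_{N-2}}$ — so once that observation is isolated the two cases are handled uniformly.
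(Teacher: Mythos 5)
Your proof is correct and follows essentially the same route as the paper's: surjectivity of $O^+(\Lambda)\to O(q_\Lambda)$ via a reflection in a norm-$2$ vector, the explicit computation of $O(q_\Lambda)$ and of the stabilizer of $\xi$, and the observation that $-\Id_\Lambda$ lies in $\Gamma_\xi$ always but in $\wt O(\Lambda)$ only for $N$ even, which settles (3) and (4). Your version merely makes explicit a few points the paper leaves implicit (the choice $e+f$ in a $U$-summand, and the fact that only $\pm\Id_\Lambda$ acts trivially on $\cD^+_\Lambda$).
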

 \begin{proof}
 The homomorphism $O(\Lambda)\to O(q_\Lambda)$ is surjective, see~\Ref{rmk}{solodim}.  Since the reflection $\rho_r$, for $r\in\Lambda$  of square $2$, acts trivially on $A_{\Lambda}$ and does not belong to the index-$2$ subgroup  $O^{+}(\Lambda)$, it follows that the homomorphism $O^{+}(\Lambda)\to O(q_\Lambda)$ is surjective as well. 
  Since $O(q_\Lambda)\cong \bZ/2$ for $N\not\equiv 6\pmod 8$, and $O(q_\Lambda)\cong \Sigma_3$ otherwise, Items~(1) and~(2) follow.   Notice that $-1 (=-\textrm{id}_\Lambda)\in \Gamma_\xi$  for all $N$, while $-1\in \widetilde O(\Lambda)$ if and only if $N$ is even.  It follows that  if $N$ is odd, then $-1$ generates $\Gamma_\xi/\widetilde O^+(\Lambda)\cong \bZ/2$. Since $-1$ acts trivially on $\cD_\Lambda$, Item~(3) follows. Item~(4) holds because $\widetilde O^+(\Lambda)<\Gamma_\xi$ is of index $2$, and $-1$ is the unique non trivial element of $O^{+}(\Lambda)$ acting trivially on  $\cD^{+}_{\Lambda}$.
 \end{proof}
\subsection{Heegner divisors on $D$ locally symmetric varieties}\label{sec:divisorclasses}
\setcounter{equation}{0}
\subsubsection{Divisor classes on locally symmetric variety  of Type IV}\label{subsubsec:divfour}
Let $X:=\cF_{\Lambda}(\Gamma)$ be   a locally symmetric variety  of Type IV. The \emph{Hodge bundle} (or automorphic bundle), denoted by $\cL(\Lambda,\Gamma)$,  is a fundamental fractional (orbifold) line-bundle on   $\cF_{\Lambda}(\Gamma)$;  it  is defined as the quotient of $\cO_{\cD^{+}_{\Lambda}}(-1)$ by $\Gamma$, where    $\cO_{\cD^{+}_{\Lambda}}(-1)$ is the restriction to $\cD^{+}_{\Lambda}$ of the tautological line-bundle on $\PP(\Lambda_{\CC})$. We recall that $\cL(\Lambda,\Gamma)$ extends to an ample fractional line bundle $\cL^{*}(\Lambda,\Gamma)$ on the Baily-Borel compactification $\cF_{\Lambda}(\Gamma)^{*}$, and that the sections of  $m\cL^{*}(\Lambda,\Gamma)$ are precisely the weight-$m$ $\Gamma$-automorphic forms. We let $\lambda(\Lambda,\Gamma):=c_1(\cL(\Lambda,\Gamma))$; thus $\lambda(\Lambda,\Gamma)$ is a $\QQ$-Cartier divisor class.

Any Weil divisor $D$ on the quasi-projective variety $X$ is  $\QQ$-Cartier. In fact  there exists a  finite index subgroup $\Gamma_0<\Gamma$ acting freely on $\cD^{+}_{\Lambda}$,  hence  $X_0:=\cF_{\Lambda}(\Gamma_0)$ is a smooth quasi-projective variety with a finite proper map  $\rho\colon X_0\to X$. Now, let us assume that $D$ is a prime divisor (as we may). Then $\rho^{-1}D$ is a closed subset of $X_0$, of pure codimension $1$, and hence a Cartier divisor because $X_0$ is smooth. Then $\rho_{*}(\rho^{-1}D)$ is an effective (non-zero)  Cartier divisor supported on $D$, and we are done. Thus we may identify $\CH^1(X)_{\QQ}$ and  $\Pic(X)_{\QQ}$.  

Heegner divisors on $\cF_\Lambda(\Gamma)$  are defined as follows. Let  $\pi\colon \cD^{+}_\Lambda\to \cF_\Lambda(\Gamma)$ be the quotient map. 
 Given a \emph{non-zero} $v\in \Lambda$,  we let
\begin{equation*}
\cH_{v,\Lambda}(\Gamma):=\bigcup_{g\in\Gamma} g(v)^{\bot}\cap \cD^{+}_\Lambda,\qquad  H_{v,\Lambda}(\Gamma):=\pi(\cH_{v,\Lambda}(\Gamma)).
\end{equation*}
Notice that $g(v)^{\bot}\cap \cD^{+}_\Lambda$ is empty if $v^2\ge 0$, hence we will always assume that $v^2<0$. Then $\cH_{v,\Lambda}(\Gamma)$ is a (particular) hyperplane arrangement 
(see~\cite{looijenga1,looijengacompact}): we call it the \emph{pre-Heegner divisor}  associated to  $v$. 
One checks  that $H_{v,\Lambda}(\Gamma)$ is a  prime divisor in the quasi-projective variety  $\cF_\Lambda(\Gamma)$. We say that $H_{v,\Lambda}(\Gamma)$ is the \emph{Heegner divisor} associated to  $v$.  Notice that $\cH_{v,\Lambda}(\Gamma)$ and $H_{v,\Lambda}(\Gamma)$  depend only on the $\Gamma$-orbit of $v$. We say that  $\cH_{v,\Lambda}(\Gamma)$ and  $H_{v,\Lambda}(\Gamma)$ are \emph{reflective} if the reflection $\rho_v$ (see~\Ref{expl}{riflessione}) belongs to $\Gamma$ (in particular $\rho_v(\Lambda)=\Lambda$). If this is the case we say that $v$ is a \emph{reflective} vector of $(\Lambda,\Gamma)$. If $(\Lambda,\xi)$ is a decorated $D$ lattice, and $\Gamma=\Gamma_{\xi}$, then we say that $v$ is a reflective vector of $(\Lambda,\xi)$.
\subsubsection{Relevant Heegner divisors for $D$ locally symmetric varieties} 
Let $(\Lambda,\xi)$ be a decorated $D$ lattice. The Heegner divisors which are relevant for the present work are associated to vectors $v\in\Lambda$ (of negative square) which minimize $|v^2|$ among vectors such that $v^{*}$ equals a given element of $A_{\Lambda}$. It will be convenient to write
\begin{equation}
A_{\Lambda}=\{0,\zeta,\xi,\zeta'\}.
\end{equation}
Thus
\begin{equation}
q_{\Lambda}(\zeta)=q_{\Lambda}(\zeta')\equiv -(N-2)/4\pmod{2\ZZ},
\end{equation}
always, and
\begin{eqnarray}
2\zeta=2\xi=2\zeta'=0 & \text{if $N$ is even,}\\
2\zeta=2\zeta'=\xi & \text{if $N$ is odd.}
\end{eqnarray}
\begin{remark}\label{rmk:scambio}
With notation as above, there exists $g\in\Gamma_{\xi}$ such that $g(\zeta)=\zeta'$. In fact it suffices to let $g=(\Id_{U^2},\rho)$, where $\rho\in O^{+}(D_{N-2})$ is the reflection in the vector $(2,0,\ldots,0)$, i.e.~$\rho(x_1,\ldots,x_{N-2})=(-x_1,x_2,\ldots,x_{N-2})$. 
\end{remark}
The result below will be used throughout the paper in order to classify $\Gamma_{\xi}$-orbits of vectors of $\Lambda$.   
\begin{proposition}[Eichler's Criterion,  Proposition~3.3 of~\cite{ghs-abelianisation}]\label{prp:criteich}
Let $\Lambda$ be an even lattice which contains $U\oplus U$. Let $v,w\in \Lambda$ be non-zero and primitive. There exists $g\in\wt{O}^{+}(\Lambda)$ such that $g (v)=w$ if and only if $v^2=w^2$ and $v^{*}=w^{*}$. 
\end{proposition}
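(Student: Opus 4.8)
The plan is to give the classical proof via Eichler transvections. The \lq\lq only if\rq\rq\ direction is immediate: an isometry preserves squares, so $v^2=w^2$; and if $g\in\wt O^+(\Lambda)$ acts trivially on $A_\Lambda$, then from $w=g(v)$ and $\divisore(w)=\divisore(v)$ one gets $w^*=g(v^*)=v^*$.

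For the converse the main tool is the \emph{Eichler transvection}: given an isotropic $e\in\Lambda$ and $a\in\Lambda$ with $(a,e)=0$, set
\begin{equation*}
t(e,a)\colon x\longmapsto x+(x,e)\,a-(x,a)\,e-\tfrac12(a,a)(x,e)\,e .
\end{equation*}
I would first record its basic properties: $t(e,a)$ is an isometry of $\Lambda$; it equals $\exp(N)$ for the nilpotent $N(x)=(x,e)a-(x,a)e$ (so $N^3=0$), hence $t(e,a)\in O^+(\Lambda)$; and for every $x\in\Hom(\Lambda,\ZZ)$ one has $t(e,a)(x)-x\in\ZZ e+\ZZ a\subset\Lambda$, so $t(e,a)$ acts trivially on $A_\Lambda$, i.e.\ $t(e,a)\in\wt O^+(\Lambda)$. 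Writing $E_\Lambda<\wt O^+(\Lambda)$ for the subgroup generated by all these transvections, it suffices to prove that $E_\Lambda$ acts transitively on the set $\{v\in\Lambda\ \text{primitive}\mid v^2=d,\ v^*=\gamma\}$ for each fixed pair $(d,\gamma)$.

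The heart of the argument is a normal-form reduction. Using the hypothesis, decompose $\Lambda=U_1\oplus U_2\oplus L$ with $U_i=\langle e_i,f_i\rangle$ hyperbolic planes, so that $A_\Lambda\cong A_L$. Starting from a primitive $v$, I would proceed in stages: (i) apply transvections attached to isotropic vectors of $U_1\oplus U_2$ (and, if needed, the reflection $\rho_{e_1-f_1}\in\wt O^+(\Lambda)$, which has square $-2$ and swaps $e_1,f_1$) to bring $v$ to a vector pairing to $1$ with $e_1$, equivalently with primitive isotropic $U_1$-component; (ii) with $t(e_1,a)$ for $a\in L$ and $t(f_1,a)$ for $a\in U_2\oplus L$, successively clear the $U_2$- and $L$-components of $v$ modulo what is forced by $v^*$, reducing $v$ to the form $e_1+kf_1+w$ with $w\in L$; (iii) using transvections attached to $U_2$, reduce $w$ to a fixed representative $w_0\in L$ of $\gamma$, whereupon the integer $k$ is determined by $v^2=d$. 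Hence every primitive $v$ with invariants $(d,\gamma)$ is carried by $E_\Lambda$ to the single vector $e_1+kf_1+w_0$, and transitivity follows.

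The main obstacle is steps (ii)--(iii): one must organize the sequence of transvections so that each reduction does not undo a previous one, and check that the relevant isotropic and perpendicular vectors are available to perform it. This is precisely where two independent hyperbolic summands are indispensable — transvections along $U_1$ are used to normalize the $U_2\oplus L$-part and conversely — and with only one copy of $U$ there is not enough room. A few degenerate initial configurations (e.g.\ $v$ orthogonal to $U_1\oplus U_2$, or $v^2=0$) must also be treated separately. The complete bookkeeping is carried out in~\cite{ghs-abelianisation}.
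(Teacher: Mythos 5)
The paper itself gives no proof of this proposition; it is quoted directly from Proposition~3.3 of \cite{ghs-abelianisation}, so the only meaningful benchmark is the transvection argument in that reference, which is indeed the strategy you outline. Your ``only if'' direction is fine, and so are your verifications that $t(e,a)$ is an isometry, lies in $O^{+}(\Lambda)$, and acts trivially on $A_\Lambda$.

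The normal form in steps (i)--(iii) is, however, wrong, and not merely as a matter of bookkeeping. If $v$ pairs to $1$ with $e_1$ --- equivalently, if $v$ is reduced to $e_1+kf_1+w$ with $w\in L$ --- then $1\in(v,\Lambda)$, so $\divisore(v)=1$ and hence $v^{*}=0$. Step (i) is therefore impossible whenever $v^{*}\neq 0$, which is exactly the case needed throughout this paper (hyperelliptic vectors have divisibility $2$, unigonal ones divisibility $2$ or $4$); and taken literally your reduction would make \emph{all} primitive vectors of a given square equivalent under $E_\Lambda$, contradicting the ``only if'' direction you just established. There is a related confusion in step (iii): an element $w_0\in L$ cannot ``represent'' $\gamma\in A_L=\Hom(L,\ZZ)/L$ directly. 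The correct target of the reduction is $n e_1+c f_1+w_\gamma$, where $n=\divisore(v)$ equals the order of $\gamma=v^{*}$ in $A_\Lambda$, where $w_\gamma\in L$ is chosen so that $w_\gamma/n\in\Hom(L,\ZZ)$ represents $\gamma$, and where $c=(v^2-w_\gamma^2)/2n$; one checks $c\in n\ZZ$ because $q_\Lambda(\gamma)\equiv v^2/n^2\equiv w_\gamma^2/n^2\pmod{2\ZZ}$ forces $v^2\equiv w_\gamma^2\pmod{2n^2}$. With that corrected normal form the reduction does go through as in \cite{ghs-abelianisation}.
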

\begin{proposition}\label{prp:minorm}
Let $(\Lambda,\xi)$ be a decorated D lattice, of dimension $N$. The following hold:
\begin{enumerate}
\item
There exists $v\in\Lambda$ such that $v^2=-2$ and $\divisore(v)=1$, and it is unique up to $\Gamma_{\xi}$. (Notice that $v^{*}=0$.)
\item
There exists $v\in\Lambda$ such that $v^2=-4$, $\divisore(v)=2$, and $v^{*}=\xi$. Such a $v$   is unique up to $\Gamma_{\xi}$.
\item
Let $a\in\{0,\dots,7\}$ be the residue mod $8$ of $N-2$. There exists  $v\in\Lambda$ such that 
\begin{enumerate}
\item[(3a)] $v^2=-4a$, $\divisore(v)=4$, and $v^{*}=\zeta$ (or  $v^{*}=\zeta'$) if $N$ is odd,
\item[(3b)]  $v^2=-a$ and $\divisore(v)=2$, and $v^{*}=\zeta$  (or  $v^{*}=\zeta'$) if $N$ is even.
\end{enumerate} 
 Such a $v$   is unique up to $\Gamma_{\xi}$ (recall that $\Gamma_{\xi}$ exchanges $\zeta$ and $\zeta'$).
\end{enumerate}
\end{proposition}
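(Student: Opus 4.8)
The plan is to deduce both the existence and the uniqueness statements from Eichler's criterion, \Ref{prp}{criteich}, which applies because $\Lambda\cong U^2\oplus D_{N-2}$ contains $U\oplus U$. According to that criterion, for primitive non-zero $v,w\in\Lambda$ the equalities $v^2=w^2$ and $v^{*}=w^{*}$ already force $w=g(v)$ for some $g\in\wt O^{+}(\Lambda)$; since $\wt O^{+}(\Lambda)<\Gamma_\xi$, this \emph{a fortiori} gives a $\Gamma_\xi$-equivalence, so each of the three sets of vectors described in the proposition will be a single $\Gamma_\xi$-orbit once we know it is non-empty and consists of primitive vectors. For the clauses ``$v^{*}=\zeta$ or $\zeta'$'' I would additionally invoke \Ref{rmk}{scambio} (an element of $\Gamma_\xi$ carrying $\zeta$ to $\zeta'$), which shows that reduction $\zeta$ and reduction $\zeta'$ give the same $\Gamma_\xi$-orbit. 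Thus the whole argument splits into a primitivity step and an existence step.

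For the primitivity step I would argue by size. In item (1) the condition $\divisore(v)=1$ already forces primitivity. In items (2) and (3), if $v=kw$ with $w$ primitive and $k\ge2$, then $w^2=v^2/k^2$ must be a negative even integer, so $k^2\le\tfrac12|v^2|$; given that $|v^2|\in\{4,\,a,\,4a\}$ for the relevant small residues $a$, the only surviving possibility is $w^2=-2$ and $|v^2|=8$, which can only occur in item (3) and there has $v^{*}=\zeta$ — but then $w$ would be a primitive $(-2)$-vector with $\divisore(w)=\divisore(v)/2=1$, hence $w^{*}=0$, so $v^{*}=0\ne\zeta$, a contradiction. Thus $v$ is primitive in all cases. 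Once this is known, $\divisore(v)$ is simply the order of $v^{*}$ in $A_\Lambda$ (respectively $1$, $2$, $2$, $4$ according as $v^{*}$ is $0$, $\xi$, $\zeta$ with $N$ even, or $\zeta$ with $N$ odd), so the divisibility assertions come for free, and I am left only to exhibit, for each item, a primitive vector with the prescribed square and reduction.

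For the existence step, items (1) and (2) can be done directly inside $\Lambda_N=U^2\oplus D_{N-2}$: for (1) a hyperbolic vector $e-f$ of a copy of $U$ has square $-2$, divisibility $1$, reduction $0$; for (2) one checks via \eqref{discdien} that $\xi$ is the class of $(1,0,\dots,0)\in D_{N-2}^{*}$, whose double $(2,0,\dots,0)\in D_{N-2}$ has square $-4$, divisibility $2$, and reduction $\xi$ when $N\ge4$, while for $N=3$ (where $D_1\cong(-4)$) one takes instead $2u$ with $u=e+\tfrac12 g\in\Lambda_3^{*}$, $g$ a generator of $D_1$ and $e$ isotropic, of square $-4$ and divisibility $2$. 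For item (3) the target squares $-a$, $-4a$ are smaller than what the half-integer vectors of $D_{N-2}$ provide, so I would first pass to the identification $\Lambda\cong\II_{2,2+8k}\oplus D_a$ of \Ref{rmk}{periodeight} (with $a\in\{1,\dots,8\}$ the residue of $N-2$ modulo $8$), so that $A_\Lambda\cong A_{D_a}$ carries $\zeta,\zeta'$ to $\alpha_a=(\tfrac12,\dots,\tfrac12)$ and $\beta_a$; then $2\alpha_a=(1,\dots,1)\in D_a$ (if $N$ is even) and $4\alpha_a=(2,\dots,2)\in D_a$ (if $N$ is odd) have square $-a$, resp.\ $-4a$, divisibility $2$, resp.\ $4$, and reduction $\zeta$ — with the caveat that if $N\equiv6\bmod8$ and the chosen decoration $\xi$ is $\alpha_4$, one uses $(-1,1,1,1)$ or $(2,0,0,0)$ instead, still of square $-4=-a$. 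Each exhibited vector is visibly primitive.

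The genuinely delicate part is not the use of Eichler's criterion but the bookkeeping around the decoration: matching $\xi,\zeta,\zeta'$ and their discriminant-form values (via \eqref{discdien}) with concrete vectors of $D_a$ uniformly in $a$, and handling the two exceptional configurations — $N\equiv6\bmod8$, where $\xi$ is not canonical and \Ref{rmk}{solodim} is what keeps the three non-zero classes on an equal footing, and the low-dimensional case $N=3$, where $D_1\cong(-4)$ is too small to host the divisibility-$2$, square-$(-4)$ vector of item (2) and a hyperbolic summand must be brought into the construction. If one also wants the sharper reading in which these are the $|v^2|$-\emph{minimal} vectors with the given reduction (the form in which they serve as ``relevant Heegner divisors''), I would include a short norm minimisation inside each coset of $A_{D_a}\subset D_a^{*}/D_a$; this is elementary, but it is the second place that needs a little care.
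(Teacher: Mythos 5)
Your overall strategy coincides with the paper's: uniqueness via Eichler's Criterion (\Ref{prp}{criteich}) together with \Ref{rmk}{scambio} to merge the $\zeta$ and $\zeta'$ orbits, and existence via explicit vectors in the model $\II_{2,2+8k}\oplus D_a$ of \Ref{rmk}{periodeight}. Your extra care about primitivity (needed before Eichler applies) and about the $N=3$ and $N\equiv 6\pmod 8$ configurations is sound, and your observation that for $N=3$ the hyperelliptic vector cannot live in the $D_1$ summand is correct.

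There is, however, a genuine gap in item~(3) for $N\equiv 2\pmod 8$, i.e.\ $a=0$ in the proposition's convention. There the required vector satisfies $v^2=-a=0$ and $\divisore(v)=2$: it is \emph{isotropic}. Your construction, which takes $a\in\{1,\dots,8\}$ and produces $(1,\dots,1)\in D_8$, yields a vector of square $-8$ with reduction $\zeta$; by Eichler this lies in a different orbit from the isotropic divisibility-$2$ vector the statement asserts to exist (and which is the reason $H_u(N)=0$ for $N\equiv 2\pmod 8$, cf.\ \Ref{dfn}{nodhypuni}). No vector of $\II_{2,2+8k}\oplus D_8$ visibly supported on the $D_8$ summand can work, and the paper instead invokes Theorem~1.13.2 of~\cite{nikulin} to rewrite $\Lambda\cong \II_{1,1+8k}\oplus U(2)$ and takes $v=(0,e)$ with $e$ a primitive isotropic vector of $U(2)$. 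Relatedly, your primitivity-by-size argument (``$w^2=v^2/k^2$ must be a negative even integer'') breaks down for isotropic $v$; in that case primitivity must instead be extracted from the divisibility condition (if $v=kw$ with $k\ge 2$ then $v^{*}=w^{*}$ while $\divisore(v)=k\,\divisore(w)$, forcing $\divisore(w)=1$ and hence $w^{*}=0\neq\zeta$). With that one case repaired, your proof matches the paper's.
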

\begin{proof}
(1) and~(2): Existence is obvious. Such a $v$ is unique up to $\wt{O}^{+}(\Lambda)$ by Eichler's Criterion, and hence also up to  $\Gamma_{\xi}$, because $\wt{O}^{+}(\Lambda)<\Gamma_{\xi}$.  
(3): Let us prove existence. Assume first that  $N-2\not\equiv 0 \pmod 8$, and hence $a\in\{1,\dots,7\}$. Let $N-2=8k+a$; by~\Ref{rmk}{periodeight} we may identify 
$\Lambda$ with $II_{2,2+8k} \oplus D_a$.
 If  $N$ is odd, the vector $v:=(0_{4+8k},(2,\dots,2))\in \Lambda_N$   satisfies~(3a), and if
$N$ is even, the vector  $v:=(0_{4+8k},(1,\dots,1))\in \Lambda_N$ satisfies~(3b). 
 If $N-2\equiv 0\pmod 8$, let  $N=8k+2$. Then 
$\Lambda\cong II_{1,1+8k} \oplus U(2)$ by   Theorem 1.13.2 of~\cite{nikulin}. With this identification understood, Item~(3b) holds for  $v:=(0_{2+8k},e)\in\Lambda$, where  $e\in U(2)$ is a primitive isotropic vector. Lastly, we prove unicity of $v$ up to $\Gamma_{\xi}$. Let $v_1,v_2$ be two vectors such that~(3a) holds for both, or~(3b) holds for both. If $v_1^{*}=v_2^{*}$, then $v_1,v_2$  are $\wt{O}^{+}(\Lambda)$-equivalent by  Eichler's Criterion, and hence they are $\Gamma_\xi$-equivalent because 
$\wt{O}^{+}(\Lambda)<\Gamma_\xi$. If $v_1^{*}\not= v_2^{*}$ then $\{v_1^{*},v_2^{*}\}=\{\zeta,\zeta'\}$, and hence by~\Ref{rmk}{scambio} there exists $g\in\Gamma_\xi$ such that $g(v_2)^{*}=v_1^{*}$; thus $v_1,v_2$ are $\Gamma_\xi$-equivalent  by Eichler's Criterion.
\end{proof}
Below is a key definition for all that follows.
\begin{definition}\label{dfn:nodhypuni}
Let $(\Lambda,\xi)$ be a decorated $D$ lattice.  
\begin{enumerate}
\item
A vector $v\in\Lambda$ is \emph{nodal} if Item~(1) of~\Ref{prp}{minorm} holds. The \emph{nodal pre-Heegner divisor}  
  and the \emph{nodal Heegner divisor}    are  $\cH_{v,\Lambda}(\Gamma_{\xi})$ and $H_{v,\Lambda}(\Gamma_{\xi})$  respectively, where  $v\in\Lambda$ is a nodal vector. We will denote them by $\cH_n(\Lambda,\xi)$ and $H_n(\Lambda,\xi)$  respectively. 
\item
A vector $v\in\Lambda$ is \emph{hyperelliptic} if Item~(2) of~\Ref{prp}{minorm} holds. 
The \emph{hyperelliptic pre-Heegner divisor}  
  and the \emph{hyperelliptic Heegner divisor}    are  $\cH_{v,\Lambda}(\Gamma_{\xi})$ and $H_{v,\Lambda}(\Gamma_{\xi})$  respectively, where  $v\in\Lambda$ is a  hyperelliptic vector. We will denote them by $\cH_h(\Lambda,\xi)$ and $H_h(\Lambda,\xi)$  respectively. 
\item
A vector $v\in\Lambda$ is \emph{unigonal} if Item~(3) of~\Ref{prp}{minorm} holds. The \emph{unigonal pre-Heegner divisor}  
 and the \emph{unigonal Heegner divisor}   are  $\cH_{v,\Lambda}(\Gamma_{\xi})$ and 
$H_{v,\Lambda}(\Gamma_{\xi})$  respectively, where  $v\in\Lambda$ is a unigonal vector. 
We will denote them by $\cH_u(\Lambda,\xi)$ and $H_u(\Lambda,\xi)$  respectively. 
Notice that if $N\equiv 2\pmod{8}$, then 
$\cH_u(\Lambda,\xi)=0$ and 
$H_u(\Lambda,\xi)=0$ because $v^2=0$. 
\end{enumerate}
 (The definition makes sense because by~\Ref{prp}{minorm} there is a single  $\Gamma_{\xi}$-orbit of nodal, hyperelliptic or unigonal vectors.)  
\end{definition}
\subsubsection{Reflective Heegner divisors and the boundary of $D$ period spaces} 
\begin{proposition}\label{prp:propram}
Let $(\Lambda,\xi)$ be a dimension $N$ decorated $D$ lattice. Let $v\in\Lambda$ be primitive, of negative square, i.e.~$v^2<0$. The reflection $\rho_v$ (see~\eqref{riflessione}) belongs to $\Gamma_\xi$ (i.e.~$v$ is a  reflective vector of $(\Lambda,\xi)$) if and only if $v$ is either nodal,   hyperelliptic, or $N\equiv 3,4\pmod{8}$ and $v$ is unigonal. 
\end{proposition}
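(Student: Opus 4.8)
The plan is to reduce everything to an explicit computation with the discriminant form $q_\Lambda\cong q_{D_{N-2}}$ of~\eqref{discdien}, using~\Ref{expl}{riflessione} to decide when $\rho_v$ preserves $\Lambda$ and then tracking its action on $A_\Lambda$. First note that since $v^2<0$ we have $\rho_v\in O^{+}(\Lambda)$ by~\Ref{expl}{riflessione}(1), so it remains only to decide: (i) whether $\rho_v(\Lambda)=\Lambda$, and (ii) if so, whether $\rho_v$ fixes the decoration $\xi\in A_\Lambda$ --- these two together being equivalent to $\rho_v\in\Gamma_\xi$.

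For~(i), \Ref{expl}{riflessione} says that for primitive $v$ one has $\rho_v(\Lambda)=\Lambda$ iff $v^2\mid 2\divisore(v)$. Combined with the general facts $\divisore(v)\mid v^2$ and (since $\Lambda$ is even) $2\mid v^2$, writing $v^2=-\divisore(v)\cdot t$ forces $t\in\{1,2\}$. To pin down the possibilities I will use two further standard facts: for primitive $v$, $\divisore(v)$ equals the order of $v^{*}$ in $A_\Lambda$ (a one-line B\'ezout argument), and $q_\Lambda(v^{*})\equiv v^2/\divisore(v)^2\pmod{2\bZ}$. Since $A_\Lambda\cong A_{D_{N-2}}$ is cyclic of order $4$ ($N$ odd) or the Klein group ($N$ even) by~\Ref{clm}{disdin}, and its discriminant form is given explicitly by~\eqref{discdien}, running through the cases $v^{*}=0$, $v^{*}=\xi$, $v^{*}\in\{\zeta,\zeta'\}$ (which exhaust the $\Gamma_\xi$-orbits on $A_\Lambda$ by~\Ref{rmk}{scambio}) leaves exactly: $v^{*}=0$ and $v^2=-2$ (the nodal vector); $v^{*}=\xi$ and $v^2=-4$ (the hyperelliptic vector; here one uses $q_\Lambda(\xi)\equiv 1$ to rule out $v^2=-2$); and $v^{*}\in\{\zeta,\zeta'\}$, which has a solution only for $N\equiv 3\pmod 8$ (then $v^2=-4=-4a$), $N\equiv 4\pmod 8$ (then $v^2=-2=-a$), or $N\equiv 6\pmod 8$ (then $v^2=-4=-a$), in each case the unigonal vector of~\Ref{prp}{minorm}.

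For~(ii), in the cases $v^2=-2$ --- namely the nodal vector, and the unigonal vector when $N\equiv 4\pmod 8$ --- \Ref{expl}{riflessione}(2) gives $\rho_v\in\wt{O}(\Lambda)$, hence $\rho_v\in\wt{O}^{+}(\Lambda)<\Gamma_\xi$, so these are reflective. In the remaining cases $v^2=-4$ I compute the induced automorphism of $A_\Lambda$ directly: writing $v=\divisore(v)w$ with $[w]=v^{*}$, for $y\in\Hom(\Lambda,\bZ)$ one has $\rho_v(y)=y+\tfrac{(y,v)}{2}v$, so modulo $\Lambda$, $\overline{\rho_v}(\bar y)=\bar y+(y,v)\tfrac{\divisore(v)}{2}\,v^{*}$, and evaluating at a lift $y$ of $\xi$ reduces the question to the residue of $(y,v)$ modulo $\divisore(v)$, which is read off from $b_\Lambda(\xi,v^{*})$. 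One finds $\overline{\rho_v}(\xi)=\xi$ when $v^{*}=\xi$ (the hyperelliptic vector, for all $N$) and when $v^{*}\in\{\zeta,\zeta'\}$ with $N\equiv 3\pmod 8$ (the unigonal vector), whereas $\overline{\rho_v}(\xi)=\xi+v^{*}\neq\xi$ when $v^{*}\in\{\zeta,\zeta'\}$ and $N\equiv 6\pmod 8$; the latter computation, which rests on $b_{D_4}(\xi,\zeta)=\tfrac12$, is precisely what eliminates the unigonal vector for $N\equiv 6\pmod 8$ and leaves the congruence condition $N\equiv 3,4\pmod 8$ in the statement.

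The main obstacle is the bookkeeping in step~(ii): correctly computing the action of a reflection on the discriminant group and tracking divisibilities modulo $\divisore(v)$, while covering every residue of $N$ mod $8$. Nothing here is deep --- it is careful Nikulin-style lattice arithmetic grounded in the explicit form of $q_{D_{N-2}}$ --- but it is the step where a parity or sign slip would alter the final congruence condition, so the computation must be carried out cleanly.
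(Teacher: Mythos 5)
Your proposal is correct and follows essentially the same route as the paper's proof: enumerate the candidate pairs $(v^2,\divisore(v))$ from the integrality condition $v^2\mid 2\divisore(v)$, match them against the discriminant form of $D_{N-2}$ to pin down the residue of $N$ modulo $8$, and then decide whether $\rho_v$ fixes $\xi$. The only (harmless) variations are that you impose $\divisore(v)\mid v^2$ up front to shorten the enumeration, and that for the unigonal cases you verify $\overline{\rho_v}(\xi)=\xi$ by a direct discriminant-group computation where the paper instead invokes $\Gamma_\xi=O^{+}(\Lambda)$ for $N\equiv 3,4\pmod 8$ from \Ref{prp}{propgroup}.
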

\begin{proof}
Assume that $\rho_v$  belongs to $\Gamma_\xi$. Since $\rho_v(\Lambda)=\Lambda$, it follows that $v^2|2\divisore(v)$ (see~\Ref{expl}{riflessione}). On the other hand $A_{\Lambda}$ is isomorphic to $\ZZ/(4)$ if $N$ is odd, and to the Klein group if $N$ is even.  Thus one of the following holds:
\begin{enumerate}
\item[(a)]
$v^2=-2$ and $\divisore(v)\in\{1,2,4\}$. 
\item[(b)]
$v^2=-4$ and $\divisore(v)\in\{2,4\}$. 
\item[(c)]
$v^2=-8$ and $\divisore(v)=4$.
\end{enumerate}
Suppose that Item~(a) holds. If $\divisore(v)=1$, then $v$ is nodal. Thus $\rho_v\in\wt{O}^{+}(\Lambda)$ by~\Ref{expl}{riflessione}, and  hence $\rho_v\in \Gamma_{\xi}$.  If $\divisore(v)=2$, then $q_{\Lambda}(v^{*})\equiv -1/2\pmod{2\ZZ}$. By~\eqref{rugby}, \Ref{clm}{disdin} and~\eqref{discdien}, it follows that $N\equiv 4\pmod{8}$, and $v$ is unigonal.  In particular $\Gamma_{\xi}=O^{+}(\Lambda)$ by~\Ref{prp}{propgroup}, and hence 
$\rho_v\in \Gamma_{\xi}$ because $\rho_v\in O^{+}(\Lambda)$.  If $\divisore(v)=4$, then $q_{\Lambda}(v^{*})\equiv -1/8\pmod{2\ZZ}$. On the other hand $N$ is odd because $\divisore(v)=4$, and hence by~\eqref{rugby}, \Ref{clm}{disdin} and~\eqref{discdien}, it follows that $-(N-2)/4\equiv -1/8\pmod{2\ZZ}$, which is absurd. Thus the case  $v^2=-2$ and  $\divisore(v)=4$ does not occur. 

Now suppose that Item~(b) holds. If $\divisore(v)=2$, then either $v$ is hyperelliptic, or $N\equiv 6\pmod{8}$.  If $v$ is  hyperelliptic,  then  $\rho_v\in O^{+}(\Lambda)$  by~\Ref{expl}{riflessione}. Since $\rho_v(v)=-v$, and $\xi=v^{*}$, the reflection $\rho_v$ fixes the $2$-torsion element $\xi$, and hence belongs to $\Gamma_{\xi}$. If, on the other hand,  $N\equiv 6\pmod{8}$ and $v$ is not hyperelliptic, we may assume that $v^{*}=\zeta$. Then  $\rho_v(\xi)=\zeta'$ (let $N=8k+6$, so that $\Lambda\cong II_{2,2+8k}\oplus D_4$, and compute), and hence $\rho_v\notin\Gamma_{\xi}$. If $\divisore(v)=4$, then 
 $q_{\Lambda}(v^{*})\equiv -1/4\pmod{2\ZZ}$. By~\eqref{rugby}, \Ref{clm}{disdin} and~\eqref{discdien}, it follows that $N\equiv 3\pmod{8}$, and $v$ is unigonal.  Then $\Gamma_{\xi}=O^{+}(\Lambda)$ by~\Ref{prp}{propgroup}, and hence 
$\rho_v\in \Gamma_{\xi}$ because $\rho_v\in O^{+}(\Lambda)$.

Lastly let us show that Item~(c) cannot hold. In fact $q_{\Lambda}(v^{*})\equiv -1/2\pmod{2\ZZ}$, and $N$ is odd because $\divisore(v)=4$; 
by ~\eqref{rugby}, \Ref{clm}{disdin} and~\eqref{discdien}, it follows that $-(N-2)/4\equiv -1/2\pmod{2\ZZ}$, which is absurd. 
\end{proof}
\begin{corollary}\label{crl:reflheeg}
Let $(\Lambda,\xi)$ be a decorated $D$ lattice, of dimension $N$.  The nodal and hyperelliptic (pre)Heegner divisors are reflective.  The unigonal  (pre)Heegner divisor is reflective if and only if $N\equiv 3,4\pmod{8}$. 
\end{corollary}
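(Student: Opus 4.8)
The plan is to deduce the statement directly from Proposition~\ref{prp:propram}, which already classifies exactly which primitive negative-square vectors $v$ have $\rho_v\in\Gamma_\xi$. The only thing to check before invoking it is that the nodal, hyperelliptic and unigonal vectors singled out in Proposition~\ref{prp:minorm} are automatically primitive. A nodal vector has $\divisore(v)=1$, which forces primitivity. A hyperelliptic vector has $v^2=-4$, so if $v=kw$ with $k\ge 2$ one would get $w^2=-1$, impossible in an even lattice; hence it is primitive. A unigonal vector has $v^2=-4a$, $\divisore(v)=4$ (if $N$ is odd) or $v^2=-a$, $\divisore(v)=2$ (if $N$ is even), where $a\in\{0,\dots,7\}$ is the residue of $N-2$ mod $8$; when $N\equiv 2\pmod 8$ we have $a=0$ and there is no unigonal divisor, and otherwise $\divisore(v)\in\{2,4\}$ already forces primitivity. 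In every case $v^2<0$, so the hypotheses of Proposition~\ref{prp:propram} are met.

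With primitivity in hand the three assertions read off immediately. For a nodal vector $v$ one has $v^2=-2$, so $\rho_v\in\widetilde O^+(\Lambda)<\Gamma_\xi$ already by Example~\ref{expl:riflessione} (items (1) and (2)); thus the nodal (pre-)Heegner divisor is reflective. For a hyperelliptic vector Proposition~\ref{prp:propram} puts us in its case~(b) with $\divisore(v)=2$, $v^{*}=\xi$, and concludes $\rho_v\in\Gamma_\xi$, so the hyperelliptic (pre-)Heegner divisor is reflective. For a unigonal vector Proposition~\ref{prp:propram} gives $\rho_v\in\Gamma_\xi$ precisely when $N\equiv 3,4\pmod 8$, which yields both directions of the last sentence, provided we know that a unigonal vector is not itself a nodal or hyperelliptic vector in the remaining residue classes: this holds because a unigonal vector has $v^{*}\in\{\zeta,\zeta'\}$, which is nonzero (so it is not nodal) and distinct from $\xi$ (so it is not hyperelliptic), the three elements $\zeta,\xi,\zeta'$ being pairwise distinct in $A_\Lambda$ in every residue class — including $N\equiv 6\pmod 8$, where $A_\Lambda$ is the Klein group and $\{\zeta,\xi,\zeta'\}$ is exactly its set of nonzero elements.

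There is essentially no obstacle: the substantive case analysis on the pair $(v^2,\divisore(v))$ against the structure of $A_\Lambda$ and the discriminant form $q_\Lambda$ was carried out in Proposition~\ref{prp:propram}. The only points needing a line of justification are the primitivity of the three distinguished classes of vectors (so that Proposition~\ref{prp:propram} applies verbatim) and the observation that a unigonal vector cannot accidentally be nodal or hyperelliptic, which is what secures the \emph{only if} half of the unigonal statement.
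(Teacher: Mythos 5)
Your proposal is correct and follows the same route as the paper, which states this corollary as an immediate consequence of Proposition~\ref{prp:propram} without further argument; your added checks (primitivity of the distinguished vectors, and that a unigonal vector is neither nodal nor hyperelliptic since $v^{*}\in\{\zeta,\zeta'\}$) are exactly the small points needed to make the deduction airtight, in particular the ``only if'' half. One tiny caveat: divisibility $2$ or $4$ alone does not force primitivity in general (e.g.\ $v=2w$ with $\divisore(w)=1$ has $\divisore(v)=2$); in the unigonal case primitivity really comes from the square values ($w^2$ would be odd, or equal to $-1$), but this does not affect the validity of your argument.
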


Below is a key definition.
\begin{definition}
Let $(\Lambda,\xi)$ be a decorated $D$ lattice, of dimension $N$.  Let $\Delta(\Lambda,\xi)$ be the $\QQ$-Cartier divisor on $\cF(\Lambda,\xi)$ defined as
\begin{equation}\label{deltaforce}
\Delta(\Lambda,\xi):=
\begin{cases}
H_h(\Lambda,\xi)/2 & \text{if $N\not\equiv 3,4\pmod{8}$,}\\
(H_h(\Lambda,\xi)+H_u(\Lambda,\xi))/2 & \text{if $N\equiv 3,4\pmod{8}$.}
\end{cases}
\end{equation}
\end{definition}
\begin{remark}
The main goal of the present paper is to predict the behaviour of $\Proj$ of the ring of sections  of $\lambda(\Lambda,\xi)+\beta\Delta(\Lambda,\xi)$  for $\beta\in[0,1]\cap\QQ$. The choice of boundary divisor $\Delta(\Lambda,\xi)$ is dictated by the geometry for $N\in\{18,19\}$, see~\Ref{sec}{whydelta}. Here we notice that 
\Ref{crl}{reflheeg} gives a property which distinguishes the Heegner divisors  appearing in~\eqref{deltaforce} from the other Heegner divisors that we have been considering - with one exception, the nodal Heegner divisor. The hyperelliptic and the reflective unigonal divisors  are further distinguished by the property of being normal, see~\Ref{prp}{manyhyp}, \Ref{prp}{unigcong3}, and~\Ref{prp}{unigcong4}.
\end{remark}
\subsubsection{Heegner divisors for the stable orthogonal group}\label{subsubsec:heegorto}
Let $(\Lambda,\xi)$ be a dimension $N$ decorated $D$ lattice. The main object of study in this paper is the geometry of the locally symmetric variety $\cF_\Lambda(\Gamma_\xi)$. (The reason for this is the inductive behavior described in~\Ref{subsec}{diserie}). It is more standard to discard the decoration $\xi$, and consider $\cF_\Lambda(\widetilde O^+(\Lambda))$, the variety associated to the stable orthogonal group. If $N$ is odd, $\cF_\Lambda(\widetilde O^+(\Lambda))\cong \cF_\Lambda(\Gamma_\xi)$ and there is nothing to be said. On the other hand, if $N$ is even then  $\Gamma_{\xi}$ is an index-$2$ subgroup of $\wt{O}^{+}(\Lambda)$ (see \Ref{prp}{propgroup}), and hence we have a double cover map 
\begin{equation}\label{stabledec}
\cF_\Lambda(\widetilde O^+(\Lambda))\overset{\rho}{\lra} \cF_\Lambda(\Gamma_\xi).
\end{equation}
We will describe the inverse image by $\rho$ of the Heegner divisors $H_n$, $H_h$ and $H_u$. 
\begin{definition}\label{dfn:vetmin}
Let $\Lambda$ be a dimension-$N$ $D$-lattice. A \emph{minimal norm vector} of $\Lambda$ is a $v\in\Lambda$ such that one of the following holds:
\begin{enumerate}
\item
 $v^2=-2$ and $\divisore(v)=1$, or
\item
$v^2=-4$, and $\divisore(v)=2$, or
\item
$(N-2)\not\equiv 0\pmod{8}$ and,  letting $a$ be  the residue of $(N-2)$ modulo $8$,
\begin{enumerate}
\item[(3a)] $v^2=-4a$, and $\divisore(v)=4$,  if $N$ is odd, or else
\item[(3b)]  $v^2=-a$, and $\divisore(v)=2$,  if $N$ is even.
\end{enumerate} 
\end{enumerate}
\end{definition}
\begin{remark}\label{rmk:minorbit}
Given $\eta\in A_{\Lambda}$, there exists a minimal norm vector $v\in\Lambda$ such that $v^{*}=\eta$, and by Eichler's Criterion (\Ref{prp}{criteich}) the set of such minimal norm vectors
 is a single $\wt{O}^{+}(\Lambda)$-orbit. If $u\in\Lambda$ is another vector such that $u^2<0$ and $u^{*}=\eta$, then $u^2\le v^2$; this is the reason for our choice of terminology.
\end{remark}
\begin{definition}\label{dfn:heegort}
Let $\Lambda$ be a dimension-$N$ $D$-lattice, and $\eta\in A_{\Lambda}$. We let
\begin{equation*}
\cH_{\eta}(\Lambda):=\cH_{v,\Lambda}(\wt{O}^{+}(\Lambda)),\qquad H_{\eta}(\Lambda):=H_{v,\Lambda}(\wt{O}^{+}(\Lambda)),
\end{equation*}
where $v\in\Lambda$ is any minimal norm vector  such that $v^{*}=\eta$. (The definition makes sense by~\Ref{rmk}{minorbit}.)
\end{definition}
\begin{remark}\label{rmk:dispari}
Let $\Lambda$ be a dimension-$N$ $D$-lattice, with $N$ odd. Choose a decoration $\xi$ of $\Lambda$, and let $A_{\Lambda}=\{\xi,\zeta,\zeta'\}$, as usual. 
Then, under the identification $\cF_\Lambda(\widetilde O^+(\Lambda))\cong \cF_\Lambda(\Gamma_\xi)$ , we have $H_0(\Lambda)=H_n(\Lambda,\xi)$, $H_{\xi}(\Lambda)=H_h(\Lambda,\xi)$, 
and $H_{\zeta}(\Lambda)=H_{\zeta'}(\Lambda)=H_u(\Lambda,\xi)$ (notice that $\zeta'=-\zeta$ because $N$ is odd).
\end{remark}
\begin{proposition}\label{prp:rhopull}
Let $(\Lambda,\xi)$ be a dimension-$N$, decorated $D$-lattice, and assume that $N$ is even. Let $\rho$ be the double covering in~\eqref{stabledec}. Then (notation as in~\Ref{rmk}{dispari})
\begin{equation}\label{rhopull}
\rho^{*}H_n(\Lambda,\xi)=H_0(\Lambda),\qquad \rho^{*}H_h(\Lambda,\xi)=2H_{\xi}(\Lambda),\qquad \rho^{*}H_u(\Lambda,\xi)=H_{\zeta}(\Lambda)+H_{\zeta'}(\Lambda).
\end{equation}
\end{proposition}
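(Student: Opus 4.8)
The plan is to pull back each Heegner divisor $H_\bullet(\Lambda,\xi)$ along the degree-$2$ map $\rho$ and compute the ramification using the behaviour of the reflection $\rho_v$ in a defining vector $v$. The key observation is that $\rho$ is the quotient map $\cF_\Lambda(\widetilde O^+(\Lambda))\to \cF_\Lambda(\Gamma_\xi)$ for the order-$2$ group $\Gamma_\xi/\widetilde O^+(\Lambda)$, which by \Ref{prp}{propgroup}(1) is generated by the image of any $g\in\Gamma_\xi\setminus\widetilde O^+(\Lambda)$; by \Ref{rmk}{scambio} we may take $g=(\Id_{U^2},\rho_0)$ with $\rho_0$ the reflection in $(2,0,\ldots,0)\in D_{N-2}$, which swaps $\zeta\leftrightarrow\zeta'$ and fixes $0$ and $\xi$. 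Set-theoretically, for a minimal norm vector $v$ with $v^*=\eta$, the preimage $\rho^{-1}(H_\bullet(\Lambda,\xi))$ is the image in $\cF_\Lambda(\widetilde O^+(\Lambda))$ of $\cH_{v,\Lambda}(\Gamma_\xi)=\bigcup_{h\in\Gamma_\xi}h(v)^\bot$. Since $\Gamma_\xi=\widetilde O^+(\Lambda)\sqcup g\,\widetilde O^+(\Lambda)$, this union is $\cH_{v,\Lambda}(\widetilde O^+(\Lambda))\cup \cH_{g(v),\Lambda}(\widetilde O^+(\Lambda))$, and by Eichler's Criterion (\Ref{prp}{criteich}) the $\widetilde O^+(\Lambda)$-orbit of $g(v)$ is determined by $g(v)^2=v^2$ and $g(v)^*=g(\eta)$. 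Hence: if $\eta\in\{0,\xi\}$ then $g$ fixes $\eta$ and $\rho^{-1}(H_\bullet)$ is (as a set) $H_\eta(\Lambda)$; if $\eta=\zeta$ then $g(\eta)=\zeta'$ and $\rho^{-1}(H_u)=H_\zeta(\Lambda)\cup H_{\zeta'}(\Lambda)$.

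The remaining point is the multiplicity, i.e.\ whether $\rho$ is ramified along the relevant divisor upstairs. This is exactly governed by whether the nontrivial deck transformation (any lift of $g$, or more precisely the stabilizer question at a generic point) fixes a generic point of the divisor, which in turn is detected by reflectivity: a component $v^\bot$ of the arrangement upstairs is fixed pointwise by the reflection $\rho_v$, and $\rho_v\in\widetilde O^+(\Lambda)$ precisely when $v^2=-2$, $\divisore(v)=1$ (by \Ref{expl}{riflessione}), i.e.\ for the nodal vector. Thus for $\eta=0$ the reflection $\rho_v$ already lies in $\widetilde O^+(\Lambda)$, so it acts trivially on $\cF_\Lambda(\widetilde O^+(\Lambda))$ and the map $\rho$ is \emph{unramified} generically along $H_0(\Lambda)$, giving $\rho^*H_n(\Lambda,\xi)=H_0(\Lambda)$ with multiplicity one. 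For $\eta=\xi$ the hyperelliptic vector has $v^2=-4$, $\divisore(v)=2$, so $\rho_v\notin\widetilde O^+(\Lambda)$ (it acts nontrivially on $A_\Lambda$? no — it acts trivially on $A_\Lambda$ since $v^*=\xi$ is $2$-torsion and $\rho_v(v)=-v$; rather $\rho_v\notin\widetilde O^+(\Lambda)$ fails to be the issue): the correct statement is that $\rho_v\in\Gamma_\xi$ by \Ref{prp}{propram} but $\rho_v\notin\widetilde O^+(\Lambda)$ because $r^2=-4\neq\pm2$, so its class generates $\Gamma_\xi/\widetilde O^+(\Lambda)$; consequently $\rho$ is \emph{ramified} along $H_\xi(\Lambda)$ with ramification index $2$, yielding $\rho^*H_h(\Lambda,\xi)=2H_\xi(\Lambda)$. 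For $\eta\in\{\zeta,\zeta'\}$, we check that $\rho$ is unramified along $H_\zeta(\Lambda)$: here $g$ moves $H_\zeta$ to $H_{\zeta'}\neq H_\zeta$, so the deck transformation cannot fix a generic point of $H_\zeta(\Lambda)$, the two divisors are exchanged, and $\rho^*H_u(\Lambda,\xi)=H_\zeta(\Lambda)+H_{\zeta'}(\Lambda)$, each with multiplicity one.

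Concretely I would organize the write-up in three short steps: (i) identify $\rho$ as the quotient by $\langle \bar g\rangle$ with $g$ as in \Ref{rmk}{scambio}, and record its action on $A_\Lambda$; (ii) for each $\eta\in\{0,\xi,\zeta\}$, use Eichler's Criterion to identify $\bigcup_{h\in\Gamma_\xi}h(v)^\bot$ with the stated union of $\widetilde O^+$-arrangements, settling the reduced preimage; (iii) compute the ramification index along each component by asking whether the stabilizer in $\Gamma_\xi/\widetilde O^+(\Lambda)$ of a generic point of $v^\bot$ is trivial or not — equivalently, whether $\rho_v\in\widetilde O^+(\Lambda)$ (nodal: yes, unramified) versus $\rho_v$ generating the quotient and fixing $v^\bot$ pointwise (hyperelliptic: ramified, index $2$) versus the deck transformation swapping two distinct components (unigonal: unramified, two sheets over two divisors). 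A clean way to phrase (iii) uniformly: the pullback $\rho^*\pi_{\Gamma_\xi}{}_*[\cH_{v}]$ differs from $\pi_{\widetilde O^+}{}_*[\cH_v]$ by the factor $[\mathrm{Stab}_{\Gamma_\xi}(\text{generic pt of }v^\bot):\mathrm{Stab}_{\widetilde O^+(\Lambda)}(\cdot)]$, which is $1$ unless $\rho_v\in\Gamma_\xi\setminus\widetilde O^+(\Lambda)$, i.e.\ exactly in the hyperelliptic case.

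\textbf{Main obstacle.} The genuinely delicate point is step (iii): one must be careful that the only source of a multiplicity-$2$ is the reflection $\rho_v$ itself, and that no \emph{other} element of $\Gamma_\xi\setminus\widetilde O^+(\Lambda)$ fixes a generic point of $v^\bot$ (which would spuriously inflate the ramification) — this uses that a generic point of $v^\bot\cap\cD^+_\Lambda$ has stabilizer in $O^+(\Lambda)$ generated by $\rho_v$ together with $-\Id$, a standard but essential fact about these hyperplane arrangements. Equivalently, one must know the generic point of each Heegner divisor is not contained in any other Heegner divisor nor in a deeper intersection stratum, so that local ramification is controlled by a single reflection; granting that, the three cases fall out exactly as the divisibility computation in \Ref{prp}{propram} dictates.
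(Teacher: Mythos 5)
Your proof is correct and follows essentially the same route as the paper's: identify the covering involution of $\rho$ with the class of the reflection $\rho_v$ in a hyperelliptic vector, then read off the multiplicities from whether that involution fixes the relevant divisor pointwise (hyperelliptic: ramified, index $2$), preserves it without fixing it pointwise (nodal: unramified), or exchanges two distinct components (unigonal). One small repair: your stated reason that $\rho_v\notin\widetilde O^{+}(\Lambda)$ for $v$ hyperelliptic --- namely $v^2=-4\neq\pm2$ --- only shows that the sufficient criterion of \Ref{expl}{riflessione} fails to apply; the actual reason is that $\rho_v$ acts nontrivially on $A_{\Lambda}$ by swapping $\zeta$ and $\zeta'$ (\Ref{rmk}{scambio}), which is precisely the fact you invoke, correctly, when handling the unigonal case.
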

\begin{proof}
Let $v$ be a hyperelliptic vector of $(\Lambda,\xi)$, and let  $\rho_v$ be the associated reflection. Then $\rho_v\in \Gamma_\xi$, but  $\rho_v\not \in \widetilde O^+(\Lambda)$ (see~\Ref{rmk}{scambio}). Thus the class of $\rho_v$ in $\Gamma_\xi/\widetilde O^+(\Lambda)$ is the covering involution of $\rho$. One may choose a nodal vector $w$ of  $(\Lambda,\xi)$ which is orthogonal to $v$. Then $\rho_v$ acts non-trivially on $w^{\bot}\cap\cD^{+}_{\Lambda}$; it follows that  the covering involution of $\rho$ maps $H_0(\Lambda)$ to itself and is not the identity. The first equality of~\eqref{rhopull} follows from this. On the other hand, $\rho_v$ acts trivially on $v^{\bot}\cap\cD^{+}_{\Lambda}$, and the second equality of~\eqref{rhopull} follows. 
Lastly,   $\rho_v$ switches the vectors $\zeta$ and $\zeta'$ in $A_\Lambda$ (cf. \Ref{rmk}{scambio}), and this proves the third equality of~\eqref{rhopull}.
\end{proof}
Applying $\rho_{*}$ to the equalities in~\eqref{rhopull}, we get the following result.
\begin{corollary}\label{crl:otildegamma}
Keep assumptions as in~\Ref{prp}{rhopull}. Then
\begin{eqnarray*}
\rho_{*}H_0(\Lambda)=2H_n(\Lambda,\xi),\qquad \rho_{*}H_{\xi}(\Lambda)=H_h(\Lambda,\xi),\qquad \rho_{*}(H_{\zeta}(\Lambda)+H_{\zeta'}(\Lambda))=2H_u(\Lambda,\xi).
\end{eqnarray*}
\end{corollary}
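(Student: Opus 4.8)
The statement to prove is \Ref{crl}{otildegamma}: applying $\rho_*$ to the three equalities of \eqref{rhopull} yields
\[
\rho_*H_0(\Lambda)=2H_n(\Lambda,\xi),\quad \rho_*H_\xi(\Lambda)=H_h(\Lambda,\xi),\quad \rho_*(H_\zeta(\Lambda)+H_{\zeta'}(\Lambda))=2H_u(\Lambda,\xi).
\]

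The plan is to use the projection formula for the finite degree-$2$ morphism $\rho\colon\cF_\Lambda(\wt O^+(\Lambda))\to\cF_\Lambda(\Gamma_\xi)$ of \eqref{stabledec} together with the pullback identities already established in \Ref{prp}{rhopull}. First I would recall that for a finite surjective morphism $\rho$ of degree $d$ one has $\rho_*\rho^*D=d\cdot D$ for any $\QQ$-Cartier divisor $D$ on the target; here $d=2$ since $\rho$ is a double cover. Applying this to the three target divisors $H_n$, $H_h$, $H_u$ gives $\rho_*\rho^*H_n=2H_n$, $\rho_*\rho^*H_h=2H_h$, $\rho_*\rho^*H_u=2H_u$. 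Then substitute the right-hand sides of \eqref{rhopull}: from $\rho^*H_n(\Lambda,\xi)=H_0(\Lambda)$ we get $\rho_*H_0(\Lambda)=2H_n(\Lambda,\xi)$, which is the first claim; from $\rho^*H_h(\Lambda,\xi)=2H_\xi(\Lambda)$ we get $2\rho_*H_\xi(\Lambda)=2H_h(\Lambda,\xi)$, hence $\rho_*H_\xi(\Lambda)=H_h(\Lambda,\xi)$, the second claim; and from $\rho^*H_u(\Lambda,\xi)=H_\zeta(\Lambda)+H_{\zeta'}(\Lambda)$ we get $\rho_*(H_\zeta(\Lambda)+H_{\zeta'}(\Lambda))=2H_u(\Lambda,\xi)$, the third claim.

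To make this airtight I would note two small points. First, all the divisors involved are $\QQ$-Cartier: this was observed in \Ref{subsubsec}{divfour}, where it is shown that on a quasi-projective locally symmetric variety of Type IV every Weil divisor is $\QQ$-Cartier (via passing to a smooth finite cover), so pullback and pushforward by $\rho$ are defined on the relevant groups $\Pic(\,\cdot\,)_\QQ\cong\CH^1(\,\cdot\,)_\QQ$ and the projection formula applies. Second, one should confirm that the prime divisors $H_\xi(\Lambda)$ and $H_\zeta(\Lambda)+H_{\zeta'}(\Lambda)$ are exactly the full preimages $\rho^{-1}$ of $H_h$ and $H_u$ (with their reduced structure), so that no spurious components are lost when applying $\rho_*$; but this is already implicit in the proof of \Ref{prp}{rhopull}, where the covering involution is shown to act trivially on $v^\bot\cap\cD^+_\Lambda$ for a hyperelliptic vector $v$ (forcing $\rho^*H_h=2H_\xi$ rather than $H_\xi$ plus a conjugate component) and to swap $\zeta,\zeta'$ (so $\rho^{-1}H_u$ is the reduced union $H_\zeta+H_{\zeta'}$ of two distinct prime divisors exchanged by the involution). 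Thus \eqref{rhopull} already encodes the scheme-theoretic information needed, and $\rho_*$ of it is a purely formal manipulation.

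The ``main obstacle'', such as it is, is really just bookkeeping of multiplicities: one must be careful that the factor $2$ on the left of the middle identity in \eqref{rhopull} does not get double-counted, i.e. that $\rho_*(2H_\xi)=2\rho_*(H_\xi)$ and $\rho_*\rho^*H_h=2H_h$ are combined correctly to cancel to $\rho_*H_\xi=H_h$ and not, say, $\rho_*H_\xi=2H_h$. Likewise in the third identity the two prime divisors $H_\zeta$ and $H_{\zeta'}$ each map one-to-one onto $H_u$ (since the involution swaps them), so $\rho_*H_\zeta=\rho_*H_{\zeta'}=H_u$ and their sum pushes forward to $2H_u$, consistent with $\rho_*\rho^*H_u=2H_u$ given $\rho^*H_u=H_\zeta+H_{\zeta'}$. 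There is no deeper difficulty; the corollary is a direct consequence of \Ref{prp}{rhopull} and the elementary identity $\rho_*\rho^*=\deg(\rho)\cdot\id$ for finite flat morphisms.
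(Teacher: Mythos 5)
Your proof is correct and is exactly the paper's argument: the paper simply states that the corollary follows by applying $\rho_{*}$ to the equalities in~\eqref{rhopull}, and your use of the projection formula $\rho_{*}\rho^{*}=\deg(\rho)\cdot\id=2\cdot\id$ is precisely the bookkeeping that justifies that one-line deduction.
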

Below is the last result of the present subsection.
\begin{claim}\label{clm:reflheeg2}
Let $\Lambda$ be a  $D$ lattice. Choose a decoration $\xi$ of $\Lambda$, and let $A_{\Lambda}=\{\xi,\zeta,\zeta'\}$, as usual. Then, with respect to the group $\wt{O}^{+}(\Lambda_N)$, the following hold:
\begin{enumerate}
\item
 $H_0(\Lambda)$ is a reflective Heegner divisor.  
\item
 $H_{\xi}(\Lambda)$  is reflective  if and only if $N$ is odd. 
\item
$H_{\zeta}(\Lambda)$ is reflective if and only if $N\equiv 3,4\pmod{8}$, and similarly for $H_{\zeta'}(\Lambda)$. 
\end{enumerate}
\end{claim}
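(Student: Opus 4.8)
The statement concerns reflectivity of the Heegner divisors $H_0(\Lambda)$, $H_\xi(\Lambda)$, $H_\zeta(\Lambda)$, $H_{\zeta'}(\Lambda)$ \emph{with respect to the group} $\wt O^+(\Lambda)$, whereas \Ref{prp}{propram} and \Ref{crl}{reflheeg} addressed reflectivity with respect to $\Gamma_\xi$. So the first thing to do is to recall precisely what must change. A vector $v$ of negative square defines a reflective Heegner divisor for $\wt O^+(\Lambda)$ if and only if $\rho_v$ preserves $\Lambda$ (equivalently $v^2 \mid 2\divisore(v)$) \emph{and} $\rho_v \in \wt O^+(\Lambda)$, i.e.\ $\rho_v$ acts trivially on $A_\Lambda$. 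Since $v/\divisore(v)$ represents $v^*$, the reflection $\rho_v$ fixes $v^*$ (it sends it to $-v^*$, and $v^*$ is $2$-torsion in the cases at hand) but in general can move the other classes. The plan is therefore: for each of the three minimal-norm vectors, determine which classes of $A_\Lambda$ are fixed by $\rho_v$, and compare with the membership in $\wt O^+(\Lambda)$ which requires \emph{all} classes to be fixed.

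\textbf{Key steps.} First I would observe that for a \emph{nodal} vector $v$ ($v^2=-2$, $\divisore(v)=1$) one has $\rho_v \in \wt O^+(\Lambda)$ by Item~(2) of \Ref{expl}{riflessione} (any square $-2$ vector reflects trivially on the discriminant group); this proves~(1) unconditionally. Second, for a \emph{hyperelliptic} vector $v$ ($v^2=-4$, $\divisore(v)=2$, $v^*=\xi$) I would compute the action of $\rho_v$ on $A_\Lambda$ directly. Here $\rho_v$ fixes $\xi$ (a $2$-torsion class equal to $v^*$), so the only question is whether it fixes $\zeta$ and $\zeta'$. If $N$ is odd, then $2\zeta = 2\zeta' = \xi \neq 0$, and one checks (using the explicit model $\Lambda \cong II_{2,2+8k}\oplus D_a$ with $a$ odd, and a concrete choice of $v$ inside the $D_a$-summand, say $v=(0;(1,1,0,\dots,0))$ up to scaling to divisibility $2$) that $\rho_v$ fixes the cyclic-order-$4$ group $A_\Lambda$ pointwise, hence $\rho_v \in \wt O^+(\Lambda)$; alternatively, when $N$ is odd $A_\Lambda$ is cyclic and $\rho_v$ fixes the unique element of order $2$, so being an isometry of a cyclic group fixing its generator's double, combined with $\rho_v^2=\id$, it must be $\pm\id$ on $A_\Lambda$ — and since it fixes $\xi\ne 0$, it is $+\id$. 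If $N$ is even, then $A_\Lambda$ is the Klein four-group $\{0,\zeta,\xi,\zeta'\}$; by \Ref{rmk}{scambio} the reflection in a hyperelliptic-type vector exchanges $\zeta$ and $\zeta'$ (this is essentially the computation already recorded there), so $\rho_v \notin \wt O^+(\Lambda)$. This proves~(2). Third, for a \emph{unigonal} vector $v$ (Item~(3) of \Ref{prp}{minorm}, so $v^*=\zeta$ or $\zeta'$): I would first note that $\rho_v$ preserves $\Lambda$ only when $v^2 \mid 2\divisore(v)$, which by the norm/divisibility bookkeeping in the proof of \Ref{prp}{propram} forces $a \in\{1,2\}$, i.e.\ $N\equiv 3,4\pmod 8$ (and the boundary subcases). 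When $N\equiv 3,4\pmod 8$, by \Ref{crl}{reflheeg} we already know $\rho_v \in \Gamma_\xi$; one then checks that in fact $\rho_v$ fixes \emph{all} of $A_\Lambda$: in the odd case ($N\equiv 3$) $A_\Lambda$ is cyclic of order $4$ generated by $v^* = \zeta$, and $\rho_v$ fixes $\zeta$ hence everything; in the even case ($N\equiv 4$) one must verify $\rho_v$ fixes $\xi$ and $\zeta'$ as well, which one does with the explicit model $\Lambda \cong II_{2,2+8k}\oplus D_4$ and the vector $v$ realizing $v^*=\zeta$, a short reflection computation inside $D_4$. When $N\not\equiv 3,4\pmod 8$ the reflection $\rho_v$ does not even preserve $\Lambda$ (its unigonal vector has $v^2 = -4a$ or $-a$ with $\divisore(v)$ too small), so there is nothing to prove. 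This gives~(3).

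\textbf{Main obstacle.} The only genuine work is the even-dimensional discriminant-group bookkeeping: verifying that $\rho_v$ for a hyperelliptic $v$ genuinely swaps $\zeta \leftrightarrow \zeta'$ (rather than fixing them) when $N$ is even, and dually that $\rho_v$ for a unigonal $v$ fixes all three nonzero classes when $N\equiv 4\pmod 8$. Both come down to the same kind of explicit computation with the $D_a$-summand ($a\in\{1,2,4\}$) that was already carried out in \Ref{rmk}{scambio} and in the proof of \Ref{prp}{propram}, so I would simply invoke those computations rather than redo them. The odd-dimensional cases are essentially formal once one notes $A_\Lambda$ is cyclic: an involutive isometry of $\bZ/4$ fixing a chosen generator (or the order-$2$ element together with a vector not of order $2$) is forced. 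I expect the proof to be short — a few lines per item — precisely because \Ref{prp}{propram}, \Ref{rmk}{scambio}, and \Ref{expl}{riflessione} have already isolated every computation that is needed; the claim is really a repackaging of \Ref{crl}{reflheeg} from the $\Gamma_\xi$ language into the $\wt O^+(\Lambda)$ language, the one substantive new point being that reflectivity \emph{fails} for $H_\xi$ in even dimension because $\rho_v \in \Gamma_\xi \setminus \wt O^+(\Lambda)$.
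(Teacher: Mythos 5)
Your overall strategy (reduce everything to the action of $\rho_v$ on $A_\Lambda$, quote \Ref{expl}{riflessione} for the nodal case and \Ref{rmk}{scambio} for the even hyperelliptic case) is the paper's, and Items (1), (2)-even, and (3)-$N\equiv 4$ are fine. But your treatment of the odd-$N$ cases contains a genuine error. For a hyperelliptic $v$ with $N$ odd, $\rho_v$ does \emph{not} fix $A_\Lambda\cong\ZZ/4$ pointwise: in the model $\Lambda=U^2\oplus D_{N-2}$ with $v=(\mathbf 0,(0,\dots,0,2))$, the reflection is the sign change in the last coordinate of the $D_{N-2}$ summand, which sends the generator $\alpha=[(1/2,\dots,1/2)]$ to $\beta=[(1/2,\dots,1/2,-1/2)]=-\alpha$ (recall $\alpha+\beta=0$ when $N-2$ is odd). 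So $\rho_v$ acts as $-\id$ on $A_\Lambda$ and is \emph{not} in $\wt O(\Lambda)$. Your fallback argument does not repair this: an involutive automorphism of $\ZZ/4$ fixing the order-$2$ element $\xi$ can still be $-\id$, since $-\id$ fixes all $2$-torsion; so "it fixes $\xi\ne 0$, hence it is $+\id$" is a non sequitur. The same sign error recurs in Item (3) for $N\equiv 3\pmod 8$: there $\Lambda=\la v\ra\oplus v^\bot$ and $\rho_v(v^*)=[-v/4]=-\zeta=\zeta'\ne\zeta$, so again $\rho_v$ acts as $-\id$ on $A_\Lambda$.

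The correct resolution is the one the paper uses implicitly: what one really needs is $\pm\rho_v\in\wt O^+(\Lambda)$ (it is $-\rho_v$, which induces the same reflection on $\cD^+_\Lambda$, that lies in $\wt O^+(\Lambda)$ in both odd cases, since $-\id$ undoes the $-\id$ action on $A_\Lambda$). Equivalently, for $N$ odd one can skip all computation: $\cF_\Lambda(\wt O^+(\Lambda))=\cF_\Lambda(\Gamma_\xi)$ and $\Gamma_\xi=\la\wt O^+(\Lambda),-1\ra$, so the reflective divisors coincide with those of \Ref{crl}{reflheeg}; this is exactly the paper's one-line disposal of the odd case. Note also that for Item (2) with $N$ even the relevant statement is $\pm\rho_v\notin\wt O^+(\Lambda)$ (both signs swap $\zeta$ and $\zeta'$, as $-\id$ is trivial on the Klein group), which is how the paper phrases it. Finally, your divisibility bookkeeping for Item (3) does not by itself exclude $a=4$ (i.e. $N\equiv 6\pmod 8$, where $v^2=-4$ and $\divisore(v)=2$ do satisfy $v^2\mid 2\divisore(v)$); that case is eliminated only by the discriminant-action computation in \Ref{prp}{propram}, so make sure you are invoking that part of the proposition and not just the norm conditions.
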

\begin{proof}
If $N$ is odd the result follows from~\Ref{crl}{reflheeg}, because $\cF_\Lambda(\widetilde O^+(\Lambda))= \cF_\Lambda(\Gamma_\xi)$. Suppose that $N$ is even. Item~(1) holds because the reflection associated to $v\in\Lambda$ with $v^2=-2$ and $\divisore(v)=1$  belongs to $\widetilde O^+(\Lambda)$. On the other hand, as noted above, if $v$ is a hyperelliptic vector of $(\Lambda,\xi)$, then 
 $\pm \rho_v\not \in \widetilde O^+(\Lambda)$, and Item~(2) follows.  In oder to prove Item~(3), let $\rho$  be the double 
covering in~\eqref{stabledec}. If $H_{\zeta}(\Lambda)$ is reflective, then $\rho_{*}$ is reflective as well, and hence $N\equiv 3,4\pmod{8}$ by~\Ref{crl}{reflheeg}. On the other hand, one easily checks that if 
 $N\equiv 3,4\pmod{8}$, the reflection associated to  a unigonal vector is in $\widetilde O^+(\Lambda)$. 
 \end{proof}
\subsection{Hyperelliptic Heegner divisors}\label{subsec:hyperheeg}
\setcounter{equation}{0}
We will prove that the hyperelliptic Heegner divisor $H_h(\Lambda_N,\xi_N)$ is isomorphic to $\cF(\Lambda_{N-1},\xi_{N-1})$. 
\begin{lemma}\label{lmm:prophyp}
 Let $N\ge 4$, and let $(\Lambda,\xi)$ be a dimension-$N$ decorated $D$ lattice, and let $v\in \Lambda$ be a hyperelliptic vector. Then $v^{\bot}$ is a dimension-$(N-1)$  $D$ lattice. 
\end{lemma}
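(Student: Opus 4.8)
The statement asserts that if $(\Lambda,\xi)$ is a dimension-$N$ decorated $D$-lattice and $v\in\Lambda$ is a hyperelliptic vector (so $v^2=-4$, $\divisore(v)=2$, $v^{*}=\xi$), then the orthogonal complement $v^{\bot}$ is again a $D$-lattice, now of dimension $N-1$, i.e.\ isometric to $\Lambda_{N-1}=U^2\oplus D_{N-3}$. I would prove this by a direct lattice-theoretic identification, checking that $v^{\bot}$ has the right rank, signature, and discriminant form, and then invoking Nikulin's uniqueness theorem (Theorem 1.13.2 of~\cite{nikulin}) exactly as in~\Ref{rmk}{periodeight}. The key point is that a dimension-$N$ $D$-lattice is determined up to isometry by its invariants $(\mathrm{rank},\mathrm{signature},q)$, so it suffices to compute $q_{v^{\bot}}$ and match it with $q_{\Lambda_{N-1}}\cong q_{D_{N-3}}$ (via~\eqref{rugby}).

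\textbf{Key steps.} First, rank and signature: since $v^2=-4\ne 0$, the sublattice $\ZZ v$ is nondegenerate of signature $(0,1)$, so $v^{\bot}$ has rank $N+1$ and signature $(2,N-3)$, matching $\Lambda_{N-1}$. Second, the discriminant form: I would use the standard exact-sequence / overlattice argument for primitive sublattices. Because $v$ is primitive, $\ZZ v \oplus v^{\bot} \subseteq \Lambda$ is a finite-index sublattice, and $\Lambda$ corresponds to an isotropic subgroup $H$ of $A_{\ZZ v}\oplus A_{v^{\bot}}$ whose projections to each factor are injective with images the ``glue'' subgroups; hence $q_{v^{\bot}}$ on the relevant subquotient is determined by $q_{\ZZ v}$, $q_\Lambda$, and the glue data. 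Concretely $A_{\ZZ v}\cong\ZZ/4$ with generator of square $-1/4$; the glue group is generated by $(v/2)$ paired with the element $\xi\in A_\Lambda$ (this uses $\divisore(v)=2$ and $v^{*}=\xi$, with $q_\Lambda(\xi)=1$). Running Nikulin's formula (Proposition 1.5.1 / Corollary 1.6.2 of~\cite{nikulin}) for the discriminant form of the orthogonal complement, one computes $q_{v^{\bot}}$ and checks it equals $q_{D_{N-3}}$; here the congruence $q_\Lambda(\xi)\equiv 1$ together with $q_{D_{N-2}}(\alpha)\equiv -(N-2)/4$ from~\eqref{discdien} should make the arithmetic line up so that $q_{v^{\bot}}(\alpha')\equiv -(N-3)/4$, as required of a dimension-$(N-1)$ $D$-lattice. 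Third, conclude by Nikulin's uniqueness theorem that $v^{\bot}\cong\Lambda_{N-1}$, hence is a dimension-$(N-1)$ $D$-lattice.

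\textbf{Main obstacle.} The routine-but-delicate part is the discriminant-form computation: one must carefully track the glue subgroup inside $A_{\ZZ v}\oplus A_{v^{\bot}}$, verify that $|A_{v^{\bot}}|$ has the expected order ($4$ if $N-1$ odd, $4$ again but of Klein type if $N-1$ even --- both have order $4$ by~\Ref{clm}{disdin}, matching $|\det(v^{\bot})|$), and then extract $q_{v^{\bot}}$ with the correct $\QQ/2\ZZ$-values, handling the parity of $N$ (equivalently the structure of $A_\Lambda$, cyclic vs.\ Klein) as separate cases. An alternative, possibly cleaner, route --- and the one I would actually try first --- is to avoid discriminant-form bookkeeping entirely by exhibiting $v^{\bot}$ explicitly: using~\Ref{rmk}{periodeight} to write $\Lambda\cong II_{2,2+8k}\oplus D_a$ (with $N-2=8k+a$), one can choose a convenient representative of the hyperelliptic $\Gamma_\xi$-orbit (unique by~\Ref{prp}{minorm}(2)) inside the $D_a$ summand --- e.g.\ a suitable norm-$(-4)$, divisibility-$2$ vector --- and directly identify its orthogonal complement with $II_{2,2+8k}\oplus D_{a-1}$ (adjusting when $a=0$ by first re-expressing $D_0$ issues via $II$-summands, as in the proof of~\Ref{prp}{minorm}(3)). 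Either way, the conclusion is immediate once the isometry type of $v^{\bot}$ is pinned down.
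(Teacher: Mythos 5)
Your proposal is correct, and your ``alternative'' route is in fact the paper's proof: the paper works with the presentation $\Lambda_N=U^2\oplus D_{N-2}$, notes via Eichler's Criterion that all vectors of square $-4$ and divisibility $2$ form a single $O^{+}(\Lambda_N)$-orbit, takes the explicit representative $v=(\mathbf{0},(0,\ldots,0,2))$ in the $D_{N-2}$ summand, and reads off $v^{\bot}\cong U^2\oplus D_{N-3}=\Lambda_{N-1}$. Two remarks on your write-up. First, in the explicit route you propose to reduce to $\II_{2,2+8k}\oplus D_a$ and pick the hyperelliptic vector inside $D_a$; this creates avoidable edge cases beyond the one you flag: for $a=1$ (i.e.\ $N\equiv 3\pmod 8$) the generator of $D_1\cong(-4)$ has divisibility $4$, hence is unigonal rather than hyperelliptic, so no hyperelliptic vector lives in the $D_a$ summand at all. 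The presentation $U^2\oplus D_{N-2}$ avoids this uniformly since $N-2\ge 2$. Second, your primary route (rank/signature/discriminant-form computation plus Nikulin's Theorem 1.13.2) is a perfectly valid, if more laborious, alternative --- it is the same mechanism the paper itself uses elsewhere, e.g.\ in \Ref{rmk}{periodeight} and \Ref{lmm}{lemextendd} --- but note the slip that the signature of $v^{\bot}$ is $(2,N-1)$, not $(2,N-3)$ (the lattice $\Lambda_{N-1}=U^2\oplus D_{N-3}$ has signature $(2,N-1)$), and that one must justify the applicability of Theorem 1.13.2 (here the discriminant group has length at most $2$ while $\mathrm{rank}(v^{\bot})=N+1\ge 5$, so this is fine).
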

\begin{proof}
We may assume that $\Lambda=\Lambda_N=U^2\oplus D_{N-2}$.
By Eichler's Criterion, i.e.~\Ref{prp}{criteich}, any two vectors of $\Lambda_N$ of square $-4$ and divisibility $2$ are $O^{+}(\Lambda_N)$-equivalent.  Thus we may suppose  that $v=({\bf 0},(0,\ldots,0,2))$; it  is obvious that $v^{\bot}\cong \Lambda_{N-1}$. 
\end{proof}
\begin{remark}\label{rmk:inducedec}
Let  $N\ge 4$, let $(\Lambda,\xi)$ be a dimension-$N$ decorated  $D$ lattice, and let $v\in \Lambda$ be  hyperelliptic. The   $D$ lattice $v^{\bot}$ comes with a decoration. In fact, since $v^2=-4$ and $\divisore(v)=2$, the sublattice $\la v\ra\oplus v^{\bot}$ has index $2$ in $\Lambda$. Thus  there exists $w\in v^{\bot}$, well-determined modulo $2v^{\bot}$, such that $(v+w)/2$ is contained in $\Lambda$.  It follows that $(w/2,v^{\bot})\subset\ZZ$, and hence $w/2$ represents an element $\eta\in A_{v^{\bot}}$,  independent of the choice of $w$. Moreover  $q_{v^{\bot}}(\eta)\equiv 1\pmod{2}$ because
$$-1+(w/2)^2=(v/2)^2+(w/2)^2=((v+w)/2)^2\equiv 0 \pmod{2\ZZ}.$$
Thus $(v^{\bot},\eta)$ is a dimension-$(N-1)$  decorated $D$-lattice. 
\end{remark}
\begin{proposition}\label{prp:decest}
 Let $(\Lambda,\xi)$ be a dimension-$N$ decorated $D$ lattice, and $v\in\Lambda$ be a  hyperelliptic vector. Let $\eta$ be the decoration of the dimension-$(N-1)$  $D$ lattice $v^{\bot}$ defined above. If    $g\in\Gamma_{\eta}$, then there exists a unique  $\wt{g}\in\Gamma_{\xi}$ which fixes $v$ and restricts to $g$ on $v^{\bot}$.
\end{proposition}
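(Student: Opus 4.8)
The plan is to leverage the fact that $\Lambda$ decomposes (up to finite index) in terms of $\langle v\rangle$ and $v^\bot$, and to combine this with the transitivity results established via Eichler's Criterion. Write $\Lambda_0:=\langle v\rangle\oplus v^\bot$, which has index $2$ in $\Lambda$; as in~\Ref{rmk}{inducedec}, $\Lambda=\Lambda_0+\ZZ\cdot\tfrac{1}{2}(v+w)$ for a vector $w\in v^\bot$ representing the decoration $\eta$ of $v^\bot$. Given $g\in\Gamma_\eta<O^+(v^\bot)$, the first step is to \emph{extend} $g$ to an isometry of $\Lambda_\QQ$ by setting $\wt g|_{v^\bot}:=g$ and $\wt g(v):=v$; this is manifestly an isometry of $\Lambda_\QQ$ and is the unique such extension fixing $v$ and restricting to $g$, which takes care of the uniqueness claim immediately (and shows why we phrase it as \lq\lq the\rq\rq\ extension).

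The heart of the matter is then to show $\wt g(\Lambda)=\Lambda$, i.e.~that $\wt g$ descends to an actual isometry of the integral lattice $\Lambda$, and that the resulting element of $O(\Lambda)$ lies in $\Gamma_\xi$, meaning it lies in $O^+(\Lambda)$ and fixes $\xi\in A_\Lambda$. For the integrality: $\wt g$ clearly preserves $\Lambda_0=\ZZ v\oplus v^\bot$, so it suffices to check $\wt g\bigl(\tfrac12(v+w)\bigr)\in\Lambda$. But $\wt g\bigl(\tfrac12(v+w)\bigr)=\tfrac12(v+g(w))=\tfrac12(v+w)+\tfrac12(g(w)-w)$, so this reduces to showing $g(w)\equiv w\pmod{2v^\bot}$. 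This is precisely where the hypothesis $g\in\Gamma_\eta$ enters: $w/2$ represents $\eta\in A_{v^\bot}$, and $g$ fixes $\eta$, so $g(w/2)-w/2\in v^\bot$, i.e.~$g(w)-w\in 2v^\bot$, as needed. Hence $\wt g(\Lambda)=\Lambda$ and $\wt g\in O(\Lambda)$.

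It remains to verify $\wt g\in\Gamma_\xi$. For the $\lq\lq+\rq\rq$ condition: $\wt g$ acts on $\Lambda_\RR=\RR v\oplus v^\bot_\RR$ as $\id$ on the negative line $\RR v$ and as $g$ on $v^\bot_\RR$, which has signature $(2,N-3)$; since $g\in O^+(v^\bot)$ preserves the positive-definite part of $v^\bot_\RR$ together with its orientation, so does $\wt g$ on all of $\Lambda_\RR$, hence $\wt g\in O^+(\Lambda)$. For the decoration: one computes the induced action of $\wt g$ on $A_\Lambda$. Using the identification $A_\Lambda\cong A_{D_{N-2}}$ from~\eqref{rugby} and the explicit model $v=({\bf 0},(0,\ldots,0,2))$, $v^\bot\cong\Lambda_{N-1}$ from~\Ref{lmm}{prophyp} together with~\Ref{rmk}{inducedec}, one checks directly that $\xi\in A_\Lambda$ is the class built from $v/\divisore(v)=v/2$ (whose square is $-1\equiv 1$), and that $\wt g(\xi)$ is determined by $\wt g(v)=v$; since $\wt g$ fixes $v$, it fixes $\xi$. (Alternatively, since $\wt O^+(\Lambda)<\Gamma_\xi$, one need only track $\wt g$ modulo $\wt O^+(\Lambda)$, and the covering involution of~\eqref{stabledec} is realized by the reflection $\rho_v$, which fixes $v$; comparing $\wt g$ with $\rho_v$ reduces the question to whether $\wt g\rho_v$ or $\wt g$ itself is stable, which one settles by the explicit discriminant computation.)

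The main obstacle I anticipate is the last step: keeping the bookkeeping of the discriminant-group action straight, in particular making sure that when $N\equiv 6\pmod 8$ (the case where $\Gamma_\xi\subsetneq O^+(\Lambda)$ has index $3$ and the decoration is genuinely needed) the extension $\wt g$ genuinely fixes the specific $\xi$ rather than moving it to $\zeta$ or $\zeta'$. This is exactly why the hypothesis is $g\in\Gamma_\eta$ and not merely $g\in O^+(v^\bot)$: the compatibility $\eta\leftrightarrow\xi$ under the correspondence of~\Ref{rmk}{inducedec} is what forces $\wt g(\xi)=\xi$. I would organize the proof so that the integrality computation (using $g(\eta)=\eta$) and the discriminant computation are done once and for all in the explicit coordinates $\Lambda_N=U^2\oplus D_{N-2}$, $v=({\bf 0},(0,\ldots,0,2))$, which makes both $v^\bot\cong\Lambda_{N-1}$ and the map $A_{v^\bot}\to A_\Lambda$ completely transparent.
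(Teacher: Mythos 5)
Your proposal is correct and follows essentially the same route as the paper: extend $g$ by fixing $v$, use $g(\eta)=\eta$ to show $\tfrac12(v+w)$ is sent into $\Lambda$, and observe that $\wt g$ fixes $\xi=v^{*}=[v/2]$ simply because it fixes $v$ (so the worry about $N\equiv 6\pmod 8$ evaporates without any explicit discriminant computation). No gaps.
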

\begin{proof}
Unicity of $\wt{g}$ is obvious, we must prove existence. Let $\wh{g}\in O(\Lambda_{\QQ})$ be the extension of $g$ which maps $v$ to itself. 
There exists $u\in v^{\bot}$ such that $\wh{g}(w/2)=w/2+u$  because  $g\in\Gamma_{\eta}$ (i.e.~$g[w/2]=[w/2]$). Hence
$\wh{g}((v+w)/2)=(v+w)/2+u\in\Lambda$, and this proves that  $\wh{g}(\Lambda)\subset\Lambda$.  We set $\wt{g}:=\wh{g}|_{\Lambda}$. Then $\wt{g}\in O(\Lambda)$, $\wt{g}(v)=v$, and $\wt{g}|_{v^{\bot}}=g$. Since $\xi=[v/2]$ the isometry $\wt{g}$ fixes $\xi$, and moreover $\wt{g}\in O^{+}(\Lambda)$ because $g\in O^{+}(v^{\bot})$; this proves that $\wt{g}\in\Gamma_{\xi}$. 
\end{proof}
By~\Ref{prp}{decest} we have an injection of groups
\begin{equation}\label{injgrp}
\begin{matrix}
\Gamma_{\eta} & \hra & \Gamma_{\xi} \\
g & \mapsto & \wt{g}
\end{matrix}
\end{equation}
\begin{claim}\label{clm:injgrp}
The injective homomorphism of~\eqref{injgrp} has image equal to the stabilizer $\Stab(v)< \Gamma_{\xi}$ of $v$.
\end{claim}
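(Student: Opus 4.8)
The plan is to show the two inclusions $\mathrm{Im}\bigl(\Gamma_{\eta}\hookrightarrow\Gamma_{\xi}\bigr)\subseteq\Stab(v)$ and $\Stab(v)\subseteq\mathrm{Im}\bigl(\Gamma_{\eta}\hookrightarrow\Gamma_{\xi}\bigr)$, the first of which is essentially built into the construction. Indeed, by \Ref{prp}{decest} every element $\wt g$ in the image satisfies $\wt g(v)=v$, so the image is contained in $\Stab(v)$ with no further work.

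For the reverse inclusion, let $h\in\Gamma_{\xi}$ with $h(v)=v$. Then $h$ preserves $v^{\bot}$, and I would set $g:=h|_{v^{\bot}}\in O(v^{\bot})$. The point is to check that $g$ actually lies in $\Gamma_{\eta}$, i.e.\ that $g\in O^{+}(v^{\bot})$ and $g(\eta)=\eta$, and that the tilde-extension $\wt g$ of \eqref{injgrp} recovers $h$. The last assertion is the easiest: $\wt g$ is by definition the unique element of $\Gamma_\xi$ fixing $v$ and restricting to $g$ on $v^{\bot}$, and $h$ is such an element (it fixes $v$ and restricts to $g$), so $h=\wt g$ by the uniqueness clause of \Ref{prp}{decest}; in particular, once $g\in\Gamma_\eta$ is established, we are done. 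To see $g\in O^+(v^{\bot})$: since $h\in O^+(\Lambda)$ preserves the positive-definite part and fixes $v$ (which has negative square), it preserves the orientation on the positive $2$-planes inside $v^{\bot}_{\RR}$, hence $g\in O^+(v^{\bot})$ (here one uses that $v^{\bot}$ still has signature $(2,N-3)$ and $\cD^{+}_{v^\bot}$ sits naturally inside $\cD^{+}_\Lambda$ as $v^{\bot}\cap\cD^{+}_\Lambda$). To see $g(\eta)=\eta$: recall $\eta=[w/2]\in A_{v^{\bot}}$ where $w\in v^{\bot}$ is characterized modulo $2v^{\bot}$ by $(v+w)/2\in\Lambda$. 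Since $h\in O(\Lambda)$ and $h(v)=v$, we have $h\bigl((v+w)/2\bigr)=(v+g(w))/2\in\Lambda$, so $g(w)$ is another admissible choice of $w$, i.e.\ $g(w)\equiv w\pmod{2v^{\bot}}$, which is exactly $g(\eta)=\eta$, so $g\in\Gamma_\eta$.

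The main (and only mild) obstacle is bookkeeping the orientation/sign issue cleanly: one must be careful that $O^+$ is defined via the action on $\cD^{\pm}_\Lambda$, so passing to $v^{\bot}$ requires identifying $\cD^{\pm}_{v^\bot}$ with the traces $v^{\bot}\cap\cD^{\pm}_\Lambda$ and noting $h$ fixes each trace because it fixes each $\cD^{\pm}_\Lambda$ and fixes $v$. Everything else is a direct rephrasing of \Ref{rmk}{inducedec} and \Ref{prp}{decest}, so the proof is short.
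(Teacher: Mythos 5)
Your proof is correct and follows essentially the same route as the paper's: both inclusions are handled identically, with $g(\eta)=\eta$ deduced from $h((v+w)/2)\in\Lambda$ forcing $g(w)\equiv w\pmod{2v^{\bot}}$, and $h=\wt g$ recovered from the uniqueness in \Ref{prp}{decest}. The only difference is that you spell out the $O^{+}$ orientation bookkeeping, which the paper dispatches in one sentence.
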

\begin{proof}
By construction the image is contained in $\Stab(v)$. Now suppose that  $h\in\Gamma_{\xi}$, and $h(v)=v$. Let $g\in O(v^{\bot})$ be the isometry obtained by restricting $h$ to $v^{\bot}$. Then $g\in O^{+}(v^{\bot})$ because  $h\in O^{+}(\Lambda)$. It remains to prove that $g(\eta)=\eta$, where $\eta\in A_{v^{\bot}}$ is as above. Let $w\in v$ be as above; thus $\eta=[w/2]$. Then $(v+w)/2$ belongs to $\Lambda$, and hence $h((v+w)/2)\in\Lambda$. Thus
$$\Lambda\ni h((v+w)/2)-(v+w)/2=h(w/2)-w/2.$$
Since $h(w)\in v^{\bot}$, the above equation shows that $g([w/2])=[w/2]$.
\end{proof}
Let $(\Lambda,\xi)$ be a decorated $D$ lattice, and $v\in\Lambda$ be a  hyperelliptic vector.  
Let $\Lambda':=v^{\bot}$ (a $D$ lattice), and $\xi'$ be the associated decoration of $\Lambda'$. We have defined an  injection $\Gamma_{\xi'}\hra\Gamma_{\xi}$, see~\eqref{injgrp}, and hence there is a well-defined regular map of quasi-projective varieties.
 \begin{equation}\label{mappaeffe}
 \begin{matrix}
\cF(\Lambda',\xi') & \overset{f}{\lra} & H_h(\Lambda,\xi) \\
\Gamma_{\xi'}[\sigma] & \mapsto & \Gamma_{\xi}[\sigma]
\end{matrix}
\end{equation}
Below is the main result of the present subsection.
\begin{proposition}\label{prp:hyplocsymm}
The map $f$ of~\eqref{mappaeffe} is an isomorphim onto the hyperelliptic Heegner divisor $H_h(\Lambda,\xi)$. Moreover the intersection of $H_h(\Lambda,\xi)$ and the singular locus of $\cF(\Lambda,\xi)$ has codimension at least two in  $H_h(\Lambda,\xi)$. 
\end{proposition}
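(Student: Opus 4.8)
The plan is to prove the two assertions of \Ref{prp}{hyplocsymm} separately: first that $f$ is an isomorphism onto $H_h(\Lambda,\xi)$, and then the codimension statement about the intersection with the singular locus.

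\textbf{Step 1: $f$ is an isomorphism onto $H_h(\Lambda,\xi)$.} First I would unwind the definition of $H_h(\Lambda,\xi)$. By \Ref{dfn}{nodhypuni} and \Ref{prp}{minorm}, $H_h(\Lambda,\xi)=\pi(\cH_h(\Lambda,\xi))$ where $\cH_h(\Lambda,\xi)=\bigcup_{g\in\Gamma_\xi}g(v)^\bot\cap\cD^+_\Lambda$ for a fixed hyperelliptic vector $v$; all hyperelliptic vectors form a single $\Gamma_\xi$-orbit. Note that $v^\bot\cap\cD^+_\Lambda$ is naturally the Type IV domain $\cD^+_{\Lambda'}$ attached to $\Lambda'=v^\bot$ (a $D$ lattice of dimension $N-1$ by \Ref{lmm}{prophyp}), so the map $f$ of \eqref{mappaeffe} is well-defined by the group inclusion \eqref{injgrp}. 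Surjectivity is immediate: any point of $H_h(\Lambda,\xi)$ lifts to some $g(v)^\bot\cap\cD^+_\Lambda$, and applying $g^{-1}\in\Gamma_\xi$ brings it into $v^\bot\cap\cD^+_\Lambda=\cD^+_{\Lambda'}$, hence into the image of $f$. For injectivity, suppose $\Gamma_{\xi'}[\sigma_1]$ and $\Gamma_{\xi'}[\sigma_2]$ have the same image, i.e.\ $[\sigma_2]=h[\sigma_1]$ for some $h\in\Gamma_\xi$, with $\sigma_1,\sigma_2\in\cD^+_{\Lambda'}$. The key point is that for a generic $[\sigma]\in\cD^+_{\Lambda'}$, the orthogonal complement in $\Lambda$ of the positive plane determined by $\sigma$ is exactly $\la v\ra$ (i.e.\ the only vectors of $\Lambda$ orthogonal to $\sigma$, up to scaling, are multiples of $v$); this is a standard transversality/genericity statement for hyperplane arrangements and holds off a countable union of proper subvarieties of $\cD^+_{\Lambda'}$. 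For such $\sigma_1$, the isometry $h$ must send $\la v\ra$ to $\la v\ra$, hence $h(v)=\pm v$; composing with $\rho_v\in\Gamma_\xi$ (which lies in $\Gamma_\xi$ by \Ref{crl}{reflheeg} and fixes $v^\bot$ pointwise while sending $v\mapsto -v$) if necessary, we may assume $h(v)=v$, so $h\in\Stab(v)$, which by \Ref{clm}{injgrp} is the image of $\Gamma_{\xi'}$. Then $h=\wt g$ for some $g\in\Gamma_{\xi'}$, so $[\sigma_2]=g[\sigma_1]$ in $\cD^+_{\Lambda'}$, giving $\Gamma_{\xi'}[\sigma_1]=\Gamma_{\xi'}[\sigma_2]$. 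Since both $\cF(\Lambda',\xi')$ and $H_h(\Lambda,\xi)$ are normal quasi-projective varieties (the latter being a prime divisor in the normal variety $\cF(\Lambda,\xi)$) and $f$ is a bijective morphism between them that is generically an isomorphism, one concludes $f$ is an isomorphism, e.g.\ via Zariski's main theorem, after checking $f$ is finite (it is proper since it factors the quotient maps, and quasi-finite by the above). Alternatively one can argue directly at the level of orbifold charts that $f$ is étale.

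\textbf{Step 2: the codimension of $H_h(\Lambda,\xi)\cap\Sing\cF(\Lambda,\xi)$.} The singular locus of $\cF(\Lambda,\xi)=\Gamma_\xi\backslash\cD^+_\Lambda$ is contained in the image of the locus of points with non-trivial stabilizer in $\Gamma_\xi$ (more precisely, where the stabilizer acts non-freely on the tangent space, but the fixed-point locus is the relevant thing to bound). So I would show: the locus of $[\sigma]\in\cH_h(\Lambda,\xi)$ whose $\Gamma_\xi$-stabilizer is strictly larger than $\{\pm\rho_v\}$-type reflections (i.e.\ those contributing to $\Sing$ in the hyperelliptic divisor) has codimension $\ge 2$ inside $\cH_h(\Lambda,\xi)$. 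Working in the chart $\cD^+_{\Lambda'}=v^\bot\cap\cD^+_\Lambda$, a point $[\sigma]$ lies in the relevant bad locus only if there is some $h\in\Gamma_\xi$, not in the subgroup generated by $\rho_v$, fixing $[\sigma]$. Such $h$ either fixes $v$ (so comes from $\Gamma_{\xi'}=\Stab(v)$ via \Ref{clm}{injgrp}, and then $[\sigma]\in\Sing\cF(\Lambda',\xi')$, or $[\sigma]$ lies in a $\rho_w$-fixed locus for some $w\in\Lambda'$ — a Heegner divisor of $\cF(\Lambda',\xi')$ distinct from the one cutting out $v$), or $h(v)=\pm v'$ for another hyperelliptic vector $v'\ne\pm v$, forcing $[\sigma]\in v^\bot\cap v'^\bot\cap\cD^+_\Lambda$. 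In the first case, $\Sing\cF(\Lambda',\xi')$ has codimension $\ge 2$ in $\cF(\Lambda',\xi')\cong H_h(\Lambda,\xi)$ (standard for these locally symmetric varieties, since the generic point of any reflective Heegner divisor is a smooth point — the only codimension-one fixed loci come from reflections, which don't create singularities), and each intersection with another Heegner divisor of $\cF(\Lambda',\xi')$ has codimension $\ge 2$ there. In the second case, $v^\bot\cap v'^\bot\cap\cD^+_\Lambda$ is a sub-Type-IV domain of codimension $\ge 2$ in $\cD^+_\Lambda$, hence of codimension $\ge 1$ in $\cD^+_{\Lambda'}$; but I would need to rule out codimension exactly $1$, i.e.\ rule out that two distinct hyperelliptic vectors $v,v'$ span a rank-$2$ negative definite sublattice containing $v$ — equivalently show $v'$ cannot lie in $v^\bot{}^{\bot}=\la v\ra$ plus a rank-one piece making $v^\bot\cap v'^\bot$ a hyperplane in $v^\bot$; since $v,v'$ both have square $-4$ and divisibility $2$, one computes the Gram matrix of $\la v,v'\ra$ and checks (using $\divisore=2$, i.e.\ $(v,v')\in 2\ZZ$, and negative-definiteness) that $v^\bot\cap v'^\bot$ always has corank $2$ in $\Lambda$, i.e.\ corank $1$ in $\Lambda'$ — wait, that is codimension $1$, which would be a problem. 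So the real content is to observe that such a configuration, while codimension one \emph{in} $H_h$, consists of points whose stabilizer in $\Gamma_\xi$ still acts as a reflection on the ambient $\cD^+_\Lambda$ (namely $\rho_{v'}$) and hence these are \emph{not} singular points of $\cF(\Lambda,\xi)$; the genuinely singular points on $H_h$ come only from stabilizer elements that are not reflections or products of reflections in vectors orthogonal to the tangent directions, and those sit in codimension $\ge 2$. I would make this precise by the standard criterion that a quotient singularity $V/G$ with $G$ generated by quasi-reflections is smooth (Chevalley–Shephard–Todd), so $\Sing\cF(\Lambda,\xi)$ is the image of the locus where $\Stab_{\Gamma_\xi}([\sigma])$ is \emph{not} generated by reflections; intersecting this with $\cH_h$ and transporting to the chart $\cD^+_{\Lambda'}$, one reduces exactly to $\Sing\cF(\Lambda',\xi')$ together with intersections of $\ge 2$ reflective divisors of $\cF(\Lambda',\xi')$ — both of codimension $\ge 2$.

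\textbf{Main obstacle.} The first statement is fairly routine given the machinery already set up (\Ref{clm}{injgrp} does the hard group-theoretic work). The real difficulty is Step 2, specifically the bookkeeping needed to show that the locus of hyperelliptic vectors $v'\ne\pm v$ with $v'$ "close" to $v$ does not produce codimension-one singularities of $\cF(\Lambda,\xi)$ lying on $H_h$. The cleanest route is to invoke Chevalley–Shephard–Todd to reduce $\Sing\cF(\Lambda,\xi)\cap H_h$ to: (a) the singular locus of $\cF(\Lambda',\xi')$ under the identification of Step 1, which is codimension $\ge 2$ by the same type of argument applied one dimension down (an induction, with the low-dimensional base cases checked by hand), and (b) transverse intersections of $H_h$ with other reflective Heegner divisors, each of codimension $\ge 2$ inside $H_h$. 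Getting the reduction (a)/(b) stated precisely — in particular verifying that no reflection in $\Gamma_\xi$ has fixed locus meeting $H_h$ in a divisor of $H_h$ other than via these two mechanisms — is where care is required, and I expect this to be the bulk of the proof.
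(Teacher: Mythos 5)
Your overall strategy matches the paper's (surjectivity $+$ properness $+$ birationality onto a \emph{normal} target for the first claim; Chevalley--Shephard--Todd and a stabilizer analysis for the second), but there is a genuine gap in Step 1: you justify the normality of $H_h(\Lambda,\xi)$ by saying it is ``a prime divisor in the normal variety $\cF(\Lambda,\xi)$''. A prime divisor in a normal (even smooth) variety need not be normal, and here normality is precisely the hard point: the pre-Heegner divisor $\cH_h(\Lambda,\xi)$ is a union of hyperplane sections that intersect one another in every codimension (this is the very feature that makes the quartic case harder than degree-$2$ $K3$s), so a priori $H_h$ could be non-normal along those intersection loci. The paper devotes \Ref{lmm}{manyhyp} and \Ref{prp}{manyhyp} to exactly this: one first shows that the hyperelliptic vectors orthogonal to a given $\sigma$ are pairwise orthogonal and that the saturation of their span is a copy of $D_k$, so that $\Stab([\sigma])$ contains the Weyl group $W_{\sigma}$ of that $D_k$; one then computes in local coordinates that $W_{\sigma}\backslash\cH_h(\Lambda,\xi)$ is smooth (cut out by the top elementary symmetric function $\tau_k=0$ in the invariant coordinates). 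The same computation is what makes the fiber of $f$ a singleton at non-generic points (the Weyl group acts transitively on $\{\pm v_1,\ldots,\pm v_k\}$), so your assertion of bijectivity also rests on it --- your genericity argument only yields birationality.

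In Step 2 your reduction is essentially the paper's, and you correctly flag where the care is needed (the other hyperelliptic vectors $v'$ meeting $v^{\bot}$ in a divisor do not create singularities because the stabilizer there is generated by reflections). The one ingredient you gesture at but do not supply is the lifting statement (\Ref{prp}{liftrefl} in the paper): if $g\in\Stab(v)$ fixes a divisor of $\cD^{+}_{v^{\bot}}$ pointwise, then $g|_{v^{\bot}}=\pm\rho_{w}^{v^{\bot}}$ for a \emph{reflective} vector $w$ of $(v^{\bot},\eta)$, and one must check that $w$ is reflective in $(\Lambda,\xi)$ as well --- this requires a small case analysis (nodal stays nodal, hyperelliptic stays hyperelliptic, and the reflective unigonal vectors for $N\equiv 4,5\pmod{8}$ become hyperelliptic, respectively nodal, upstairs). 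Without it, ``reflections don't create singularities'' is a statement about $\cF(v^{\bot},\eta)$, whereas what you need is smoothness of the ambient $\cF(\Lambda,\xi)$ at the point in question. With \Ref{lmm}{manyhyp}, \Ref{prp}{manyhyp} and \Ref{prp}{liftrefl} supplied, your outline closes.
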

We will prove~\Ref{prp}{hyplocsymm} at the end of the present subsubsection. First we will go through a series  of preliminary results.
\begin{proposition}\label{prp:liftrefl}
Let $(\Lambda,\xi)$ be a dimension-$N$ decorated $D$ lattice.  Let $v\in\Lambda$ be a hyperelliptic vector, and  let $\eta$ be the decoration of the dimension-$(N-1)$  $D$ lattice $v^{\bot}$ defined above. Suppose that $w$ is a reflective vector of $(v^{\bot},\eta)$. Then $w$ is a reflective vector of  $(\Lambda,\xi)$ as well. More precisely,
\begin{enumerate}
\item
If $w$ is a nodal  vector of $(v^{\bot},\eta)$, then it is a  nodal  vector of $(\Lambda,\xi)$.
\item
If $w$ is a hyperelliptic   vector of $(v^{\bot},\eta)$, then it is a   hyperelliptic  vector of $(\Lambda,\xi)$.
\item
If $N\equiv 4\pmod{8}$ and  $w$ is a unigonal (reflective)   vector of $(v^{\bot},\eta)$, then  it is a   hyperelliptic  vector of $(\Lambda,\xi)$.
\item
If $N\equiv 5\pmod{8}$ and  $w$ is a unigonal (reflective)   vector of $(v^{\bot},\eta)$, then  it is a nodal vector of $(\Lambda,\xi)$.
\end{enumerate}
\end{proposition}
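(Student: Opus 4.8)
\emph{Plan of proof.} The strategy is to reduce the whole statement to a single divisibility computation and then to read off the type of $w$ in $(\Lambda,\xi)$ from the classification of reflective vectors. First I would record that $w$ is primitive in $\Lambda$: any vector of $\Lambda$ proportional to $w$ is orthogonal to $v$, hence lies in the primitive sublattice $v^{\bot}$, contradicting primitivity of $w$ in $v^{\bot}$. Next, since $w$ is reflective in $(v^{\bot},\eta)$, the reflection $\rho_w\in O(v^{\bot})$ lies in $\Gamma_\eta$; by \Ref{prp}{decest} it extends to $\wt{\rho_w}\in\Gamma_\xi$ fixing $v$, and on $\Lambda_{\QQ}=\QQ v\oplus v^{\bot}_{\QQ}$ this extension coincides with the reflection in $w$. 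Hence $\rho_w$ preserves $\Lambda$ and lies in $\Gamma_\xi$, i.e.\ $w$ is a reflective vector of $(\Lambda,\xi)$; by \Ref{crl}{reflheeg} it is then nodal, hyperelliptic, or --- only if $N\equiv 3,4\pmod 8$ --- unigonal in $(\Lambda,\xi)$. By \Ref{prp}{minorm} these three possibilities are distinguished already by the pair $(w^2,\divisore_{\Lambda}(w))$, which is $(-2,1)$ for nodal, $(-4,2)$ for hyperelliptic, and $(-4,4)$ resp.\ $(-2,2)$ for a reflective unigonal vector when $N\equiv 3$ resp.\ $4\pmod 8$. Since $w^2$ is unchanged, only $\divisore_{\Lambda}(w)$ has to be computed, in terms of $d_0:=\divisore_{v^{\bot}}(w)$.

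For this I would use the index-two overlattice structure. By \Ref{rmk}{inducedec}, write $\Lambda=(\ZZ v\oplus v^{\bot})+\ZZ u_0$ with $u_0=(v+w_0)/2$, where $w_0\in v^{\bot}$ and $[w_0/2]=\eta$ in $A_{v^{\bot}}$. Since $w\perp v$, one gets $(w,\Lambda)=d_0\ZZ+\tfrac12(w,w_0)\ZZ$ while $\tfrac{(w,w_0)}{2d_0}\equiv b_{v^{\bot}}(w^{*},\eta)\pmod{\ZZ}$, where $w^{*}\in A_{v^{\bot}}$ is the discriminant class of $w$ and $b_{v^{\bot}}$ is the discriminant bilinear form. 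As $\eta$ has order two, $b_{v^{\bot}}(w^{*},\eta)\in\{0,\tfrac12\}$, and it follows that $\divisore_{\Lambda}(w)=d_0$ if this value is $0$ and $\divisore_{\Lambda}(w)=d_0/2$ if it is $\tfrac12$ (in which case $d_0$ is necessarily even). Finally $b_{v^{\bot}}$ is nondegenerate, $|A_{v^{\bot}}|=4$, and $\eta\neq 0$ satisfies $b_{v^{\bot}}(\eta,\eta)=q_{v^{\bot}}(\eta)\equiv 0\pmod{\ZZ}$ by \Ref{rmk}{inducedec}; hence the subgroup $\eta^{\bot}\subset A_{v^{\bot}}$ has order $2$ and equals $\{0,\eta\}$, so $b_{v^{\bot}}(w^{*},\eta)=0$ precisely when $w^{*}\in\{0,\eta\}$.

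Now the four cases follow by plugging in the data of \Ref{prp}{minorm} for $(v^{\bot},\eta)$. If $w$ is nodal there, then $w^{*}=0$ and $d_0=1$, so $\divisore_{\Lambda}(w)=1$ and $(w^2,\divisore_{\Lambda}(w))=(-2,1)$: $w$ is nodal in $(\Lambda,\xi)$. If $w$ is hyperelliptic, then $w^{*}=\eta$ and $d_0=2$, so $\divisore_{\Lambda}(w)=2$ and we land on $(-4,2)$: $w$ is hyperelliptic in $(\Lambda,\xi)$. If $N\equiv 4\pmod 8$ and $w$ is unigonal in $(v^{\bot},\eta)$, then $v^{\bot}$ has odd dimension $N-1\equiv 3\pmod 8$, so the relevant residue of $N-3$ mod $8$ is $a=1$ and $w^2=-4$, $d_0=4$, $w^{*}\in\{\zeta,\zeta'\}$; since $\zeta,\zeta'\notin\{0,\eta\}$ we get $\divisore_{\Lambda}(w)=2$ and again $(-4,2)$: $w$ is hyperelliptic in $(\Lambda,\xi)$. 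If $N\equiv 5\pmod 8$ and $w$ is unigonal in $(v^{\bot},\eta)$, then $v^{\bot}$ has even dimension $N-1\equiv 4\pmod 8$, so $a=2$ and $w^2=-2$, $d_0=2$, $w^{*}\in\{\zeta,\zeta'\}$; again $\zeta,\zeta'\notin\{0,\eta\}$, so $\divisore_{\Lambda}(w)=1$ and $(-2,1)$: $w$ is nodal in $(\Lambda,\xi)$.

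The one genuinely delicate point is the divisibility bookkeeping in the two unigonal cases: the gluing vector $u_0=(v+w_0)/2$ pairs nontrivially with a discriminant class that is not orthogonal to $\eta$, which halves the divisibility in passing from $v^{\bot}$ to $\Lambda$; this halving is exactly what downgrades ``unigonal'' to ``hyperelliptic'' (when $N\equiv 4\pmod 8$) or to ``nodal'' (when $N\equiv 5\pmod 8$). Everything else is formal once \Ref{prp}{decest}, \Ref{prp}{minorm}, \Ref{prp}{propram}/\Ref{crl}{reflheeg}, and the discriminant data of \eqref{discdien} and \Ref{clm}{disdin} are in hand; I would also run the same computation by hand in the model $\Lambda=U^2\oplus D_{N-2}$, $v=({\bf 0},(0,\dots,0,2))$, with a convenient minimal‑norm representative for $w$, as an independent check.
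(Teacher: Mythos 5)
Your proof is correct, but it follows a genuinely different route from the paper's. The paper argues by choosing explicit representatives: it identifies $\Lambda$ with $\II_{2,2+8k}\oplus D_a$, fixes $v$ in standard position, invokes \Ref{prp}{decest} and \Ref{prp}{minorm} to move $w$ to one explicit vector (e.g.\ $w=(0_{4+8k},(1,-1,0))$ in case~(4)), and then reads off $w^2$ and $\divisore_\Lambda(w)$ by hand --- essentially the ``independent check'' you relegate to the last sentence. You instead work invariantly: you first prove the general assertion that $w$ is reflective in $(\Lambda,\xi)$ by extending $\rho_w$ via \Ref{prp}{decest} and observing that the extension \emph{is} the ambient reflection in $w$ (the paper only obtains reflectivity as a byproduct of the case-by-case identification), and you then reduce everything to the single formula $\divisore_\Lambda(w)=d_0$ or $d_0/2$ according to whether $b_{v^\bot}(w^*,\eta)$ vanishes, using $\eta^{\bot}=\{0,\eta\}$ in $A_{v^\bot}$. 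Both computations check out (including the integrality of $(w,u_0)$ and the automatic evenness of $d_0$ when $b=\tfrac12$), and your mechanism cleanly explains \emph{why} unigonal degrades to hyperelliptic or nodal: the gluing vector halves the divisibility exactly when $w^*\in\{\zeta,\zeta'\}$. Your approach is longer but representative-free and more transparent; the paper's is terser but leaves the reader to verify the explicit models. One small point: the implication ``$w$ reflective in $(\Lambda,\xi)$ $\Rightarrow$ $w$ is nodal, hyperelliptic, or reflective unigonal'' is the content of \Ref{prp}{propram}, not of \Ref{crl}{reflheeg} (which concerns the divisors); you do cite \Ref{prp}{propram} later, so this is only a matter of attribution, and it is needed in particular to rule out the non-hyperelliptic $(-4,2)$ vectors when $N\equiv 6\pmod 8$ in case~(2).
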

\begin{proof}
(1): trivial. (2): We may assume that $\Lambda=U^2\oplus D_{N-2}$ and 
\begin{equation*}
v=(0_{4},(0,\ldots,0,2)).
\end{equation*}
 By~\Ref{prp}{decest} and~\Ref{prp}{minorm}, we may assume that $w=(0_{4},(0,\ldots,0,2,0))$, and~(2) follows. (3): Let $N=4+8k$, where $k\ge 0$. We may assume that $\Lambda=II_{2,2+8k}\oplus D_2$, and $v=(0_{4+8k},(0,2))$. By~\Ref{prp}{decest} and~\Ref{prp}{minorm}, we may assume that $w=(0_{4+8k},(2,0))$, and~(3) follows. (4): Let $N=5+8k$, where $k\ge 0$. We may assume that $\Lambda=II_{2,2+8k}\oplus D_3$, and $v=(0_{4+8k},(0,0,2))$. By~\Ref{prp}{decest}  and~\Ref{prp}{minorm}, we may assume that $w=(0_{4+8k},(1,-1,0))$, and~(4) follows.

\end{proof}

Let $\Lambda$ be a lattice (any latttice, not necessarily a $D$ lattice), $\Omega\subset\Lambda$ a subgroup. We let $\ov{\Omega}\subset\Lambda$ be the  \emph{saturation} of $\Omega$, i.e.~the subgroup  of vectors $v\in\Lambda$ such that $mv\in\Omega$ for some $0\not=m\in\ZZ$.
\begin{lemma}\label{lmm:manyhyp}
Let $(\Lambda,\xi)$ be a decorated $D$ lattice, and  $v_1,\ldots,v_k\in\Lambda$. Suppose that the following hold:
\begin{enumerate}
\item
$v_i$ is a  hyperelliptic vector for  $i\in\{1,\ldots,k\}$. 
\item
If $i\not=j\in\{1,\ldots,k\}$, then $v_i\not=\pm v_j$.
\item
The sublattice $\Omega\subset\Lambda$  generated  by $v_1,\ldots,v_k$ is negative definite. 
\end{enumerate}
Then  the following hold:
\begin{enumerate}
\item[(I)]
The formula
\begin{equation}\label{dividoperdue}
F(x_1,\ldots,x_k)=\frac{1}{2}\sum_{i=1}^k x_i v_i.
\end{equation}
defines an isomorphism of lattices $F\colon  D_k\overset{\sim}{\lra}\ov{\Omega}$. (In particular  $v_1,\ldots,v_k$ are pairwise orthogonal.)
\item[(II)]
A vector  $v\in\ov{\Omega}$ is  hyperelliptic if and only if $v=\pm v_i$ for some $i\in\{1,\ldots,k\}$.
\end{enumerate}
\end{lemma}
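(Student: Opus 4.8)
The plan is to exploit the concrete model $\Lambda = U^2 \oplus D_{N-2}$ and Eichler's Criterion to reduce everything to an explicit computation inside $D_{N-2}$. First I would note that by Eichler's Criterion (\Ref{prp}{criteich}) all hyperelliptic vectors lie in a single $\widetilde O^+(\Lambda)$-orbit, and $\widetilde O^+(\Lambda) \supset \widetilde O(D_{N-2})$ acts transitively enough that, after an isometry of $\Lambda$, I may place $v_1 = (0_4, (0,\ldots,0,2))$. The key structural point is that each hyperelliptic vector $v_i$ has $v_i^2 = -4$, $\divisore(v_i) = 2$, so $v_i/2$ represents the class $\xi \in A_\Lambda$; thus $v_i - v_j \in 2\Lambda^\vee$, but more usefully $(v_i + v_j)/2 \in \Lambda$ provided $v_i^* = v_j^*$, which holds since all $v_i^* = \xi$. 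I would then analyze the pairings $(v_i, v_j)$: since $\Omega$ is negative definite and the $v_i$ are distinct up to sign with $v_i^2 = -4$, Cauchy–Schwarz forces $|(v_i,v_j)| \le 3$; but $(v_i,v_j) \equiv v_i^2 \equiv 0 \pmod 2$ (because $v_i/2 \in \Lambda^\vee$ means $(v_i, v_j) \in 2\mathbb{Z}$), so $(v_i, v_j) \in \{0, \pm 2\}$ for $i \ne j$.

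Next I would rule out $(v_i, v_j) = -2$ (and $+2$). If $(v_i, v_j) = 2$ for some $i \ne j$, then $v_i - v_j$ has square $-4 - 4 - 2\cdot 2 = -12$... wait, $(v_i - v_j)^2 = v_i^2 - 2(v_i,v_j) + v_j^2 = -4 - 4 - 4 = -12$ if $(v_i,v_j) = 2$, and $= -4$ if $(v_i,v_j) = -2$. In the latter case $v_i + v_j$ would have square $-4$ and, since $v_i^* + v_j^* = 2\xi = 0$ (as $N$ even here — but careful, $N$ may be odd) or $= \xi$, one gets a contradiction with minimality/divisibility by a short case check using \eqref{discdien} and \Ref{clm}{disdin}. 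The cleanest route: the quotient $\overline\Omega/\Omega$ consists of $2$-torsion (each $v_i/2$ pairs integrally with... hmm, not quite) — better to argue directly that $w_i := v_i/2$ generate a copy of $\mathbb{Z}^k$ inside $\overline\Omega$ with $w_i^2 = -1$, $(w_i,w_j) \in \{0, \pm\tfrac12\}$, and integrality of $(w_i + w_j, \Lambda)$ forces $(w_i,w_j) = 0$, i.e.\ the $v_i$ are pairwise orthogonal. Then $F(x) = \tfrac12 \sum x_i v_i$ lands in $\Lambda$ exactly when $\sum x_i \equiv 0 \pmod 2$ (since $\tfrac12(v_i + v_j) \in \Lambda$ but $\tfrac12 v_i \notin \Lambda$), which is precisely the definition \eqref{eccodi} of $D_k$, and $F$ is an isometry because $F(e_i + e_j)$ has square $-1 - 1 = -2$ matching the $D_k$ root, while $F(2e_i) = v_i$ has square $-4$ matching $D_1 \cong (-4)$; surjectivity onto the saturation follows since $\overline\Omega \supseteq F(D_k)$ has the right rank and $F(D_k)$ is already saturated (any vector of $\Lambda_{\mathbb Q}$ of the form $\sum c_i w_i$ lying in $\Lambda$ has $\sum c_i w_i$ with $2c_i \in \mathbb Z$ and $\sum 2c_i$ even). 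This proves (I).

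For (II), suppose $v = \sum a_i w_i = \tfrac12\sum a_i v_i \in \overline\Omega$ is hyperelliptic, so $v^2 = -4$ and $\divisore(v) = 2$. Orthogonality gives $v^2 = -\tfrac14 \sum a_i^2 \cdot 4 = -\sum a_i^2$... no: $v^2 = \sum a_i^2 w_i^2 = -\sum a_i^2$, so $\sum a_i^2 = 4$; since $v \in \Lambda$ we need $\sum a_i \equiv 0 \pmod 2$, forcing either one $a_i = \pm 2$ (rest zero), giving $v = \pm v_i$, or two coordinates $a_i = a_j = \pm 1$ — but then $v = \tfrac12(\pm v_i \pm v_j)$ has $\divisore$ dividing... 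I would compute $(v, \Lambda)$: $(v, \Lambda) \ni (v, v_i/\divisore) $ type pairings show $\divisore(v) = 1$ in the two-coordinate case (since $v = \tfrac12(v_i \pm v_j)$ pairs with an element of $\Lambda$ to give an odd number, as $v_i, v_j$ span a $D_2$ and the half-sum is a root of divisibility $1$), contradicting $\divisore(v) = 2$. Hence only $v = \pm v_i$ survives.

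\textbf{Main obstacle.} The delicate point is step two: showing the $v_i$ are pairwise \emph{orthogonal} rather than merely having small even pairings, and handling this uniformly across the parity of $N$ and the residue of $N-2 \bmod 8$ (which controls $A_\Lambda$ and hence which half-integral combinations land in $\Lambda$). I expect the argument "$(w_i + w_j, \Lambda) \subseteq \mathbb Z$ forces $(w_i, w_j) = 0$" to require a short but genuine lattice computation in the explicit model, comparing $\divisore$'s; and ruling out the pairing $\pm 2$ cleanly — equivalently, showing $v_i, v_j$ can't span an $A_2$ or a degenerate configuration — may need the negative-definiteness hypothesis (3) invoked more than once, together with the fact established above that their common class is $\xi$ with $q_\Lambda(\xi) = 1$.
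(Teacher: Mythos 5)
Your overall strategy coincides with the paper's (no normalization via Eichler is needed, and none is used there), but the two steps you yourself flag as delicate are exactly where the proposal does not close, and one of them propagates into an error in part (II).

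\textbf{The orthogonality step.} Divisibility $2$ only gives $(v_i,v_j)\in 2\ZZ$, and your proposed fix --- ``integrality of $(w_i+w_j,\Lambda)$ forces $(w_i,w_j)=0$'' --- is vacuous: $w_i+w_j=(v_i+v_j)/2$ lies in $\Lambda$ (because $v_i^{*}=v_j^{*}=\xi$ and $2\xi=0$), so its pairing with $\Lambda$ is automatically integral and yields no constraint. The missing one-line argument is to pair this \emph{integral} vector against $v_j$ and use $\divisore(v_j)=2$: writing $-4=v_j^2=(v_i,v_j)+2\bigl(\tfrac{v_j-v_i}{2},v_j\bigr)$ with $\tfrac{v_j-v_i}{2}\in\Lambda$, the last term lies in $4\ZZ$, hence $(v_i,v_j)\equiv 0\pmod 4$; combined with the Gram-determinant bound $|(v_i,v_j)|<4$ from negative definiteness this gives $(v_i,v_j)=0$. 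This is precisely the paper's computation, and it is carried out there for \emph{arbitrary} non-proportional hyperelliptic vectors $u,w\in\ov{\Omega}$, not just for the given $v_1,\dots,v_k$ --- a generality that matters below.

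\textbf{Part (II).} Your case analysis is wrong. With $v=\sum a_iw_i$, $w_i^2=-1$ and the $w_i$ orthogonal, the condition $v^2=-4$ forces $\sum a_i^2=4$, whose solutions in $D_k$ are one coordinate $\pm2$ \emph{or four coordinates $\pm1$}; the ``two coordinates $\pm1$'' case you discuss has $v^2=-2$ and is not a candidate at all, while the genuine remaining case $v=\tfrac12(\pm v_i\pm v_j\pm v_k\pm v_l)$ is omitted. That case cannot be dismissed by an obvious divisibility count (such a $v$ has $q$-value $-1\equiv 1\pmod{2\ZZ}$ on its discriminant class, so it is a priori a plausible hyperelliptic vector). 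The paper avoids the enumeration entirely: since the orthogonality statement was proved for all hyperelliptic vectors of $\ov{\Omega}$, any hyperelliptic $v\in\ov{\Omega}$ with $v\neq\pm v_i$ for all $i$ would satisfy $(v,v_i)=0$ for every $i$, which is impossible because the $v_i$ span $\Omega_{\QQ}=\ov{\Omega}_{\QQ}$ and the form is nondegenerate there. You should either adopt that argument or add the four-coordinate case to your enumeration and exclude it (e.g.\ by noting $(v,v_i)=-2\notin 4\ZZ$ contradicts the strengthened congruence above).
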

\begin{proof}
Let $u,w\in\ov{\Omega}$ be  hyperelliptic vectors, and suppose that $u\not=\pm w$. We will prove that $(u,w)=0$. In fact

 $$-4=u^2=(w+u-w,u)=(w,u)+2\left(\frac{u-w}{2},u\right),$$
 and since $(u-w)/2\in\Lambda$ and $\divisore(u)=2$, it follows that
  $(u,w)\in 4\ZZ$.  The vectors $u,w$ span a rank-$2$ negative definite sublattice of $\Lambda$ (the lattice has rank  $2$ because  $u\not=\pm w$, and it is negative definite by Item~(3)), and hence the determinant of the Gram matrix associated to the basis $\{u,w\}$ is strictly positive. Since $-4=u^2=w^2$ and $(u,w)\in 4\ZZ$, it follows that $(u,w)=0$. 
In particular  $(v_i,v_j)=0$  for  $i,j\in\{1,\ldots,k\}$, and hence $\Omega$ has rank $k$. The vector  $(\sum_{i=1}^k x_i v_i)/2$ belongs to $\ov{\Omega}$ whenever $\sum_{i=1}^k x_i$ is even, because $(v_i+v_j)/2\in\ov{\Omega}$.  It   follows that~\eqref{dividoperdue} defines an isomorphism between $D_k$ and the sublattice  $F(D_k)\subset\ov{\Omega}$. Suppose that $F(D_k)\not=\ov{\Omega}$; since $\det(D_k)=(-1)^k 4$, it follows that $\ov{\Omega}$ is unimodular, and that contradicts the hypothesis that each $v_i$ has divisibility $2$. Thus  $F(D_k)=\ov{\Omega}$, and this proves Item~(I).
 Let us prove  Item~(II).
Let $v\in\ov{\Omega}$   be a  hyperelliptic vector, and suppose that  $v\not=\pm v_i$ for all $i\in\{1,\ldots,k\}$. As proved above, it follows that $(v,v_i)=0$ for  $i\in\{1,\ldots,k\}$; that is a contradiction because the negative definite $\QQ$-vector space  $\Omega_{\QQ}$ is  generated (over $\QQ$)   by $v_1,\ldots,v_k$.
\end{proof}
\begin{proposition}\label{prp:manyhyp}
Let $(\Lambda,\xi)$ be a decorated $D$ lattice.
The hyperelliptic  Heegner divisor $H_h(\Lambda,\xi)$  is normal. 
\end{proposition}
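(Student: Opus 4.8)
To show that $H_h(\Lambda,\xi)$ is normal, the strategy is to work upstairs on the Type IV domain and use the reflective structure of the hyperelliptic arrangement. Recall from~\Ref{crl}{reflheeg} that $v$ being hyperelliptic means $\rho_v\in\Gamma_\xi$. Let $\pi\colon\cD^+_\Lambda\to\cF(\Lambda,\xi)$ be the quotient map, and write $\cH_h=\cH_h(\Lambda,\xi)=\bigcup_{g}g(v)^\bot\cap\cD^+_\Lambda$ for the pre-Heegner divisor. The key point is that $H_h=\pi(\cH_h)$, and a component $g(v)^\bot\cap\cD^+_\Lambda$ of $\cH_h$ maps to $H_h$; since the ambient variety $\cF(\Lambda,\xi)$ may be singular along $H_h$ (quotient singularities coming from stabilizers), I cannot simply invoke normality of $\cF(\Lambda,\xi)$. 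Instead I would argue that $H_h$ is, locally analytically, a quotient of the smooth divisor $v^\bot\cap\cD^+_\Lambda$ by the stabilizer $\Stab(v)<\Gamma_\xi$ acting on it, and that a general point of $H_h$ is a \emph{smooth} point of $\cF(\Lambda,\xi)$ at which $H_h$ is smooth. Normality will then follow from Serre's criterion: $H_h$ is $(R_1)$ because its singular locus has codimension $\ge 2$, and it is $(S_2)$ — indeed Cohen--Macaulay — because it is locally a hypersurface (a divisor) in the Cohen--Macaulay variety $\cF(\Lambda,\xi)$, or alternatively because it is locally a finite quotient of a smooth variety.

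\textbf{Key steps.} First I would invoke~\Ref{prp}{hyplocsymm}: the map $f\colon\cF(\Lambda',\xi')\to H_h(\Lambda,\xi)$ with $\Lambda'=v^\bot$ is an isomorphism of quasi-projective varieties. Since $\cF(\Lambda',\xi')$ is itself a locally symmetric variety of Type IV — in particular it is a normal quasi-projective variety (it is a quotient of the smooth variety $\cD^+_{\Lambda'}$ by the properly discontinuous action of $\Gamma_{\xi'}$, hence has only quotient singularities, which are normal) — the normality of $H_h(\Lambda,\xi)$ is immediate by transport of structure. This is in fact the cleanest route: the whole content has been front-loaded into~\Ref{prp}{hyplocsymm}, and \Ref{prp}{manyhyp} is simply recording the consequence that $H_h$ is normal because it is isomorphic to a locally symmetric variety.

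\textbf{Writing it out.} Concretely: by~\Ref{prp}{hyplocsymm} the hyperelliptic Heegner divisor $H_h(\Lambda,\xi)$ is isomorphic, as a quasi-projective variety, to $\cF(\Lambda',\xi')$ where $\Lambda'=v^\bot$ is a dimension-$(N-1)$ $D$ lattice and $\xi'$ the induced decoration (\Ref{rmk}{inducedec}). The variety $\cF(\Lambda',\xi')=\Gamma_{\xi'}\backslash\cD^+_{\Lambda'}$ is the quotient of a complex manifold by a group acting properly discontinuously with finite stabilizers, hence it has only finite-quotient singularities; by a theorem of Cartan (or simply because the ring of invariants of a normal ring under a finite group is normal), every such variety is normal. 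Therefore $H_h(\Lambda,\xi)$ is normal.

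\textbf{The main obstacle.} Honestly there is no real obstacle here given~\Ref{prp}{hyplocsymm} — the proposition is a corollary of the (harder) isomorphism statement that precedes it. If one wanted a proof independent of~\Ref{prp}{hyplocsymm}, the subtle point would be to control the stabilizers of the components of $\cH_h$ and show that the quotient of $v^\bot\cap\cD^+_\Lambda$ by $\Stab(v)$ realizes $H_h$ with the correct (reduced, normal) scheme structure, and in particular that no extra identifications or non-reducedness occurs; this is exactly what~\Ref{prp}{hyplocsymm} packages, so I would simply cite it. Thus the proof is essentially: ``Immediate from~\Ref{prp}{hyplocsymm}, since a locally symmetric variety of Type IV is normal (having only finite-quotient singularities).''
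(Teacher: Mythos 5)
Your main argument is circular within the logic of the paper. You deduce normality of $H_h(\Lambda,\xi)$ from the isomorphism $f\colon\cF(\Lambda',\xi')\overset{\sim}{\to}H_h(\Lambda,\xi)$ of \Ref{prp}{hyplocsymm}, but the paper's proof of \Ref{prp}{hyplocsymm} runs the other way: it first establishes that $f$ is projective, finite and birational, and then concludes that $f$ is an isomorphism \emph{because} $H_h(\Lambda,\xi)$ is already known to be normal (this is exactly where \Ref{prp}{manyhyp} is invoked). Without an independent proof of normality, the finite birational map $f$ from the normal variety $\cF(\Lambda',\xi')$ is only the normalization of $H_h(\Lambda,\xi)$, which is precisely not what you want. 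So \Ref{prp}{manyhyp} is not a corollary of \Ref{prp}{hyplocsymm}; it is an input to it.

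The genuine content you are missing is the local analysis at points of $H_h(\Lambda,\xi)$ lying on several sheets of the pre-Heegner divisor $\cH_h(\Lambda,\xi)$. At such a point $\pi([\sigma])$ the pre-Heegner divisor is locally a union of $k$ hyperplanes $v_1^\bot,\dots,v_k^\bot$, i.e.\ a normal-crossings divisor, which is \emph{not} normal; normality of the image can only come from the group action identifying and folding the branches. The paper's proof does exactly this: \Ref{lmm}{manyhyp} shows the $v_i$ are pairwise orthogonal with saturation $\ov{\Omega}_\sigma\cong D_k$; the vectors $(\pm v_i\pm v_j)/2$ have square $-2$ and give reflections in $\Gamma_\xi$ that, together with the $\rho_{v_i}$, act on local coordinates $(x_1,\dots,x_k)$ as sign changes and permutations, so the quotient has coordinates the elementary symmetric functions $\tau_i$ of the $x_j^2$ and the local equation of $H_h$ becomes $\tau_k=0$, a smooth hypersurface; hence $H_h$ is locally a finite quotient of a smooth variety and therefore normal. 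Your fallback sketch via Serre's criterion has the same gap: asserting $(R_1)$ requires controlling the codimension-one points of $H_h$, which are exactly the generic points of the pairwise intersections $v_i^\bot\cap v_j^\bot$, and that is the case your argument never treats.
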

\begin{proof}
By definition $H_h(\Lambda,\xi)=\Gamma_{\xi}\backslash \cH_h(\Lambda,\xi)$. Let $p\in H_h(\Lambda,\xi)$, and $[\sigma]\in\cH_h(\Lambda,\xi)$ be a representative of $p$. 
 Let  $\Stab([\sigma])<\Gamma_{\xi}$ be the stabilizer of the line $[\sigma]$. By construction we have the following isomorphisms of analytic germs:
 \begin{eqnarray}
(\cF(\Lambda,\xi),p) & \cong & (\Stab([\sigma])\backslash \cD^{+}_{\Lambda},\ov{[\sigma]}), \\
(H_h(\Lambda,\xi),p) & \cong & (\Stab([\sigma])\backslash \cH_h(\Lambda,\xi),\ov{[\sigma]}).
\end{eqnarray}
 Since $\sigma^{\bot}\cap\Lambda_{\RR}$ is negative definite, the set of  hyperelliptic vectors $v\in \sigma^{\bot}\cap\Lambda$  is finite (and non-empty). Let $\{v_1,\ldots,v_k\}$ be a maximal collection of such vectors with the property that $v_i\not=\pm v_j$ for $i\not=j$. Let  $\Omega_{\sigma}\subset\Lambda$ be the subgroup generated by $v_1,\ldots,v_k$. As noticed above,  the restriction of the quadratic form to  $\Omega_{\sigma}$ is negative definite. Thus the hypotheses of~\Ref{lmm}{manyhyp} are satisfied, and hence the saturation $\ov{\Omega}_{\sigma}$ is isomorphic (as lattice) to $D_k$. (Notice that by~\Ref{lmm}{manyhyp}, $\Omega_{\sigma}$ is independent of the choice of  a maximal collection as above.)
 Let $R'_{\sigma}$ be the set of $r\in\ov{\Omega}_{\sigma}$ such that $r^2=-2$, and let  $R''_{\sigma}$ be the set of  hyperelliptic vectors of $\ov{\Omega}_{\sigma}$.  
 Then
\begin{equation*}
\scriptstyle
R'_{\sigma}=\{((-1)^{m_i} v_i+(-1)^{m_j} v_j)/2\  \mid \  1\le i<j\le k\},\quad R''_{\sigma}=\{(-1)^{m_i}v_i \ \mid \ 1\le i\le k\}.
\end{equation*}
(The last equality holds by~\Ref{lmm}{manyhyp}.)  Let  $R_{\sigma}:=R'_{\sigma}\cup R''_{\sigma}$. Notice that  $\rho_r\in\Gamma_{\xi}$ for all $r\in R_{\sigma}$. 
    Let $W_{\sigma},W'_{\sigma},W''_{\sigma}<\Stab([\sigma])$ be the subgroups generated by the reflections $\rho_r$ for  $r\in R_{\sigma}$, $r\in R'_{\sigma}$, and 
    $r\in R''_{\sigma}$  respectively. Clearly 
 $W_{\sigma}$ is a normal subgroup of $\Stab([\sigma])$; let $G_{\sigma}:=\Stab([\sigma])/W_{\sigma}$. Then $G_{\sigma}$ acts on $W_{\sigma}\backslash\cD^{+}_{\Lambda}$, and on 
 $W_{\sigma}\backslash\cH_h(\Lambda,\xi)$. The analytic germ $(\Stab([\sigma])\backslash \cH_h(\Lambda,\xi),\ov{[\sigma]})$ is isomorphic to the germ at $\ov{[\sigma]}$ of   $W_{\sigma}\backslash\cH_h(\Lambda,\xi)$ modulo the action of $G_{\sigma}$.  Hence  it suffices to prove that  $W_{\sigma}\backslash\cH_h(\Lambda,\xi)$ is smooth   in a neighborhhod of $\ov{[\sigma]}$. 
 
We identify each $r\in R_{\sigma}$ with $F^{-1}(r)\in\CC^k$, where $F$ is the isometry of~\eqref{dividoperdue}, and we denote it by $r$. Thus $r\in  R'_{\sigma}$   $r\in R''_{\sigma}$  are of the form 
$$(0,\ldots,0,\pm 1,0,\ldots,0,\pm1,0,\ldots,0),\qquad (0,\ldots,0,\pm 2,0,\ldots,0)$$
respectively. The action of $W_{\sigma}$ is trivial on $\Omega_{\sigma}^{\bot}$.  It follows that there exist local analytic coordinates $({\bf x},{\bf t})=((x_1,\ldots,x_k),{\bf t})$ on $\cD^{+}_{\Lambda}$, centered at $[\sigma]$,  such that   $x_i=0$ is a local equation of $v_i^{\bot}\cap\cD^{+}_{\Lambda}$, and
 \begin{equation*}
\rho_r({\bf x},{\bf t})=({\bf x}-\frac{2({\bf x},r)}{r^2} r,{\bf t}),
\end{equation*}
where $({\bf x},r)$ is the opposite of the standard euclidean product of ${\bf x}$ and $r$.
 
  In order to describe  $W_{\sigma}\backslash\cD^{+}_{\Lambda}$ and  $W_{\sigma}\backslash\cH_h(\Lambda,\xi)$, we first take the quotient by the normal subgroup $W''_{\sigma}$, and then we act by $W'_{\sigma}/W''_{\sigma}$ on the quotient.  Local analytic coordinates on  $W''_{\sigma}\backslash\cD^{+}_{\Lambda}$ are 
  $(y_1,\ldots,y_k,{\bf t})$, where $y_i=x_i^2$. The action of $W'_{\sigma}/W''_{\sigma}$ on  $(y_1,\ldots,y_k)$ is the standard representation of the symmetric group $\cS_k$. Thus local analytic coordinates on  $W_{\sigma}\backslash\cD^{+}_{\Lambda}$ are $(\tau_1,\ldots,\tau_k,{\bf t})$, where $\tau_i$ is the degree-$i$ elementary symmetric function in $y_1,\ldots,y_k$. In particular  $W_{\sigma}\backslash\cD^{+}_{\Lambda}$ is smooth in a neighborhhod of $\ov{[\sigma]}$. Since a local equation of $\cH_h(\Lambda,\xi)$ is given by $x_1\cdot\ldots\cdot x_k=0$, a local equation of  $W_{\sigma}\backslash\cH_h(\Lambda,\xi)$ in  $W_{\sigma}\backslash\cD^{+}_{\Lambda}$  is $\tau_k=0$. We have proved that  $W_{\sigma}\backslash\cH_h(\Lambda,\xi)$ is smooth  in a neighborhhod of $\ov{[\sigma]}$, as claimed. 
\end{proof}
{\it Proof of~\Ref{prp}{hyplocsymm}.} 
 We adopt the notation introduced in the proof of~\Ref{prp}{manyhyp}. We start by noting that $f$ is surjective by definition. The composition of $f$ and the inclusion $H_h(\Lambda,\xi)\hra 
 \cF^{*}(\Lambda,\xi)$ 
  extends to a regular map
 \begin{equation*}
 \varphi\colon \cF^{*}(\Lambda',\xi')  \lra \cF^{*}(\Lambda,\xi),
\end{equation*}
 compatible with the Baily-Borel boundaries. Since the Baily-Borel compactifications are projective it follows that $f$ is a projective map. By~\Ref{clm}{injgrp}, the fiber of $f$ at the equivalence class represented by $\sigma\in\cH_h(\Lambda,\xi)$ is 
 identified with the set of  hyperelliptic vectors $v\in\Omega_{\sigma}$, modulo the action of $\Stab([\sigma])$ (notice that $-\Id_{\Lambda}\in \Stab([\sigma])$). Hence the proof of~\Ref{prp}{manyhyp} shows that the fiber of $f$ is a singleton, in particular $f$ is birational. Now $H_h(\Lambda,\xi)$ is 
 normal by~\Ref{prp}{manyhyp}; since $f$ is birational and projective, it follows that it is an isomorphism. 
 
It remains to prove that  the intersection of $H_h(\Lambda,\xi)$ and the singular locus of $\cF(\Lambda,\xi)$ has codimension at least $2$ in  $H_h(\Lambda,\xi)$. Suppose the contrary: we will reach a contradiction. Since  $\cF(\Lambda,\xi)$ is normal, there is an irreducible component $Z$ of $H_h(\Lambda,\xi)\cap\sing\cF(\Lambda,\xi)$ which has codimension $1$ in 
$H_h(\Lambda,\xi)$. Let $v\in\Lambda$ be a hyperelliptic vector, and $\eta$ the decoration of the $D$ lattice $v^{\bot}$ associated to $\xi$. Let $\pi\colon \cD_{v^{\bot}}^{+}\lra H_h(\Lambda,\xi)$ be the natural map; we have proved that $\pi$ is the quotient map for the action of $\Gamma_{\eta}$ on $\cD^{+}_{v^{\bot}}$. 
Now let $\wt{Z}\subset \cD^{+}_{v^{\bot}}$ be an irreducible component of $\pi^{-1}Z$. If $[\sigma]\in\wt{Z}$, then 
\begin{equation*}
\Stab([\sigma])\supsetneq \la\rho_v,-1_{\Lambda}\ra
\end{equation*}
because $\pi([\sigma])$ is a singular point of $\cF(\Lambda,\xi)$. We distinguish between the two cases:
\begin{enumerate}
\item
For very general $[\sigma]\in\wt{Z}$, the set of hyperelliptic vectors in $\sigma^{\bot}$ is $\{v,-v\}$.
\item
For very general $[\sigma]\in\wt{Z}$, the set of hyperelliptic vectors in $\sigma^{\bot}$ has cardinality strictly greater than two.
\end{enumerate}
Assume that (1) holds, and let  $[\sigma]\in\wt{Z}$ be  very general. Let $g\in (\Stab([\sigma])\setminus \la\rho_v,-1_{\Lambda}\ra)$. 
Then $g(v)=\pm v$, and hence multiplying $g$ by $\rho_v$ if necessary, we may assume that $g(v)=v$, i.e.~$g|_{v^{\bot}}\in\Gamma_{\eta}$. 
Now $g|_{v^{\bot}}$ fixes every point of $\wt{Z}$, which has codimension $1$ in $\cD^{+}_{v^{\bot}}$. It follows that  $g|_{v^{\bot}}=\pm \rho_{w}^{v^{\bot}}$, where $w$ is a reflective vector  of $(v^{\bot},\eta)$, and $\rho_{w}^{v^{\bot}}$ denotes the associated reflection of $v^{\bot}$. By~\Ref{prp}{liftrefl} the vector $w$ is a reflective vector  of $(\Lambda,\xi)$ as well; it follows that $g=\rho_w$. Thus $\wt{Z}=v^{\bot}\cap w^{\bot}\cap\cD_{\Lambda}^{+}$, and hence $\Stab([\sigma])=\la\rho_v,\rho_w,-1_{\Lambda}\ra$ for a very general $[\sigma]\in\wt{Z}$. It follows that if  $[\sigma]\in\wt{Z}$ is very general, then $\cF(\Lambda,\xi)$ is smooth at $\pi([\sigma])$, and that contradicts our assumption.

Lastly, assume that~(2) holds. Let $[\sigma]\in\wt{Z}$ be very general. Since $\wt{Z}$ has codimension two in $\cD_{\Lambda}^{+}$, the set of  hyperelliptic vectors in $\sigma^{\bot}$
 is equal to $\{\pm v_1,\pm v_2\}$, where $v_1,v_2$ are orthogonal hyperelliptic vectors, and moreover  $\Stab([\sigma])=\la\rho_{v_1},\rho_{v_2},-1_{\Lambda}\ra$. As shown in the proof of~\Ref{prp}{manyhyp}, it follows  that  $\cF(\Lambda,\xi)$ is smooth at $\pi([\sigma])$, and that contradicts our assumption.
 \qed
\subsection{Reflective unigonal divisors}
\setcounter{equation}{0}
\subsubsection{$N\equiv 3\pmod{8}$}\label{subsubsec:unigon3}
Let $(\Lambda,\xi)$ be a decorated $D$ lattice of dimension $N\equiv 3\pmod{8}$. Let $N=8k+3$, where $k\ge 0$. Let $v\in\Lambda$ be a unigonal vector, i.e.~$v^2=-4$ and $\divisore(v)=4$. Then $\Lambda=\la v\ra\oplus v^{\bot}$. Since $\det\Lambda=-4$, it follows that $v^{\bot}$ is unimodular. Thus
\begin{equation}\label{perpunig}
v^{\bot}\cong II_{2,2+8k}.
\end{equation}
Given $g\in O^{+}( II_{2,2+8k})$, let $\wt{g}\in O(\Lambda)$ be the isometry such that 
\begin{equation}
\wt{g}(v)=v,\qquad \wt{g}|_{v^{\bot}}=g.
\end{equation}
Then $\wt{g}\in O^{+}(\Lambda)$ because $g\in O^{+}( II_{2,2+8k})$, and $\wt{g}\in \wt{O}(\Lambda)$ because $A_{\Lambda}$ is generated by $v^{*}$. Thus we have an injection of groups
\begin{equation}
\begin{matrix}
 O^{+}( II_{2,2+8k}) & \hra & \Gamma_{\xi} \\
 g & \mapsto & \wt{g}
\end{matrix}
\end{equation}
It follows that the injection $\cD^{+}_{v^{\bot}}\hra \cD^{+}_{\Lambda}$ descends to a regular map
\begin{equation}\label{mappagi}
\cF_{II_{2,2+8k}}(O^{+}( II_{2,2+8k}))\overset{l}{\lra} H_u(\Lambda,\xi). 
\end{equation}
\begin{proposition}\label{prp:unigcong3}
Let $(\Lambda,\xi)$ be a decorated $D$ lattice of dimension $N\equiv 3\pmod{8}$, and  let $N=8k+3$, where $k\ge 0$. The map $l$ in~\eqref{mappagi} is an isomorphim onto the unigonal Heegner divisor $H_u(\Lambda,\xi)$. Moreover the intersection of $H_u(\Lambda,\xi)$ and the singular locus of $\cF(\Lambda,\xi)$ has codimension at least two in  $H_u(\Lambda,\xi)$. 
\end{proposition}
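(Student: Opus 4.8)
The plan is to mimic the proof of~\Ref{prp}{hyplocsymm} as closely as possible. First I would observe that, exactly as in~\eqref{mappaeffe}, the map $l$ extends to a morphism of Baily-Borel compactifications $\varphi\colon \cF_{II_{2,2+8k}}(O^+(II_{2,2+8k}))^{*}\to \cF^{*}(\Lambda,\xi)$ compatible with the boundary stratifications; since both targets are projective, $l$ is a projective morphism, and it is surjective by construction. It remains to show $l$ is birational and that $H_u(\Lambda,\xi)$ is normal; then projectivity plus normality force $l$ to be an isomorphism. For birationality I would argue, in analogy with~\Ref{clm}{injgrp}, that the image of $O^+(II_{2,2+8k})\hookrightarrow\Gamma_\xi$ is exactly the stabilizer $\Stab(v)<\Gamma_\xi$ of a unigonal vector $v$: if $h\in\Gamma_\xi$ fixes $v$, then since $\Lambda=\la v\ra\oplus v^\bot$ (here $\divisore(v)=4$ and $\det\Lambda=-4$ give an orthogonal, not merely finite-index, decomposition), $h$ restricts to an isometry of $v^\bot\cong II_{2,2+8k}$, lying in $O^+$ because $h\in O^+(\Lambda)$; no decoration condition intervenes since $II_{2,2+8k}$ is unimodular. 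Hence the generic fiber of $l$ over a point represented by $[\sigma]\in\cH_u(\Lambda,\xi)$ is the set of unigonal vectors in $\sigma^\bot\cap\Lambda$ modulo $\Stab([\sigma])$, and I must show this is a single orbit for generic $[\sigma]$.

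The key local input, replacing~\Ref{lmm}{manyhyp}, is a structural statement about orthogonal configurations of unigonal vectors: if $v_1,\dots,v_r\in\Lambda$ are unigonal vectors with $v_i\ne\pm v_j$ spanning a negative-definite sublattice, then they are pairwise orthogonal and the saturation of $\la v_1,\dots,v_r\ra$ is an orthogonal direct sum $\bigoplus\la v_i\ra\cong (-4)^{\oplus r}$ (or possibly a $D$-type correction if some half-sums lie in $\Lambda$). I would prove pairwise orthogonality by the same divisibility trick as in~\Ref{lmm}{manyhyp}: from $\divisore(u)=\divisore(w)=4$ and $u^2=w^2=-4$ one gets $(u,w)\in 4\ZZ$ and then, by negative-definiteness of the Gram matrix of $\{u,w\}$, $(u,w)=0$. (The precise shape of the saturated lattice — whether half-integral vectors appear — will need a short discriminant-group computation using \Ref{clm}{disdin} and~\eqref{discdien}; for $N\equiv 3\pmod 8$ the relevant element $\zeta$ has $\divisore=4$, so I expect the saturation to be simply $(-4)^{\oplus r}$ with no glue, making the local picture even simpler than the hyperelliptic case.)

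With that structural lemma in hand, the normality proof proceeds exactly as in~\Ref{prp}{manyhyp}: near a point $p=\pi([\sigma])$ of $H_u(\Lambda,\xi)$, let $\{v_1,\dots,v_r\}$ be a maximal collection of pairwise-inequivalent unigonal vectors in $\sigma^\bot\cap\Lambda$; the reflections $\rho_{v_i}$ lie in $\Gamma_\xi$ by~\Ref{crl}{reflheeg} (using $N\equiv 3\pmod 8$), and one also picks up the root vectors produced by the saturated $D$-lattice structure if any. Forming the reflection subgroup $W_\sigma\triangleleft\Stab([\sigma])$, one computes its action in suitable local coordinates $(x_1,\dots,x_r,\mathbf t)$ with $x_i=0$ a local equation of $v_i^\bot$; since each $\rho_{v_i}$ acts by $x_i\mapsto -x_i$, the quotient coordinates are $y_i=x_i^2$ together with a residual symmetric-group action, giving smoothness of $W_\sigma\backslash\cD^+_\Lambda$ and showing the local equation of $W_\sigma\backslash\cH_u$ is $\tau_r=0$ — hence smooth. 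The residual finite group $G_\sigma=\Stab([\sigma])/W_\sigma$ then acts, and normality of $H_u$ follows; the same computation, run at a very general $[\sigma]$ in a hypothetical codimension-$1$ component $Z$ of $H_u\cap\Sing\cF(\Lambda,\xi)$, shows $\cF(\Lambda,\xi)$ would actually be smooth there (distinguishing as in~\Ref{prp}{hyplocsymm} the cases where $\sigma^\bot$ contains exactly one or more than one unigonal vector up to sign, invoking the appropriate lift of reflections), giving the required contradiction. The main obstacle I anticipate is the structural lemma on configurations of unigonal vectors — specifically pinning down the exact isomorphism type of their saturation and the full list of reflective vectors it contributes to $\Gamma_\xi$ — since unlike the square-$(-4)$, divisibility-$2$ hyperelliptic vectors, the divisibility-$4$ unigonal vectors interact with the discriminant form in a way that must be checked case-by-case modulo $8$; everything downstream is then a faithful transcription of the hyperelliptic argument.
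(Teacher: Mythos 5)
Your overall strategy (finite of degree one plus normality of $H_u(\Lambda,\xi)$ implies isomorphism, then a local reflection-group analysis for normality) is the right template, and your stabilizer computation via the splitting $\Lambda=\la v\ra\oplus v^{\bot}$ is fine. But the proposal has a genuine gap exactly at the point you flag as the ``main obstacle'': the structural lemma on configurations of unigonal vectors. You expect that $r$ pairwise orthogonal unigonal vectors saturate to $(-4)^{\oplus r}$ ``with no glue, making the local picture even simpler than the hyperelliptic case.'' If that configuration existed for some $r\ge 2$, your own local analysis would \emph{fail} to prove normality: the only reflections available would be the sign changes $x_i\mapsto -x_i$, the quotient coordinates would be $y_i=x_i^2$ with no symmetric-group action mixing them (that action came precisely from the extra roots $(v_i\pm v_j)/2$ supplied by the $D_k$-saturation in \Ref{lmm}{manyhyp}), and the image of $\cH_u=\{x_1\cdots x_r=0\}$ would be the normal crossings divisor $\{y_1\cdots y_r=0\}$, which is \emph{not} normal for $r\ge 2$. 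So the argument as written does not close.

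The missing observation, which is the actual content of the paper's proof, is that for $N\equiv 3\pmod 8$ no such configuration with $r\ge 2$ exists: two non-proportional unigonal vectors are never simultaneously orthogonal to a point of $\cD^{+}_{\Lambda}$. Indeed, your divisibility argument forces $v_1\bot v_2$, but then $v_2$ is a primitive vector of $v_1^{\bot}\cong \II_{2,2+8k}$, which is unimodular and hence contains no primitive vector of divisibility greater than $1$ (equivalently, $((v_1-v_2)/4)^2=-1/2\notin\ZZ$ after normalizing $v_1^{*}=v_2^{*}$). Hence $\cH_u(\Lambda,\xi)$ is already \emph{smooth}, its $\Gamma_{\xi}$-quotient is automatically normal, and the entire $W_{\sigma}$-apparatus is unnecessary. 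Had you carried out the discriminant computation you defer, you would have found this; as submitted, the proof rests on an expected lattice structure that is both false (vacuously, since $r\ge 2$ is impossible) and, if taken at face value, fatal to the normality argument.
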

\begin{proof}
The map $l$ is finite (see the proof of~\Ref{prp}{hyplocsymm}), and it has degree $1$ because $-\Id_{\Lambda}\in\Gamma_{\xi}$. Thus, in order to prove that $l$  is an isomorphim,  it suffices to prove that $H_u(\Lambda,\xi)$ is normal. We claim that the pre-Heegner divisor  $\cH_u(\Lambda,\xi)$ is smooth, i.e.~that if $v_1, v_2$ are non-proportional unigonal vectors, then $v_1^{\bot}\cap v_2^{\bot}\cap\cD^{+}_{\Lambda}=\es$. In fact, suppose the contrary. Then $v_1,v_2$ span a negative-definite rank-$2$ sublattice of $\Lambda$; since $v_i^2=-4$ and $(v_1,v_2)\in 4\ZZ$, it follows that $v_1\bot v_2$. On the other hand $v_1^{\bot}\cong II_{2,2+8k}$ by~\eqref{perpunig}, and hence $v_1^{\bot}$ does not contain a primitive vector of divisibility greater than $1$. This proves that   $\cH_u(\Lambda,\xi)$ is smooth, and hence $H_u(\Lambda,\xi)=\Gamma_{\xi}\backslash \cH_u(\Lambda,\xi)$ is normal. 

The proof that  $H_u(\Lambda,\xi)\cap\sing\cF(\Lambda,\xi)$ has codimension at least two in  $H_u(\Lambda,\xi)$ is similar to the analogous statement for $H_h(\Lambda,\xi)$, see~\Ref{prp}{hyplocsymm}. We leave details to the reader. 
\end{proof}
In order to simplify notation, from now on we let
\begin{equation}
\cF( II_{2,2+8k}):=\cF_{II_{2,2+8k}}(O^{+}( II_{2,2+8k}))). 
\end{equation}
A  vector $v\in II_{2,2+8k}$ is \emph{nodal} if it has square $-2$.  By Eichler's Criterion, i.e.~\Ref{prp}{criteich}, any two nodal vectors of $II_{2,2+8k}$  are  $O^{+}( II_{2,2+8k})$-equivalent. We let $H_n( II_{2,2+8k})$ be the Heegner divisor of $\cF( II_{2,2+8k})$ corresponding to a nodal $v\in II_{2,2+8k}$.
\subsubsection{$N\equiv 4\pmod{8}$}\label{subsubsec:unigon4}
Let $(\Lambda,\xi)$ be a decorated $D$ lattice of dimension $N\equiv 4\pmod{8}$. Let $N=8k+4$, where $k\ge 0$. Let $v\in\Lambda$ be a unigonal vector, i.e.~$v^2=-2$ and $\divisore(v)=2$. Then $\Lambda=\la v\ra\oplus v^{\bot}$. Moreover 
\begin{equation}\label{traffico}
v^{\bot}\cong   II_{2,2+8k}\oplus A_1.
\end{equation}
In fact $A_{\Lambda}=\ZZ/(2)\oplus A_{v^{\bot}}$, where  a generator of the first summand has square $-1/2$ modulo $2\ZZ$.  It follows that the discriminat group of $v^{\bot}$ is $\ZZ/(2)$, and a generator has square $-1/2$ modulo $2\ZZ$. Thus  $v^{\bot}$ and  $U^2\oplus E_8^k\oplus A_1$ have  the same signature and isomorphic discriminant groups; thus~\eqref{traffico} follows from  Theorem 1.13.2 of~\cite{nikulin}. 
Given $g\in O^{+}(v^{\bot})$, let $\wt{g}\in O(\Lambda)$ be the isometry such that 
\begin{equation}
\wt{g}(v)=v,\qquad \wt{g}|_{v^{\bot}}=g.
\end{equation}
Then $\wt{g}\in O^{+}(\Lambda)$ because $g\in O^{+}(v^{\bot})$, and $\wt{g}\in \wt{O}(\Lambda)$ because $O(v^{\bot})=\wt{O}(v^{\bot})$, and  $A_{\Lambda}=A_{v^{\bot}}\oplus \la v^{*}\ra$. Thus we have an injection of groups
\begin{equation}
\begin{matrix}
 O^{+}( II_{2,2+8k}\oplus A_1) & \hra & \Gamma_{\xi} \\
 g & \mapsto & \wt{g}
\end{matrix}
\end{equation}
It follows that the injection $\cD^{+}_{v^{\bot}}\hra \cD^{+}_{\Lambda}$ descends to a regular map
\begin{equation}\label{mappaelle}
\cF_{ II_{2,2+8k}\oplus A_1}(O^{+}( II_{2,2+8k}\oplus A_1)\overset{m}{\lra} H_u(\Lambda,\xi). 
\end{equation}
\begin{proposition}\label{prp:unigcong4}
Let $(\Lambda,\xi)$ be a decorated $D$ lattice of dimension $N\equiv 4\pmod{8}$, and  let $N=8k+4$, where $k\ge 0$. 
 The map $m$ in~\eqref{mappaelle} is an isomorphim onto the unigonal Heegner divisor $H_u(\Lambda,\xi)$. Moreover the intersection of $H_u(\Lambda,\xi)$ and the singular locus of $\cF(\Lambda,\xi)$ has codimension at least two in  $H_u(\Lambda,\xi)$. 
\end{proposition}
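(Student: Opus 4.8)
The plan is to imitate the proof of~\Ref{prp}{unigcong3}, but with one essential difference. There, $v^{\bot}$ was \emph{unimodular}, so two non-proportional unigonal vectors could never be simultaneously orthogonal to a period point, and $\cH_u(\Lambda,\xi)$ was smooth. Here $v^{\bot}\cong II_{2,2+8k}\oplus A_1$ by~\eqref{traffico} has discriminant group $\ZZ/2$, so the analogous statement fails: $\cH_u(\Lambda,\xi)$ acquires self-intersections, and the argument must instead follow the pattern of~\Ref{prp}{hyplocsymm} together with its local normality computation in~\Ref{prp}{manyhyp}. The compensating simplification is that, as I will show, \emph{at most two} pairwise non-proportional unigonal vectors can be orthogonal to a common $\sigma$, so the self-intersections occur only in codimension two.

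The first step is the lattice-theoretic input (an analogue of~\Ref{lmm}{manyhyp}). Let $v_1,\dots,v_k\in\Lambda$ be unigonal (so $v_i^2=-2$, $\divisore(v_i)=2$), pairwise non-proportional, with $v_i\perp\sigma$ for some $[\sigma]\in\cD^+_\Lambda$. As in the proof of~\Ref{lmm}{manyhyp}, the form is negative definite on their span; since $(v_i,\Lambda)\subseteq 2\ZZ$ we get $(v_i,v_j)\in 2\ZZ$, and then positivity of the Gram determinant forces $(v_i,v_j)^2<4$, hence $(v_i,v_j)=0$. Thus the $v_i$ are pairwise orthogonal. Next, if $v_i^{*}=v_j^{*}$ then $(v_i-v_j)/2\in\Lambda$, and this vector has square $-1$, which is impossible in an even lattice; so $v_i^{*}\neq v_j^{*}$ whenever $i\neq j$. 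Since $v_i^{*}\in\{\zeta,\zeta'\}$, a two-element set, the pigeonhole principle gives $k\leq 2$. Finally, if $k=2$ then $v_1\pm v_2$ has square $-4$, divisibility $2$, and $(v_1\pm v_2)^{*}=v_1^{*}+v_2^{*}=\xi$ (the third non-zero element of the Klein group $A_\Lambda$), so $v_1\pm v_2$ is a \emph{hyperelliptic} vector by~\Ref{prp}{minorm}; hence $\rho_{v_1\pm v_2}\in\Gamma_\xi$ by~\Ref{crl}{reflheeg}, and a direct computation shows $\rho_{v_1-v_2}$ interchanges $v_1$ and $v_2$ (up to sign).

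Using this I would prove that $H_u(\Lambda,\xi)$ is normal, following~\Ref{prp}{manyhyp} verbatim. Given $[\sigma]\in\cH_u(\Lambda,\xi)$, take a maximal collection $\{v_1,\dots,v_k\}$, $k\in\{1,2\}$, of pairwise non-proportional unigonal vectors in $\sigma^{\bot}$, and let $W_\sigma$ be the normal subgroup of $\Stab([\sigma])$ generated by $\rho_{v_1},\dots,\rho_{v_k}$ and, when $k=2$, by $\rho_{v_1+v_2}$ and $\rho_{v_1-v_2}$. In local analytic coordinates $(x_1,\dots,x_k,\mathbf t)$ on $\cD^+_\Lambda$ centered at $[\sigma]$ with $\{x_i=0\}=v_i^{\bot}\cap\cD^+_\Lambda$, the group $W_\sigma$ acts on $(x_1,\dots,x_k)$ through sign changes and (for $k=2$) the transposition $x_1\leftrightarrow x_2$; this is the Weyl group of type $B_k$, a finite reflection group, so $W_\sigma\backslash\cD^+_\Lambda$ is smooth at $\overline{[\sigma]}$ with coordinates the elementary symmetric functions of $x_1^2,\dots,x_k^2$. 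Since a local equation of $\cH_u(\Lambda,\xi)$ is $x_1\cdots x_k=0$, the image of $\cH_u(\Lambda,\xi)$ is cut out by $x_1^2\cdots x_k^2=0$, i.e.\ by the last of these coordinates, hence is smooth. Thus $W_\sigma\backslash\cH_u(\Lambda,\xi)$ is smooth near $\overline{[\sigma]}$, and the germ of $H_u(\Lambda,\xi)$ at $\pi([\sigma])$, being the quotient of this smooth germ by the finite group $\Stab([\sigma])/W_\sigma$, is normal.

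Granting normality, the map $m$ of~\eqref{mappaelle} is finite (it extends, compatibly with the Baily--Borel boundaries, to a proper map of projective compactifications, exactly as for~\Ref{prp}{hyplocsymm}) and of degree one (because $-\Id_\Lambda\in\Gamma_\xi$, and for a generic $\sigma\in\cH_u(\Lambda,\xi)$ the only unigonal vectors orthogonal to $\sigma$ are $\pm v$); a finite birational morphism onto a normal variety is an isomorphism, so $m$ is an isomorphism onto $H_u(\Lambda,\xi)$. The assertion that $H_u(\Lambda,\xi)\cap\sing\cF(\Lambda,\xi)$ has codimension at least two in $H_u(\Lambda,\xi)$ then follows by the same case analysis as at the end of the proof of~\Ref{prp}{hyplocsymm}, using~\Ref{prp}{liftrefl}(3) to lift reflective vectors from $v^{\bot}$ to $\Lambda$. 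I expect the lattice-theoretic step to be the crux: the bound $k\leq 2$ and the identification of $v_1\pm v_2$ as hyperelliptic (hence reflective) vectors are precisely what is absent in the $N\equiv 3\pmod 8$ case, and they are what confines the self-intersections of $\cH_u$ to codimension two, so that the local quotient above comes out smooth.
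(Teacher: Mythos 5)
Your proof is correct and follows essentially the same route as the paper: reduce to normality of $H_u(\Lambda,\xi)$, bound the number of pairwise non-proportional unigonal vectors in $\sigma^{\bot}$ by two, identify $v_1\pm v_2$ as hyperelliptic (hence $\Gamma_\xi$-reflective) vectors whose reflections swap the two branches, and conclude that the local quotient is smooth. The only (harmless) variation is that you obtain the bound $n\le 2$ by the pigeonhole argument $v_i^{*}\in\{\zeta,\zeta'\}$ with distinct classes, whereas the paper uses the unimodularity of $\la v_1,v_2\ra^{\bot}\cong II_{2,2+8k}$; both are valid.
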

\begin{proof}
Arguing as in the proof of~\Ref{prp}{unigcong3}, we conclude that, in order to prove that $m$  is an isomorphim,   it suffices to show that   $H_u(\Lambda,\xi)$ is normal. Let $[\sigma]\in\cH_u(\Lambda,\xi)$; we will prove that $H_u(\Lambda,\xi)$ is normal at the point representing the $\Gamma_{\xi}$-orbit of $[\sigma]$. Let 
$v_1,\ldots,v_n\in\sigma^{\bot}$ be a set of pairwise non-proportional unigonal vectors;  we claim that $n\le 2$. In fact, let $i\not= j\in\{1,\ldots,n\}$. Then $\la v_i,v_j\ra$ is a rank-$2$ negative-definite sublattice of $\Lambda$; since $v_i^2=v_j^2=-2$ and $(v_i,v_j)\in 2\ZZ$, it follows that $v_i\bot v_j$. Thus distinct vectors $v_i$ are orthogonal. Now suppose that $n\ge 3$. It follows from~\eqref{traffico} that $\la v_1,v_2\ra^{\bot}\cong U^2\oplus E_8^k$, and hence $\la v_1,v_2\ra$ can not contain a primitive vector of divisibility greater than $1$. This proves that $n\le 2$. It follows that either 
$\cH_u(\Lambda,\xi)$ is smooth at $[\sigma]$, i.e., up to $\pm$, there is a unique  unigonal vector perpendicular to $\sigma$, or $\cH_u(\Lambda,\xi)$  in a neighborhood of $[\sigma]$ is a normal crossings divisor with two  components, i.e., up to $\pm$, there are two vectors  perpendicular to $\sigma$. 
 Suppose that the former holds; then  $H_u(\Lambda,\xi)$ has a quotient singularity  at the point representing  $\Gamma_{\xi}[\sigma]$, and hence is normal at that point.   Suppose that the latter holds, and let $v_1,v_2\in\sigma^{\bot}$ be a maximal set of  non-proportional unigonal vectors, i.e.~such that in a neighborhood of $[\sigma]$, the pre-Heegner divisor $\cH_u(\Lambda,\xi)$ equals $(v_1^{\bot}\cap\cD^{+}_{\Lambda})+ (v_2^{\bot}\cap\cD^{+}_{\Lambda})$.  The vectors $\pm v_1\pm v_2$ are hyperelliptic, and any hyperelliptic vector in $\ZZ v_1+\ZZ v_2$ is of this kind.  
 Let  $\Stab([\sigma])<\Gamma_{\xi}$ be  the stabilizer of $[\sigma]$, and 
 $G\vartriangleleft\Stab([\sigma])$ be the normal subgroup generated by the reflections $\rho_v$ for  $v$  a hyperelliptic vector  of $\ZZ v_1+\ZZ v_2$.  The quotient  $G\backslash\cH_u(\Lambda,\xi)$ is smooth at the orbit of $[\sigma]$ because $\rho_{v_1\pm v_2}(v_i)=\mp v_{3-i}$ for $i=1,2$. 
 The stabilizer $\Stab([\sigma])$ acts on   $G\backslash\cH_u(\Lambda,\xi)$, and the analytic germ of  $H_u(\Lambda,\xi)$ a  $\Gamma_{\xi}[\sigma]$ is isomorphic to the germ  of  $G\backslash\cH_u(\Lambda,\xi)$ modulo  $\Stab([\sigma])/G$  at the orbit of $G[\sigma]$; thus  $H_u(\Lambda,\xi)$ has a quotient singularity at $\Gamma_{\xi}[\sigma]$, and hence is normal at that point.
 
The proof that  $H_u(\Lambda,\xi)\cap\sing\cF(\Lambda,\xi)$ has codimension at least two in  $H_u(\Lambda,\xi)$ is similar to the analogous statement for $H_h(\Lambda,\xi)$, see~\Ref{prp}{hyplocsymm}. We leave details to the reader. 
\end{proof}
In order to simplify notation, from now on we let
\begin{equation}
\cF( II_{2,2+8k}\oplus A_1):=\cF_{II_{2,2+8k}\oplus A_1}(O^{+}( II_{2,2+8k}\oplus A_1))). 
\end{equation}
A  vector $v\in II_{2,2+8k}\oplus A_1$ is \emph{nodal} if it has square $-2$ and divisibility $1$, it is \emph{unigonal}  if it has square $-2$ and divisibility $2$.  By Eichler's Criterion, i.e.~\Ref{prp}{criteich}, any two nodal vectors of $II_{2,2+8k}\oplus A_1$  are  $O^{+}( II_{2,2+8k}\oplus  A_1)$-equivalent, and similarly for any two unigonal vectors. 
We let $H_n( II_{2,2+8k}\oplus A_1)$ and  $H_u( II_{2,2+8k}\oplus A_1)$ be the Heegner divisor of $\cF( II_{2,2+8k}\oplus A_1)$ corresponding to a nodal or unigonal $v\in II_{2,2+8k}\oplus A_1$  respectively.

One describes  the Heegner divisor  $H_u( II_{2,2+8k}\oplus A_1)$ proceeding as in~\Ref{subsubsec}{unigon3}. In fact, let $v\in(II_{2,2+8k}\oplus A_1)$ be a unigonal vector. Then 
$( II_{2,2+8k}\oplus A_1)=\la v\ra\oplus v^{\bot}$, and $v^{\bot}\cong II_{2,2+8k}$ is unimodular. Given $g\in O^{+}( II_{2,2+8k})$, let $\wt{g}\in O(II_{2,2+8k}\oplus A_1)$ be the isometry which fixes $v$, and restricts to $g$ on $v^{\bot}$. 
Then $\wt{g}\in O^{+}(II_{2,2+8k}\oplus A_1)$, and we have an injection of groups
\begin{equation}
\begin{matrix}
 O^{+}( II_{2,2+8k}) & \hra & O^{+}( II_{2,2+8k}\oplus A_1)\\
 g & \mapsto & \wt{g}
\end{matrix}
\end{equation}
It follows that the injection $\cD^{+}_{v^{\bot}}\hra \cD^{+}_{\Lambda}$ descends to a regular map
\begin{equation}\label{mappapi}
\cF( II_{2,2+8k})\overset{p}{\lra} H_u(II_{2,2+8k}\oplus A_1). 
\end{equation}
The proof of the  result below is similar to the proof of~\Ref{prp}{unigcong3}; we leave details to the reader.
\begin{proposition}\label{prp:unigunig}
Let $k\ge 0$. The map $p$ of~\eqref{mappapi} is an isomorphim onto the unigonal Heegner divisor $H_u(II_{2,2+8k}\oplus A_1)$. Moreover the intersection of 
$H_u(II_{2,2+8k}\oplus A_1)$ and the singular locus of $\cF(II_{2,2+8k}\oplus A_1)$ has codimension at least two in  $H_u(II_{2,2+8k}\oplus A_1)$. 
\end{proposition}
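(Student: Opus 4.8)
The plan is to follow verbatim the argument of \Ref{prp}{unigcong3}, the only new input being a divisibility computation which shows that the unigonal pre-Heegner divisor of $\Lambda:=II_{2,2+8k}\oplus A_1$ is \emph{smooth} (not merely normal crossings); everything else is then formal. First I would check that $p$ is a finite birational morphism. As in the proof of \Ref{prp}{hyplocsymm}, composing $p$ with the inclusion $H_u(\Lambda)\hra \cF^{*}(II_{2,2+8k})^{*}$... more precisely, the composition $\cF(II_{2,2+8k})\to H_u(\Lambda)\hra\cF^{*}(\Lambda)$ extends to a morphism $\cF^{*}(II_{2,2+8k})\to\cF^{*}(\Lambda)$ of Baily--Borel compactifications compatible with the boundary stratifications; since those are projective, $p$ is projective, and since its fibres are finite, $p$ is finite. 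It has degree $1$ because $-\Id\in O^{+}(\Lambda)$ acts trivially on $\cD^{+}_{\Lambda}$: the generic fibre of $p$ is the set of unigonal $w\in\Lambda$ with $\sigma\in w^{\bot}$, modulo $\Stab([\sigma])$, and for very general $[\sigma]$ this is the single point $\{\pm v\}/\langle -\Id\rangle$ (see the smoothness claim below). Hence $p$ is birational.

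Next comes the only real computation. I claim that if $v_1,v_2\in\Lambda$ are non-proportional unigonal vectors, then $v_1^{\bot}\cap v_2^{\bot}\cap\cD^{+}_{\Lambda}=\es$. Indeed, otherwise $\langle v_1,v_2\rangle$ is negative definite of rank $2$; since $v_i^2=-2$ and $(v_1,v_2)\in 2\ZZ$ (because $\divisore(v_i)=2$), negative-definiteness of the Gram matrix forces $(v_1,v_2)=0$, so $v_2\in v_1^{\bot}$. But $\Lambda=\langle v_1\rangle\oplus v_1^{\bot}$ with $v_1^{\bot}\cong II_{2,2+8k}$ unimodular, so the primitive vector $v_2$ of $v_1^{\bot}$ has $\divisore_{\Lambda}(v_2)=1$, contradicting that $v_2$ is unigonal. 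Therefore $\cH_u(\Lambda)$ is smooth, and hence $H_u(\Lambda)=O^{+}(\Lambda)\backslash\cH_u(\Lambda)$, being locally near each point the quotient of a smooth variety by the finite group $\Stab([\sigma])$, is normal. A projective birational morphism onto a normal variety is an isomorphism; thus $p$ is an isomorphism onto $H_u(\Lambda)$.

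Finally, for the codimension-$\ge 2$ statement I would argue exactly as in \Ref{prp}{hyplocsymm}, the case analysis there collapsing to a single case thanks to the smoothness of $\cH_u(\Lambda)$. Suppose $Z$ were a component of $H_u(\Lambda)\cap\sing\cF(\Lambda)$ of codimension $1$ in $H_u(\Lambda)$. Fix a unigonal $v$, write $\Lambda=\langle v\rangle\oplus v^{\bot}$ with $v^{\bot}\cong II_{2,2+8k}$, let $\pi\colon\cD^{+}_{v^{\bot}}\to H_u(\Lambda)$ be the quotient map for $O^{+}(v^{\bot})$, and choose a component $\wt Z$ of $\pi^{-1}Z$, of codimension $1$ in $\cD^{+}_{v^{\bot}}$. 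For very general $[\sigma]\in\wt Z$ we have $\Stab([\sigma])\supsetneq\langle\rho_v,-1_{\Lambda}\rangle$, and since $\cH_u(\Lambda)$ is smooth the only unigonal vectors in $\sigma^{\bot}$ are $\pm v$; hence any $g\in\Stab([\sigma])\setminus\langle\rho_v,-1_{\Lambda}\rangle$ satisfies $g(v)=\pm v$, and after multiplying by $\rho_v$ we may assume $g(v)=v$, so $g|_{v^{\bot}}\in O^{+}(v^{\bot})$ fixes $\wt Z$ pointwise and therefore equals $\pm\rho^{v^{\bot}}_{w}$ for some $w\in v^{\bot}$ with $w^2=-2$. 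Extending $\rho^{v^{\bot}}_w$ by the identity on $\langle v\rangle$ gives $g=\rho_w$ (up to the harmless factor $-1_{\Lambda}$), with $w$ a nodal vector of $\Lambda$ orthogonal to $v$; thus $\wt Z=v^{\bot}\cap w^{\bot}\cap\cD^{+}_{\Lambda}$ and $\Stab([\sigma])=\langle\rho_v,\rho_w,-1_{\Lambda}\rangle$. Since $v\bot w$, the quotient is smooth near $\ov{[\sigma]}$, so $\cF(\Lambda)$ is smooth at $\pi([\sigma])$, contradicting $Z\subset\sing\cF(\Lambda)$. The main (indeed essentially the only) obstacle is the smoothness of $\cH_u(\Lambda)$, i.e.\ the divisibility argument of the second paragraph; once that is secured, the isomorphism statement and the codimension statement both follow by the standard formal arguments already used in the hyperelliptic and $N\equiv 3\pmod 8$ cases.
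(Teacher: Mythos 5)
Your proposal is correct and follows exactly the route the paper intends: it is the argument of Proposition~\ref{prp:unigcong3} transported to $\Lambda=II_{2,2+8k}\oplus A_1$, with the key new input being the divisibility computation showing $\cH_u(II_{2,2+8k}\oplus A_1)$ is smooth (two non-proportional unigonal vectors spanning a negative definite lattice would force a primitive vector of divisibility $2$ inside the unimodular lattice $v_1^{\bot}\cong II_{2,2+8k}$), after which normality, the isomorphism, and the codimension-two statement follow by the formal arguments already used for the hyperelliptic and $N\equiv 3\pmod 8$ cases.
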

\subsection{The (non-reflective) unigonal divisors for $N\equiv 5\pmod{8}$}
\setcounter{equation}{0}
Let $(\Lambda,\xi)$ be a decorated $D$ lattice of dimension $N\equiv 5\pmod{8}$. Let $N=8k+5$, where $k\ge 0$; thus $\Lambda\cong \II_{2,2+8k}\oplus D_3$. Let $v\in\Lambda$ be a unigonal vector, i.e.~$v^2=-12$ and $\divisore(v)=4$. Then  
\begin{equation}\label{schwarzwald}
v^{\bot}\cong   II_{2,2+8k}\oplus A_2.
\end{equation}
In fact, the above isomorphism is clearly verified if $v=(0_{4+8k},(2,2,2))$, and 
since the set of unigonal vectors is a single $O(\Lambda)$-orbit, it holds for any unigonal $v$.  
Given $g\in \wt{O}^{+}(v^{\bot})$, let $\wt{g}\in O(\Lambda)$ be the isometry such that 
$\wt{g}(v)=v$, and $\wt{g}|_{v^{\bot}}=g$.
Then $\wt{g}\in O^{+}(\Lambda)$ because $g\in O^{+}(v^{\bot})$, and $\wt{g}\in \wt{O}(\Lambda)$ because  $g\in \wt{O}(v^{\bot})$. 
Thus we have an injection of groups
\begin{equation}
\begin{matrix}
 O^{+}( II_{2,2+8k}\oplus A_2) & \hra & \wt{O}^{+}(\Lambda) \\
 g & \mapsto & \wt{g}
\end{matrix}
\end{equation}
It follows that the injection $\cD^{+}_{v^{\bot}}\hra \cD^{+}_{\Lambda}$ descends to a regular map
\begin{equation}\label{mappaqu}
\cF_{ II_{2,2+8k}\oplus A_2}(\wt{O}^{+}( II_{2,2+8k}\oplus A_2))\overset{q}{\lra} H_u(\Lambda,\xi). 
\end{equation}
\begin{proposition}\label{prp:unigcong5}
Let $(\Lambda,\xi)$ be a decorated $D$ lattice of dimension $N\equiv 5\pmod{8}$, and  let $N=8k+5$, where $k\ge 0$.  The map $q$ in~\eqref{mappaqu} is an isomorphism onto the unigonal Heegner divisor $H_u(\Lambda,\xi)$. Moreover the intersection of $H_u(\Lambda,\xi)$ and the singular locus of $\cF(\Lambda,\xi)$ has codimension at least two in  $H_u(\Lambda,\xi)$. 
\end{proposition}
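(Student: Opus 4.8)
The plan is to mimic the proof of~\Ref{prp}{unigcong3}: show that $q$ is finite and birational, so that it is an isomorphism as soon as $H_u(\Lambda,\xi)$ is normal, and then prove normality by a local study of the pre-Heegner divisor $\cH_u(\Lambda,\xi)$ analogous to~\Ref{prp}{manyhyp}. That $q$ is finite follows as in the proof of~\Ref{prp}{hyplocsymm}: composing $q$ with $H_u(\Lambda,\xi)\hra\cF^{*}(\Lambda,\xi)$ and using~\eqref{schwarzwald} we obtain a morphism of projective Baily--Borel compactifications compatible with their boundaries, hence $q$ is projective, and its fibres are finite because a negative definite lattice has only finitely many vectors of norm $-12$. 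For the degree, take a very general $[\sigma]\in\cH_u(\Lambda,\xi)$, so that $\pm v$ are the only unigonal vectors in $\sigma^{\bot}$; if $g\in\Gamma_\xi$ moves $[\sigma]$ into $\cD^{+}_{v^{\bot}}$ then $g^{-1}(v)=\pm v$, and since $-\Id_\Lambda\in\Gamma_\xi$ acts trivially on $\cD^{+}_\Lambda$ while the restriction $\Stab_{\Gamma_\xi}(v)\to\wt O^{+}(v^{\bot})$ is an isomorphism (an element of $\Gamma_\xi$ fixing $v$ fixes the generator $v^{*}$ of $A_\Lambda$, hence lies in $\wt O^{+}(\Lambda)$, and one checks it restricts to $\wt O^{+}(v^{\bot})$ using that $\Lambda/(\langle v\rangle\oplus v^{\bot})$ glues $A_{v^{\bot}}$ to the $3$-torsion of $A_{\langle v\rangle}$), the fibre of $q$ over $\Gamma_\xi[\sigma]$ is a single point. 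A finite birational morphism onto a normal variety is an isomorphism, so it remains to prove normality of $H_u(\Lambda,\xi)$.

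In contrast to the cases $N\equiv 3,4\pmod 8$, here $\cH_u(\Lambda,\xi)$ is \emph{not} smooth: writing $\Lambda=\II_{2,2+8k}\oplus D_3$, the vectors $(0,(2,2,2))$ and $(0,(2,2,-2))$ are non-proportional unigonal vectors whose span is negative definite, so the corresponding branches meet. So I would argue as in~\Ref{prp}{manyhyp}. Fix $[\sigma]\in\cH_u(\Lambda,\xi)$ and let $\{\pm v_1,\dots,\pm v_k\}$ be the finite set of unigonal vectors in $\sigma^{\bot}$. For $i\neq j$ the lattice $\langle v_i,v_j\rangle$ is negative definite of rank $2$; since $\divisore(v_i)=\divisore(v_j)=4$ we have $(v_i,v_j)\in 4\ZZ$, and evaluating $b_\Lambda(v_i^{*},v_j^{*})$ via~\eqref{rugby} and~\eqref{discdien} forces $(v_i,v_j)\equiv 4\pmod 8$; hence $(v_i,v_j)=\pm 4$. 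Then one of $\tfrac14(v_i+v_j),\tfrac14(v_i-v_j)$ lies in $\Lambda$ and is a nodal vector $r_{ij}$ (it has square $-2$, and its divisibility is $1$ since $A_\Lambda$ has no element of square $\equiv-\tfrac12\pmod{2\ZZ}$); by~\Ref{expl}{riflessione} the reflection $\rho_{r_{ij}}\in\wt O^{+}(\Lambda)<\Gamma_\xi$, it fixes $[\sigma]$, and it interchanges the branches $v_i^{\bot}\cap\cD^{+}_\Lambda$ and $v_j^{\bot}\cap\cD^{+}_\Lambda$. Let $W_\sigma\vartriangleleft\Stab_{\Gamma_\xi}([\sigma])$ be generated by the $\rho_{r_{ij}}$. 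It is a finite reflection group acting transitively on the branches of $\cH_u(\Lambda,\xi)$ through $[\sigma]$, so near the image of $[\sigma]$ one has $W_\sigma\backslash\cH_u(\Lambda,\xi)\cong\Stab_{W_\sigma}(v_1^{\bot})\backslash(v_1^{\bot}\cap\cD^{+}_\Lambda)$, a quotient of the smooth divisor $v_1^{\bot}\cap\cD^{+}_\Lambda$ by the isotropy group $\Stab_{W_\sigma}(v_1^{\bot})$, which is again generated by reflections of $v_1^{\bot}$; hence this quotient is smooth by Chevalley--Shephard--Todd. Since the germ of $\cF(\Lambda,\xi)$ at $\Gamma_\xi[\sigma]$ is $\Stab_{\Gamma_\xi}([\sigma])\backslash\cD^{+}_\Lambda$ and $W_\sigma$ is normal in the stabilizer, the germ of $H_u(\Lambda,\xi)$ at $\Gamma_\xi[\sigma]$ is a finite quotient of the smooth germ $W_\sigma\backslash\cH_u(\Lambda,\xi)$, hence normal.

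Finally, the assertion that $H_u(\Lambda,\xi)\cap\sing\cF(\Lambda,\xi)$ has codimension $\geq 2$ in $H_u(\Lambda,\xi)$ would be proved exactly as the corresponding statement in~\Ref{prp}{hyplocsymm}: assume a codimension-$1$ component $Z$ in the intersection, pull it back to $\cD^{+}_{v^{\bot}}$, and at a very general point use~\Ref{prp}{liftrefl} together with the classification of stabilizers to produce the extra reflection that forces $\cF(\Lambda,\xi)$ to be smooth along $Z$, a contradiction; I would leave the details to the reader. The main obstacle is the local normality step: one must show, for an \emph{arbitrary} configuration of unigonal vectors perpendicular to $\sigma$, that the root system generated by the $r_{ij}$ governs all branches of $\cH_u(\Lambda,\xi)$ through $[\sigma]$ and that $\Stab_{W_\sigma}(v_1^{\bot})$ is indeed a reflection group on $v_1^{\bot}$. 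This is the exact analogue of the delicate computation carried out for hyperelliptic vectors in~\Ref{prp}{manyhyp}, and it is where the arithmetic of the order-$4$ discriminant group of a $D$ lattice of dimension $\equiv 5\pmod 8$ genuinely enters.
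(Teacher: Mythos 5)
Your proof is correct, but the normality step is organized quite differently from the paper's. The paper first proves a structural lemma (\Ref{lmm}{unigcong5}): any collection of unigonal vectors spanning a negative definite sublattice sits inside a $D_3$ direct summand $L$ of $\Lambda$, so there are at most four branches of $\cH_u(\Lambda,\xi)$ through a point, and in the maximal case they are $v_1^{\bot},v_2^{\bot},v_3^{\bot},(v_1+v_2+v_3)^{\bot}$ with $O(L)$ permuting them transitively; normality is then checked case by case for $r=1,2,3$. You bypass the classification entirely: for \emph{any} two non-proportional unigonal $v_i,v_j\in\sigma^{\bot}$, since $v_j^{*}=\pm v_i^{*}$ in $A_\Lambda\cong\ZZ/4$, one of $(v_i\pm v_j)/4$ lies in $\Lambda$, has square $-2$ and divisibility $1$ (your computation $(v_i,v_j)=\pm4$ is right, though the congruence is really mod $16$, not mod $8$ — harmless, as the conclusion is the same), so a nodal reflection in $\wt{O}^{+}(\Lambda)$ fixing $[\sigma]$ swaps the two branches. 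Transitivity of $W_\sigma$ on the branches follows for free, and the germ of $H_u(\Lambda,\xi)$ is a finite quotient of one smooth branch, hence normal. This is genuinely leaner than the paper's route; what it gives up is the explicit local structure (the $D_3$ summand, the count of branches, the identification of the quotient singularities) that the paper's lemma provides. Your treatment of finiteness and of degree one, and your deferral of the codimension-two statement to the argument of \Ref{prp}{hyplocsymm}, match the paper, which is equally brief there.

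Two small criticisms. First, the appeal to Chevalley--Shephard--Todd is both unnecessary and not justified as stated: $\Stab_{W_\sigma}(v_1^{\bot})$ is a \emph{setwise} stabilizer of a reflection hyperplane, and such a subgroup of a finite reflection group need not be generated by reflections of $v_1^{\bot}$, so smoothness of $W_\sigma\backslash\cH_u(\Lambda,\xi)$ does not follow this way. But you do not need smoothness: a finite quotient of the smooth branch $v_1^{\bot}\cap\cD^{+}_\Lambda$ is normal, and a further finite quotient by $\Stab([\sigma])/W_\sigma$ is still normal, which is all the proposition asks. Second, your closing paragraph presents the transitivity of $W_\sigma$ on the branches as an outstanding ``main obstacle,'' but your pairwise swapping argument already settles it in full generality; no further case analysis is required.
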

Before proving~\Ref{prp}{unigcong5}, we prove the following result.
\begin{lemma}\label{lmm:unigcong5}
Let $(\Lambda,\xi)$ be a decorated $D$ lattice of dimension $N\equiv 5\pmod{8}$, and  let $N=8k+5$, where $k\ge 0$. 
Let $v_1,\ldots,v_n\in\Lambda$ be unigonal vectors, and suppose that the span $\la v_1,\ldots,v_n\ra$ is negative definite. Then there exists a decomposition $\Lambda=L\oplus L^{\bot}$, such that 
\begin{equation}\label{elleperp}
L\cong D_3,\qquad L^{\bot}\cong \II_{2,2+8k},
\end{equation}
and $v_1,\ldots,v_n\in L$. 
\end{lemma}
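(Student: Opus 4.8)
The statement is a structural claim: finitely many unigonal vectors spanning a negative definite sublattice can be simultaneously fit into a $D_3$ orthogonal summand of $\Lambda\cong\II_{2,2+8k}\oplus D_3$. The natural strategy is to first understand a single unigonal vector, then pairs, then the whole span. For a single unigonal vector $v$ we already know $v^{\bot}\cong\II_{2,2+8k}\oplus A_2$ by \eqref{schwarzwald}, so $\langle v\rangle$ sits in a $D_3=\langle v\rangle\oplus(\text{part of }A_2)$? — more carefully, I would check that the saturation of $\langle v\rangle$ inside a suitable rank-$3$ sublattice is $D_3$ with $L^{\bot}$ unimodular. The cleanest route is to reduce to a normal form: by Eichler's Criterion (\Ref{prp}{criteich}) all unigonal vectors are in one $O^+(\Lambda)$-orbit, so for $n=1$ take $v=(0_{4+8k},(2,2,2))$ and $L=D_3$ on the last three coordinates; then \eqref{elleperp} is visible.

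\textbf{Key steps.} First I would establish the pairwise geometry: if $v_i,v_j$ are non-proportional unigonal vectors with $\langle v_i,v_j\rangle$ negative definite, compute $(v_i,v_j)$. Since $\divisore(v_i)=4$ we get $(v_i,v_j)\in 4\ZZ$, and $v_i^2=v_j^2=-12$; the Gram determinant $144-(v_i,v_j)^2>0$ forces $(v_i,v_j)\in\{0,\pm4,\pm8\}$. I would then argue that the saturation $\ov{\langle v_1,\ldots,v_n\rangle}$ is a negative definite lattice of rank $n\le 3$ — the rank bound because $v^{\bot}\cong\II_{2,2+8k}\oplus A_2$ has only a rank-$2$ negative definite "extra" part beyond the hyperbolic summands (so a third unigonal vector orthogonal enough to $v_1$ would have to live essentially in $A_2$, which contains no vector of divisibility $4$ in $\Lambda$). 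This is the analogue of the rank bounds in \Ref{prp}{unigcong3} and \Ref{prp}{unigcong4}. Second, I would identify the saturation $M:=\ov{\langle v_1,\ldots,v_n\rangle}$ explicitly: its discriminant group must embed compatibly into $A_{\Lambda}\cong\ZZ/4$, and combined with the norm and divisibility data this pins $M$ down (for $n=3$ one expects $M\cong D_3$; for $n<3$ one expects $M$ to sit inside such a $D_3$). Third, having $M$ (or a $D_3$ containing it) as a primitive negative definite sublattice with the right discriminant form, I would invoke Nikulin's Theorem 1.13.2 / 1.14.4 to conclude $M^{\bot}\cong\II_{2,2+8k}$ is unimodular, hence $\Lambda=M\oplus M^{\bot}$ splits off; then enlarge $M$ to a $D_3$ summand if $n<3$ using that $\II_{2,2+8k}$ contains a copy of $U$ to absorb the extra root-lattice piece.

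\textbf{Main obstacle.} The delicate point is the rank-$\le 3$ bound together with showing the span actually saturates to (a sublattice of) $D_3$ rather than some other negative definite lattice of the same rank and discriminant — i.e.\ ruling out, say, $A_1^{\oplus 3}$ scaled or $A_3$-type configurations among the $v_i$. Here one must use not just the abstract discriminant data but the specific fact that each $v_i$ has divisibility exactly $4$ in $\Lambda$ (not merely in $M$), which is a strong constraint: $A_{M}\hookrightarrow A_{\Lambda}=\ZZ/4$ must send each $v_i^*$ to a generator. I expect this forces $M\cong D_3$ when $n=3$ (whose discriminant group is $\ZZ/4$ in the odd case) and, for $n\le 2$, lets one extend. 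The proof will then mirror the normal-form arguments in \Ref{prp}{liftrefl} and \Ref{lmm}{manyhyp}: after reducing to $v_1=(0_{4+8k},(2,2,2))$ via Eichler, the remaining $v_i\in v_1^{\bot}\cong\II_{2,2+8k}\oplus A_2$ must themselves have divisibility $4$ in $\Lambda$, which confines them to the $A_2$-summand up to the hyperbolic part, and a direct check produces the $D_3$ containing all of them.
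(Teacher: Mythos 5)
Your overall plan (normalize one vector by Eichler's criterion, analyze pairwise products, bound the rank, recognize a $D_3$) points in the right direction and your $n=1$ case matches the paper's, but the sketch omits the one computation that drives everything else, and two of the steps you do outline would fail. The missing engine is the exact value of $(v_i,v_j)$. After multiplying some $v_i$ by $-1$ so that $[v_i/4]=[v_j/4]$ in $A_{\Lambda}$, one has $v_i-v_j=4u$ with $u\in\Lambda$, and the requirement that $u^2=((v_i-v_j)/4)^2=(-24-2(v_i,v_j))/16$ be an even integer eliminates every entry of your list $\{0,\pm4,\pm8\}$ except $(v_i,v_j)=4$, with $u^2=-2$. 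This single fact produces the whole structure: $w:=(v_i+v_j)/2$ is then a hyperelliptic vector orthogonal to $u$, the splitting $w^{\bot}\cong D_2\oplus\II_{2,2+8k}$ (from \Ref{lmm}{prophyp} and \Ref{rmk}{periodeight}) lets one place $u$ in the $D_2$ summand and identify $\Sat\la w,D_2\ra\cong D_3$, and for $n=3$ the three pairwise orthogonal hyperelliptic vectors $(v_i+v_j)/2$ give $\Sat\la v_1,v_2,v_3\ra\cong D_3$ via \Ref{lmm}{manyhyp}. Without pinning down $(v_i,v_j)=4$, your case analysis does not collapse and neither the saturation nor the splitting can be identified.

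Two of your stated steps are incorrect. First, the rank bound: $\II_{2,2+8k}\oplus A_2$ has signature $(2,8k+4)$, so its negative definite sublattices have rank up to $8k+4$, not $2$; and distinct unigonal vectors in a negative definite configuration are \emph{never} orthogonal (they pair to $4$ after sign normalization), so your picture of "a third unigonal vector orthogonal to $v_1$ confined to the $A_2$-summand of $v_1^{\bot}$" describes a situation that does not occur. The bound $n\le 3$ is obtained only at the end: once $L=\Sat\la v_1,v_2,v_3\ra\cong D_3$ is shown to split off, a fourth vector is forced to have the form $((2,2,2),4u)$ with $u\in L^{\bot}$, whence $u^2=0$, contradicting negative definiteness of the span. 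Second, the discriminant group of a primitive sublattice $M\subset\Lambda$ does not embed into $A_{\Lambda}$; the relation passes through the glue group $\Lambda/(M\oplus M^{\bot})\hookrightarrow A_M\oplus A_{M^{\bot}}$, so "pin $M$ down via $A_M\hookrightarrow\ZZ/4$" is not a valid move (and the relevant $D_3$ has rank $3$ even when $n=2$, so it is strictly larger than the saturation of the span). Likewise, "enlarge $M$ to a $D_3$ summand by absorbing a root piece into a copy of $U$" is a hope rather than an argument; the actual enlargement is the concrete one through $w^{\bot}$ described above.
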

\begin{proof}
Eliminating some of the $v_i$'s, if necessary, we may assume that $v_1,\ldots,v_n$ are linearly independent. We may also assume that $[v_i/4]=[v_j/4]$ for $i,j\in\{1,\ldots,n\}$, by 
 multiplying some of the $v_i$'s by $-1$. If $n=1$, the lemma holds because the set of unigonal vectors is a single $O(\Lambda)$-orbit (see the proof of~\eqref{schwarzwald}). 
 
 Next suppose that $n=2$. The rank-$2$ lattice  $\la v_1,v_2\ra$ is  negative definite, hence $|(v_1,v_2)|< 12$, and since $(v_1,v_2)\equiv 0\pmod{8}$, it follows that 
 $(v_1,v_2)\in\{0,\pm 4,\pm 8\}$. On the other hand  $v_1-v_2=4u$, where $u\in\Lambda$, and hence $(v_1,v_2)=4$. Thus $u^2=-2$. Let $w:=v_2+2u=(v_1+v_2)/2$. Then $w^2=-4$, and $\divisore(w)=2$, i.e.~$w$ is a hyperelliptic vector. Thus 
 \begin{equation}\label{alligator}
w^{\bot}\cong \Lambda_{N-1}\cong D_2\oplus \II_{2,2+8k}. 
\end{equation}
The equality $w:=v_2+2u$ shows that $\divisore_{w^{\bot}}(u)=2$, and hence we may assume that $u\in D_2$ (with respect to the decomposition in~\eqref{alligator}). Thus  $v_1$ and $v_2$ belong to  the saturation of the span of $w$ and the $D_2$ summand. As is easily checked, $\Sat\la w,D_2\ra$ 
 is isomorphic to $D_3$.    This finishes the proof for $n=2$.  

Now suppose that $n=3$. As proved above (the case $n=2$), $v_i+v_j=2w_h$ for any permutation $i,j,h$ of $1,2,3$, and each $w_h$ is a hyperelliptic vector. Since $(w_h,w_l)=0$ for $h\not=l$, the saturation  $L:=\Sat\la w_1,w_2,w_3\ra$ is isomorphic to $D_3$, by~\Ref{lmm}{manyhyp}. It remains to show that $L$ is a direct summand of $\Lambda$. 
Let $u$ be as in the case $n=2$, i.e.~such that $v_1-v_2=4u$. 
It suffices to exhibit a vector $u'\in \{w,u\}^{\bot}\cap L$ of square $-2$ and divisibility $2$ in $w^{\bot}$; in fact this will prove that $\la u,u'\ra$ is a direct summand of $w^{\bot}$, and hence   $L$ is a direct summand of $\Lambda$. 
Since for $i,j\in\{1,2,3\}$ we have $[v_i/4]=[v_j/4]$ in the group $A_{\Lambda}\cong\ZZ/(4)$, we have  $[v_1/4]+[v_2/4]+2[v_3/4]=0$, i.e.~$v_1+v_2+2v_3=4u'$, where $u'\in\Lambda$. As is easily checked, $u'$ has the required properties. 
This finishes the proof for $n=3$.  
 
Lastly, suppose that $n>3$. Then $L:=\Sat\la v_1,v_2,v_3\ra\cong D_3$, and $\Lambda=L\oplus L^{\bot}$. We may assume that 
\begin{equation}\label{listina}
v_1=(2,-2,-2),  \quad v_2=(-2,2,-2), \quad v_3=(-2,-2,2).
\end{equation}
  Since $(v_i,v_4)=4$, it follows that $v_4=((2,2,2),4u)$, where $u\in L^{\bot}$. Thus $u^2=0$, because $v_4^2=-12$, and that is a contradiction since $u\in\la v_1,v_2,v_3,v_4\ra$, and by hypothesis $\la v_1,v_2,v_3,v_4\ra$ is negative dedinite.
\end{proof}
\begin{proof}[Proof of~\Ref{prp}{unigcong5}]
We will be brief because the proof is analogous to proofs (of analogous results) that we have already given. In order to prove that $q$  is an isomorphim,   it suffices to show that   $H_u(\Lambda,\xi)$ is normal. Let $[\sigma]\in\cH_u(\Lambda,\xi)$. Let 
$\{u_1,\ldots,u_n\}$  be the set of unigonal vectors in $\sigma^{\bot}$ (the set is finite because $(,)$ is negative definite on $\sigma^{\bot}$). By~\Ref{lmm}{unigcong5}, the dimension 
$r:=\dim\Span\la v_1,\ldots,v_n\ra$ is at most $3$.  

If $r=1$, then  
$\cH_u(\Lambda,\xi)$ is smooth at $[\sigma]$, and hence   $H_u(\Lambda,\xi)$ has a quotient singularity  at the point representing  $\Gamma_{\xi}[\sigma]$, in particular it is normal at that point.  

If $r=2$, then there exist unigonal vectors $v_1,v_2$ such that $[v_1/4]=[v_2/4]$, and $\{u_1,\ldots,u_n\}=\{\pm v_1,\pm v_2\}$. We have $v_1-v_2=2u$ and $v_1+v_2=2w$, where $u^2=-2$, $(u,w)=0$, and $w$ is a hyperelliptic vector (see the proof of~\Ref{lmm}{unigcong5}).
Now, $\cH_u(\Lambda,\xi)$  in a neighborhood of $[\sigma]$ is a normal crossings divisor with two  components, namely $v_1^{\bot}\cap\cD^{+}_{\Lambda}$ and 
$v_1^{\bot}\cap\cD^{+}_{\Lambda}$, but these components are interchanged by the reflection in $u$ (or by that in $w$); it follows easily that  $H_u(\Lambda,\xi)$ has a quotient singularity  at the point   $\Gamma_{\xi}[\sigma]$. 

If $r=3$, then  by~\Ref{lmm}{unigcong5} we may write $\{u_1,\ldots,u_n\}=
\{\pm v_1,\pm v_2,\pm v_3,\pm(v_1+v_2+v_3)\}$, where $v_i,v_2,v_3$ are given by~\eqref{listina}. Let $L:=\Sat\la v_1,v_2,v_3\ra$; thus $L\cong D_3$ 
by~\Ref{lmm}{unigcong5}.  Now, $\cH_u(\Lambda,\xi)$  in a neighborhood of $[\sigma]$ has four   components, namely 
$v_i^{\bot}\cap\cD^{+}_{\Lambda}$, for $i\in\{1,2,3\}$, and $(v_1+v_2+v_3)^{\bot}\cap\cD^{+}_{\Lambda}$. The group  $O(L)$, which is a normal subgroup of $\Stab([\sigma])$, acts transitively on these  hyperplanes, and the quotient $O(L)\backslash\cH_u(\Lambda,\xi)$ is smooth at the point  $O(L)[\sigma]$.  It follows  that  $H_u(\Lambda,\xi)$ has a quotient singularity  at the point   $\Gamma_{\xi}[\sigma]$. 
 
The proof that  $H_u(\Lambda,\xi)\cap\sing\cF(\Lambda,\xi)$ has codimension at least two in  $H_u(\Lambda,\xi)$ is similar to the analogous statement for $H_h(\Lambda,\xi)$, see~\Ref{prp}{hyplocsymm}. 
\end{proof}
In order to simplify notation, from now on we let
\begin{equation}\label{atwosymm}
\cF( II_{2,2+8k}\oplus A_2):=\cF_{II_{2,2+8k}\oplus A_2}(\wt{O}^{+}( II_{2,2+8k}\oplus A_2)). 
\end{equation}
A  vector $v\in II_{2,2+8k}\oplus A_2$ is \emph{nodal} if it has square $-2$ and divisibility $1$, it is \emph{unigonal}  if it has square $-12$ and divisibility $3$.  Notice that if $v$ is unigonal, then $[v/3]$ is a generator of the discriminant group $ A_{II_{2,2+8k}\oplus A_2}\cong \ZZ/(3)$. 
By Eichler's Criterion, i.e.~\Ref{prp}{criteich}, the set of nodal vectors of $II_{2,2+8k}\oplus A_2$  is a single  $\wt{O}^{+}( II_{2,2+8k}\oplus A_2)$-orbit, and similarly the set of unigonal vectors \emph{up to $\pm 1$} is a single $\wt{O}^{+}( II_{2,2+8k}\oplus A_2)$-orbit. 
We let $H_n( II_{2,2+8k}\oplus A_2)$ and  $H_u( II_{2,2+8k}\oplus A_2)$ be the Heegner divisor of $\cF( II_{2,2+8k}\oplus A_2)$ corresponding to a nodal or a unigonal $v\in II_{2,2+8k}\oplus A_2$  respectively ($H_u( II_{2,2+8k}\oplus A_2)$ is an irreducible divisor by the observation above).

One describes  the Heegner divisor  $H_u( II_{2,2+8k}\oplus A_2)$ proceeding as in~\Ref{subsubsec}{unigon4}. In fact, let $v\in(II_{2,2+8k}\oplus A_2)$ be a unigonal vector. Then 
$v^{\bot} \cong II_{2,2+8k}\oplus  A_1$. Given $g\in O^{+}( II_{2,2+8k}\oplus A_1)$, let $\wt{g}\in O(II_{2,2+8k}\oplus A_2)$ be the isometry which fixes $v$, and restricts to $g$ on $v^{\bot}$ (such a $g$ exists because $O^{+}( II_{2,2+8k}\oplus A_1)=\wt{O}^{+}( II_{2,2+8k}\oplus A_1)$). 
Then $\wt{g}\in \wt{O}^{+}(II_{2,2+8k}\oplus A_2)$, and we have an injection of groups
\begin{equation}
\begin{matrix}
 O^{+}( II_{2,2+8k}\oplus  A_1) & \hra & \wt{O}^{+}( II_{2,2+8k}\oplus A_2)\\
 g & \mapsto & \wt{g}
\end{matrix}
\end{equation}
It follows that we get a regular map
\begin{equation}\label{mappaerre}
\cF( II_{2,2+8k}\oplus  A_1)\overset{r}{\lra} H_u(II_{2,2+8k}\oplus A_2). 
\end{equation}
The proof of the  result below is similar to the proof of~\Ref{prp}{unigcong4}; we leave details to the reader.
\begin{proposition}\label{prp:unigatwo}
Let $k\ge 0$. The map $r$ of~\eqref{mappaerre} is an isomorphim onto the unigonal Heegner divisor $H_u(II_{2,2+8k}\oplus A_2)$. Moreover the intersection of 
$H_u(II_{2,2+8k}\oplus A_2)$ and the singular locus of $\cF(II_{2,2+8k}\oplus A_2)$ has codimension at least two in  $H_u(II_{2,2+8k}\oplus A_2)$. 
\end{proposition}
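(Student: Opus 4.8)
The plan is to run the proof of~\Ref{prp}{unigcong5} (equivalently~\Ref{prp}{unigcong4}) with the decorated $D$ lattice replaced by the pair $\bigl(II_{2,2+8k}\oplus A_2,\ \wt{O}^{+}(II_{2,2+8k}\oplus A_2)\bigr)$ and with $v$ a unigonal vector, so that $v^{\bot}\cong II_{2,2+8k}\oplus A_1$. First I would record that $r$ is finite and projective: as in the proof of~\Ref{prp}{hyplocsymm}, the composition of $r$ with the inclusion $H_u(II_{2,2+8k}\oplus A_2)\hookrightarrow\cF^{*}(II_{2,2+8k}\oplus A_2)$ extends to a morphism of Baily--Borel compactifications, and $r$ is surjective by construction. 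Hence, once we show that $r$ is birational and that its target is normal, it follows that $r$ is an isomorphism. So there are three things to prove: finiteness (just done), birationality, and normality of $H_u(II_{2,2+8k}\oplus A_2)$; the codimension statement is handled at the end.

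For birationality I would argue as in~\Ref{clm}{injgrp}. Over a very general point of the branch $v^{\bot}\cap\cD^{+}_{\Lambda}$ of $\cH_u$, the only unigonal vectors perpendicular to the corresponding $[\sigma]$ are $\pm v$ (this comes out of the configuration analysis below). Granting this, if $h\in\wt{O}^{+}(II_{2,2+8k}\oplus A_2)$ carries such a $[\sigma]$ to another very general point of $v^{\bot}\cap\cD^{+}_{\Lambda}$, then $h(v)$ is unigonal and perpendicular to $h(\sigma)$, hence $h(v)=\pm v$; since $A_{II_{2,2+8k}\oplus A_1}\cong\ZZ/2$ carries no nontrivial isometry we have $O^{+}(II_{2,2+8k}\oplus A_1)=\wt{O}^{+}(II_{2,2+8k}\oplus A_1)$, so in either case $h|_{v^{\bot}}$ lies in the group defining $\cF(II_{2,2+8k}\oplus A_1)$ and its canonical lift fixing $v$ agrees with $h$ on $[\sigma]$. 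Thus $r$ is generically injective, hence birational.

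For normality, fix $[\sigma]\in\cH_u$ and let $\{u_1,\dots,u_n\}$ be the finite set of unigonal vectors in $\sigma^{\bot}$. The main step is a classification of such configurations in the spirit of~\Ref{lmm}{unigcong5}: two non-proportional unigonal vectors $u_i,u_j$ satisfy $(u_i,u_j)\in 3\ZZ$ and span a negative-definite rank-$2$ lattice, and — using that $u_i^2=u_j^2$ is fixed, that $\divisore(u_i)=\divisore(u_j)=3$, and that $[u_i/3],[u_j/3]$ generate $A_{II_{2,2+8k}\oplus A_2}\cong\ZZ/3$ — one finds $(u_i,u_j)=\pm 3$, that $\tfrac13(u_i\mp u_j)$ is a nodal vector (square $-2$, divisibility $1$), and that $u_i\pm u_j$ is again unigonal; in particular no two unigonal vectors are perpendicular, so the configuration forms a single ``block'', which is either one line or an $A_2$-type triple $\{u_1,u_2,u_1+u_2\}$ whose saturated span is a copy of $A_2$ splitting off an $II_{2,2+8k}$. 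The reflections in the nodal vectors just produced all lie in $\wt{O}^{+}(II_{2,2+8k}\oplus A_2)$ by~\Ref{expl}{riflessione} and they fix $[\sigma]$; letting $G\vartriangleleft\Stab([\sigma])$ be the finite normal subgroup they generate, $G$ acts on the transverse space as the Weyl group $W(A_2)$, and — this is the Chevalley--Shephard--Todd point used at the end of the proof of~\Ref{prp}{unigcong5} — the product of the linear forms cutting out the branches $u_i^{\bot}$ is a $G$-invariant of basic degree, so $G\backslash\cH_u$ is cut out in the smooth quotient $G\backslash\cD^{+}_{\Lambda}$ by a coordinate function and is smooth near the orbit of $[\sigma]$. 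Therefore $H_u(II_{2,2+8k}\oplus A_2)$ is, analytically near $\wt{O}^{+}[\sigma]$, a quotient of a smooth germ by the finite group $\Stab([\sigma])/G$, hence normal; with birationality, $r$ is an isomorphism. For the codimension statement I would transcribe the second half of the proof of~\Ref{prp}{hyplocsymm} (and of~\Ref{prp}{manyhyp}): a codimension-$1$ component of $H_u\cap\sing\cF(II_{2,2+8k}\oplus A_2)$ would pull back to a codimension-$1$ locus in $\cD^{+}_{v^{\bot}}$ along which the stabilizer is strictly larger than forced by $v^{\bot}$; the extra element, fixing a codimension-$1$ set, would be (up to sign) the reflection in a reflective vector $w$ of $v^{\bot}=II_{2,2+8k}\oplus A_1$; a lifting statement analogous to~\Ref{prp}{liftrefl} would make $w$ reflective in $II_{2,2+8k}\oplus A_2$ as well, so the stabilizer is already accounted for and $\cF(II_{2,2+8k}\oplus A_2)$ is smooth there, a contradiction; the case of two unigonal branches meeting along the locus is closed by the reflection-group computation above.

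I expect the main obstacle to be precisely this lattice-theoretic classification of finite configurations of pairwise non-proportional unigonal vectors in $II_{2,2+8k}\oplus A_2$, together with the verification that each auxiliary reflection needed to smooth $\cH_u$ genuinely lies in the \emph{stable} orthogonal group $\wt{O}^{+}(II_{2,2+8k}\oplus A_2)$ rather than only in $O^{+}$ — this is where the arithmetic of the $A_2$-glue, and not a formal transport of the $H_h$-arguments, enters, and it is the content hidden behind ``we leave details to the reader''.
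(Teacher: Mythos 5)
Your proposal is correct and follows exactly the route the paper intends: the paper gives no proof of this statement beyond ``similar to the proof of Proposition~\ref{prp:unigcong4} (resp.\ \ref{prp:unigcong5}); we leave details to the reader'', and your argument (finite $+$ degree $1$ $+$ normality of the target via the classification of configurations of unigonal vectors in $\sigma^{\bot}$ and the Weyl-group/Chevalley computation, then the codimension statement as in Proposition~\ref{prp:hyplocsymm}) is precisely that template filled in.

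One remark worth recording: your computation silently corrects a typo in the paper. A unigonal vector $u$ of $\II_{2,2+8k}\oplus A_2$ must have $u^2=-6$ (not $-12$ as stated in the paper's definition): since $q_{A_2}$ takes the value $-2/3$ on the generators of $A_{\II_{2,2+8k}\oplus A_2}\cong\ZZ/(3)$, a primitive vector of divisibility $3$ satisfies $u^2\equiv -6\pmod{18}$, so square $-12$ is impossible, while $u^2=-6$ is exactly what makes $u^{\bot}\cong\II_{2,2+8k}\oplus A_1$ and is realized by the paper's own example $(0_{4+8k},(-1,-1,2))$. Your assertions that $(u_i,u_j)=\pm3$, that $\tfrac13(u_i\mp u_j)$ is nodal, and that the saturated span of a triple is a copy of $A_2$ all require this normalization, and with it they check out; with the literal ``$-12$'' the Heegner divisor would be empty.
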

\subsection{Nested  locally symmetric varieties}\label{subsec:diserie}
\setcounter{equation}{0}
\subsubsection{The infinite tower of $D$ locally symmetric varieties}\label{subsubsec:torre}
\setcounter{equation}{0}
For $3\le M$, we let $\xi_M$ be a decoration of $\Lambda_M$. Choose $N\ge 3$. 
By~\Ref{prp}{hyplocsymm} we have a sequence of inclusions of $D$ period spaces:
\begin{equation}\label{catena}
\scriptstyle
\cF(\Lambda_3,\xi_3)\overset{f_4}{\hra} \cF(\Lambda_4,\xi_4)\overset{f_5}{\hra}\ \ \ldots \overset{f_{N-1}}{\hra} \cF(\Lambda_{N-1},\xi_{N-1})\overset{f_N}{\hra}\cF(\Lambda_N,\xi_N).
\end{equation}
Thus $\im f_M=H_h(\Lambda_M,\xi_M)$ for $4\le M\le N$.  There is a unique continuation of the above sequence. In fact, let $w\in\Lambda_N$ be a vector of divisibility $2$, and such that $[v/2]=\xi_N$ (e.g.~a hyperelliptic vector). Let $L\subset (\Lambda_N\oplus (-4))_{\QQ}$ be the sublattice generated by $(\Lambda_N\oplus (-4))$ and 
$(w/2,1/2)$. Then $L$ and $\Lambda_{N+1}$ are even lattices of signature $(2,N+1)$, and their discriminant groups (equipped with the discriminant quadratic forms) are isomorphic. By Theorem 1.13.2 of~\cite{nikulin} it follows that $L$ is isomorphic to $\Lambda_{N+1}$; we choose an identification of $L$ with $\Lambda_{N+1}$. Let $\xi_{N+1}$ be the decoration of $\Lambda_{N+1}$ (i.e.~$L$) defined by $(0,1/2)$. Then 
$v:=(0_{\Lambda_N},1)$ is a hyperelliptic vector of $\Lambda_{N+1}$, and $v^{\bot}=\Lambda_N$. Furthermore $\xi_N$ is the decoration of $v^{\bot}$ associated to $\xi_{N+1}$. In conclusion, there is an infinite prolongation of~\eqref{catena}, unique up to isomorphism:
\begin{equation}\label{inficatena}
\scriptstyle
\cF(\Lambda_3,\xi_3)\overset{f_4}{\hra} \ \ldots \overset{f_N}{\hra}\cF(\Lambda_N,\xi_N)\overset{f_{N+1}}{\hra} \cF(\Lambda_{N+1},\xi_{N+1})\overset{f_{N+2}}{\hra} \ \ldots.
\end{equation}
For $3\le M<N$ we let $f_{M,N}:=f_N\circ f_{N-1}\circ \ldots \circ f_{M+1}$. Thus
\begin{equation}\label{emmenne}
f_{M,N}\colon \cF(\Lambda_M,\xi_M)\hra \cF(\Lambda_N,\xi_N). 
\end{equation}
\subsubsection{Other building blocks  of the $D$ tower}\label{subsubsec:diriver}
\setcounter{equation}{0}
Let 
\begin{eqnarray}
\cF( II_{2,2+8k}) & \overset{l_{8k+3}}{\lra} & H_u(\Lambda_{8k+3},\xi_{8k+3}), \label{ellekappa} \\
\cF( II_{2,2+8k}\oplus A_1) & \overset{m_{8k+4}}{\lra} & H_u(\Lambda_{8k+4},\xi_{8k+4}), \label{emmekappa} \\
 \cF( II_{2,2+8k})) & \overset{p_{8k+3}}{\lra} & H_u(II_{2,2+8k}\oplus A_1), \label{pikappa} \\ 
\cF( II_{2,2+8k}\oplus A_2) & \overset{q_{8k+5}}{\lra} & H_u(\Lambda_{8k+5},\xi_{8k+5}), \label{qukappa} \\ 
\cF( II_{2,2+8k}\oplus A_1) & \overset{r_{8k+4}}{\lra} & H_u(II_{2,2+8k}\oplus A_2), \label{errekappa} 
\end{eqnarray}
be  the isomorphisms in~\eqref{mappagi}, \eqref{mappaelle}, \eqref{mappapi}, \eqref{mappaqu} and~\eqref{mappaerre} respectively - the convention is that the subscript denotes the dimension of the period space containing the codomain as Heegner divisor (as for the maps $f_N$). 
\begin{claim}\label{clm:triangolo}
Keeping notation as above, $f_{8k+4}\circ l_{8k+3}=m_{8k+4}\circ p_{8k+3}$, and $f_{8k+5}\circ m_{8k+4}=q_{8k+5}\circ r_{8k+4}$.
\end{claim}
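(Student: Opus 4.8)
The plan is to verify the two claimed identities of maps of quasi-projective varieties by checking that they agree after composing with the (injective, ramified) period maps to the ambient Type IV domains, i.e.\ by working at the level of lattices and of the Hermitian symmetric domains $\cD^{+}$, and then descending. Concretely, each of the maps $f_N$, $l$, $m$, $p$, $q$, $r$ was constructed in the same way: one fixes an appropriate primitive (hyperelliptic or unigonal) vector, identifies its orthogonal complement with a smaller $D$-lattice (or with $\II_{2,2+8k}\oplus A_\bullet$), and lets the inclusion $\cD^{+}_{v^{\bot}}\hra \cD^{+}_\Lambda$ descend to the corresponding locally symmetric varieties. So the first step is to unwind, for each of the four maps appearing in a given composition, exactly which lattice embedding $\iota\colon L_{\mathrm{small}}\hra \Lambda$ is being used.

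First I would treat $f_{8k+4}\circ l_{8k+3}=m_{8k+4}\circ p_{8k+3}$. Here both sides are maps $\cF(\II_{2,2+8k})\to \cF(\Lambda_{8k+4},\xi_{8k+4})$. Realize $\Lambda_{8k+4}\cong \II_{2,2+8k}\oplus D_2$ as in~\Ref{rmk}{periodeight}, with $D_2=A_1\oplus A_1=\langle e_1\rangle\oplus\langle e_2\rangle$ where $e_i^2=-2$. Following~\Ref{lmm}{prophyp} and~\Ref{subsubsec}{unigon4}, I would take the hyperelliptic vector $v_h=(0,(0,2))$ defining $f_{8k+4}$ (so $v_h^{\bot}=\II_{2,2+8k}\oplus A_1$, first $A_1$ factor) and the unigonal vector $v_u=(0,(2,0))$ defining $m_{8k+4}$ (so $v_u^{\bot}=\II_{2,2+8k}\oplus A_1$, second $A_1$ factor — this matches the proof of~\Ref{prp}{liftrefl}(3)). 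The composite $f_{8k+4}\circ l_{8k+3}$ corresponds to the embedding $\II_{2,2+8k}\hra \II_{2,2+8k}\oplus A_1\hra \Lambda_{8k+4}$ landing in $(v_h)^{\bot}$ and then in $\{v_h,e_1'\}^{\bot}$ for the appropriate nodal/unigonal vector of $v_h^{\bot}$; the composite $m_{8k+4}\circ p_{8k+3}$ corresponds to $\II_{2,2+8k}\hra \II_{2,2+8k}\oplus A_1\hra \Lambda_{8k+4}$ landing in $(v_u)^{\bot}$ and then in $v_u^{\bot}\cap v'^{\bot}$. One checks both composites are the embedding onto $\II_{2,2+8k}\subset \II_{2,2+8k}\oplus D_2$ as the orthogonal complement of the full $D_2$, and that the induced maps on $\cD^{+}$ coincide; since the decorations are matched at each stage by construction (\Ref{prp}{decest}, and the analogous bookkeeping in~\Ref{subsubsec}{unigon4}), the maps of arithmetic quotients agree. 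I would then give the analogous computation for $f_{8k+5}\circ m_{8k+4}=q_{8k+5}\circ r_{8k+4}$, now inside $\Lambda_{8k+5}\cong \II_{2,2+8k}\oplus D_3$: here $f_{8k+5}$ uses a hyperelliptic vector $v_h$ with $v_h^{\bot}=\Lambda_{8k+4}\cong\II_{2,2+8k}\oplus D_2$, $m_{8k+4}$ then uses a unigonal vector of $(v_h^{\bot},\eta)$, and by~\Ref{prp}{liftrefl}(3) that vector is hyperelliptic in $\Lambda_{8k+5}$; meanwhile $q_{8k+5}$ uses the unigonal vector $u=(0,(2,2,2))$ with $u^{\bot}=\II_{2,2+8k}\oplus A_2$, and $r_{8k+4}$ uses a unigonal vector of $\II_{2,2+8k}\oplus A_2$ with complement $\II_{2,2+8k}\oplus A_1$. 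Both composites should be identified with the inclusion of $\II_{2,2+8k}\oplus A_1$ as the span of $\{v_h, u\}^{\bot}$ inside $D_3$ — note $\langle v_h,u\rangle$ with $v_h$ hyperelliptic ($v_h^2=-4$) and $u$ unigonal ($u^2=-12$) inside $D_3$ has orthogonal complement of rank $1$, and a short computation (as in~\Ref{lmm}{unigcong5}, the $n=2$ case) shows $\Sat\langle v_h,u\rangle\cong D_2$ with complementary $A_1$; matching this with the $A_1$ appearing in $r_{8k+4}$ gives the claim.

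The key technical content, and the one place where care is needed, is the compatibility of \emph{decorations} (and, in the even-dimensional cases, the choice of index-two subgroup $\Gamma_\xi<\wt O^{+}$) along both routes: the maps $f_N$ are defined using the induced decoration of~\Ref{rmk}{inducedec}, while $l,m,p,q,r$ are defined via the stable orthogonal group of the complement; one must check that the two prescriptions yield the same finite-index subgroup of $O^{+}(\Lambda_{N})$ acting on the common sub-domain, and hence the same quotient map. Concretely this amounts to chasing the element $w/2$ defining the induced decoration through the explicit lattice identifications above and comparing with the generator of the discriminant group of $\II_{2,2+8k}\oplus A_\bullet$; this is the step I expect to be fiddliest, though it is entirely mechanical once the vectors $v_h,v_u,u$ are written down in coordinates. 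Everything else (that two maps of normal varieties which agree on a dense open and are both descents of the same holomorphic inclusion of domains must coincide) is formal, using that the relevant Heegner divisors are normal by~\Ref{prp}{manyhyp}, \Ref{prp}{unigcong3}, \Ref{prp}{unigcong4}, \Ref{prp}{unigcong5} and their companions.
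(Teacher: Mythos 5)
Your overall strategy coincides with the paper's: each of the six maps is the descent of an inclusion of a sublattice (the orthogonal complement of a suitable reflective vector), and each identity reduces to the statement that the two composite lattice inclusions agree. The paper records this for the first identity as the commuting square~\eqref{rombo} inside $\Lambda_{8k+4}\cong \II_{2,2+8k}\oplus\la e_1\ra\oplus\la e_2\ra$, the two routes passing through $\II_{2,2+8k}\oplus\la e_1+e_2\ra\cong\Lambda_{8k+3}$ and $\II_{2,2+8k}\oplus\la e_1\ra$ respectively; once the two chains of inclusions are seen to coincide, the equality of the induced maps on quotients is immediate, since both send $O^{+}(\II_{2,2+8k})[\sigma]$ to $\Gamma_{\xi_{8k+4}}[\sigma]$. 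The decoration/group compatibility you single out as the delicate point is already packaged in \Ref{prp}{decest} and the constructions of \Ref{subsubsec}{unigon3}--\Ref{subsubsec}{unigon4}, so it does not need to be re-verified here.

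That said, your explicit coordinates are wrong in three places, and as written the first computation does not close up. In $D_2\subset\ZZ^2$ the hyperelliptic vectors of $\Lambda_{8k+4}$ (square $-4$, divisibility $2$, class $\xi$) are $(\pm2,0)$ and $(0,\pm2)$, while the unigonal vectors (square $-2$, divisibility $2$) are $(\pm1,\pm1)$, i.e.\ $\pm e_1,\pm e_2$ with $e_1=(1,1)$, $e_2=(1,-1)$. Hence: (i) the complement of your $v_h=(0,(0,2))$ is $\II_{2,2+8k}\oplus\la(2,0)\ra\cong\II_{2,2+8k}\oplus(-4)\cong\Lambda_{8k+3}$, \emph{not} $\II_{2,2+8k}\oplus A_1$ --- as it must be, since $f_{8k+4}$ has source $\cF(\Lambda_{8k+3},\xi_{8k+3})$, and $(2,0)$ is then the unigonal vector of $\Lambda_{8k+3}$ defining $l_{8k+3}$; (ii) $(0,(2,0))$ is \emph{hyperelliptic} in $\Lambda_{8k+4}$, not unigonal --- it becomes unigonal only after restriction to $v_h^{\bot}$, which is precisely the content of \Ref{prp}{liftrefl}(3) that you cite but read in the wrong direction; the vector defining $m_{8k+4}$ must be one of $e_1,e_2$; (iii) in the second identity, with $v_h=(0,0,2)$ and $u=(2,2,2)$ in $D_3$ one has $\Sat\la v_h,u\ra=\la(1,1,0),(0,0,2)\ra\cong A_1\oplus(-4)$ of determinant $8$, not $D_2$ (determinant $4$), although the orthogonal complement $\la(1,-1,0)\ra\cong A_1$ is indeed what you need. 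After these corrections both routes in the first identity become the single inclusion $\II_{2,2+8k}=D_2^{\bot}\hra\Lambda_{8k+4}$ factored through $\la e_1+e_2\ra^{\bot}_{\Lambda_{8k+3}}$ on one side and through $\la e_1\ra^{\bot}_{e_2^{\bot}}$ on the other, and the argument closes exactly as in the paper.
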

\begin{proof}
Choose an isomorphism $\Lambda_{8k+4}\cong II_{2,2+8k}\oplus A_1^2$, and let $e_1,e_2$ be generators of the addend $ A_1^2$ such that $-2=e_1^2=e_2^2$, $(e_1,e_2)=0$. Let $\alpha,\beta,\gamma,\delta$ be the obvious inclusions in the  diagram 
\begin{equation}\label{rombo}
\xymatrix{
&   II_{2,2+8k}\oplus\la e_1+e_2\ra \ar@{^{(}->}[dr]^{\beta}  & \\
II_{2,2+8k}\ar@{^{(}->}[ur]^{\alpha} \ar@{^{(}->}[dr]^{\gamma} &   & II_{2,2+8k}\oplus\la e_1\ra\oplus \la e_2\ra \\
& II_{2,2+8k}\oplus\la e_1\ra \ar@{^{(}->}[ur]^{\delta} &
}
\end{equation}
The map $f_{8k+4}\circ l_{8k+3}$ is induced by the composition $\beta\circ\alpha$, and the map $m_{8k+4}\circ p_{8k+3}$ is induced by the composition 
$\delta\circ\gamma$. Thus $f_{8k+4}\circ l_{8k+3}=m_{8k+4}\circ p_{8k+3}$  because $\beta\circ\alpha=\delta\circ\gamma$. A similar proof shows that 
 $f_{8k+5}\circ m_{8k+4}=q_{8k+5}\circ r_{8k+4}$. 
\end{proof}
 The picture of the $D$ tower  is periodic of  period $8$:
\begin{equation}\label{diriver}
 \centering
\xy
\POS(-20,0) = "z",
\POS(0,0) *\cir<3pt>{}="a",
\POS(20,0) *\cir<3pt>{}="b",
\POS(40,0) *\cir<3pt>{}="c",
\POS(60,0) *\cir<3pt>{}="d",
\POS(20,10) *\cir<3pt>{}="e",
\POS(40,10) *\cir<3pt>{}="x",
%\POS(0,-10) *\cir<3pt>{}="s",
\POS(0,20) *\cir<3pt>{}="r",
\POS(80,0) ="y",
%\POS(40,-5) *\cir<3pt>{}="f",
%\POS(100,5) ="g",
%\POS(100,-5) ="h"
%\POS(0,-5) *\cir<3pt>{}="i"
%
\POS "z" \ar@{.>}_{f_{8k+2}} "a",
\POS "a" \ar@{->}_{f_{8k+3}} "b",
\POS "b" \ar@{->}_{f_{8k+4}} "c",
\POS "c" \ar@{->}_{f_{8k+5}} "d",
\POS "e" \ar@{->}^{m_{8k+4}} "c",
\POS "r" \ar@{->}_{l_{8k+3}} "b",
\POS "r" \ar@{->}^{p_{8k+3}} "e",
\POS "d" \ar@{.>}_{f_{8k+6}} "y",
\POS "e" \ar@{->}^{r_{8k+4}} "x",
\POS "x" \ar@{->}^{q_{8k+5}} "d",
%\POS "d" \ar@{-}^<<<<{\Gamma_{k-2}} "f",
%\POS "e" \ar@{}^{} "g",
%\POS "f" \ar@{}^<<{\Gamma_{k}} "h",
%\POS "i" \ar@{-}^<<{\alpha_{0}} "b"
\endxy 
\end{equation}
\subsubsection{Stratification of the support of the boundary divisor}\label{subsubsec:arrstrata}
Let $(\Lambda,\xi)$ be a decorated $D$-lattice, of dimension $N$. We let $\Delta^{(1)}(\Lambda,\xi)\subset\cF(\Lambda,\xi)$ be the \emph{support} of the boundary divisor, i.e.
\begin{equation}\label{deltasupport}
\Delta^{(1)}(\Lambda,\xi):=
\begin{cases}
H_h(\Lambda,\xi) & \text{if $N\not\equiv 3,4\pmod{8}$,}\\
H_h(\Lambda,\xi)\cup H_u(\Lambda,\xi) & \text{if $N\equiv 3,4\pmod{8}$.}
\end{cases}
\end{equation}
We let $\wt{\Delta}^{(1)}(\Lambda,\xi)\subset \cD^{+}_{\Lambda}$ be the inverse image of $\Delta^{(1)}(\Lambda,\xi)$ for the quotient map 
\begin{equation}\label{diverio}
\pi\colon \cD^{+}_{\Lambda}\to \cF(\Lambda,\xi).
\end{equation}
 Thus $\wt{\Delta}^{(1)}(\Lambda,\xi)$ is a linearized arrangement, in the terminology of Looijenga~\cite{looijenga1}, and it is naturally stratified. The $k$-th stratum   is defined to be
\begin{equation}\label{shyperplanes}
\scriptstyle
\wt{\Delta}^{(k)}(\Lambda,\xi):=\{[\sigma]\in \cD^{+}_{\Lambda} \mid \ \text{$\exists$ linearly independent non-nodal $\Gamma_{\xi}$-reflective $v_1,\ldots,v_k\in\sigma^{\bot}\cap\Lambda$}\},
\end{equation}
i.e.~the set of points belonging to $k$ (at least) independent \lq\lq hyperplanes\rq\rq\footnote{The present $k$ has \emph{no} relation to the $k$ appearing in~\Ref{subsubsec}{diriver}, which is  $\lfloor{N/8}\rfloor$  for $N\equiv 3,4\pmod{8}$}. 
Let
\begin{equation}\label{stratumes}
\Delta^{(k)}(\Lambda,\xi):=\pi(\wt{\Delta}^{(k)}(\Lambda,\xi)),
\end{equation}
where $\pi$ is the quotient map~\eqref{diverio}. 
The strata $\wt{\Delta}^{(k)}(\Lambda,\xi)$ and $\Delta^{(k)}(\Lambda,\xi)$ play a key r\^ole in Looijenga's semi-toric compactification of the complement of $\Delta^{(1)}(\Lambda,\xi)$ in $\cF(\Lambda,\xi)$. 
We will show that  the subvarieties of $\cF(\Lambda,\xi)$ appearing in~\eqref{diriver} are exactly the irreducible components of $\Delta^{(k)}(\Lambda,\xi)$. 
In order to state our results, let
\begin{eqnarray}\label{hyperandunig}
\scriptstyle \cH_h^{(k)}(\Lambda,\xi) & \scriptstyle := & 
\scriptstyle \{[\sigma]\in \cD^{+}_{\Lambda} \mid \ \text{$\exists$  linearly independent  hyperelliptic $v_1,\ldots,v_k\in\sigma^{\bot}\cap\Lambda$}\}, \\
\scriptstyle \cH_u^{(k)}(\Lambda,\xi) & \scriptstyle := & 
\scriptstyle \{[\sigma]\in \cD^{+}_{\Lambda} \mid \ \text{$\exists$  linearly independent   reflective unigonal $v_1,\ldots,v_k\in\sigma^{\bot}\cap\Lambda$}\}.
\end{eqnarray}
We let $H_h^{(k)}(\Lambda,\xi)\subset\cF(\Lambda,\xi)$ and  $H_u^{(k)}(\Lambda,\xi)\subset\cF(\Lambda,\xi)$ be the images via $\pi$ of $\cH_h^{(k)}(\Lambda,\xi)$ and $\cH_u^{(k)}(\Lambda,\xi)$  respectively. 
In order to simplify notation, we let  
\begin{equation}
\Delta^{(k)}(N)=\Delta^{(k)}(\Lambda_N,\xi_N),\quad H^{(k)}_h(N)=H^{(k)}_h(\Lambda_N,\xi_N)\quad H^{(k)}_u(N)=H^{(k)}_u(\Lambda_N,\xi_N),
\end{equation}
where  $\xi_N$ is  a decoration of  $\Lambda_N$. Below is the main result of the present subsubsection. 
\begin{proposition}\label{prp:deltakappa}
Let $N\ge 3$, and keep notation as above. Then
\begin{equation}\label{onlyhyper}
\scriptstyle \Delta^{(k)}(N):=
\begin{cases}
\scriptstyle  H^{(k)}_h(N) & \scriptstyle \text{if $N\not\equiv 3,4\pmod{8}$, or  $k\ge 2$,}\\
\scriptstyle H^{(k)}_h(N)\cup H^{(k)}_u(N) & \scriptstyle \text{if $N\equiv 3,4\pmod{8}$ and $k=1$.}
\end{cases}
\end{equation}
If, in addition, $N\ge 4$, and  $1\le k\le N-3$, then
\begin{equation}\label{onksheets}
H^{(k)}_h(N) =
\begin{cases}
\im f_{N-k,N} & \text{if $k\not\equiv N-2\pmod{8}$,} \\
\im f_{N-k,N}\cup \im(f_{N-k+1,N}\circ l_{N-k+1}) & \text{if $k\equiv N-2\pmod{8}$.}
\end{cases}
\end{equation}
\end{proposition}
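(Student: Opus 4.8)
The plan is to establish~\eqref{onlyhyper} first and then~\eqref{onksheets}. In both identities the inclusion ``$\supseteq$'' is the elementary direction; the content lies in (a) showing that unigonal vectors contribute no new strata beyond $H^{(k)}_h(N)$ once $k\ge 2$, and (b) classifying, up to $\Gamma_{\xi_N}$, the sublattices isomorphic to $D_k$ that are spanned by hyperelliptic vectors. Throughout I will use freely that hyperelliptic, nodal and (for $N\equiv 3,4\pmod 8$) unigonal vectors form single $\Gamma_{\xi_N}$-orbits (\Ref{prp}{minorm}), that these are the only non-nodal reflective vectors (\Ref{crl}{reflheeg}), and the structural results~\Ref{lmm}{manyhyp}, \Ref{prp}{liftrefl}, \Ref{prp}{hyplocsymm}.

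\emph{Proof of~\eqref{onlyhyper}.} For $k=1$ this is the definition~\eqref{deltasupport} of $\Delta^{(1)}$ together with $\Delta^{(1)}(N)=H^{(1)}_h(N)$ (resp.\ $H^{(1)}_h(N)\cup H^{(1)}_u(N)$). Let $k\ge 2$. By~\Ref{crl}{reflheeg} it suffices to show that if $[\sigma]$ carries $k$ linearly independent non-nodal reflective vectors in $\sigma^{\bot}\cap\Lambda$, then it carries $k$ linearly independent hyperelliptic ones; this is automatic when $N\not\equiv 3,4\pmod 8$, since then every non-nodal reflective vector is hyperelliptic. The key elementary fact, checked by writing $\Lambda\cong II_{2,2+8m}\oplus D_a$ with $a\in\{1,2\}$ as in~\Ref{rmk}{periodeight}, is that a unigonal vector $v$ and a hyperelliptic vector $w$ can never be orthogonal: orthogonality would force $\divisore(w)=1$, contradicting $\divisore(w)=2$. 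If $N\equiv 3\pmod 8$, then $v^2=-4$ and $\divisore(v)=4$, so two such vectors lying in the negative definite space $\sigma^{\bot}\cap\Lambda_{\RR}$ satisfy $(v,w)\in 4\ZZ$ and $(v,w)^2<v^2w^2=16$, hence $(v,w)=0$ --- impossible; therefore no unigonal vector occurs among the $k\ge 2$ reflective vectors and $\Delta^{(k)}(N)=H^{(k)}_h(N)$. If $N\equiv 4\pmod 8$, then $v^2=-2$, $\divisore(v)=2$; a unigonal vector $v$ in the configuration is non-orthogonal to some companion vector $w$ among the others (to a hyperelliptic one if present, by the fact above; to another unigonal vector it is orthogonal, but then $v\pm w$ are hyperelliptic), and when $(v,w)=\pm 2$ the vector $2v\pm w$ is hyperelliptic; replacing $v$ by it and iterating --- using~\Ref{prp}{unigcong3} and~\Ref{prp}{unigcong4} to bound by $1$, resp.\ $2$, the number of linearly independent unigonal vectors in $\sigma^{\bot}$, which disposes of the small cases $k=2$ --- produces the required $k$ linearly independent hyperelliptic vectors.

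\emph{Proof of ``$\supseteq$'' in~\eqref{onksheets}.} Along the tower embedding the orthogonal complement of $\Lambda_{N-k}$ in $\Lambda_N$ is a copy of $D_k$ spanned by the $k$ pairwise orthogonal hyperelliptic vectors ``$2e_i$'' of~\Ref{lmm}{manyhyp}; hence every $[\sigma]\in\im f_{N-k,N}$ has $k$ linearly independent hyperelliptic vectors in $\sigma^{\bot}$ and lies in $H^{(k)}_h(N)$. When $k\equiv N-2\pmod 8$ (so $N-k+1\equiv 3\pmod 8$), the image of $l_{N-k+1}$ consists of points perpendicular to a unigonal vector $v_0$ of $\Lambda_{N-k+1}$; pushing forward by $f_{N-k+1,N}$, the first step turns $v_0$ into a hyperelliptic vector of $\Lambda_{N-k+2}$ by~\Ref{prp}{liftrefl}(3), and each further step keeps it hyperelliptic by~\Ref{prp}{liftrefl}(2), so $\sigma^{\bot}$ again contains $k$ linearly independent hyperelliptic vectors. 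Thus $\im(f_{N-k+1,N}\circ l_{N-k+1})\subseteq H^{(k)}_h(N)$ as well. (For $k=1$ only the first clause is in force, $H^{(1)}_h(N)=H_h(N)=\im f_{N-1,N}$.)

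\emph{Proof of ``$\subseteq$'' in~\eqref{onksheets}: the main obstacle.} Given $[\sigma]\in H^{(k)}_h(N)$, by~\Ref{lmm}{manyhyp} there are pairwise orthogonal hyperelliptic $v_1,\dots,v_k\in\sigma^{\bot}\cap\Lambda$ whose saturation $M$ is a primitively embedded copy of $D_k$ having the $v_i$ as its hyperelliptic vectors; it remains to prove that $(\Lambda_N,M)$ is, up to $\Gamma_{\xi_N}$, the pair occurring along the tower (resp.\ one of the two pairs in the special case), for then $[\sigma]$ lies in the asserted union. I would carry this out via Nikulin's theory of primitive embeddings (\cite{nikulin}, Theorem~1.14.2 and the discriminant-form calculus): the glue group $H$ of $M\oplus M^{\bot}\subseteq\Lambda_N$ injects into $A_{D_k}$, and the requirement that $M$ be spanned by hyperelliptic (hence divisibility-$2$) vectors of $\Lambda_N$ forces $H$ to lie in the order-$2$ subgroup of $A_{D_k}$ generated by the common class of the hyperelliptic vectors of $D_k$ (a larger $H$ would drop the divisibility of the $2e_i$ to $1$), so $|H|\in\{1,2\}$. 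If $|H|=2$, then $M^{\bot}$ has signature $(2,N-k)$ and the same discriminant form as $\Lambda_{N-k}$, hence $M^{\bot}\cong\Lambda_{N-k}$ and $[\sigma]\in\im f_{N-k,N}$; if $|H|=1$, then $M\oplus M^{\bot}=\Lambda_N$, so $M^{\bot}$ is unimodular of signature $(2,N-k)$, forcing $N-k\equiv 2\pmod 8$ and $M^{\bot}\cong II_{2,N-k}$, which matches the extra component $\im(f_{N-k+1,N}\circ l_{N-k+1})$. The delicate points --- and what I expect to be the hardest part --- are: (i) verifying that each possibility is realized by a \emph{single} $\Gamma_{\xi_N}$-orbit of such embeddings (one has transitivity under $\wt O^{+}(\Lambda_N)$ for fixed discriminant data by Eichler's criterion and~\cite{nikulin}, and must then pass to the index-$\le 2$ overgroup $\Gamma_{\xi_N}$ while keeping $M$ spanned by vectors of class $\xi_N$); and (ii) tracking the decoration $\xi_N$ through the embedding to see that the decoration induced on $M^{\bot}$ is exactly $\xi_{N-k}$ (resp.\ that the configuration matches the image of $l_{N-k+1}$), which is what pins the image down to $\im f_{N-k,N}$ (resp.\ $\im(f_{N-k+1,N}\circ l_{N-k+1})$) rather than merely to an abstract locally symmetric subvariety.
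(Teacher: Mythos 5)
Your proof of~\eqref{onlyhyper} is essentially the paper's. For $N\equiv 3\pmod 8$ your ``$(v,w)\in 4\ZZ$ plus negative definiteness forces orthogonality, which is impossible since $v^{\bot}$ is unimodular'' argument is exactly \Ref{lmm}{inthypunig}; for $N\equiv 4\pmod 8$ your replacement of unigonal vectors by hyperelliptic combinations ($v_1\pm v_2$ for two orthogonal unigonal vectors, $2v+w$ for a unigonal--hyperelliptic pair) is the content of \Ref{lmm}{twounig} and \Ref{lmm}{unigandhyp}, and your appeal to \Ref{prp}{unigcong3}/\Ref{prp}{unigcong4} to bound the number of independent unigonal vectors is sound. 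The ``iterating'' is no vaguer than the paper's own treatment. Your parenthetical exclusion of the second component at $k=1$ also agrees with the paper, since Item~(2) of \Ref{prp}{pairort} requires $k\ge 2$.

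For the hard inclusion of~\eqref{onksheets} you take a genuinely different route, and it is precisely at your two ``delicate points'' that the proof is incomplete. Your Nikulin-style reduction is fine as far as it goes: the glue group must lie in $\{0,[v_1/2]\}$, so $|H|\in\{1,2\}$, and one reads off the genus of $M^{\bot}$ ($\Lambda_{N-k}$, resp.\ $II_{2,N-k}$, the latter only when $k\equiv N-2\pmod 8$). But knowing the abstract isometry class of $M^{\bot}$ does not place $\pi([\sigma])$ in the \emph{specific} subvarieties $\im f_{N-k,N}$ or $\im(f_{N-k+1,N}\circ l_{N-k+1})$: for that you must show the pair $M\subset\Lambda_N$ is $\Gamma_{\xi_N}$-conjugate to the standard tower configuration \emph{and} that the decoration induced on $M^{\bot}$ is $\xi_{N-k}$. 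That is the actual content of the statement, and neither Eichler's criterion (which concerns single vectors) nor Nikulin's uniqueness theorems (which give conjugacy under the full orthogonal group, with hypotheses you would still have to verify) delivers it without further work. The paper avoids the global classification entirely via \Ref{prp}{pairort}: it peels off the $v_i$ one at a time, using \Ref{lmm}{hyperort} to show that each successive orthogonal complement is again a decorated $D$-lattice in which the next $v_i$ is hyperelliptic (or, at most once and only when $k\equiv N-2\pmod 8$, has divisibility $4$ with unimodular complement). Combined with \Ref{prp}{minorm} (transitivity of $\Gamma_{\xi}$ on hyperelliptic vectors) and \Ref{prp}{decest} (lifting $\Gamma_{\xi_{N-i}}$ into the stabilizer inside $\Gamma_{\xi_N}$), this reduces the orbit and decoration questions to a single application of Eichler's criterion at each step. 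If you want to keep your one-shot approach, you must supply the transitivity and decoration-tracking arguments; as written, the proof stops at the decisive step.
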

We will prove~\Ref{prp}{deltakappa} at the end of the present subsubsection.
\begin{lemma}\label{lmm:inthypunig}
Let  $(\Lambda,\xi)$ be a    $D$-lattice, of dimension $N\equiv 3\pmod{8}$. Then 
$$\es=H_h(\Lambda,\xi)\cap H_u(\Lambda,\xi)=H^{(2)}_u(\Lambda,\xi).$$  
\end{lemma}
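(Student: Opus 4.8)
The plan is to show two things: first that a hyperelliptic vector and a (reflective) unigonal vector in a dimension-$N$ $D$-lattice with $N \equiv 3 \pmod 8$ can never be simultaneously orthogonal to a common positive-definite plane, and second that two non-proportional unigonal vectors likewise cannot be. Both reduce to the observation that the relevant span would have to be a rank-$2$ negative definite sublattice, and one computes its Gram matrix using the constraints on squares and on pairwise products coming from divisibility.

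Concretely, suppose $[\sigma] \in H_h(\Lambda,\xi) \cap H_u(\Lambda,\xi)$. Lifting to $\cD^+_\Lambda$, there is $[\sigma] \in \cD^+_\Lambda$ with a hyperelliptic vector $v$ (so $v^2 = -4$, $\divisore(v) = 2$) and a unigonal vector $u$ (so $u^2 = -4$, $\divisore(u) = 4$) both in $\sigma^\bot \cap \Lambda$. First I would note that if $v, u$ were proportional they would have the same divisibility, so they are linearly independent, hence span a rank-$2$ sublattice of $\sigma^\bot \cap \Lambda$, which is negative definite because $\sigma^\bot \cap \Lambda_\RR$ is negative definite (it is the orthogonal complement of a positive $2$-plane in a signature $(2,m)$ space). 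Now $\divisore(u) = 4$ forces $(u, \Lambda) \subseteq 4\ZZ$, in particular $(u,v) \in 4\ZZ$; since $|(u,v)|^2 < u^2 v^2 = 16$ by negative-definiteness (Cauchy--Schwarz / positivity of the Gram determinant), we get $(u,v) = 0$. But then $v \in u^\bot$. Here I invoke~\eqref{perpunig}: $u^\bot \cong \II_{2,2+8k}$ is unimodular, so it contains no primitive vector of divisibility $>1$; yet $v$ (or a primitive vector proportional to $v$ inside $u^\bot$) has divisibility $2$ in $\Lambda$, and one checks its divisibility in the unimodular sublattice $u^\bot$ is still $\ge 2$ --- indeed since $\Lambda = \la u\ra \oplus u^\bot$, for $w \in u^\bot$ one has $(w, \Lambda) = (w, u^\bot)$, so $\divisore_{u^\bot}(v) = \divisore_\Lambda(v) = 2$, contradiction. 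Hence $H_h(\Lambda,\xi) \cap H_u(\Lambda,\xi) = \es$.

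The same argument handles $H^{(2)}_u(\Lambda,\xi)$: if $v_1, v_2$ are linearly independent unigonal vectors orthogonal to a common $\sigma$, then $\la v_1, v_2\ra$ is rank-$2$ negative definite, $(v_1,v_2) \in 4\ZZ$ (from $\divisore(v_1) = 4$), and $|(v_1,v_2)|^2 < v_1^2 v_2^2 = 16$ gives $(v_1,v_2) = 0$; then $v_2 \in v_1^\bot \cong \II_{2,2+8k}$, but $\divisore_{v_1^\bot}(v_2) = \divisore_\Lambda(v_2) = 4 > 1$ contradicts unimodularity of $v_1^\bot$. Therefore no point of $\cD^+_\Lambda$ lies on two independent unigonal hyperplanes, i.e.~$H^{(2)}_u(\Lambda,\xi) = \es$, and a fortiori $H_h \cap H_u$ (which is contained in neither chain but would in particular force the unigonal constraint) is empty; combining, $\es = H_h(\Lambda,\xi) \cap H_u(\Lambda,\xi) = H^{(2)}_u(\Lambda,\xi)$.

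I expect the only slightly delicate point to be the bookkeeping that $\divisore_{u^\bot}(v) = \divisore_\Lambda(v)$ when $\Lambda = \la u\ra \oplus u^\bot$ is an orthogonal direct sum and $v \in u^\bot$; this is immediate from $(v, x + y\,u) = (v,x)$ for $x \in u^\bot$, $y \in \ZZ$, so $(v,\Lambda) = (v, u^\bot)$. Everything else is the standard Gram-matrix positivity argument already used repeatedly in the proofs of~\Ref{lmm}{manyhyp} and~\Ref{prp}{unigcong3}, so I would keep the write-up brief and refer back to those.
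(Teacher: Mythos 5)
Your proof is correct and is essentially the paper's own argument: positivity of the Gram determinant together with $(v,u)\in 4\ZZ$ forces orthogonality, and then the unimodularity of $u^{\bot}$ (from $u^2=-4$, $\divisore(u)=4$) rules out a vector of divisibility $2$ or $4$ lying in it. The paper merely runs the two cases (hyperelliptic-vs-unigonal and unigonal-vs-unigonal) simultaneously by letting the second vector be "either hyperelliptic or unigonal"; your explicit check that $\divisore_{u^{\bot}}(v)=\divisore_{\Lambda}(v)$ for the orthogonal direct sum is a welcome clarification of a step the paper leaves implicit.
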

\begin{proof}
Suppose the contrary. Then there exists $[\sigma]\in\cD_{\Lambda_N}^{+}$ such that $\sigma^{\bot}$ contains a unigonal vector $w\in\Lambda_N$ (i.e.~$w^2=-4$ and $\divisore(w)=4$), and a vector $v$ which is either  hyperelliptic or unigonal, and moreover $v\not=\pm w$. It follows that $\la v,w\ra$ is a rank-$2$ negative definite lattice ($\sigma^{\bot}\cap\Lambda_{\RR}$ is negative definite), and hence the determinant of $\la v,w\ra$ is strictly positive. This in turn implies that  $v\bot w$ (recall that $(v,w)\in 4\ZZ$). 
 On the other hand, $\Lambda=\ZZ w\oplus w^{\bot}$, because $w^2=-4$ and $\divisore(w)=4$. Since $\Lambda$ has determinant $-4$, it follows that the lattice $w^{\bot}$ is unimodular. Now, $v$ belongs to the unimodular lattice $w^{\bot}$, and $\divisore_{\Lambda}(v)\in\{2,4\}$; that is a contradiction.
\end{proof}
\begin{lemma}\label{lmm:twounig}
Let  $(\Lambda,\xi)$ be a    $D$-lattice, of dimension $N\equiv 4\pmod{8}$. If $w_1,w_2$ are non-proportional unigonal vectors of $(\Lambda,\xi)$, spanning a negative definite sublattice, then   $\la w_1,w_2\ra$  is isomorphic to $D_2$, and  
\begin{equation}\label{rabbia}
\Lambda=\la w_1,w_2\ra\oplus \la w_1,w_2\ra^{\bot},\qquad \la w_1,w_2\ra^{\bot}\cong \II_{2,N-2}.
\end{equation}
In particular $\{w_1^{*},w_2^{*}\}=\{\zeta,\zeta'\}$, where, as usual $A_{\Lambda}=\{0,\xi,\zeta,\zeta'\}$.
\end{lemma}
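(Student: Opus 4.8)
The plan is to compute the sublattice $M:=\langle w_1,w_2\rangle$ and its orthogonal complement in $\Lambda$ explicitly, using that for $N\equiv 4\pmod 8$ a unigonal vector has square $-2$, divisibility $2$, and (being of square $-2$) is primitive, with $w^{*}\in\{\zeta,\zeta'\}$ by \Ref{prp}{minorm}(3b).

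First I would establish $w_1\perp w_2$ and identify $M$. Since $\divisore(w_i)=2$, the form $(\,\cdot\,,w_i)$ takes only even values on $\Lambda$, so in particular $(w_1,w_2)\in 2\ZZ$. By hypothesis $M$ is negative definite of rank $2$ (the $w_i$ being non-proportional), hence the determinant $w_1^2w_2^2-(w_1,w_2)^2=4-(w_1,w_2)^2$ of its Gram matrix is strictly positive; combined with $(w_1,w_2)\in 2\ZZ$ this forces $(w_1,w_2)=0$. Thus $M=\ZZ w_1\oplus\ZZ w_2$ has Gram matrix $\diag(-2,-2)$, i.e.\ $M\cong A_1\oplus A_1\cong D_2$.

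Next I would prove the splitting \eqref{rabbia}. For $v\in\Lambda$, the orthogonal projection of $v$ onto $M_\QQ$ (with respect to the orthogonal basis $w_1,w_2$) is $\operatorname{pr}_{M_\QQ}(v)=-\tfrac12(v,w_1)w_1-\tfrac12(v,w_2)w_2$, and since $(v,w_i)\in 2\ZZ$ this element actually lies in $\ZZ w_1+\ZZ w_2=M$. Hence $v-\operatorname{pr}_{M_\QQ}(v)$ lies in $\Lambda$ and is orthogonal to $w_1,w_2$, so it lies in $M^{\bot}$; therefore $v\in M+M^{\bot}$, and as $M\cap M^{\bot}=0$ we get $\Lambda=M\oplus M^{\bot}$. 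Now $M^{\bot}$ is an even lattice of signature $(2,N-2)$ (signatures being additive over orthogonal direct sums and $M$ having signature $(0,2)$), and $A_\Lambda\cong A_M\oplus A_{M^{\bot}}$; since $|A_\Lambda|=|A_{D_{N-2}}|=4=|A_{D_2}|=|A_M|$ by \Ref{clm}{disdin}, we obtain $|A_{M^{\bot}}|=1$, i.e.\ $M^{\bot}$ is unimodular. An indefinite even unimodular lattice of signature $(2,N-2)$ with $2\equiv N-2\pmod 8$ — which holds precisely because $N\equiv 4\pmod 8$ — is isomorphic to $\II_{2,N-2}$, which gives \eqref{rabbia}.

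Finally, for the last assertion, each $w_i^{*}$ lies in $\{\zeta,\zeta'\}$ by definition of a unigonal vector, so it only remains to check $w_1^{*}\ne w_2^{*}$. If $w_1^{*}=w_2^{*}$ then $(w_1-w_2)/2\in\Lambda$, and $\bigl((w_1-w_2)/2\bigr)^2=\tfrac14\bigl(w_1^2-2(w_1,w_2)+w_2^2\bigr)=-1$, contradicting that $\Lambda$ is even; hence $\{w_1^{*},w_2^{*}\}=\{\zeta,\zeta'\}$. I do not expect a genuine obstacle here: the only non-formal ingredients are the classification of indefinite even unimodular lattices and the small-lattice facts recorded in \Ref{clm}{disdin}. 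The point I would be most careful about is the splitting step, which uses $\divisore(w_i)=2$ in an essential way — the identical projection argument fails for hyperelliptic vectors (square $-4$, divisibility $2$), in accordance with the index-$2$ phenomenon of \Ref{rmk}{inducedec}.
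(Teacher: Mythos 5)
Your proof is correct and follows essentially the same route as the paper's: orthogonality of $w_1,w_2$ from positivity of the Gram determinant plus $(w_1,w_2)\in 2\ZZ$, identification of $\la w_1,w_2\ra$ with $D_2$, unimodularity of the complement, and the classification of indefinite even unimodular lattices. The only (harmless) variation is in the two remaining steps: you obtain the splitting $\Lambda=\la w_1,w_2\ra\oplus\la w_1,w_2\ra^{\bot}$ directly by the integrality of the orthogonal projection (using $\divisore(w_i)=2$), and you get $w_1^{*}\neq w_2^{*}$ from evenness of $\bigl((w_1-w_2)/2\bigr)^2$, whereas the paper first deduces $w_1^{*}\neq w_2^{*}$ from the discriminant form and reads off the splitting from that.
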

\begin{proof}
Computing the determinant of the quadratic form on $\la w_1,w_2\ra$ (which is positive by hypothesis), we get that $w_1\bot w_2$. Since $(\zeta,\zeta)=(\zeta',\zeta')\equiv -1/2\pmod{2\ZZ}$, it follows that $w_1^{*}\not= w_2^{*}$. The first equality 
of~\eqref{rabbia} follows at once, and since  $\la w_1,w_2\ra^{\bot}$ is unimodular, the isomorphism $\la w_1,w_2\ra^{\bot}\cong \II_{2,N-2}$ follows from the classification of unimodular indefinite even lattices.
\end{proof}
\begin{lemma}\label{lmm:unigandhyp}
Let  $(\Lambda,\xi)$ be a    $D$-lattice, of dimension $N\equiv 4\pmod{8}$. Let $v,w$ be a hyperelliptic and a unigonal vector of $(\Lambda,\xi)$ respectively, spanning a negative definite sublattice. Then  $\la v,w\ra$  is isomorphic to $D_2$, and  
\begin{equation}\label{ditwoperp}
\Lambda=\la v,w\ra\oplus \la v,w\ra^{\bot},\qquad \la v,w\ra^{\bot}\cong \II_{2,N-2}.
\end{equation}
\end{lemma}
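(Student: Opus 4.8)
The plan is to mimic the proofs of \Ref{lmm}{twounig} and \Ref{prp}{liftrefl}(3), exploiting the fact that in dimension $N\equiv 4\pmod 8$ a hyperelliptic vector and a unigonal vector behave interchangeably at the level of discriminant forms. First I would record that $v^2=-4$, $\divisore(v)=2$ with $v^*=\xi$, while $w^2=-2$, $\divisore(w)=2$ with $w^*$ a $2$-torsion class of square $-1/2\pmod{2\ZZ}$; write $A_\Lambda=\{0,\xi,\zeta,\zeta'\}$ and note $q_\Lambda(\zeta)=q_\Lambda(\zeta')\equiv -1/2\pmod{2\ZZ}$, so $w^*\in\{\zeta,\zeta'\}$ by~\eqref{discdien} and \Ref{clm}{disdin}. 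Then, exactly as in \Ref{lmm}{twounig}, the determinant of the Gram matrix of $\la v,w\ra$ with respect to the basis $\{v,w\}$ equals $8-(v,w)^2>0$ (positivity from the negative-definiteness hypothesis), and since $(v,w)\in 2\ZZ$ (divisibility of $w$ is $2$) in fact $(v,w)=0$; hence $\la v,w\ra$ is the orthogonal direct sum of $(-4)$ and $(-2)$.

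Next I would identify $\la v,w\ra$ with $D_2$ inside $\Lambda$: the key point is saturation. Since $v^*$ and $w^*$ are both $2$-torsion and $v^*=\xi\neq w^*$, the class $(v+w)/2$ pairs integrally with all of $\Lambda$ (because $\divisore(v)=\divisore(w)=2$ forces $(v,\Lambda)\subset 2\ZZ$ and $(w,\Lambda)\subset 2\ZZ$, so $((v+w)/2,\Lambda)\subset\ZZ$), yet $(v+w)/2$ has square $-3/2\notin\ZZ$, so this vector is \emph{not} in $\Lambda$ — rather, an easy check (as in \Ref{rmk}{inducedec}) shows that the relevant half-integral vector that does lie in $\Lambda$ is of the form $(v+w)/2+u$ with $u$ in the orthogonal complement, forcing the saturation $\ov{\la v,w\ra}$ to be a rank-$2$ lattice of determinant dividing $8$. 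The only even rank-$2$ negative definite overlattice of $(-4)\oplus(-2)$ of index a power of $2$ is $D_2\cong A_1\oplus A_1$ (index $2$), and indeed one verifies $(v-w)/2$ is \emph{not} integral while no half-lattice vector is added — alternatively, and more cleanly, I would just exhibit the splitting directly: choose a standard model $\Lambda\cong II_{2,2+8k}\oplus D_2$ with the unigonal vector $w=(0_{4+8k},(2,0))$ of square $-2$ (wait: in this model unigonal has $v^2=-2,\divisore 2$, so $w=(0,\dots,0,(1,1))$ has square $-2$) and then, using Eichler's Criterion (\Ref{prp}{criteich}) applied to $v$ orthogonal to $w$ with $v^*=\xi$, move $v$ to the standard hyperelliptic vector $(0_{4+8k},(2,0)\oplus\dots)$ — i.e. reduce to an explicit sublattice computation in $II_{2,2+8k}\oplus D_2$ where $\la v,w\ra^{\bot}\cong II_{2,2+8k}\cong II_{2,N-2}$ is manifest.

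So the cleanest route is: (i) prove $v\perp w$ as above; (ii) pick the standard identification $\Lambda=\II_{2,2+8k}\oplus D_2$ (from \Ref{rmk}{periodeight}), in which a fixed unigonal vector $w$ is $(0_{4+8k},(1,-1))$ and a fixed hyperelliptic vector orthogonal to it is $v=(0_{4+8k},(1,1))$ — then $\la v,w\ra=D_2$ and $\la v,w\ra^{\bot}=\II_{2,2+8k}\oplus 0\cong\II_{2,N-2}$, and $\Lambda=\la v,w\ra\oplus\la v,w\ra^{\bot}$ because $D_2$ is a direct summand there; (iii) transport this to the given $v,w$: by Eichler's Criterion the pair (a unigonal vector, plus a hyperelliptic vector in its orthogonal complement with the correct discriminant class $\xi$) is unique up to $\wt O^+(\Lambda)$, hence up to $\Gamma_\xi$ — here I must check that the discriminant data of $v$ inside $w^\perp\cong\II_{2,2+8k}\oplus A_1$ is forced, which it is since $w^\perp$ has discriminant group $\ZZ/2$ and $v$ must be the unique divisibility-$2$ class there. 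The main obstacle, and the step I would be most careful about, is \textbf{(iii)}: making the uniqueness-up-to-isometry argument airtight requires knowing that $v$, once we pass to $w^\perp\cong\II_{2,2+8k}\oplus A_1$, has the right divisibility and square there (namely $v^2=-4$, $\divisore_{w^\perp}(v)=2$), and then applying Eichler's criterion inside $w^\perp$ — a lattice that does contain $U\oplus U$, so the criterion applies. Everything else is the same bilinear-form bookkeeping already carried out in \Ref{lmm}{twounig}, \Ref{lmm}{manyhyp}, and \Ref{prp}{liftrefl}.
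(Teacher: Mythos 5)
There is a genuine gap, and it is fatal to the whole route: your claim that $(v,w)=0$ is false. From negative definiteness you get $\det\begin{pmatrix}-4&(v,w)\\(v,w)&-2\end{pmatrix}=8-(v,w)^2>0$, and $(v,w)\in 2\ZZ$ then gives $(v,w)\in\{0,\pm2\}$, \emph{not} $(v,w)=0$ (you seem to have transported the computation from \Ref{lmm}{twounig}, where both squares are $-2$ and the bound is $4-(v,w)^2>0$). In fact $(v,w)=0$ is impossible: if $v\perp w$ then $v$ lies in $w^{\bot}\cong\II_{2,N-2}\oplus A_1$ as a vector of square $-4$ and divisibility $2$, and that lattice contains no such vector (its discriminant group is $\ZZ/2$ with generator of square $-1/2$, whereas $q(v^{*})\equiv-1$). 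Your own backup computations confirm this: $\ZZ v\oplus\ZZ w$ with $v\perp w$ is $(-4)\oplus(-2)$, of determinant $8$, which is not $D_2$ and (as you half-notice) admits no even overlattice, so no saturation argument can rescue it; and in the explicit model $\II_{2,2+8k}\oplus D_2$ the vector $(0_{4+8k},(1,1))$ you propose as the hyperelliptic one has square $-2$ and class $\zeta$, i.e.\ it is \emph{unigonal} — the genuine hyperelliptic vector is $(0_{4+8k},(2,0))$, which pairs to $-2$ with $(0_{4+8k},(1,1))$.

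The correct argument (the paper's) is short once this is fixed: rule out $(v,w)=0$ as above, normalize to $(v,w)=2$, and observe that $\{v+w,\,w\}$ is a basis of $\ZZ v+\ZZ w$ with $(v+w)^2=w^2=-2$ and $(v+w)\perp w$, so $\la v,w\ra\cong D_2$ on the nose. Then $v+w$ and $w$ both have divisibility $2$ in $\Lambda$, so $\Lambda=\la v+w\ra\oplus(v+w)^{\bot}$ and $(v+w)^{\bot}=\la w\ra\oplus\{v,w\}^{\bot}$, whence $\{v,w\}^{\bot}$ is even unimodular of signature $(2,N-2)$, i.e.\ $\II_{2,N-2}$. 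Your step (iii) (Eichler transport inside $w^{\bot}$) is not needed and in any case cannot be run as written, since it presupposes the false orthogonality.
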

\begin{proof}
Computing the determinant of the quadratic form on $\la v,w\ra$ (which is positive by hypothesis), we get that $|(v,w)|\in\{0,2\}$. Since $w^{\bot}$ is unimodular (see the proof of~\Ref{lmm}{twounig}), we must have 
$|(v,w)|=2$, and hence we may assume that $(v,w)=2$. It follows that $\la v,w\ra$  is isomorphic to $D_2$; in fact  $\{v+w,w\}$ is a standard  basis, i.e.~$-2=(v+w)^2=w^2$ and $(v+w)\bot w$. Then~\eqref{ditwoperp} follows as in the proof of~\Ref{lmm}{twounig}.
\end{proof}
\begin{lemma}\label{lmm:hyperort}
Let $(\Lambda,\xi)$ be a decorated $D$-lattice, of dimension $N\ge 4$. Let $v$ be a hyperelliptic vector of $(\Lambda,\xi)$, and $\eta$ be the decoration of the $D$-lattice $v^{\bot}$ defined in~\Ref{rmk}{inducedec}.  Suppose that $w$ is a hyperelliptic vector of $(\Lambda,\xi)$, \emph{orthogonal} to $v$. Then one of the following holds:
\begin{enumerate}
\item
$w$ has divisibility $2$ in $v^{\bot}$, and it is a hyperelliptic vector of $(v^{\bot},\eta)$. 
\item
$w$ has divisibility $4$ in $v^{\bot}$, $\{v,w\}^{\bot}\cong II_{2,N-2}$, and $N\equiv 4\pmod{8}$.
\end{enumerate}
\end{lemma}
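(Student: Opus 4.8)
The plan is to mimic the lattice-theoretic dichotomy arguments already used several times in this subsection (cf.\ the proofs of~\Ref{lmm}{twounig} and~\Ref{lmm}{unigandhyp}), computing with the Gram matrix of $\la v,w\ra$ and exploiting that $v^{\bot}$ is an honest $D$-lattice of dimension $N-1$ with the induced decoration $\eta$ from~\Ref{rmk}{inducedec}. Since $v$ and $w$ are both hyperelliptic, $v^2=w^2=-4$, $\divisore_{\Lambda}(v)=\divisore_{\Lambda}(w)=2$, and $v\bot w$ by hypothesis, so $\la v,w\ra$ is negative definite of rank $2$. As in~\Ref{lmm}{manyhyp}, the relation $-4=w^2=(w,v+w-v)$ together with $(v-w)/2\in\Lambda$ and $\divisore(v)=2$ forces $(v,w)\in 4\ZZ$; here $(v,w)=0$ is already assumed, so nothing new — the real content is the divisibility of $w$ \emph{inside} $v^{\bot}$.

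First I would determine $\divisore_{v^{\bot}}(w)$. We have $\divisore_{\Lambda}(w)=2$, and since $\la v\ra\oplus v^{\bot}$ has index $2$ in $\Lambda$ with $(v+w_0)/2\in\Lambda$ for a fixed $w_0\in v^{\bot}$ representing $\eta$, one computes $(w,v^{\bot})\subseteq (w,\Lambda)+\tfrac12(w,\la v\ra)=2\ZZ$ (using $v\bot w$), so $\divisore_{v^{\bot}}(w)\in\{2,4\}$ (it divides $4$ because $A_{v^{\bot}}$ is cyclic of order $\le 4$, being $\ZZ/4$ if $N-1$ is odd and the Klein group otherwise, and $\divisore$ divides the exponent times... — more precisely, $w^2=-4$ must be divisible by $2\divisore_{v^{\bot}}(w)/\gcd$, which with the possible discriminant forms from~\eqref{discdien} and~\Ref{clm}{disdin} rules out $\divisore_{v^{\bot}}(w)=1$). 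This gives the two cases in the statement.

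Case~(1): $\divisore_{v^{\bot}}(w)=2$. Then $w^2=-4$ and $\divisore_{v^{\bot}}(w)=2$ say exactly that $w$ is a hyperelliptic vector of the $(N-1)$-dimensional $D$-lattice $v^{\bot}$; I must still check $w^{*}=\eta$ in $A_{v^{\bot}}$, not some other square-$1$ class (only an issue when $N-1\equiv 6\pmod 8$, i.e.\ $N\equiv 7\pmod 8$, where there are three square-$1$ classes). Here I would invoke~\Ref{prp}{minorm}(2) applied to $(v^{\bot},\eta)$: a square-$-4$, divisibility-$2$ vector with class $\ne\eta$ would have $q_{v^{\bot}}$-value $\ne q_{v^{\bot}}(\eta)$, and tracking the compatibility $q_\Lambda(w^{*})\equiv -1\pmod{2\ZZ}$ under the gluing $A_\Lambda\leftrightarrow A_{v^{\bot}}$ of~\Ref{rmk}{inducedec} pins down $w^{*}=\eta$. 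Alternatively, by Eichler's Criterion all hyperelliptic vectors of $v^{\bot}$ orthogonal to nothing in particular are $\wt O^{+}(v^{\bot})$-equivalent, and the claim is just that $w$ lies in that orbit rather than in a $\zeta$- or $\zeta'$-orbit, which again follows from the discriminant-form bookkeeping.

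Case~(2): $\divisore_{v^{\bot}}(w)=4$. Then $w^2=-4$ forces, via~\eqref{discdien} and~\Ref{clm}{disdin} applied to $v^{\bot}$, that $q_{v^{\bot}}(w^{*})\equiv -1/4\pmod{2\ZZ}$ with $w^{*}$ of order $4$; comparing with $q_{D_{N-3}}(\alpha)\equiv -(N-3)/4$ shows $N-3\equiv 1\pmod 8$ is impossible there, and pushing through the constraint one finds $N\equiv 4\pmod 8$ (this is the same congruence computation as in~\Ref{prp}{propram}). In that range, $\la v\ra\oplus\la w\ra$ has discriminant group containing a $\ZZ/4\times\ZZ/2$ piece but the gluing $(v+w)/2\in\Lambda$ cuts it down; explicitly, with $N=8k+4$ I would normalize $\Lambda\cong II_{2,2+8k}\oplus D_2$ and take $v=(0,(0,2))$, and then $w$ orthogonal to $v$ of divisibility $4$ in $v^{\bot}\cong II_{2,2+8k}\oplus A_1$ must be $w=(0,(2,0,\dots))$ up to the stable orthogonal group — wait, that has divisibility $2$; rather $w$ lives in a $D_3\subset$ ... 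I would instead argue abstractly: $\la v,w\ra$ has Gram matrix $\diag(-4,-4)$, so $\det\la v,w\ra=16$, but the saturation $\Sat\la v,w\ra$ must have determinant $16/m^2$ for the index $m=[\Sat\la v,w\ra:\la v,w\ra]$; the vectors $(v\pm w)/2$ are in $\Lambda$ (square $-2$, checking they lie in $\Sat$) giving $m\ge 2$, and one verifies $\Sat\la v,w\ra\cong D_2$ by exhibiting the standard basis $\{(v+w)/2,(v-w)/2\}$ with square $-2$ and mutual product... $(v+w)/2\cdot(v-w)/2=(v^2-w^2)/4=0$, so actually that is $A_1\oplus A_1=D_2$. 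Then $\la v,w\ra=\Sat\la v,w\ra=D_2$ after all (the vectors $v,w$ themselves have divisibility $2$ within it), its discriminant is the Klein group, $A_\Lambda$ is the Klein group, so the orthogonal complement is unimodular, hence $\cong II_{2,N-2}$ by the classification of indefinite even unimodular lattices, and $\Lambda=\la v,w\ra\oplus\la v,w\ra^{\bot}$.

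The main obstacle I anticipate is Case~(2): reconciling ``$w$ has divisibility $4$ in $v^{\bot}$'' with ``$\la v,w\ra\cong D_2$ (whose vectors have divisibility $2$)'' requires care — the divisibility of $w$ jumps between $v^{\bot}$ and the rank-$2$ sublattice $\la v,w\ra$ versus the full $\Lambda$, and one must not conflate $\divisore_{v^{\bot}}(w)$ with $\divisore_{\la v,w\ra}(w)$. I would resolve this by working throughout with images in the relevant discriminant groups and using the gluing exact sequence $0\to \la v,w\ra\oplus\la v,w\ra^{\bot}\to\Lambda\to H\to 0$ with $H$ isotropic in $A_{\la v,w\ra}\oplus A_{\la v,w\ra^{\bot}}$, rather than with divisibilities directly; the congruence $N\equiv 4\pmod 8$ then drops out of the requirement that $A_\Lambda$ has the Klein-group form forced by $\Lambda\cong\Lambda_N$ with $N$ even and a square-$1$ element of the right order. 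Everything else is routine Nikulin/Eichler bookkeeping of the kind already deployed in Lemmas~\ref{lmm:twounig}--\ref{lmm:unigandhyp}.
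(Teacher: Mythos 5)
Your overall architecture (the divisibility dichotomy in $v^{\bot}$ followed by discriminant-form bookkeeping) matches the paper's, but Case~(1) has a genuine gap. To show $w^{*}=\eta$ you propose to compare $q_{v^{\bot}}$-values, conceding that the only delicate case is $N-1\equiv 6\pmod 8$; but in exactly that case the comparison carries no information. For $N\equiv 7\pmod 8$ all three nonzero elements of $A_{v^{\bot}}$ have square $1\pmod{2\ZZ}$, and by \Ref{prp}{minorm}~(3b) the classes $\zeta,\zeta'$ are themselves represented by vectors of square $-4$ and divisibility $2$ (the unigonal vectors of $(v^{\bot},\eta)$), so a square-$(-4)$, divisibility-$2$ vector with class $\ne\eta$ does \emph{not} have a different $q_{v^{\bot}}$-value. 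One must instead use the position of $w$ relative to $v$ inside $\Lambda$, which is what the paper does: since $v,w$ are both hyperelliptic in $(\Lambda,\xi)$ one has $[v/2]=[w/2]=\xi$, hence $w=v+2y$ with $y\in\Lambda$; by the definition of $\eta$ there is a hyperelliptic $u\in v^{\bot}$ with $(v-u)/2\in\Lambda$, so $v=u+2z$. Then $w=u+2(y+z)$ with $y+z=(w-u)/2\in v^{\bot}$ (because $v^{\bot}$ is saturated), whence $w^{*}=u^{*}=\eta$. This argument is not a cosmetic variant of yours; it is the step that makes the exceptional congruence class work.

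Case~(2) lands on the correct conclusion but is shakier than necessary. The assertion $\la v,w\ra=\Sat\la v,w\ra\cong D_2$ is false as written: $\ZZ v+\ZZ w$ has determinant $16$ while the saturation $\la (v+w)/2,(v-w)/2\ra$ has determinant $4$, so the inclusion is of index $2$. More importantly, the splitting $\Lambda=\Sat\la v,w\ra\oplus\Sat\la v,w\ra^{\bot}$ requires ruling out nontrivial glue between the two summands, which you do not do. The paper's route avoids all of this: $\divisore_{v^{\bot}}(w)=4=-w^2$ gives $v^{\bot}=\la w\ra\oplus\{v,w\}^{\bot}$ directly, and since $|\det v^{\bot}|=|\det\Lambda_{N-1}|=4=|\det\la w\ra|$ the complement is unimodular, hence isomorphic to $II_{2,N-2}$, which forces $N\equiv 4\pmod 8$ by the classification of indefinite even unimodular lattices. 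I recommend replacing your Case~(2) argument with this one and, in Case~(1), replacing the discriminant-form comparison by the explicit congruence $w\equiv u\pmod{2v^{\bot}}$.
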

\begin{proof}
By hypothesis the divisibility of $w$ in $\Lambda$ is $2$, hence  the divisibility of $w$ in $v^{\bot}$ is a multiple of $2$. Since $v^{\bot}$ is a $D$-lattice, $w^{*}\in A_{v^{\bot}}$ has order  $2$ or $4$; it follows that   the divisibility of $w$ in $v^{\bot}$ is either $2$ or $4$. 

Suppose that $\divisore_{v^{\bot}}(w)=2$. We must prove  that $w^{*}=\eta$.  Since $v,w$ are  hyperelliptic vectors of $(\Lambda,\xi)$, there exists $y\in\Lambda$ such that $w=v+2y$. 
There exists a hyperelliptic vector $u$ of $(v^{\bot},\eta)$ such that $u^{*}=\eta$. By definition of $\eta$, the vector $z:=(v-u)/2$ belongs to $\Lambda$. Thus  $v=u+2z$, and  hence $w=u+2(y+z)$, and $(y+z)\in v^{\bot}$; it follows that $w^{*}=u^{*}=\eta$. 

Now suppose that $\divisore_{v^{\bot}}(w)=4$.  Then    $v^{\bot}=\la w\ra \oplus \{v,w\}^{\bot}$ because $w^2=-4$. On the other hand $v^{\bot}\cong \Lambda_{N-1}$ by~\Ref{lmm}{prophyp}.   Since     $|\det  \Lambda_{N-1}|=4$, it follows that 
$\{v,w\}^{\bot}$ is unimodular, and hence  isomorphic to $II_{2,N-2}$. The equality  $N\equiv 4\pmod{4}$ holds by the classification of unimodular even lattices.
\end{proof}
\begin{proposition}\label{prp:pairort}
Let  $N\ge 4$ and $1\le k\le N-3$. Let  $(\Lambda,\xi)$ be a dimension-$N$   $D$-lattice, and    $v_1,\ldots,v_k$ be pairwise orthogonal hyperelliptic vectors of  
$(\Lambda,\xi)$. Then   for any  $1\le i\le k-1$, 
\begin{equation}
\{v_1,\ldots,v_i\}^{\bot}\cong \Lambda_{N-i},
\end{equation}
 $\{v_1,\ldots,v_i\}^{\bot}$ carries a decoration $\xi(i)$ such that $(\{v_1,\ldots,v_{i-1}\}^{\bot},\xi(i-1))$ induces  $(\{v_1,\ldots,v_i\}^{\bot},\xi(i))$ as
 in~\Ref{rmk}{inducedec} (for $i=1$, this means that $(\Lambda,\xi)$ induces $(v_1^{\bot},\xi(1))$), and $v_i$ is a hyperelliptic vector of 
  $(\{v_1,\ldots,v_i\}^{\bot},\xi(i))$.
Moreover one of the following holds:
 \begin{enumerate}
\item
 $v_k$ is a hyperelliptic vector of 
  $(\{v_1,\ldots,v_{k-1}\}^{\bot},\xi(k-1))$.
\item
$k\ge 2$,  $N\equiv k+2\pmod{8}$,   $v_k$ has divisibility $4$ in $\{v_1,\ldots,v_{k-1}\}^{\bot}$, and   
$\{v_1,\ldots,v_k\}^{\bot}\cong \II_{2,N-k}$.  
\end{enumerate}
\end{proposition}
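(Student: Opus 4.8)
The plan is to argue by induction on $i$, stripping off one hyperelliptic vector at a time, tracking the induced decorations of~\Ref{rmk}{inducedec}, and using~\Ref{lmm}{hyperort} to control when the exceptional ``divisibility $4$'' phenomenon can occur. Write $M_0:=\Lambda$ and $M_i:=\{v_1,\dots,v_i\}^{\bot}$ for $1\le i\le k$; since the $v_j$ are pairwise orthogonal and of square $-4$, they are linearly independent, each $M_i$ is a primitive sublattice of $\Lambda$ of signature $(2,N-i)$, and (again by pairwise orthogonality) $v_{i+1},\dots,v_k\in M_i$. If $k=1$ there is nothing to prove in the first part, and alternative~(2) is excluded, so the ``moreover'' claim reduces to alternative~(1), which is the hypothesis. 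Hence I may assume $k\ge 2$.

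First I would establish, by induction on $i$ for $0\le i\le k-2$, the assertion $P(i)$: \emph{$M_i\cong\Lambda_{N-i}$; it carries a decoration $\xi(i)$, equal to $\xi$ if $i=0$ and otherwise induced from $(M_{i-1},\xi(i-1))$ via the hyperelliptic vector $v_i$ as in~\Ref{rmk}{inducedec}; and $v_{i+1},\dots,v_k$ are hyperelliptic vectors of $(M_i,\xi(i))$.} The case $P(0)$ is the hypothesis of the proposition. For the step $P(i)\Rightarrow P(i+1)$ with $0\le i\le k-3$: by $P(i)$, $v_{i+1}$ is a hyperelliptic vector of $(M_i,\xi(i))$ and $\dim M_i=N-i\ge N-k+3\ge 6$, so~\Ref{lmm}{prophyp} and~\Ref{rmk}{inducedec} give $M_{i+1}=v_{i+1}^{\bot}\cap M_i\cong\Lambda_{N-i-1}$ with induced decoration $\xi(i+1)$. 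Fixing $j\in\{i+2,\dots,k\}$, the vector $v_j$ is hyperelliptic for $(M_i,\xi(i))$ and orthogonal to $v_{i+1}$, so~\Ref{lmm}{hyperort}, applied inside $(M_i,\xi(i))$ to the pair $(v_{i+1},v_j)$, yields that either $v_j$ has divisibility $2$ in $M_{i+1}$ and is hyperelliptic for $(M_{i+1},\xi(i+1))$, or $v_j$ has divisibility $4$ in $M_{i+1}$ and $\{v_{i+1},v_j\}^{\bot}\cap M_i$ is unimodular. In the second case, since $i\le k-3$ there is $l\in\{i+2,\dots,k\}$ with $l\ne j$; then $v_l\in\{v_{i+1},v_j\}^{\bot}\cap M_i$, is primitive in $\Lambda$ hence primitive in that sublattice, and has divisibility $\ge 2$ there (divisibility in a sublattice is a multiple of divisibility in $\Lambda$), contradicting unimodularity. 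So the first alternative holds for every $j$, establishing $P(i+1)$.

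It then remains to handle the last vector $v_k$. By $P(k-2)$ (which is $P(0)$ when $k=2$), $M_{k-2}\cong\Lambda_{N-k+2}$ carries a decoration $\xi(k-2)$, $\dim M_{k-2}\ge 5$, and $v_{k-1},v_k$ are hyperelliptic for $(M_{k-2},\xi(k-2))$; so~\Ref{lmm}{prophyp} and~\Ref{rmk}{inducedec} give $M_{k-1}=v_{k-1}^{\bot}\cap M_{k-2}\cong\Lambda_{N-k+1}$ with induced decoration $\xi(k-1)$, and $v_{k-1}$ is hyperelliptic for $(M_{k-2},\xi(k-2))$. Combined with $P(0),\dots,P(k-2)$ this proves every assertion of the first part for $1\le i\le k-1$. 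Finally I apply~\Ref{lmm}{hyperort} inside $(M_{k-2},\xi(k-2))$ to the pair $(v_{k-1},v_k)$: either $v_k$ has divisibility $2$ in $M_{k-1}$ and is hyperelliptic for $(M_{k-1},\xi(k-1))$, which is alternative~(1); or $v_k$ has divisibility $4$ in $M_{k-1}$, $\{v_{k-1},v_k\}^{\bot}\cap M_{k-2}\cong\II_{2,N-k}$, and $\dim M_{k-2}=N-k+2\equiv 4\pmod 8$, i.e.\ $N\equiv k+2\pmod 8$ (with $k\ge 2$). Since $\{v_{k-1},v_k\}^{\bot}\cap M_{k-2}=\{v_1,\dots,v_k\}^{\bot}$, this is exactly alternative~(2).

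The hard part will be the propagation argument in the inductive step: one must rule out the divisibility-$4$ branch of~\Ref{lmm}{hyperort} at every stage except the last, and the only leverage is that in that branch the relevant orthogonal complement is \emph{unimodular}, which is incompatible with the existence of a further primitive divisibility-$2$ (i.e.\ hyperelliptic) vector among $v_{i+2},\dots,v_k$ --- and such a vector exists precisely when $i\le k-3$. Everything else is bookkeeping: passing the decorations down the flag $\Lambda=M_0\supset M_1\supset\cdots\supset M_{k-1}$ via~\Ref{rmk}{inducedec}, and reading off $\II_{2,N-k}$ and the congruence $N\equiv k+2\pmod 8$ from~\Ref{lmm}{hyperort}(2) applied in the ambient lattice $M_{k-2}$ of dimension $N-k+2$.
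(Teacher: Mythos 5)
Your proof is correct and follows essentially the same route as the paper's: both rest on \Ref{lmm}{prophyp} and \Ref{lmm}{hyperort}, and both rule out the divisibility-$4$ branch at every stage but the last by observing that a unimodular orthogonal complement cannot contain a further primitive vector of even divisibility. The only difference is organizational --- you run a forward induction on the depth $i$ of the flag with a strengthened statement $P(i)$, whereas the paper inducts on $k$ and invokes the proposition itself for the subsets $\{v_1,\dots,v_{k-1}\}$ and $\{v_1,\dots,v_{k-2},v_k\}$.
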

\begin{proof}
By induction on $k$. If $k=1$, the proposition holds by~\Ref{lmm}{prophyp}, if $k=2$ it holds by~\Ref{lmm}{hyperort}.  Now let $k\ge 3$. By the inductive hypothesis, 
$\{v_1,\ldots,v_{k-2}\}^{\bot}$, $\{v_1,\ldots,v_{k-1}\}^{\bot}$,   and $\{v_1,\ldots,v_{k-2},v_k\}^{\bot}$ are either $D$-lattices or unimodular. Since $v_{k-1}$ and $v_k$ have divisibility $2$ in $\Lambda$ it follows that 
$\{v_1,\ldots,v_{k-2}\}^{\bot}$, $\{v_1,\ldots,v_{k-1}\}^{\bot}$,   and $\{v_1,\ldots,v_{k-2},v_k\}^{\bot}$ are
 not unimodular, and  
hence  they are $D$-lattices. In particular, by~\Ref{lmm}{hyperort}, $v_{k-1}$ and $v_k$  are  hyperelliptic vectors of  $(\{v_1,\ldots,v_{k-2}\}^{\bot},\xi(k-2))$. The proposition follows by applying~\Ref{lmm}{hyperort} to the decorated $D$-lattice 
 $(\{v_1,\ldots,v_{k-2}\}^{\bot},\xi(k-2))$ and the vectors $v=v_{k-1}$, $w=v_k$.
\end{proof}
{\it Proof of~\Ref{prp}{deltakappa}.\/}
Let us prove~\eqref{onlyhyper}. It is obvious that $\Delta^{(k)}(N)$ contains the set on the right-hand side. Next, we check that the left-hand side of~\eqref{onlyhyper} is contained in the  right-hand side. If $N\not\equiv 3,4\pmod{8}$, the containment is obvious, because there are no reflective unigonal vectors. If $N\equiv 3\pmod{8}$ (and $k\ge 2$), the containment follows from~\Ref{lmm}{inthypunig}.  It remains to prove that if $N\equiv 4\pmod{8}$, then  
$\Delta^{(2)}(N)\subset H_h^{(2)}(N)$; 
this is an easy consequence of~\Ref{lmm}{twounig}. 

Lastly, we prove~\eqref{onksheets}. Suppose that $\pi([\sigma])\in H_h^{(k)}(N)$, i.e.~ there exist linearly independent  hyperelliptic vectors $v_1,\ldots,v_k$ of $(\Lambda_N,\xi_N)$ such that 
  $\sigma\in\{v_1,\ldots,v_k\}^{\bot}$. The latter condition implies that the restriction of $(,)_{\Lambda}$ to the span of  $v_1,\ldots,v_k$ is negative definite, and hence the  vectors $v_1,\ldots,v_k$ are pairwise orthogonal by~\Ref{lmm}{manyhyp}. Thus we may apply~\Ref{prp}{pairort}; if Item~(1) holds, then $\pi([\sigma])\in\im f_{N-k,N}$, if Item~(2) holds, then $N\equiv k+2\pmod{8}$ and $\pi([\sigma])\in \im(f_{N-k+1,N}\circ l_{N-k+1})$. This proves that the left-hand side of~\eqref{onksheets} is contained in the right-hand side. 
The converse is obvious.
\qed
\begin{remark}
Let $N=8s+4$ where $s\ge 0$. We have proved that
\begin{equation}\label{manyunig}
H_u^{(2)}(N)=\im(f_{8s+4}\circ l_{8s+3})=\im(m_{8s+4}\circ p_{8s+3}).
\end{equation}
In particular, it follows from~\Ref{lmm}{inthypunig} that $\im f_{N-2,N}\cap H_u^{(2)}(N)=\es$.
\end{remark}

%%%%%%%%%%%%%%%%%%%%%%%%%%%%%%%%%%%%%%%
%%% Sect2
\section{Locally symmetric spaces $\cF(N)$  as period spaces for $N\in\{18,19,20,21\}$}\label{sec:gitper}
We will denote by $\cF(N)$ the locally symmetric variety $\cF(\Lambda_N,\xi_N)$, where  $(\Lambda_N,\xi_N)$ is a decorated dimension-$N$ $D$-lattice.    As is well-known, the period space for quartic $K3$ surfaces is $\cF_{\Lambda_{19}}(\widetilde O^+(\Lambda_{19}))$, and the latter is equal to $\cF(19)$   by~\Ref{prp}{propgroup}.  
 We will prove that   $\cF(18)$  is the period space of hyperelliptic quartic $K3$ surfaces, see~\Ref{subsec}{hyperper}, and that $\cF(20)$ is the period space of desingularized EPW-sextics (a quotient of the period space of double EPW sextics by the natural duality involution, see~\cite{epwperiods}).  
   Lastly, in~\Ref{subsec}{epwper}  we will showand we will establish a relation between $\cF(21)$ and the period  space of hyperk\"ahler $10$-folds of Type OG10 equipped with a certain polarization.

\bigskip

\begin{notation}\label{shortnotation} 
We will denote by $H_h(N),H_u(N)$ the hyperelliptic and unigonal Heegner divisors in $\cF(N)$.
\end{notation}
\subsection{Quartic $K3$ surfaces and their periods}\label{subsec:quartper}
\setcounter{equation}{0}
  For us a \emph{$K3$ surface} is a complex projective surface $X$ with DuVal singularities,  trivial dualizing sheaf  $\omega_X$, and $H^1(\cO_X)=0$. 
A  \emph{polarization} of degree $d$ on a $K3$ surface $X$ is an ample line bundle $L$ on $X$, such that $c_1(L)$ is primitive, and $c_1(L)\cdot c_1(L)=d$. The degree $d$ of a polarization is strictly positive and even, hence we may write $d=2g-2$, with $g\ge 2$; then $g$ is the arithmetic genus of curves in $|L|$. A \emph{polarized} $K3$ surface is a couple  $(X,L)$, where $X$ is a $K3$ surface and $L$ is a polarization of $X$; the degree and genus of $(X,L)$ are defined to be the degree and genus of $L$. We recall the following fundamental result.
\begin{theorem}[Mayer~\cite{mayer}]\label{thm:mayerthm}
Let $(X,L)$ be a polarized $K3$ of genus $g$. Then one of the following holds:
\begin{enumerate}
\item
The map $\varphi_L\colon S\dra |L|^{\vee}\cong\PP^g$ defined by $L$ is regular, and is an isomorphism onto its image (a surface whose generic hyperplane section is a canonical curve of genus $g$).
\item
The map $\varphi_L\colon S\dra |L|^{\vee}\cong\PP^g$ is regular, and is a double cover of its image (a rational surface of degree $g-1$).
\item
The linear system $|L|$ has a fixed component, a smooth rational curve $R$, and $L(-R)\cong\cO_X(gE)$, where $E$ is an elliptic curve.
\end{enumerate}
\end{theorem}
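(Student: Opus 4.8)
The plan is to prove Mayer's theorem by analyzing the base locus of $|L|$ and the map $\varphi_L$, distinguishing the hyperelliptic and non-hyperelliptic cases via intersection theory on $K3$ surfaces. First I would recall the standard vanishing facts for a polarized $K3$ surface $(X,L)$ of genus $g\ge 2$: by Kodaira-type vanishing (Ramanujam/Mumford vanishing on the minimal resolution, pulled back appropriately since $X$ has only DuVal singularities) one gets $H^1(X,L)=0$, so $\dim|L|=g$, and similarly $H^1(X,L^{\otimes k})=0$ for $k\ge 1$. The key technical input is the analysis of whether $|L|$ has base points, which hinges on whether there exists an effective divisor $E$ with $E^2=0$, $E\cdot L=1$ or $2$ (an elliptic pencil configuration), via Saint-Donat's classical arguments. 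I would invoke the dichotomy: either $|L|$ is base-point free, or case (3) holds with a fixed rational component $R$ and a residual elliptic system.

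The key steps, in order: (1) Establish $H^1(X,L)=0$ and hence $h^0(X,L)=g+1$ using Riemann--Roch $\chi(L) = 2 + L^2/2 = g+1$ on the (possibly singular) $K3$, reducing to the minimal resolution where $K$ is trivial and $L$ pulls back to a nef and big line bundle. (2) Show that if $|L|$ is not base-point free, then by Saint-Donat's analysis the only possibility on a $K3$ is that the fixed part is a smooth rational curve $R$ with $R^2=-2$, $R\cdot L = 0$, and the moving part $M = L-R$ satisfies $M^2 = 0$; writing $M = gE$ for an elliptic curve $E$ gives case (3). (3) Assuming $|L|$ is base-point free, consider $\varphi_L\colon X \to \PP^g$; it is then a morphism, and one shows it is either birational onto a surface of degree $2g-2$ (case (1)) or 2-to-1 onto a surface of degree $g-1$ (case (2)). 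The trichotomy on the degree of $\varphi_L$ follows because the image has degree $(L^2)/\deg(\varphi_L) = (2g-2)/\deg(\varphi_L)$, which must be an integer $\ge \mathrm{codim}+1$; only $\deg = 1$ or $2$ survive, and in the degree-$1$ case projective normality (again from the vanishing $H^1(X, kL)=0$ and a Castelnuovo-type argument) upgrades birational to an isomorphism onto a projectively normal surface whose hyperplane sections are canonical genus-$g$ curves.

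The main obstacle I expect is carefully handling the DuVal singularities: Mayer's original theorem is stated for smooth $K3$ surfaces, but here $K3$ means a surface with rational double points. One must check that each step — the vanishing theorems, Saint-Donat's base-point analysis, and the conclusion that $\varphi_L$ embeds in case (1) — survives passage through the minimal resolution $\pi\colon \widetilde X \to X$. On $\widetilde X$, $\pi^*L$ is nef and big but not ample (it is trivial on the exceptional $(-2)$-curves), so the map defined by $\pi^*L$ on $\widetilde X$ factors through $X$ and the analysis of its image coincides with that of $\varphi_L$. The subtlety is that ``isomorphism onto its image'' in case (1) means $X$ itself (with its singularities) maps isomorphically, equivalently $\pi^*L$ separates points and tangent vectors away from the exceptional locus and contracts exactly the exceptional curves; this requires knowing $L$ is very ample on $X$, which follows from Saint-Donat's criteria adapted to the singular setting (no $(-2)$-curve $R$ with $R\cdot L = 0$ other than those contracted, no elliptic configuration with $E\cdot L \le 2$). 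I would cite Saint-Donat and the standard reduction to the resolution rather than reproving these, treating the singular case as a routine extension.

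Once the trichotomy is in place, cases (1), (2), (3) correspond precisely to the three alternatives in the statement: case (1) is the ``canonically embedded'' or ``non-hyperelliptic'' case, case (2) is the ``hyperelliptic'' case where the image is a rational surface of degree $g-1$ (for quartic $K3$'s, $g=3$ and the image is a quadric), and case (3) is the ``unigonal'' case with a fixed rational curve and an elliptic pencil. This is exactly the input needed in the sequel to identify $\cF(19)$, $\cF(18)$, and the unigonal divisor geometrically, so no further refinement of Mayer's statement is required here.
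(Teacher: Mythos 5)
The paper does not prove this statement: it is quoted as Mayer's theorem with a citation to \cite{mayer}, and the only actual content supplied by the authors is the remark immediately following it, namely that Mayer (and Saint-Donat) work with big and nef line bundles on a \emph{smooth} $K3$, and that the version for a $K3$ with DuVal singularities follows by passing to the minimal desingularization $\wt X$ and the pull-back $\wt L$. Your proposal therefore does strictly more than the paper: you re-derive the classical trichotomy along the standard Saint-Donat lines (vanishing of $H^1(X,L)$, hence $\dim|L|=g$; analysis of the fixed part; degree count $\deg\varphi_L\cdot\deg(\mathrm{im})=2g-2$ forcing $\deg\varphi_L\in\{1,2\}$; projective normality in the birational case), and you correctly identify and handle the one point the paper actually addresses, the reduction through $\pi\colon\wt X\to X$ where $\pi^*L$ is nef and big but contracts exactly the exceptional $(-2)$-curves. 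That is an acceptable, and indeed the expected, route; citing Saint-Donat for the base-point and very-ampleness criteria rather than reproving them is exactly what the paper implicitly does.

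One factual slip worth correcting in step (2): when $|L|$ has a fixed component, Saint-Donat's analysis gives $L\sim gE+R$ with $E^2=0$, $R^2=-2$, $E\cdot R=1$, so that $R\cdot L=g-2$, which is \emph{not} zero in general (it equals $1$ for quartics, $g=3$). The condition ``$R\cdot L=0$ for a $(-2)$-curve $R$'' belongs to the very-ampleness criterion (such curves are contracted by $\varphi_L$, which is why they do not occur on the singular model $X$ where $L$ is ample), not to the characterization of the fixed component. The fixed part $R$ is detected by $h^0(L-R)=h^0(L)$, and your conclusion $M=L-R=gE$ with $M^2=0$ is correct; only the intermediate intersection number is wrong, and nothing downstream depends on it.
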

Actually Mayer considered  big and nef divisors on a smooth $K3$ surface; one gets the result for a singular $K3$ surface $X$ upon replacing $X$ by its minimal desingularization $\wt{X}$, and $L$ by the pull-back $\wt{L}$ of $L$ to $\wt{X}$. 
 \begin{definition}
 A polarized K3 surface  $(X,L)$ of degree $4$   is \emph{hyperelliptic} if Item~(2) of~\Ref{thm}{mayerthm}  holds, and it is \emph{unigonal} if Item~(3) 
  of~\Ref{thm}{mayerthm} holds. 
 \end{definition}
\begin{remark}\label{rmk:eccohyp}
One constructs hyperelliptic  quartics as follows. Let $Q\subset\PP^3$  be an irreducible quadric, i.e.~either a smooth quadric, or a quadric cone over a smooth conic. Let   $B\in |\omega_Q^{-2}|$, and suppose that $B$ has simple singularities. Let $\varphi\colon X\to Q$ be the double cover ramified over $B$, and  let $L:=\varphi^{*}\cO_Q(1)$. Then  $(X,L)$  is a hyperelliptic quartic $K3$ surface, and the image of $\varphi_L\colon X\to |L|^{\vee}$ is identified with $Q$. Conversely, every hyperelliptic quartic $K3$ surface is obtained by this procedure.
\end{remark}
 \begin{remark}\label{rmk:eccounig}
One constructs unigonal quartics as follows.   Let $\FF_4:=\PP(\cO_{\PP^1}(4)\oplus\cO_{\PP^1})$. Let  $\rho\colon \FF_4\to\PP^1$ be the structure map, let  $F$ be a fiber of $\rho$, and let
  $A:=\PP(\cO_{\PP^1}(4))\subset \FF_4$ be the negative section of $\rho$.  Let  $B\in|3A+12F|$ be reduced with simple singularities, and disjoint from $A$ (the generic divisor in $|3A+12F|$ has these properties, because $B\cdot A=0$). Let $\pi\colon X\to \FF_4$ be the double cover branched over $A+B$. Then $X$ is a $K3$ surface, because $(A+B)\in |-2K_{\FF_4}|$.  Since 
   $\pi$ is simply ramified over $A$, we have $\pi^{*}A=2R$, where $R$ is a smooth rational curve. Let $E:=\pi^{*}F$; thus $E$ is an elliptic curve, moving in the elliptic fibration $\rho\circ\pi\colon X\to\PP^1$. 
 Then  $(X,\cO_X(R+3E))$ is a unigonal quartic $K3$ surface, and conversely every  unigonal quartic is obtained by this procedure.  
 \end{remark}
   If $(X,L)$  is a quartic $K3$ surface, the period point $\Pi(X,L)\in \cF(19)$ is defined as follows. Let $\wt{X}$ be the minimal desingularization of $X$ and $\wt{L}$ be the pull back of $L$ to $\wt{X}$. The lattices $c_1(\wt{L})^{\bot}_{\ZZ}:=c_1(\wt{L})^{\bot}\cap H^2(\wt{X};\ZZ)$ and $\Lambda_{19}$ are isomorphic; let 
\begin{equation}\label{primisom}
\psi\colon     c_1(\wt{L})^{\bot}_{\ZZ}\overset{\sim}{\lra} \Lambda_{19} 
\end{equation}
be an  isomorphism, and $\psi_{\CC}\colon     c_1(\wt{L})^{\bot}\overset{\sim}{\lra} \Lambda_{19,\CC}$ be the $\CC$-linear extension of $\psi$. The line $\psi_{\CC}(H^{2,0}(\wt{X}))$ belongs to $\cD_{\Lambda_{19}}$, and, if necessary, we may  precompose $\psi$ with an element of $\wt{O}(c_1(\wt{L})^{\bot})$ so that 
  $\psi_{\CC}(H^{2,0}(\wt{X}))$  belongs to  $\cD^{+}_{\Lambda_{19}}$. Such a point is well determined up to 
  $\wt{O}^{+}(\Lambda_{19})$, hence it determines a well-defined 
\begin{equation}\label{periodok3}
\Pi(X,L)\in \wt{O}^{+}(\Lambda_{19})\backslash\cD^{+}_{\Lambda_{19}}= \cF(19).
\end{equation}
This is the \emph{period point} of $(X,L)$. 

Now, let 
\begin{equation}\label{emme19}
\gM(19)  :=  |\cO_{\PP^3}(4)|\gquot\PGL(4)  
\end{equation}
be the GIT moduli space of quartic surfaces in $\PP^3$ (see~\cite{shah4} for many results about $\gM(19)$). 
\begin{definition}\label{dfn:u19}
Let   $\cU(19)\subset |\cO_{\PP^3}(4)|$ be the open subset  parametrizing quartics with ADE singularities. 
\end{definition}
Then $\cU(19)$ is contained in the stable locus of 
$|\cO_{\PP^3}(4)|$  for the natural action of $\PGL(4)$ by~\cite{shah4}. 
By Global Torelli for $K3$ surfaces, and Mayer's~\Ref{thm}{mayerthm}, the period map restricted to $\cU(19)\gquot\PGL(4)$  defines an isomorphism 
\begin{equation}\label{juventus}
\cU(19)\gquot\PGL(4) \overset{\sim}{\lra} (\cF(19)\setminus H_h(19)\setminus H_u(19)).
\end{equation}
Thus we have a birational map
\begin{equation}\label{per19}
\gp_{19}\colon\gM(19)\dra \cF(19)^{*},
\end{equation}
and one of the main goals of the present paper is to propose a conjectural decomposition of $\gp_{19}^{-1}$ as a composition of the $\QQ$-factoralization of $\cF(19)^{*}$, and a series of flops of 
subloci described by the $D$ tower of~\Ref{subsec}{diserie}.   
\subsection{Periods  of hyperelliptic and unigonal quartics}\label{subsec:hyperper}
\setcounter{equation}{0}
Below is the main result of the present subsection.
\begin{proposition}\label{prp:perhyp}
Let  $(X,L)$ be a quartic $K3$ surface. Then
\begin{enumerate}
\item
$(X,L)$  is hyperelliptic if and only if $\Pi(X,L)\in H_h(19)$, and 
\item
$(X,L)$  is unigonal if and only if $\Pi(X,L)\in H_u(19)$.
\end{enumerate}
\end{proposition}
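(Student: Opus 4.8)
The plan is to identify, in lattice-theoretic terms, exactly when the period point of a quartic $K3$ lands on $H_h(19)$ or $H_u(19)$, by combining Mayer's trichotomy (\Ref{thm}{mayerthm}) with the explicit geometric models of \Ref{rmk}{eccohyp} and \Ref{rmk}{eccounig}. The key observation is that in each of the non-trivial cases of Mayer's theorem there is a natural class in the Picard lattice of the minimal desingularization $\wt X$ whose image under $\psi$ (the marking of~\eqref{primisom}) is, respectively, a hyperelliptic or a unigonal vector of $\Lambda_{19}$, so that $H^{2,0}(\wt X)$ is forced to lie on the corresponding pre-Heegner hyperplane. Conversely, since by \Ref{prp}{minorm} there is a single $\Gamma_\xi$-orbit of hyperelliptic (resp.\ unigonal) vectors, membership of the period point in $H_h(19)$ (resp.\ $H_u(19)$) forces the existence of such a class in the transcendental-orthogonal complement, and one reads off from Mayer's list which geometric type this produces.

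\textbf{Step 1: the hyperelliptic case, forward direction.} Suppose $(X,L)$ is hyperelliptic. Using the model of \Ref{rmk}{eccohyp}, $X \to Q$ is a double cover of an irreducible quadric $Q\subset\PP^3$ branched over $B\in|\omega_Q^{-2}|$; pulling back to the minimal desingularization $\wt X$, the ruling(s) of $Q$ (or the desingularization of the quadric cone) produce a curve class $C$ on $\wt X$ with $C^2=0$ and $C\cdot c_1(\wt L)=2$. The class $v := 2C - c_1(\wt L)$ then satisfies $v^2 = 4\cdot 0 - 4\cdot 2 + 4 = -4$ and lies in $c_1(\wt L)^\perp$; one checks $v$ has divisibility $2$ in $c_1(\wt L)^\perp_\ZZ$ and that $v^* = \xi$ (this is exactly the discriminant-group computation that singles out the "degree $4$" polarization, and is forced by~\eqref{rugby} and~\eqref{discdien}). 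Hence $\psi(v)$ is a hyperelliptic vector of $\Lambda_{19}$, and since $C$ (being algebraic) is orthogonal to $H^{2,0}(\wt X)$, we get $\psi_\CC(H^{2,0}(\wt X)) \perp \psi(v)$, i.e.\ $\Pi(X,L)\in H_h(19)$.

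\textbf{Step 2: the unigonal case, forward direction, and both converses.} For unigonal $(X,L)$ one uses the model of \Ref{rmk}{eccounig}: $X\to\FF_4$ with the rational curve $R$ (from $\pi^*A=2R$) and the elliptic fibre $E=\pi^*F$, where $L\cong\cO_X(R+3E)$. Here $R,E$ are algebraic classes with $R^2=-2$, $E^2=0$, $R\cdot E=1$; the vector $v:= $ (an appropriate combination, e.g.\ a suitable primitive generator of the rank-one sublattice of $c_1(\wt L)^\perp$ picked out by $R$ and $E$) will have $v^2=-4$ and divisibility $4$, i.e.\ $\psi(v)$ is a unigonal vector, whence $\Pi(X,L)\in H_u(19)$. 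For the converses: if $\Pi(X,L)\in H_h(19)$ then by \Ref{prp}{minorm}(2) there is, up to $\Gamma_\xi$, a unique hyperelliptic $v\in\Lambda_{19}$ with $\psi_\CC(H^{2,0}(\wt X))\perp\psi(v)$; pulling back, $\psi^{-1}(v)$ is an algebraic class (it is of Hodge type $(1,1)$ and integral), and forming the class $C:=(c_1(\wt L)+\psi^{-1}(v))/2\in H^2(\wt X;\ZZ)$ (integral because $v$ has divisibility $2$ and $v^*=\xi=[c_1(\wt L)/2]$, by \Ref{rmk}{inducedec}-type bookkeeping) one finds $C^2=0$ and $C\cdot c_1(\wt L)=2$, so $|\wt L|$ maps $\wt X$ two-to-one onto a quadric; by Mayer's theorem this is precisely case (2), i.e.\ $(X,L)$ is hyperelliptic. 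The unigonal converse is analogous, with $v$ of divisibility $4$ forcing a fixed-component/elliptic-pencil splitting, which is Mayer's case (3).

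\textbf{Main obstacle.} The routine-but-delicate part is the discriminant-group bookkeeping that guarantees $v^*=\xi$ (so that $v$ really is \emph{hyperelliptic} and not merely of square $-4$ and divisibility $2$), and the corresponding integrality statements for the auxiliary classes $C$ in the converse directions — these require keeping careful track of how $c_1(\wt L)$ and $\Lambda_{19}\subset H^2(\wt X;\ZZ)$ sit together, i.e.\ that $[c_1(\wt L)/2]$ is the decoration. A second point needing care is that $X$ may be singular, so one must work consistently on $\wt X$ and check that big-and-nef on $\wt X$ versus ample on $X$ does not disturb which Mayer case occurs; but this is exactly the remark following \Ref{thm}{mayerthm}, so it is harmless. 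I expect no genuine difficulty beyond organizing these lattice computations cleanly.
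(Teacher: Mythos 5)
Your overall strategy is the same as the paper's (translate membership in $H_h(19)$, $H_u(19)$ into the existence of a primitive $U(2)$, resp.\ a $U$, in $H^{1,1}(\wt X;\ZZ)$ containing $c_1(\wt L)$, and match against Mayer's trichotomy), but Step 1 has a genuine gap at exactly the point the paper works hardest. For $N=19$ the hyperelliptic and the unigonal vectors \emph{both} have square $-4$; they are distinguished only by divisibility ($2$ versus $4$). For $x\in c_1(\wt L)^{\bot}$ one has $(v,x)=2(C,x)$, so $\divisore(v)$ is automatically even, and it equals $4$ precisely when the lattice $\la C,c_1(\wt L)\ra\cong U(2)$ fails to be saturated in $H^2(\wt X;\ZZ)$ (its saturation is then $U$, and the period lands on $H_u(19)$, not $H_h(19)$). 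This dichotomy cannot be resolved by the general discriminant-form facts~\eqref{rugby} and~\eqref{discdien}, which are equally consistent with either outcome; it is a global statement about how the ruling class sits inside the full unimodular lattice $H^2(\wt X;\ZZ)$. The paper proves primitivity of $M\cong U(2)$ by a non-formal argument: irreducibility of the family of hyperelliptic quartics reduces the claim to one explicit member, where a branch curve bitangent to two rulings (resp.\ to a fiber of $\FF_2\to\PP^1$ in the cone case) exhibits the needed odd intersection number. Alternatively, you could close the gap without that computation by noting that if $U(2)$ were not primitive then $\Pi(X,L)\in H_u(19)$, and the (independently proved) unigonal converse would force $(X,L)$ to be unigonal, contradicting Mayer's trichotomy; either way, an argument is required, and your "one checks the divisibility is $2$" is precisely the assertion to be proved. (Relatedly, your parenthetical in the last paragraph misplaces the danger: for $N=19$, square $-4$ together with divisibility $2$ already forces $v^{*}=\xi$, since $v^{*}$ is then the unique $2$-torsion element of $A\cong\ZZ/4$; the issue is divisibility $4$.)

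A secondary, smaller gap is in your hyperelliptic converse: from a class $C$ with $C^2=0$ and $C\cdot c_1(\wt L)=2$ you conclude directly that you are in Mayer's case (2). This rules out case (1), but not case (3) (a unigonal quartic also carries $2E$ with square $0$ and degree $2$). The paper excludes the unigonal alternative by invoking $H_h(19)\cap H_u(19)=\es$ (\Ref{lmm}{inthypunig}); you need this, or an equivalent direct argument, to finish. Your unigonal statements (both directions) and the integrality bookkeeping for $C=(c_1(\wt L)+\psi^{-1}(v))/2$ are fine and match the paper's \Ref{lmm}{estensione} and \Ref{lmm}{pianoiper}.
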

\Ref{prp}{perhyp} will be proved at the end of the present subsection.  
\begin{lemma}\label{lmm:estensione}
Let  $(X,L)$ be a hyperelliptic quartic $K3$ surface. Let  $\wt{X}$ be the minimal desingularization of $X$, and $\wt{L}$ the pull-back of $L$ to $\wt{X}$. 
Then there exists $\epsilon_L\in H^2(\wt{X};\ZZ)$ perpendicular to $c_1(\wt{L})$  such that 
\begin{equation}\label{divquattro}
\frac{c_1(\wt{L})+\epsilon_L}{4}\in H^2(\wt{X};\ZZ),
\end{equation}
and such that $H^2(\wt{X};\ZZ)$ is generated (over $\ZZ$) by $\ZZ c_1(\wt{L})\oplus c_1(\wt{L})^{\bot}$ and the element in~\eqref{divquattro}. Moreover 
\begin{equation*}
\divisore_{c_1(\wt{L})^{\bot}}(\epsilon_L)=4,\quad q_{c_1(\wt{L})^{\bot}}(\epsilon_L^{*})\equiv -1/4\pmod{2\ZZ}.
\end{equation*}

\end{lemma}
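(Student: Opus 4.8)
The plan is to identify a concrete model for the pair $(\wt X, \wt L)$ coming from Mayer's classification (Item (2) of~\Ref{thm}{mayerthm}) and then read off the lattice-theoretic statements directly. By~\Ref{rmk}{eccohyp}, a hyperelliptic quartic $K3$ surface is a double cover $\varphi\colon X\to Q$ of an irreducible quadric $Q\subset\PP^3$, branched along $B\in|\omega_Q^{-2}|$ with simple singularities, and $L=\varphi^*\cO_Q(1)$. I would split into the two cases for $Q$: either $Q\cong\PP^1\times\PP^1$ (smooth), or $Q$ is a quadric cone, whose minimal resolution is $\FF_2=\PP(\cO_{\PP^1}(2)\oplus\cO_{\PP^1})$. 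In either case, resolving $X$ amounts to resolving the branch curve; the key point is to compute $H^2(\wt X;\ZZ)$ together with $c_1(\wt L)$ and locate a class $\epsilon_L$ with the claimed divisibility.

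First I would handle the generic (smooth) case. When $Q=\PP^1\times\PP^1$ and $B$ is smooth, $\wt X=X$ is the double cover of $\PP^1\times\PP^1$ branched over a $(4,4)$ curve, and $c_1(\wt L)=\varphi^*(h_1+h_2)$ where $h_1,h_2$ are the two rulings. Pulling back, $\varphi^*h_1$ and $\varphi^*h_2$ are isotropic classes with $(\varphi^*h_1,\varphi^*h_2)=2$, so the sublattice $\langle\varphi^*h_1,\varphi^*h_2\rangle\cong U(2)$, and $c_1(\wt L)=\varphi^*h_1+\varphi^*h_2$ has square $4$ and divisibility $2$ in this $U(2)$. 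One then sets $\epsilon_L:=\varphi^*h_1-\varphi^*h_2$ (up to sign/orientation); it is perpendicular to $c_1(\wt L)$, has square $-4$, and $(c_1(\wt L)+\epsilon_L)/2=\varphi^*h_1\in H^2$. To get divisibility by $4$ as in~\eqref{divquattro}, I would instead track the covering involution: the anti-invariant part of $H^2(\wt X;\ZZ)$ under the deck transformation is where the transcendental lattice lives, and the precise statement is that $H^2(\wt X;\ZZ)$ contains $(c_1(\wt L)+\epsilon_L)/4$ because of how the double-cover gluing interacts with the even lattice $U(2)\oplus(\text{stuff})$ — concretely, one checks that $\langle c_1(\wt L), c_1(\wt L)^\perp\rangle$ has index $4$ in $H^2(\wt X;\ZZ)$ by a discriminant computation: $\det\langle c_1(\wt L)\rangle\cdot\det(c_1(\wt L)^\perp)=4\cdot|\det\Lambda_{19}|=16$ while $|\det H^2(\wt X;\ZZ)|=1$, forcing index $4$, and one produces the glue vector. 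The divisibility and discriminant-form statements $\divisore_{c_1(\wt L)^\perp}(\epsilon_L)=4$ and $q(\epsilon_L^*)\equiv-1/4\pmod{2\ZZ}$ then follow from~\eqref{rugby}, \Ref{clm}{disdin}, and~\eqref{discdien} applied to $\Lambda_{19}$ (where $N-2=17$ is odd, $A_{\Lambda_{19}}\cong\ZZ/4$), matching the unigonal-type element $\zeta$ of Item~(3a) of~\Ref{prp}{minorm}.

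Next I would argue that the statement is lattice-theoretic and hence independent of the choice of smooth model, so the quadric-cone case and the case of singular $B$ follow automatically. The point is that $\divisore$ of a class, membership of a rational class in $H^2(\wt X;\ZZ)$, and the discriminant form of $c_1(\wt L)^\perp$ are all deformation invariants of the family of hyperelliptic quartics with their marking, and the family is connected (all hyperelliptic quartics specialize to/deform from the generic double cover of $\PP^1\times\PP^1$, with ADE-resolved fibers contributing only to $c_1(\wt L)^\perp$ in a way that does not change its discriminant group since ADE lattices are $2$-elementary-free... more carefully: adding $(-2)$-curves orthogonal to $c_1(\wt L)$ enlarges $c_1(\wt L)^\perp$ by a unimodular-on-the-nose root sublattice, leaving the discriminant form of the saturation unchanged). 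Alternatively, and more robustly, I would verify directly in the cone case using $\FF_2$: here $c_1(\wt L)=\varphi^*(A+2F)$ where $A^2=-2$, $F^2=0$, $(A,F)=1$ on $\FF_2$, giving the same square-$4$ class, and the same discriminant computation applies.

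\textbf{Main obstacle.} The delicate part is producing the class $\epsilon_L$ with divisibility exactly $4$ (not just $2$) and verifying $q(\epsilon_L^*)\equiv-1/4$, since the naive glue vector from the double-cover construction only obviously gives divisibility $2$; one must use the full even unimodular structure of $H^2(\wt X;\ZZ)$ and the precise embedding $\ZZ c_1(\wt L)\oplus\Lambda_{19}\hookrightarrow H^2$ of index $4$. I expect the cleanest route is the pure discriminant-form argument: since $A_{\ZZ c_1(\wt L)}\cong\ZZ/4$ and $A_{\Lambda_{19}}\cong\ZZ/4$ with $q$-values $1/4$ and $-17/4\equiv-1/4$ respectively, an even unimodular overlattice of $\langle 4\rangle\oplus\Lambda_{19}$ must be the graph of an anti-isometry $\ZZ/4\to\ZZ/4$, which is unique up to automorphism, and this graph is generated precisely by $(c_1(\wt L)+\epsilon_L)/4$ for a suitable $\epsilon_L\in\Lambda_{19}$ with $\epsilon_L^*$ a generator; this simultaneously gives~\eqref{divquattro}, the divisibility, and the $q$-value, and shows uniqueness of the construction.
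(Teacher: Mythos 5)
Your final paragraph (the ``pure discriminant-form argument'') is exactly the paper's proof, which consists of one sentence: existence of $\epsilon_L$ satisfying~\eqref{divquattro} follows from unimodularity of $H^2(\wt X;\ZZ)$, and the divisibility and $q$-value statements follow at once. Concretely: $c_1(\wt L)$ is a primitive class of square $4$ in the even unimodular lattice $H^2(\wt X;\ZZ)$, so $H^2(\wt X;\ZZ)$ is an even unimodular overlattice of $(4)\oplus c_1(\wt L)^{\bot}$, hence corresponds to the graph of an anti-isometry $\ZZ/4\to A_{c_1(\wt L)^{\bot}}\cong\ZZ/4$; a generator of this glue group is $[(c_1(\wt L)+\epsilon_L)/4]$ with $\epsilon_L^{*}$ of order $4$ and $q(\epsilon_L^{*})\equiv-q(c_1(\wt L)/4)\equiv-1/4$. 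Note that this argument never uses the hyperelliptic hypothesis — the lemma holds for any primitive square-$4$ polarization class — so the bulk of your proposal (the case analysis over $Q=\PP^1\times\PP^1$ versus the quadric cone, the explicit $U(2)$ sublattices, and the deformation/connectedness argument) buys you nothing and should be deleted. Worse, taken on its own that part does not prove the lemma: as you yourself observe, the class $\varphi^{*}h_1-\varphi^{*}h_2$ only exhibits divisibility $2$, and no choice of classes visible in the pullback of $\Pic(Q)$ will produce the divisibility-$4$ glue vector, which necessarily involves the transcendental part of $H^2(\wt X;\ZZ)$. So the write-up is salvageable, but only by promoting the ``main obstacle'' paragraph to be the entire proof and discarding the rest.
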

\begin{proof}
Existence of $\epsilon_L\in c_1(\wt{L})^{\bot}$ such that~\eqref{divquattro} holds follows from unimodularity of the intersection form on  $H^2(\wt{X};\ZZ)$. The remaining statements follow at once from~\eqref{divquattro}.
\end{proof}
\begin{lemma}\label{lmm:pianoiper}
Let  $(X,L)$ be a hyperelliptic quartic $K3$ surface. Let  $\wt{X}$ be the minimal desingularization of $X$, and $\wt{L}$ the pull-back of $L$ to $\wt{X}$. 
Then 
\begin{enumerate}
\item
$\Pi(X,L)\in H_h(19)$   if and only if  there exists a primitive sublattice $U(2)\subset H^{1,1}(\wt{X};\ZZ)$ containing $c_1(\wt{L})$, and 
\item
$\Pi(X,L)\in H_u(19)$   if and only if  there exists a  sublattice $U\subset H^{1,1}(\wt{X};\ZZ)$  (necessarily primitive) containing $c_1(\wt{L})$.
\end{enumerate}
\end{lemma}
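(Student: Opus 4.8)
\textbf{Proof plan for Lemma \ref{lmm:pianoiper}.}
The strategy is to translate the two period-theoretic conditions into statements about which negative vectors of divisibility $2$ (resp.\ divisibility $4$) lie in the transcendental/Picard lattice, and then to recognise the two hyperbolic planes $U(2)$ and $U$ as the saturated span of $c_1(\wt L)$ together with such a vector. Throughout I identify $c_1(\wt L)^\bot_{\ZZ}$ with $\Lambda_{19}$ via a marking $\psi$ as in~\eqref{primisom}, so that $\Pi(X,L)=\wt O^+(\Lambda_{19})[\sigma]$ with $\sigma=\psi_\CC(H^{2,0}(\wt X))$, and I use Remark~\ref{rmk:dispari}: in the odd-dimensional case $N=19$ the nodal, hyperelliptic, unigonal Heegner divisors for $\widetilde O^+(\Lambda_{19})$ are exactly $H_0,H_\xi,H_{\zeta}=H_{\zeta'}$. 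Thus $\Pi(X,L)\in H_h(19)$ iff there is a hyperelliptic vector $v\in\Lambda_{19}$ (i.e.\ $v^2=-4$, $\divisore(v)=2$, $v^*=\xi$) with $(\sigma,v)=0$, equivalently a class $v\in H^{1,1}(\wt X;\ZZ)\cap c_1(\wt L)^\bot$ with those lattice invariants; and $\Pi(X,L)\in H_u(19)$ iff there is a unigonal vector $v$ (i.e.\ $v^2=-12$, $\divisore(v)=4$) in $H^{1,1}(\wt X;\ZZ)\cap c_1(\wt L)^\bot$.

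For part~(1), in the forward direction I take such a hyperelliptic $v$ and consider $M:=\Sat\la c_1(\wt L),v\ra\subset H^{1,1}(\wt X;\ZZ)$. Since $c_1(\wt L)^2=4$ and $v^2=-4$ with $v\bot c_1(\wt L)$, and since $v$ has divisibility $2$ in $c_1(\wt L)^\bot$ while $\divisore_{c_1(\wt L)^\bot}(\epsilon_L)=4$ (Lemma~\ref{lmm:estensione}), a short computation with the gluing vector $(c_1(\wt L)+\epsilon_L)/4$ shows that $M$ is generated by $c_1(\wt L)$, $v/2+$(half-integral correction), and in fact $M\cong U(2)$ with $c_1(\wt L)=e+f$ in a standard basis $\{e,f\}$ of $U(2)$ (note $\divisore_{U(2)}(c_1(\wt L))=2$, consistent with $v^*=\xi$). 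Conversely, given a primitive $U(2)\hra H^{1,1}(\wt X;\ZZ)$ containing $c_1(\wt L)$: writing $c_1(\wt L)=ae+bf$ in a standard basis with $c_1(\wt L)^2=4$ forces $2ab=4$, so $\{a,b\}=\{1,2\}$; after rescaling $c_1(\wt L)=e+2f$ or $2e+f$, and then $v:=e-f$ (or the appropriate combination) is a class in $c_1(\wt L)^\bot\cap H^{1,1}$ with $v^2=-4$, $\divisore_{c_1(\wt L)^\bot}(v)=2$, and $v^*=\xi$ forced by the discriminant-form computation~\eqref{discdien}. Hence $\Pi(X,L)\in H_\xi(19)=H_h(19)$.

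For part~(2) the argument is parallel but cleaner since $U$ is unimodular. Forward: from a unigonal $v$ (with $v^2=-12$, $\divisore_{c_1(\wt L)^\bot}(v)=4$, and $q(v^*)\equiv -1/4$) set $M:=\Sat\la c_1(\wt L),v\ra$; using the glue vector again one checks $M$ is unimodular of signature $(1,1)$, hence $M\cong U$, with $c_1(\wt L)$ of divisibility $1$ in $M$. Conversely, a primitive $U\hra H^{1,1}(\wt X;\ZZ)$ with $c_1(\wt L)\in U$: writing $c_1(\wt L)=ae+bf$, $c_1(\wt L)^2=2ab=4$ again gives $\{a,b\}=\{1,2\}$, and since $U$ is unimodular $c_1(\wt L)$ is primitive in $U$; then $v:=e-2f$ (or $2e-f$) lies in $c_1(\wt L)^\bot$, has $v^2=-8$?—here I must be careful: $(e-2f)^2=-4$, so instead the correct unigonal class of square $-12$ is obtained by saturating $\la c_1(\wt L),U\cap c_1(\wt L)^\bot\ra$ inside $c_1(\wt L)^\bot$, and the generator of $U\cap c_1(\wt L)^\bot$ has square $-4ab\cdot(\text{something})$; one computes its divisibility in $c_1(\wt L)^\bot$ is $4$ and its square is $-12$, matching Proposition~\ref{prp:minorm}(3a) for $N=19$ (where $a=1$, so $v^2=-4a=-4$?). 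The potential mismatch of the numerical invariants ($-4$ vs.\ $-12$, divisibility $4$ vs.\ $2$) is exactly the delicate point: I will need to recompute $q_{\Lambda_{19}}$ on the relevant element using $(A_{\Lambda_{19}},q_{\Lambda_{19}})\cong(A_{D_{17}},q_{D_{17}})$ and~\eqref{discdien} to pin down which negative vector in $c_1(\wt L)^\bot$ is forced, and then invoke Eichler's criterion (Proposition~\ref{prp:criteich}) to conclude it is $\wt O^+$-equivalent to the standard unigonal vector. \emph{The main obstacle} is precisely this bookkeeping: correctly matching the square and divisibility of the distinguished vector cut out by $U$ (resp.\ $U(2)$) with the normalisation of the unigonal (resp.\ hyperelliptic) vector in Definition~\ref{dfn:nodhypuni}, and checking that the relevant discriminant-group element is $\xi$ (resp.\ $\zeta$), since an incorrect identification would attach the period point to the wrong Heegner divisor.
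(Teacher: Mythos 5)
Your reduction of both statements to the existence of a negative vector of prescribed square and divisibility in $c_1(\wt L)^\bot\cap H^{1,1}(\wt X;\ZZ)$ is exactly the paper's starting point, and your part~(1) is essentially the paper's argument: the paper takes $\alpha$ with $\alpha^2=-4$, $\divisore_{c_1(\wt L)^\bot}(\alpha)=2$, observes via $\epsilon_L$ that $(c_1(\wt L)\pm\alpha)/2$ are integral isotropic classes pairing to $2$, and gets $U(2)$; your $\Sat\la c_1(\wt L),v\ra$ is the same lattice. Two small corrections there: in $U(2)$ one has $(ae+bf)^2=4ab$, so $c_1(\wt L)^2=4$ forces $a=b=\pm1$ and $c_1(\wt L)=e+f$ (your "$2ab=4$, hence $c_1(\wt L)=e+2f$" uses the $U$-pairing and is impossible in $U(2)$); and in the converse you must also rule out $\divisore_{c_1(\wt L)^\bot}(e-f)=4$, which the paper does by noting that this would make $(c_1(\wt L)+(e-f))/4$ integral and contradict saturation of $U(2)$.

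The genuine gap is in part~(2): you assign the unigonal vector of $\Lambda_{19}$ the invariants $v^2=-12$, $\divisore(v)=4$. Since $19-2=17\equiv1\pmod 8$, Proposition~\ref{prp:minorm}(3a) gives $a=1$, so the unigonal vector has $v^2=-4$ and $\divisore(v)=4$; the value $-12$ belongs to $N\equiv5\pmod 8$ (e.g.\ $N=21$). You notice the mismatch at the end but leave it unresolved ("I will need to recompute"), so as written the proof of (2) does not close. With the correct invariants it does, and cleanly: if $\alpha^2=-4$ and $\divisore_{c_1(\wt L)^\bot}(\alpha)=4$, then $\alpha^*=\pm\epsilon_L^*$, so $e:=(c_1(\wt L)+\alpha)/4$ is integral with $e^2=0$ and $e\cdot c_1(\wt L)=1$, whence $\la c_1(\wt L),e\ra\cong U$ contains $c_1(\wt L)$; conversely, for $U\ni c_1(\wt L)=e+2f$ the generator $\alpha=e-2f$ of $c_1(\wt L)^\bot\cap U$ has $\alpha^2=-4$, and since $U$ splits off $H^2(\wt X;\ZZ)$ one gets $c_1(\wt L)^\bot=\ZZ\alpha\oplus U^\bot$, so $(\alpha,c_1(\wt L)^\bot)=4\ZZ$ and $\alpha$ is unigonal. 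Your fallback suggestion of "saturating to reach a class of square $-12$" points in the wrong direction.
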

\begin{proof}
(1): First, note that $\Pi(X,L)\in H_h(19)$   if and only if  there exists  $\alpha\in H^{1,1}(\wt{X};\ZZ)$ such that 
\begin{equation}\label{spalletti}
c_1(\wt{L})\bot \alpha,\qquad \alpha^2=-4,\qquad \divisore_{c_1(\wt{L})^{\bot}}(\alpha)=2.
\end{equation}
Now suppose that $\Pi(X,L)\in H_h(19)$, and let $\alpha$ be as above. Then $A_{c_1(\wt{L})^{\bot}}\cong\ZZ/(4)$, and hence $\alpha^{*}=2\epsilon_L^{*}$, where 
$\epsilon_L$ is as in~\Ref{lmm}{estensione}. It follows that $H^{1,1}(\wt{X};\ZZ)\ni (c_1(\wt{L})\pm \alpha)/2$. Then
\begin{equation}
U(2)\cong \la (c_1(\wt{L})+\alpha)/2,(c_1(\wt{L})- \alpha)/2\ra\subset H^{1,1}(\wt{X};\ZZ).
\end{equation}
The above lattice is saturated because $ \divisore_{c_1(\wt{L})}(\alpha)=2$. To prove the converse, suppose that there exists a primitive sublattice $U(2)\subset H^{1,1}(\wt{X};\ZZ)$ containing $c_1(\wt{L})$. Let $\alpha$ be a generator $c_1(\wt{L})^{\bot}\cap  U(2)$; it suffices to show that~\eqref{spalletti} holds.  The equality $\alpha^2=-4$ is clear, and  $2$ divides $\divisore_{c_1(\wt{L})^{\bot}}(\alpha)$ because there exists $\beta\in U(2)$ such that $c_1(\wt{L})+\alpha=2\beta$. It remains to prove that $\divisore_{c_1(\wt{L})^{\bot}}(\alpha)=4$ can not hold. If it holds, then $\alpha^{*}=\epsilon_L^{*}$, where $\epsilon_L$ is as
 in~\Ref{lmm}{estensione}, and hence $H^{1,1}(\wt{X};\ZZ)\ni (c_1(\wt{L})+\alpha)/4$; then $U(2)$ is not saturated, against the hypothesis. This proves Item~(1). The proof of Item~(2) is similar, we leave details to the reader. 
\end{proof}
\n
{\it Proof of~\Ref{prp}{perhyp}.\/} First we prove Item~(2). Suppose that $(X,L)$ is unigonal, and hence $X$ (and $L$) are as in~\Ref{rmk}{eccounig}. Let $\wt{X}\to X$ be the minimal desingularization of $X$, and $\wt{R},\wt{E}$ be the pull-backs to $\wt{X}$ of the divisor classes $R,E$ on $X$ defined in~\Ref{rmk}{eccounig}.
Then $\wt{R},\wt{E}$ span a hyperbolic lattice in $H^{1,1}(\wt{X};\ZZ)$ containing $c_1(\wt{L})$, and hence $\Pi(X,L)\in H_u(19)$  by~\Ref{lmm}{pianoiper}. Now assume that  $\Pi(X,L)\in H_u(19)$. By~\Ref{lmm}{pianoiper} there exists a hyperbolic plane  $U\subset H^{1,1}(\wt{X};\ZZ)$ containing $c_1(\wt{L})$. It follows that there exist cohomology classes  $r,e\in U$ such that $c_1(\wt{L})=r+3e$, and $r^2=-2$, $r\cdot e=1$, and $e^2=0$. By Riemann-Roch, both $r$ and $e$ are effective; it follows that  $r$ is the class of a rational curve which is the base-locus of $\varphi_L$, and $e$ is the class of the fiber of an elliptic 
fibration.  Thus $(X,L)$ is unigonal. 

Next, let us  prove Item~(1). Suppose that $(X,L)$ is hyperelliptic, and hence is as in~\Ref{rmk}{eccohyp}.  Let $Q:=\varphi_L(X)\subset|L|^{\vee}\cong\PP^3$. Let $\wt{X}\to X$ be the minimal desingularization, and let 
 $\wt{\varphi}\colon\wt{X}\to Q$ be the composition of  $\wt{X}\to X$ and $\varphi_L$. If $Q$ is a  quadric cone, with vertex $v$, let $Q_0\cong\FF_2=\PP(\cO_{\PP^1}(2)\oplus\cO_{\PP^1})$ be the blow-up of $Q$ at $v$; the map $\wt{\varphi}\colon\wt{X}\to Q$ lifts to a map $\wt{\varphi}_0\colon\wt{X}\to Q_0$. Let 
\begin{equation}\label{eliogermano}
M:=
\begin{cases}
\wt\varphi^{*}H^2(Q;\ZZ) & \text{if $Q$ is a smooth quadric,} \\
\wt\varphi_0^{*}H^2(Q_0;\ZZ) & \text{if $Q$ is a quadric cone.}
\end{cases}
\end{equation}
Then $M\cong U(2)\subset H^{1,1}(\wt{X};\ZZ)$, and it contains  $c_1(\wt{L})$. By~\Ref{lmm}{pianoiper}, in order to prove that $\Pi(X,L)\in H_h(19)$ it suffices to check  that  $M$ is primitive.
  
Suppose first that $Q$ is  smooth.  Since the family of $K3$ surfaces which are double covers of $\PP^1\times\PP^1$ is irreducible it suffices to prove that  $\wt{\varphi}^{*}H^2(Q;\ZZ)$ is primitive for the double cover of $Q=\PP^1\times\PP^1$ ramified over a   particular smooth $B\in |\cO_{\PP^1}(4)\boxtimes \cO_{\PP^1}(4)|$; it suffices to take $B$ which is bitangent to lines $L_1,L_2\subset Q$ belonging to distinct rulings. 

Now   suppose  that $Q$ is  a quadric cone.
As in the previous case,  it will suffice to prove   that  $M$ is primitive for a particular double cover of $Q$. Let 
$B_0\in |\omega^{-2}_{Q_0}|$ be a  smooth curve  which is bitangent to a fiber of the structure map $Q_0\to\PP^1$, and is otherwise generic. The double cover $\wt{X}\to Q_0$ branched over $B_0$ is the minimal desingularization of a hyperelliptic quartic $K3$, and  $M=\wt{\varphi}_0^{*}H^2(Q_0;\ZZ)$ is primitive. This  proves that if $(X,L)$ is a hyperelliptic quartic, then   $\Pi(X,L)\in H_h(19)$. 

Lastly, suppose that $\Pi(X,L)\in H_h(19)$, and let us prove that $(X,L)$ is a hyperelliptic quartic. By~\Ref{lmm}{pianoiper}, there exists  a primitive $U(2)\subset H^{1,1}(\wt{X};\ZZ)$ containing  $c_1(\wt{L})$. 
 There exists $\alpha\in U(2)$ such that $\alpha^2=0$ and $\alpha\cdot c_1(\wt{L})=2$. Thus $\alpha$ is represented by an  effective divisor $D$ such that $D\cdot D=0$ and $\wt{L}\cdot D=2$. Hence  the divisor $D$ moves by Riemann-Roch; it follows that $\varphi_L(X)$ is not a quartic surface with DuVal singularities, and hence $(X,L)$ is either hyperelliptic or unigonal. If $(X,L)$ is unigonal, then $\Pi(X,L)\in H_u(19)$ (we have just proved it), and we get a contradiction because $H_h(19)\cap H_u(19)=\es$ by~\Ref{lmm}{inthypunig}. Thus  $(X,L)$ is hyperelliptic.
\qed
\begin{remark}\label{rmk:perhyp}
 By~\Ref{prp}{hyplocsymm}, there is a natural isomorphism between $\cF(18)$ and the hyperelliptic divisor $H_h(19)$. By~\Ref{prp}{perhyp}, it follows that we may identify $\cF(18)$ with the period space for hyperelliptic quartic $K3$ surfaces, and this is what we will do for the rest of the paper. 
\end{remark}
There is a GIT moduli space which is in a relation to $\cF(18)$ that is similar to the relation between $\gM(19)$ and $\cF(19)$. 
In fact, let
\begin{eqnarray}\label{emme18}
\gM(18)  :=  |\cO_{\PP^1}(4)\boxtimes\cO_{\PP^1}(4)|\gquot\Aut(\PP^1\times\PP^1).
\end{eqnarray}
\begin{definition}\label{dfn:u18}
Let  $\cU(18)\subset|\cO_{\PP^1\times\PP^1}(4,4)|$ be the open subset parametrizing curves with ADE singularities.
\end{definition}
 If $D\in\cU(18)$, the double cover $\pi\colon X\to\PP^1\times\PP^1$ branched over
  $D$ is a $K3$ surface, and $(X,\pi^{*}(\cO_{\PP^1}(1)\boxtimes \cO_{\PP^1}(1))$ is a hyperelliptic quartic $K3$ surface. 
 Now,   $\cU(18)$ is contained in the stable locus of 
$|\cO_{\PP^\times\PP^1}(4,4)|$ by~\cite[Theorem 4.8]{shah4}, and by associating to the orbit of $D\in\cU(18)$ 
the period of $(X,\pi^{*}(\cO_{\PP^1}(1)\boxtimes \cO_{\PP^1}(1))$ (notation as above), one gets an isomorphism 
\begin{equation}\label{bayern}
\cU(18)\gquot\Aut(\PP^1\times\PP^1) \overset{\sim}{\lra} (\cF(18)\setminus H_h(18)).
\end{equation}
Thus we have a birational period map 
\begin{equation}\label{periodi18}
\gp_{18}\colon\gM(18)\dra \cF(18)^{*},
\end{equation}
whose behaviour is very similar to that of $\gp_{19}$.

\subsection{Periods of certain higher-dimensional hyperk\"ahler varieties}\label{subsec:epwper}
\setcounter{equation}{0}
\subsubsection{Double EPW sextics}
Let $(X,L)$ be a polarized   HK variety of Type $K3^{[2]}$, where $q(c_1(L))=2$ ($q$ is the Beauville-Bogomolov quadratic formon $H^2(X)$).
 Then 
\begin{equation*}
c_1(L)^{\bot}\cong  II_{2,18}\oplus  A_1\oplus  A_1\cong  \Lambda_{20}.
\end{equation*}
In the above equation, perpendicularity is with respect to the Beauville-Bogomolov quadratic form, and the first isomorphism is found in~\cite{epwperiods}. The period space for degree-$2$ polarized   HK varieties of Type $K3^{[2]}$ is   
$\cF_{\Lambda_{20}}(\widetilde O^+(\Lambda_{20}))$;  in fact the moduli space of such (polarized) varieties is identified with an open dense subset of 
$\cF_{\Lambda_{20}}(\widetilde O^+(\Lambda_{20}))$  by Verbitsky's Global Torelli Theorem. (One should introduce the analogue of DuVal singularities, in order to identify $\cF_{\Lambda_{20}}(\widetilde O^+(\Lambda_{20}))$  with the moduli space of polarized varieties with mild singularities.) On the other hand we have a natural degree-$2$ covering map 
\begin{equation}\label{cyper}
\cF_{\Lambda_{20}}(\wt{O}^{+}(\Lambda_{20}))\overset{\rho}{\lra}\cF(20),
\end{equation}
because  $\Gamma_{\xi_{20}}$ is an index-$2$ subgroup of $\widetilde O^+(\Lambda_{20})$. (See Item~(4) of~\Ref{prp}{propgroup}.) 

There is an analogue of the GIT moduli spaces $\gM(19)$ and $\gM(18)$ in this case as well. 
In fact, let  $(X,L)$ be a generic  degree-$2$ polarized   HK varieties of Type $K3^{[2]}$.
Then the map $\varphi_L\colon X\dra |L|^{\vee}\cong\PP^5$ is regular, finite, of degree $2$ onto a special sextic hypersurface, an \emph{EPW sextic}. Conversely, given an EPW sextic $Y\subset\PP^5$, there is a canonical double cover  $f\colon X\to Y$, and if $Y$ is generic then $(X,f^{*}\cO_Y(1))$ is a degree-$2$ polarized   HK variety of Type $K3^{[2]}$.  (For the definition and properties of double EPW sextics we refer to~\cite{epwduke}  and~\cite{epw}.)  The parameter space for EPW sextics is (an open dense subset of) $\lagr$. Let 
\begin{equation}\label{emme20}
\gM(20)=\lagr\gquot\PGL_6(\CC).
\end{equation}
We have a birational period map
\begin{equation}\label{perepw}
\wt{\gp}_{20}\colon\gM(20)\dra \cF_{\Lambda_{20}}(\wt{O}^{+}(\Lambda_{20}))^{*}.
\end{equation}
The dual of an EPW sextic is another EPW sextic, see~\cite{dualepw}, and hence we have a (regular) duality involution $\delta\colon\gM(20)\to\gM(20)$. 
This is the geometric version of the covering involution of the double cover $\rho$ in~\eqref{cyper}. 
 The upshot is that we have a birational period map
\begin{equation}\label{per20}
\gp_{20}\colon\gM(20)/\la\delta\ra\dra \cF(20)^{*}.
\end{equation}
Thus we may view $\cF(20)$ as the period space for double EPW sextics up to duality. Alternatively, $\cF(20)$ is the period space for the Calabi-Yau fourfolds obtained by blowing up a (generic) EPW sextic $Y$ along it singular locus (a smooth surface). 
 
 The inverse images by $\rho$ of the reflective Heegner divisors of $\cF(20)$ have appeared in~\cite{epwperiods}.  
 In fact
\begin{equation*}
\rho^{-1}H_n(20)={\mathbb S}_2^{\star},\quad \rho^{-1}H_h(20)={\mathbb S}_4,
\quad \rho^{-1}H_u(20)={\mathbb S}'_2\cup {\mathbb S}''_2.
\end{equation*}
For a geometric interpretation of periods in 
${\mathbb S}_2^{\star}$, ${\mathbb S}_4$, 
${\mathbb S}'_2$, and ${\mathbb S}''_2$, see Section~5 of~\cite{epwperiods}.  
\subsubsection{EPW cubes}
Let $(X,L)$ be a polarized   HK variety of Type $K3^{[3]}$, where $q(c_1(L))=4$ and $\divisore(c_1(L))=2$.
 Then 
\begin{equation*}
c_1(L)^{\bot}\cong  II_{2,18}\oplus  A_1\oplus  A_1\cong  \Lambda_{20}.
\end{equation*}
Moreover, one shows that the period space for polarized   HK varieties of Type $K3^{[3]}$ of this kind is   
$\cF(20)$. (We thank M.~Kapustka and G.~Mongardi for bringing this to our attention.)  A.~Iliev, G.~Kapustka, M.~Kapustka and K.~Ranetsad~\cite{ikkr} have proved that the generic such polarized HK is isomorphic to an EPW cube, a double cover of a codimension-$3$ degeneracy locus in $\Gr(3,\CC^6)$. The parameter space of EPW cubes is the same as that fordouble EPW sextics (but the EPW cubes parametrized by $A\in\lagr$ and the dual lagrangian $A^{\bot}\in\lagrdual$ are isomorphic). 
\subsubsection{Hyperk\"ahler varieties of Type OG10}
Let $X$ be  a   HK manifold of Type OG10. Then $H^2(X;\ZZ)$ equipped with the Beauville-Bogomolov quadratic form is isomorphic to $II_{3,19}\oplus A_2$, see~\cite{rapagnetta}. Let $a,b\in A_2$ be standard generators ($a^2=b^2=(a+b)^2=-2$), and $v\in II_{3,19}$ of square $2$.  Let 
$h:=(3v+a-b)$; notice that $h^2=12$, and $(h,H^2(X;\ZZ))=3\ZZ$. 
The  discriminant group and quadratic form of  $h^{\bot}$ 
are isomorphic to the discriminant group and quadratic form of $\Lambda_{21}$ respectively, and hence $h^{\bot}\cong \Lambda_{21}$ by Theorem 1.13.2 of~\cite{nikulin}. Thus $\cD_{\Lambda_{21}}$ is the classifying space for the corresponding 
$10$-dimensional polarized O'Grady HK manifolds. By Verbitsky's Global Torelli Thorem~\cite{verbitskytor}, the moduli space  for such  varieties is isomorphic to $\cF_{\Lambda_{21}}(\Gamma)$, where $\Gamma<O^{+}(\Lambda_{21})$ is the relevant monodromy group (a subgroup of finite index, see~\cite{verbitskytor}).
See~\cite{mongardimon} for a result regarding the  monodromy group of O'Grady's  $10$-dimensional   HK manifolds.

\section{The Picard group of the $D$ tower}\label{sec:picdtower}

\subsection{Set up, and statement of the main results}
\setcounter{equation}{0}
Let $N\ge 3$. The Picard group of $\cF_{\Lambda_N}(\wt{O}(\Lambda_N)^+)$ is finitely generated, see~\cite{ghs-abelianisation}; 
  in the present section we will compute its rank  by applying results of Bergeron et al.~\cite{pick3nl}, and  of Bruinier~\cite{bruinier}.  
\begin{theorem}\label{thm:thmrankpic}
The rank of $\Pic(\cF_{\Lambda_N}(\wt{O}(\Lambda_N)^+))$ is equal to
\begin{equation*}
\rho_N:=\left\{
\renewcommand*{\arraystretch}{1.4}
\begin{array}{lcl}
\left\lfloor \frac{N+6}{8}\right\rfloor&\textrm{if}&N\equiv 1 \pmod 2\\
\left\lfloor \frac{N+2}{6}\right\rfloor&\textrm{if}&N\equiv 0,  6 \pmod 8\\
\left\lfloor \frac{N-4}{6}\right\rfloor&\textrm{if}&N\equiv 2 \pmod 8\\
\left\lfloor \frac{N+8}{6}\right\rfloor&\textrm{if}&N\equiv 4 \pmod 8
\end{array}
\right.
\end{equation*}
Explicitly,  for $3\le N\le 20$, we obtain:

\medskip

\begin{tabular}{c||c|c|c|c|c|c|c|c|c|c}
$N$&3&4&5-10&11&12&13-15&16&17-18&19&20  \\
\hline
$\rho_N$&1&2&1&2&3&2&3&2&3&4
\end{tabular}
\end{theorem}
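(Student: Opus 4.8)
\textbf{Proof strategy for Theorem~\ref{thm:thmrankpic}.} The plan is to compute $\rk\Pic(\cF_{\Lambda_N}(\wt O^+(\Lambda_N)))_{\QQ}$ by identifying the Picard group (tensored with $\QQ$) with a space of vector-valued modular forms, via the machinery of Borcherds, Bruinier, and Bergeron--Li--Millson--Moeglin. Concretely, by~\cite{ghs-abelianisation} the group $\Pic(\cF_{\Lambda_N}(\wt O^+(\Lambda_N)))$ is finitely generated, so it suffices to work rationally. The key input is that, for a lattice $\Lambda$ of signature $(2,m)$ with $m\ge 3$, the span of the Heegner (Noether--Lefschetz) divisors together with the Hodge class $\lambda$ exhausts $\Pic_{\QQ}$ --- this is precisely the content of the results of Bergeron et al.~\cite{pick3nl} (building on Bruinier~\cite{bruinier}). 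Moreover those same references give a precise formula: the rank equals $1$ (for $\lambda$) plus the dimension of a certain space of cusp forms, or, in the cleanest packaging, equals the dimension of a space $M_{1+m/2}(\omega_{\Lambda})$ of vector-valued modular forms of weight $1+m/2$ for the Weil representation $\omega_\Lambda$ attached to the discriminant form $q_{\Lambda}$. In our situation $m = N$, so the relevant weight is $1 + N/2$, and the discriminant form is $q_{D_{N-2}}$ by~\eqref{rugby}.

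First I would reduce the computation of $\dim M_{1+N/2}(\omega_{\Lambda_N})$ to a manageable arithmetic count using the structure of $A_{\Lambda_N}$ from Claim~\ref{clm:disdin}: the discriminant group is $\ZZ/4$ when $N$ is odd and the Klein four-group when $N$ is even, and the Weil representation depends only on the discriminant form, which by Remark~\ref{rmk:periodeight} is periodic mod $8$ in $N$ (since $\Lambda_{N+8}\cong\Lambda_N\oplus E_8$ and $E_8$ contributes trivially to the discriminant). This periodicity is what produces the floor functions in the statement: the dimension of the space of vector-valued modular forms grows linearly in the weight (hence in $N$) with a period-$8$ (really period-$24$, but the $8$-periodicity of the discriminant form plus the $12$-periodicity of modular forms combine) correction term. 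I would then invoke the dimension formula for vector-valued modular forms for the Weil representation --- either the Riemann--Roch / Selberg trace formula version in Bruinier's book, or the explicit tables in~\cite{pick3nl} --- and evaluate it case by case according to the residue of $N$ modulo $8$ (and, because of the $\pm$ eigenspace decomposition under $-1\in O(q_\Lambda)$, also track which part is invariant). The four cases in the statement ($N$ odd; $N\equiv 0,6$; $N\equiv 2$; $N\equiv 4\pmod 8$) correspond exactly to the distinct discriminant forms $q_{D_{N-2}}$ up to the $8$-periodicity, with the $\equiv 2$ and $\equiv 4$ cases being exceptional because there $D_{N-2}$ has extra isotropic/low-norm vectors (cf.~the analysis in Remark~\ref{rmk:periodeight}, where $\Lambda_N \cong II_{2,2+8k}\oplus D_a$ with $a = 0,2,4$ giving $U(2)$-type or $D_2,D_4$ summands).

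The main obstacle, I expect, is the bookkeeping in the dimension formula: one must correctly compute the local invariants (the Gauss sums $\sum_{x\in A_\Lambda} e(q_\Lambda(x))$, the signature mod $8$, and the contributions of elements of order $2$) entering the trace-formula expression for $\dim M_k(\omega_\Lambda)$, and then extract the $\Gamma_{\xi}$- (or $\wt O^+$-)invariant part. A subtlety worth flagging: the theorem is stated for $\wt O^+(\Lambda_N)$, and the surjectivity $O^+(\Lambda)\to O(q_\Lambda)$ (Proposition~\ref{prp:propgroup}) means one is really computing the full space of modular forms for the Weil representation, not an eigenspace --- but one should double-check that the Heegner divisors used are exactly the $\wt O^+$-orbits (Definition~\ref{dfn:heegort}), so that no relations are missed or double-counted. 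Once the formula is set up, verifying the explicit table for $3\le N\le 20$ is a finite check: plug in $N=3,\dots,20$, confirm $\rho_N$ matches, and in particular confirm the low-dimensional coincidences $\rho_5=\dots=\rho_{10}=1$ (where $\Pic_\QQ$ is spanned by $\lambda$ alone, forcing all the Heegner divisors to be proportional --- consistent with the remark in the introduction that $H_h(N)$ is proportional to $\lambda(N)$ for $N\le 10$, and with Gritsenko's observations in~\cite{gritsenko}).
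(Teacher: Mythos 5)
Your proposal follows essentially the same route as the paper: the rank is $1$ (for $\lambda$) plus $\dim S_{1+N/2,\Lambda_N}$, with injectivity from Bruinier and surjectivity from Bergeron et al., and the dimension is then evaluated via Bruinier's explicit formula in terms of Gauss sums and local invariants of the discriminant form $q_{D_{N-2}}$, case by case in the residue of $N$. The only caveat is your alternative packaging as $\dim M_{1+N/2}(\omega_{\Lambda_N})$, which would require the Eisenstein part to be one-dimensional; the paper sticks with the cusp-form count, which is the safe formulation.
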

\Ref{thm}{thmrankpic} will be proved in~\Ref{subsec}{eccopic}.
 If $N$ is odd, the natural map
\begin{equation}\label{crazyeddie}
\cF_{\Lambda_N}(\wt{O}(\Lambda_N)^+)\to \cF_{\Lambda_N}(\Gamma_{\xi_N})=\cF(N)
\end{equation}
is an isomorphism, and hence $\rho_N$ is also the  Picard number of $\cF(N)$. On the other hand, if $N$ is even the map in~\eqref{crazyeddie} 
  is a double cover (ramified along the hyperelliptic divisor, see~\Ref{crl}{otildegamma}), and hence $\Pic(\cF(N))_{\QQ}$ is identified with the  subspace  of 
  $\Pic(\cF_{\Lambda_N}(\wt{O}(\Lambda_N)^+))_{\QQ}$ invariant under the action of the covering involution of  the map in~\eqref{crazyeddie}. In particular, the 
  Picard number of  $\cF(N)$ is at most $\rho_N$. In fact, for most $N$ (with at most 1-2 exceptions), one can see that  the Picard number of $\cF(N)$ is equal to $\left\lfloor \frac{N+5}{8}\right\rfloor$.
Let  $N\in\{18,19,20\}$;  in~\Ref{sec}{whydelta} we will show that a basis of the $\QQ$-Picard group  of $\cF_{\Lambda_N}(\wt{O}(\Lambda_N)^+)$  is provided by the Heegner divisors defined in~\Ref{subsubsec}{heegorto}, and that the corresponding Heegner divisors of $\cF(N)$ provide a basis of $\cF(N)_{\QQ}$. In particular the ranks of $\Pic(\cF(18))$ and $\Pic(\cF(19))$ are equal to $\rho_{18}$ and $\rho_{19}$ respectively, while the rank of  $\Pic(\cF(20))$ is $3=(\rho_{20}-1)$.  
  
The fact that the Picard number of  $\cF(10)$ is equal to $1$ is 
particularly relevant for us. It implies that the $\QQ$-Picard group of $\cF(10)$ is generated by $\lambda(10)$, and thus $H_h(10)$ is  proportional to $\lambda(10)$. In fact Gritsenko~\cite[Thm. 3.2]{gritsenko} proved that  $H_h(10)=8 \lambda(10)$.  That is a key relation for what follows. More generally, we will need to know what are the relations between $\lambda(N)$, $H_n(N)$, $H_h(N)$ and $H_u(N)$. We will derive such 
 relations  in the range $N\le 25$ by considering suitable quasi pull-backs of Borcherd's celebrated reflective modular form for the orthogonal group $O(II_{2,26})$.  
 
 In order to state our results,  we let $\mu(N)$, for $3\le N\le 25$, be defined by Table~\eqref{tablemuen}.
  \begin{table}[htb!]
\renewcommand{\arraystretch}{1.60}
\begin{tabular}{c|c|c|c|c|c|c|c|c|c|c|c|c|c}
N &  3 &  4 & 5--10&11  & 12  & 13--18   &  19  &  20   & 21 &  22  &  23  &    24  &  25    \\
\hline
$\mu(N)$ & 46 & 1 & 0 &30& 1  & 0 & 78  & 33 & 16   &  8   &  4   &  2  &  1  \\\end{tabular}
\vspace{0.2cm}
\caption{Definition of $\mu(N)$}\label{tablemuen}
\end{table}
\begin{theorem}[First Borcherds' relation]\label{thm:thmborcherds1}
Let  $3\le N\le 25$. Then
\begin{equation}\label{borcherds1}
%\scriptstyle
%
2(12+(26-N)(25-N))\lambda(N)=  H_n(N)+2(26-N) H_h(N)+\tau(N)\mu(N) H_u(N), 
\end{equation}
where $\tau(N)$ is   equal to $1$ if $N\equiv 3,4\pmod{8}$, and is equal to $2$ otherwise.
\end{theorem}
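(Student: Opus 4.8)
The plan is to exploit Borcherds' reflective modular form $\Phi_{12}$ on the Type IV domain attached to $\II_{2,26}$ via the \emph{quasi-pullback} technique (due to Borcherds, and systematized by Gritsenko--Hulek--Sankaran). First I would fix a primitive embedding $\Lambda_N \hookrightarrow \II_{2,26}$. By Remark~\ref{rmk:periodeight} (writing $N-2 = 8k+a$) one has $\Lambda_N \cong \II_{2,2+8k}\oplus D_a$, so the orthogonal complement $R_N := \Lambda_N^{\bot}$ inside $\II_{2,26}$ is a negative definite root lattice of rank $24-N$; a uniform way to produce the embedding is to write $\II_{2,26} \cong \Lambda_N \oplus R_N$ up to finite index and identify $R_N$ with the relevant piece of a Niemeier lattice (or of $E_8^{\oplus 3}$). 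The number $\mu(N)$ in Table~\eqref{tablemuen} is, up to the combinatorial normalization, the number of roots $r \in R_N$ (equivalently $|R_N(-2)|/2$), and the coefficient $2(12 + (26-N)(25-N))$ is the weight of the quasi-pullback, namely $12 + \tfrac12|R_N(-2)|$ together with the dimension shift; the first task is thus to tabulate $R_N$ and check these two numerical inputs against the stated formulas for $3 \le N \le 25$.

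Next I would form the quasi-pullback $\Phi_{12}|_{\Lambda_N}$: this is the holomorphic automorphic form on $\cD^+_{\Lambda_N}$ obtained by restricting $\Phi_{12}$ and dividing by the product $\prod_{r>0}(r,\cdot)$ over positive roots of $R_N$, which is automorphic of weight $12 + \tfrac12|R_N(-2)|$ for a suitable finite-index subgroup of $O^+(\Lambda_N)$ (one checks it descends to $\Gamma_{\xi_N}$, or at worst to $\wt O^+(\Lambda_N)$, using that the relevant reflections preserve the decoration by Proposition~\ref{prp:propram}). The key point is then the \emph{divisor} of this form. Borcherds' form $\Phi_{12}$ vanishes to order one precisely on the reflective hyperplanes $r^{\bot}$ for $r \in \II_{2,26}$ with $r^2 = -2$; intersecting with $\Lambda_N$, such an $r$ either lies in $R_N$ (these are the roots divided out in the quasi-pullback and contribute nothing to the residual divisor), or projects to a vector $v \in \Lambda_N$ whose square and divisibility are constrained. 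Running through the possibilities for $v^2$ and $\divisore(v)$ using Claim~\ref{clm:disdin}, \eqref{discdien}, \eqref{rugby} exactly as in the proof of Proposition~\ref{prp:propram}, one finds that the only contributions are: $v$ nodal ($v^2=-2$, $\divisore 1$), with multiplicity $1$; $v$ hyperelliptic ($v^2=-4$, $\divisore 2$), where the count of $r \in \II_{2,26}(-2)$ projecting onto a fixed hyperelliptic hyperplane produces the factor $2(26-N)$; and, when $N\equiv 3,4\pmod 8$, $v$ unigonal, with the factor $\tau(N)\mu(N) = \mu(N)$ (the extra roots in $R_N$ that pair off against a unigonal vector). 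The multiplicities $2(26-N)$ and $\mu(N)$ come from an explicit lattice-point count: given a hyperelliptic (resp. unigonal) $v\in\Lambda_N$, count $r\in\II_{2,26}$ with $r^2=-2$ and $r^{\bot}\cap\Lambda_{N,\CC} = v^{\bot}\cap\Lambda_{N,\CC}$, i.e. $r = \alpha v + (\text{root of }R_N)$ for appropriate $\alpha$; this reduces to counting short vectors in cosets of $R_N$, which is where the arithmetic of $D_a$ and the Niemeier complement enters.

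Finally, since $\Phi_{12}|_{\Lambda_N}$ is a nonzero holomorphic automorphic form of known weight, its divisor is linearly equivalent to (weight)$\cdot\lambda(N)$; equating this with the effective divisor just computed gives \eqref{borcherds1} in $\Pic(\cF_{\Lambda_N}(\wt O^+(\Lambda_N)))_{\QQ}$ (and hence on $\cF(N)$, for $N$ odd identically and for $N$ even after the descent of Proposition~\ref{prp:rhopull}, noting $\rho^*$ is injective on $\QQ$-Picard groups). The main obstacle I anticipate is the \emph{bookkeeping of the multiplicities} $2(26-N)$, $\mu(N)$, $\tau(N)$: one must be careful that (i) the $\tfrac12$ discrepancies between "number of roots" and the tabulated $\mu(N)$ are tracked consistently (the $\tau(N)$ is precisely the reflection-ramification correction of Proposition~\ref{prp:propram} and Corollary~\ref{crl:reflheeg}, i.e. whether the quotient map is ramified along $H_u$), and (ii) the coset-counting in the non-unimodular cases $D_a$ ($a\ne 0$) is done correctly, since there the unigonal vector has divisibility $4$ (for $N$ odd) or $2$ (for $N$ even) and the relevant short vectors live in $\tfrac14 R_N$ or $\tfrac12 R_N$. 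A clean way to organize this is to verify the identity on the hyperelliptic divisor $H_h(N)\cong\cF(N-1)$ by restriction, turning the multiplicities into an inductive statement compatible with the $D$-tower of Section~\ref{sec:dtower}.
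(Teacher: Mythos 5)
Your overall strategy is the paper's: embed $\Lambda_N$ saturatedly into $\II_{2,26}$ with orthogonal complement $D_{26-N}$, take the quasi-pullback of $\Phi_{12}$, identify its divisor as a combination of pre-Heegner divisors by counting roots of $\II_{2,26}$ whose hyperplanes meet $\cD^{+}_{\Lambda_N}$, and descend to the quotient keeping track of ramification. Two points, however, are genuinely off.

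First, your case analysis of which Heegner divisors occur is incomplete. You assert that the unigonal divisor contributes only when $N\equiv 3,4\pmod 8$, with $\tau(N)\mu(N)=\mu(N)$. This contradicts the statement being proved: for $21\le N\le 25$ one has $\mu(N)\in\{16,8,4,2,1\}$ and $\tau(N)=2$, so the unigonal term is present and nonzero there. The correct dichotomy is the one in \Ref{prp}{numinorm}: a root $\delta$ of $\II_{2,26}$ with $\la\delta,\Lambda_N^{\bot}\ra$ negative definite always produces a \emph{minimal norm} vector $\nu(\delta)$, and the unigonal classes $\zeta,\zeta'$ are hit precisely when the saturation $\Sat\la v_{\zeta},\Lambda_N^{\bot}\ra$ acquires extra roots beyond those of $\Lambda_N^{\bot}$ --- which happens for $N\in\{3,4,11,12\}\cup\{19,\dots,25\}$ (for $19\le N\le 25$ this saturation is $E_{27-N}$, whence the values of $\mu$). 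The congruence $N\equiv 3,4\pmod 8$ governs something else entirely, namely whether the unigonal divisor is \emph{reflective}, i.e. whether $\pi\colon\cD^{+}_{\Lambda_N}\to\cF(N)$ ramifies along it; that is what $\tau(N)$ records when one converts pre-Heegner divisors into Heegner divisors (together with $H_{\zeta}=H_{\zeta'}=H_u$ for $N$ odd). Decoupling ``does $H_u$ appear'' (controlled by $\mu(N)$) from ``what is $\tau(N)$'' (controlled by reflectivity) is essential; as written your argument yields the wrong relation for $21\le N\le 25$.

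Second, your identification of the numerical inputs is garbled at the start: the complement $R_N$ has rank $26-N$ (not $24-N$), and $\tfrac12|R_N(-2)|=(26-N)(25-N)$ is the correction to the \emph{weight}, not $\mu(N)$. The multiplicities $2(26-N)$ and $\mu(N)$ are root counts in the saturations $\Sat\la v,\Lambda_N^{\bot}\ra$ for $v$ hyperelliptic (giving $D_{27-N}$) and unigonal (giving $E_{27-N}$), minus the roots already in $\Lambda_N^{\bot}$ --- your later ``lattice-point count'' paragraph describes this correctly, but it conflicts with your opening characterization of $\mu(N)$. Finally, the overall factor $2$ on the left-hand side of \eqref{borcherds1} is itself a ramification factor (the nodal divisor is always reflective), and for $N$ even there is the additional descent through the double cover $\cF_{\Lambda_N}(\wt{O}^{+}(\Lambda_N))\to\cF(N)$; you should state explicitly where each factor of $2$ enters rather than deferring all of it to ``bookkeeping''.
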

\begin{theorem}[Second Borcherds' relation]\label{thm:thmborcherds2}
 Let $3\le N\le 17$. Then
\begin{equation}\label{borcherds2}
%\scriptstyle
%
2(132+(18-N)(17-N))\lambda(N)=  H_n(N)+2(18-N) H_h(N)+\tau(N)\mu(N+8) H_u(N), 
\end{equation}
where $\tau(N)$ is  as in~\Ref{thm}{thmborcherds1}.
\end{theorem}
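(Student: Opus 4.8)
The plan is to derive the Second Borcherds' relation (\Ref{thm}{thmborcherds2}) by applying Borcherds' quasi-pullback construction to a \emph{different} pair of lattice embeddings than the one used for \Ref{thm}{thmborcherds1}. Recall that $\II_{2,26}$ contains $\Lambda_N = U^2\oplus D_{N-2}$ as a primitive sublattice, and the orthogonal complement is a negative-definite lattice of rank $24-N$. For the first relation, the relevant complement is (a form related to) $D_{24-N}$ or its neighbors; the point of the second relation is that there is a \emph{second}, inequivalent primitive embedding $\Lambda_N\hookrightarrow \II_{2,26}$ whose orthogonal complement $K$ has rank $24-N$ but a different root system — concretely, one realizes $\Lambda_N\oplus E_8 \cong \Lambda_{N+8}$ (by~\eqref{periodeight}) and embeds via $\Lambda_N \hookrightarrow \Lambda_{N+8}\hookrightarrow \II_{2,26}$, so that the complement now has the shape corresponding to $\Lambda_{N+8}$'s complement, i.e.\ rank $24-(N+8)=16-N$ sitting inside things — wait, I need to recount: the complement of $\Lambda_{N+8}$ in $\II_{2,26}$ has rank $24-(N+8)=16-N$, and adjoining the $E_8$ back gives a complement of rank $(16-N)+8 = 24-N$ for $\Lambda_N$, but with root system enlarged by an $E_8$ summand. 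First I would make this embedding precise and compute its root system and the number $\mu(N+8)$ of its roots of the relevant type, matching the shift $N\rightsquigarrow N+8$ in the coefficients (note $132 + (18-N)(17-N) = 12 + (26-(N+8))(25-(N+8))$, which is exactly the first-relation expression with $N$ replaced by $N+8$, confirming this is the right structural input).

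Second, I would run the quasi-pullback machinery: restrict Borcherds' form $\Phi_{12}$ on $\cD_{\II_{2,26}}$ to $\cD_{\Lambda_N}$, dividing by the product of the linear forms cutting out the hyperplanes $r^\perp$ for $r$ a root in the orthogonal complement $K$. This produces a nonzero automorphic form on $\cF_{\Lambda_N}(\wt O^+(\Lambda_N))$ of weight $12 + \frac12|R(K)|$ whose divisor is a combination of the Heegner divisors $H_n(N), H_h(N), H_u(N)$. Taking $c_1$ (i.e.\ passing to divisor classes on $\cF(N)$) yields an identity $2(\text{weight})\,\lambda(N) = \sum_\bullet m_\bullet H_\bullet(N)$; the factor $2$ and the multiplicities $m_\bullet$ come from the orders of vanishing of $\Phi_{12}$ along the various rational quadratic divisors of $\II_{2,26}$ and the ramification of the quotient map, exactly as in the proof of \Ref{thm}{thmborcherds1}. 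The coefficient $1$ of $H_n(N)$ is forced because $\Phi_{12}$ vanishes to order exactly $1$ along each $(-2)$-divisor; the coefficient $2(18-N)$ of $H_h(N)$ counts, via Eichler's criterion, how many roots of the complement become $\wt O^+$-equivalent after projection to the $H_h$ stratum (the ``$2$'' being the branching of~\eqref{crazyeddie} in even dimension, or the analogous lattice-theoretic factor); and the coefficient $\tau(N)\mu(N+8)$ of $H_u(N)$ is the count of roots projecting onto the unigonal divisor, which by the structure of the second embedding is governed by $\mu$ evaluated at $N+8$.

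Third, I would verify the range constraint $3\le N\le 17$: this is exactly the condition under which the second embedding $\Lambda_N \hookrightarrow \II_{2,26}$ (factoring through $\Lambda_{N+8}$) exists with $N+8\le 25$, so that $\mu(N+8)$ is defined by Table~\eqref{tablemuen} and the complementary lattice still carries enough roots for the quasi-pullback to be a genuine (holomorphic, nonzero) form rather than identically zero. I would also check the two boundary behaviors: $\tau(N)=1$ when $N\equiv 3,4\pmod 8$ (reflecting that the unigonal divisor is itself reflective, by \Ref{crl}{reflheeg}, so the associated reflection lies in $\Gamma_\xi$ and the quotient map is ramified there, halving the naive multiplicity) and $\tau(N)=2$ otherwise.

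The main obstacle, as in \Ref{thm}{thmborcherds1}, will be the bookkeeping of multiplicities: correctly identifying which $\wt O^+(\II_{2,26})$-orbits of roots in the orthogonal complement restrict to which of $H_n$, $H_h$, $H_u$ on $\cF(N)$, and with what multiplicity, taking into account both the order of vanishing of $\Phi_{12}$ and the ramification/branching of the tower of quotient maps. This requires a careful orbit analysis using Eichler's criterion (\Ref{prp}{criteich}) for the lattice $K^\perp = \Lambda_N$ inside $\II_{2,26}$, together with the explicit description of minimal-norm vectors from \Ref{prp}{minorm}; I expect the argument to parallel \Ref{thm}{thmborcherds1} closely enough that once that computation is in hand, the shift $N\mapsto N+8$ in the complement's root system propagates formally through all the multiplicity counts. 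A secondary subtlety is confirming that the quasi-pullback is not identically zero in the low end of the range ($N$ near $17$), where the complementary root system is small; here one invokes Borcherds' non-vanishing criterion (the quasi-pullback along a set of roots spanning a sublattice with no further roots orthogonal to it is nonzero), which holds for the embeddings in question.
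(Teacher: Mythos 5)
Your approach is essentially the paper's: for $3\le N\le 17$ one uses the second saturated embedding of $\Lambda_N$ into $\II_{2,26}$, with orthogonal complement $E_8\oplus D_{18-N}$ (\Ref{lmm}{lemextendd}), forms the quasi-pullback $\Xi_N$ of $\Phi_{12}$, computes its weight and divisor (\Ref{thm}{automdue}) by the same orbit analysis as for the first relation, and descends to $\cF(N)$ via the (possibly ramified) double cover. Two arithmetic points to fix. First, the orthogonal complement of $\Lambda_N$ (rank $N+2$) in $\II_{2,26}$ (rank $28$) has rank $26-N$, not $24-N$; your subsequent rank counts are all off by $2$, though the conclusion that the second complement is $E_8\oplus D_{18-N}$ is correct. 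Second, your ``confirming'' identity $132+(18-N)(17-N)=12+(26-(N+8))(25-(N+8))$ is false: the right-hand side equals $12+(18-N)(17-N)$, which is $120=\tfrac12|R(E_8)|$ too small. The correct weight follows from your own general formula $12+\tfrac12|R(K)|$ with $K=E_8\oplus D_{18-N}$ and $|R(D_m)|=2m(m-1)$: the $E_8$ summand contributes $120$ to the weight but nothing to the divisor (all of its roots lie in $\Lambda_N^{\bot}$ and are divided out), which is exactly why the constant on the left jumps from $12$ to $132$ while the right-hand coefficients $2(18-N)$ and $\mu(N+8)$ are those of the first relation with $N$ replaced by $N+8$.
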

\Ref{thm}{thmborcherds1} and~\Ref{thm}{thmborcherds2} will be proved in~\Ref{subsec}{sectborcherds}. Below we prove a corollary which is extremly important for our work. 
\begin{corollary}[Gritsenko's relation]\label{crl:keyrelations}
 Let $3\le N\le 17$. Then
 \begin{equation}\label{borcherds3}
32(14-N)\lambda(N)=16 H_h(N)+\tau(N)(\mu(N)-\mu(N+8))H_u(N).
\end{equation}
\end{corollary}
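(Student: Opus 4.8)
The plan is to obtain the identity by subtracting the Second Borcherds' relation (\Ref{thm}{thmborcherds2}) from the First (\Ref{thm}{thmborcherds1}). Both relations are valid in the overlapping range $3\le N\le 17$, both are linear relations in $\Pic(\cF(N))_{\QQ}$ among $\lambda(N)$, $H_n(N)$, $H_h(N)$, $H_u(N)$, and crucially both assign $\tau(N)$ the same meaning. The key structural observation is that the coefficient of $H_n(N)$ equals $1$ in each relation, so it disappears upon subtraction, leaving a relation involving only $\lambda(N)$, $H_h(N)$ and $H_u(N)$.

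The first step is to compute the coefficient of $\lambda(N)$ in the difference, namely $2\big[(12+(26-N)(25-N))-(132+(18-N)(17-N))\big]$. Expanding $(26-N)(25-N)=650-51N+N^2$ and $(18-N)(17-N)=306-35N+N^2$, the difference of these two products is $344-16N$, so the bracket is $-120+344-16N=224-16N=16(14-N)$, and the $\lambda(N)$-coefficient of the difference is $32(14-N)$.

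Next one reads off the remaining coefficients: the coefficient of $H_h(N)$ in the difference is $2\big[(26-N)-(18-N)\big]=16$, and the coefficient of $H_u(N)$ is $\tau(N)\big(\mu(N)-\mu(N+8)\big)$. Assembling these, the difference of the two relations is precisely
$$32(14-N)\lambda(N)=16H_h(N)+\tau(N)\big(\mu(N)-\mu(N+8)\big)H_u(N),$$
which is the asserted formula~\eqref{borcherds3}.

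There is no real obstacle here: the entire content is carried by \Ref{thm}{thmborcherds1} and \Ref{thm}{thmborcherds2}, and all that is needed is the arithmetic verification that the $\lambda(N)$-coefficient collapses to the clean form $32(14-N)$ while the $H_n(N)$ terms cancel (the latter being immediate from the shared coefficient $1$). One only needs to note that the restriction $3\le N\le 17$ in the statement is exactly what is required for both input relations (in particular for \Ref{thm}{thmborcherds2}), so no additional hypothesis is imposed.
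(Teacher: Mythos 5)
Your proposal is correct and is essentially identical to the paper's own proof: the paper likewise subtracts the Second Borcherds relation from the First, noting that the $H_n(N)$ terms cancel. Your explicit arithmetic check that the $\lambda(N)$-coefficient collapses to $32(14-N)$ is accurate and simply spells out what the paper leaves implicit.
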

\begin{proof}
Taking the difference between the identities \eqref{borcherds1} and \eqref{borcherds2}, we see that the terms $H_n(N)$ cancel, giving the relationship above between $\lambda(N)$, $H_h(N)$ and $H_u(N)$. 
\end{proof}
\begin{corollary}\label{cormoving}
 The following hold:
\begin{enumerate}
\item $H_h(14)$ is linearly equivalent to $H_u(14)$.
\item If $4\le N<14$, then $H_h(N)$ is a big divisor.
\item If $4\le N\le 10$, then $H_h(N)$ is an ample divisor.
\end{enumerate}
\end{corollary}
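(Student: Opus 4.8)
The plan is to derive all three statements from Gritsenko's relation~\eqref{borcherds3}, combined with the known fact that $H_h(10)=8\lambda(10)$, and a positivity/ampleness input for small $N$. Recall that~\eqref{borcherds3} reads $32(14-N)\lambda(N)=16 H_h(N)+\tau(N)(\mu(N)-\mu(N+8))H_u(N)$. By Table~\eqref{tablemuen}, for $5\le N\le 10$ we have $\mu(N)=0$, while $\mu(N+8)$ equals $\mu(13),\ldots,\mu(18)$, all of which are $0$ as well; for $N\in\{11,12\}$ one has $\mu(N)=\mu(11)=30$ or $\mu(12)=1$ and $\mu(N+8)=\mu(19)=78$ or $\mu(20)=33$; for $N\in\{3,4\}$ one has $\mu(3)=46,\mu(11)=30$ and $\mu(4)=1,\mu(12)=1$. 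So the coefficient $\mu(N)-\mu(N+8)$ vanishes precisely for $5\le N\le 10$, is $0$ for $N=4$, and is nonzero (negative) for $N\in\{3,11,12\}$. The first step is to record these arithmetic facts, and also to recall that $\lambda(N)$ is ample on the Baily-Borel compactification (Baily-Borel), hence big and nef on $\cF(N)$, and that $H_u(N)$ and $H_h(N)$ are effective.

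For~(1): set $N=14$ in~\eqref{borcherds3}. The left-hand side is $32(14-14)\lambda(14)=0$, so $0=16H_h(14)+\tau(14)(\mu(14)-\mu(22))H_u(14)$. From Table~\eqref{tablemuen}, $\mu(14)=0$ (since $13\le 14\le 18$) and $\mu(22)=8$, and $\tau(14)=2$ (since $14\equiv 6\pmod 8$, not $3,4$); thus $0=16H_h(14)-16H_u(14)$, i.e.~$H_h(14)$ is linearly equivalent to $H_u(14)$ in $\Pic(\cF(14))_\QQ$. (Strictly, the Borcherds relations are identities of $\QQ$-Cartier divisor classes, so "linearly equivalent" should be read as $\QQ$-linear equivalence, which is all that is claimed.)

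For~(3): for $4\le N\le 10$ I use~\eqref{borcherds3} together with the relation of Gritsenko for $N=10$. From the previous paragraph's bookkeeping, for $5\le N\le 10$ the coefficient of $H_u(N)$ vanishes, so~\eqref{borcherds3} gives $32(14-N)\lambda(N)=16H_h(N)$, i.e.~$H_h(N)=2(14-N)\lambda(N)$; for $N=4$ the coefficient of $H_u$ also vanishes ($\mu(4)-\mu(12)=1-1=0$), giving again $H_h(4)=2(14-4)\lambda(4)=20\lambda(4)$. Since $14-N>0$ for $N\le 10$ and $\lambda(N)$ is ample on $\cF(N)^*$, the class $2(14-N)\lambda(N)$ is ample; hence $H_h(N)$ is ample for $4\le N\le 10$. (Here one should note that $\cF(N)$ is quasi-projective and $\lambda(N)$ extends to the ample $\cL^*(N)$ on Baily-Borel; ampleness of $H_h(N)$ is to be understood in the sense that it is proportional to $\lambda(N)$, which is the natural polarization — this is exactly the phrasing used in the introduction, "ample on $\cF(N)$ (and proportional to $\lambda(N)$) for $N\le 10$".)

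For~(2): I need bigness of $H_h(N)$ for $4\le N<14$. For $4\le N\le 10$ this follows from~(3), since ample implies big. For $N\in\{11,12,13\}$ I argue as follows. For $N=13$: $\mu(13)=0$ and $\mu(21)=16$, $\tau(13)=1$ (since $13\equiv 5\pmod 8$), so~\eqref{borcherds3} gives $32(14-13)\lambda(13)=16H_h(13)-16H_u(13)$, i.e.~$2\lambda(13)=H_h(13)-H_u(13)$, hence $H_h(13)=2\lambda(13)+H_u(13)$, a sum of a big (indeed ample-on-$\cF^*$) class and an effective class, hence big. For $N=12$: $\mu(12)-\mu(20)=1-33=-32$, $\tau(12)=2$, so $32(14-12)\lambda(12)=16H_h(12)-64H_u(12)$, i.e.~$4\lambda(12)=H_h(12)-4H_u(12)$, hence $H_h(12)=4\lambda(12)+4H_u(12)$, again big $+$ effective, hence big. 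For $N=11$: $\mu(11)-\mu(19)=30-78=-48$, $\tau(11)=1$, so $32(14-11)\lambda(11)=16H_h(11)-48H_u(11)$, i.e.~$6\lambda(11)=H_h(11)-3H_u(11)$, hence $H_h(11)=6\lambda(11)+3H_u(11)$, big $+$ effective, hence big. This finishes all three parts.

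The main obstacle I anticipate is not conceptual but bookkeeping: one must be careful about the meaning of "ample" and "big" on the quasi-projective variety $\cF(N)$ versus its Baily-Borel compactification, and about the precise value of $\tau(N)$ and of $\mu$ on the various residue classes — a single sign or table-lookup slip propagates. The only real content beyond Gritsenko's relation is the input $\lambda(N)$ ample on Baily-Borel (standard) and the effectivity of the Heegner divisors $H_h,H_u$ (immediate from their definition as images of hyperplane arrangements, \Ref{subsubsec}{divfour}); everything else is linear algebra in the rank-$\le 4$ Picard group computed in~\Ref{thm}{thmrankpic}. A small subtlety worth flagging: for $N\in\{5,\dots,10\}$ the displayed conclusion $H_h(N)=2(14-N)\lambda(N)$ is consistent with — and for $N=10$ recovers — Gritsenko's $H_h(10)=8\lambda(10)$, which is a useful internal sanity check that the normalization of~\eqref{borcherds3} is correct.
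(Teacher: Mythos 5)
Your argument is exactly the paper's: plug $N=14$ into Gritsenko's relation~\eqref{borcherds3} for (1), note that the $H_u(N)$ coefficient vanishes for $4\le N\le 10$ so that $H_h(N)=2(14-N)\lambda(N)$ is ample for (3), and for (2) rewrite the relation as (positive multiple of $\lambda(N)$) plus (nonnegative multiple of the effective divisor $H_u(N)$), hence big. One bookkeeping slip: you have the values of $\tau$ swapped for $N=12,13$ — since $12\equiv 4\pmod 8$ one has $\tau(12)=1$, and since $13\equiv 5\pmod 8$ one has $\tau(13)=2$ — so the correct relations are $H_h(12)=4\lambda(12)+2H_u(12)$ and $H_h(13)=2\lambda(13)+2H_u(13)$; the coefficients of $H_u$ remain positive, so your conclusions are unaffected.
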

\begin{proof}
This is a direct consequence of~\eqref{borcherds3}. Plug $N=14$ into~\eqref{borcherds3} to get Item~(1). Next, for $ N< 14$, we get
\begin{equation}\label{gritenne}
H_h(N)=2(14-N)\lambda(N)+\frac{\tau(N)}{16}(\mu(N+8)-\mu(N))H_u(N).
\end{equation}
Since $(14-N)>0$, $\lambda(N)$ is ample, and $\mu(N)\le \mu(N+8)$ (for $4\le N<14$), we get that $H_h(N)$ is big. For $4\le N\le 10$, the coefficient of $H_u(N)$ is zero, and hence  $H_h(N)$ is ample.
\end{proof}
\begin{remark}\label{remgritsenko}
Let $N\in\{4,\dots, 10\}$. Then Gritsenko \cite[Thm. 3.2]{gritsenko} constructed a strongly reflective automorphic form for the group $\widetilde O^+(\Lambda_N)$, of weight $(14-N)$, 
and with associated divisor  the hyperelliptic divisor $H_{\xi}(\Lambda_N)$, see~\Ref{dfn}{heegort}.
 Interpreted in our context, this reads
\begin{equation}\label{eqgritsenko}
H_h(N)=2(14-N)\lambda(N),
\end{equation}
which is exactly \Ref{crl}{keyrelations} for $N\in\{4,\dots, 10\}$.
\end{remark}
\subsection{The Picard rank of $\cF_{\Lambda_N}(\wt{O}(\Lambda_N)^+)$}\label{subsec:eccopic}
\setcounter{equation}{0}
The work of Borcherds and the refinements of Bruinier \cite{bruinier,bruinierbook} give a recipe for computing the rank of the Picard group for modular varieties of Type IV, by relating this to a dimension computation for vector valued modular forms. 
\medskip

\n
{\it Proof of~\Ref{thm}{thmrankpic}.\/} 
Let $L$ be an even lattice of signature $(2,N)$. Let $k=1+\frac{N}{2}\in \frac{1}{2}\bZ$. Borcherds has defined a homorphism
$$S_{k,L}\to \Pic(\sF_L(\widetilde O^+(L)))_\bC/\langle \lambda\rangle$$
from the space of cusp forms of weight $k$ with values in $L$. This morphism is injective if $L$ contains $2$ hyperbolic summands (cf. Bruinier). Recent work of Bergeron et al. \cite{pick3nl} establishes that this is in fact an isomorphism. We are interested in the case $L= \Lambda_N$ is a $D$ lattice. Thus~\Ref{thm}{thmrankpic} will follow from the computation of $\dim S_{k,\Lambda_N}$ carried out  below (see 
 \eqref{dimpicodd} and \eqref{dimpiceven}).

Bruinier \cite[p. 52, Eqs. (6) and (7)]{bruinier} gives the following formula for the space of cusp forms:
\begin{equation}\label{dimskl}
\dim(S_{k,\Lambda_N})=d+\frac{dk}{12}-\alpha_1-\alpha_2-\alpha_3-\alpha_4,
\end{equation}
where $k=1+\frac{N}{2}$ is as above, 
$$d:=\left|A_{\Lambda_N}/\{\pm 1\}\right|=\left\{ \begin{matrix}4&\textrm{if}& N\equiv 0 \pmod 2\\
 3&\textrm{if}& N\equiv 1 \pmod 2,
 \end{matrix}\right. $$
 and the $\alpha_i$'s will be computed  below.

In fact \cite[Eq. (7)]{bruinier}, gives
\begin{equation}\label{computea4}
\alpha_4:=\left|\left \{\gamma\in A_{\Lambda_N}/\{\pm 1\} \mid \frac{1}{2}q(\gamma)\in \bZ\right\}\right|=
\left\{ \begin{matrix}1&\textrm{if}& N\not\equiv 2 \pmod 8\\
 3&\textrm{if}& N\equiv 2 \pmod 8
  \end{matrix}\right.
  \end{equation}
 (Warning: Bruinier uses a different convention on the quadratic form $q$: for us, $q(x)=(x,x)\pmod{2\ZZ}$, while for Bruinier  $q(x)=(x,x)/2\pmod{\ZZ}$.)
  
Similarly, \cite[p. 55]{bruinier} gives
$$\alpha_3=\sum_{\gamma\in A_{\Lambda_N}/\{\pm1\}}\left\{-\frac{1}{2}q(\gamma)\right\},$$
where $\{z\}=z-\lfloor z\rfloor\in[0,1)$ denotes the fractional part of $z\in\bR$. It follows that
\begin{equation}\label{computea3}
\alpha_3=\left\{ 
\renewcommand*{\arraystretch}{1.4}
\begin{array}{lcl}\frac{1}{2}+\left\{\frac{N-2}{8}\right\}&\textrm{if}& N\equiv 1 \pmod 2\\
\frac{1}{2}+2\left\{\frac{N-2}{8}\right\}&\textrm{if}& N\equiv 0 \pmod 2.
  \end{array}\right. 
  \end{equation}
The computation of  $\alpha_1$ and $\alpha_2$ involves the Gauss sum
$$G(n,\Lambda_N):=\sum_{\gamma\in A_{\Lambda_N}} \exp\left(\frac{nq(\gamma)}{2}\right),$$
where $\exp(z)=e^{2\pi i z}$. The two sums needed later are 
\begin{equation}\label{eqg2}
\mathrm{Re}\left( G(2,\Lambda_N)\right)=\left\{\begin{matrix}2&\textrm{if}& N\equiv 1 \pmod 2\\
0&\textrm{if}& N\equiv 0 \pmod 4\\
4&\textrm{if}& N\equiv 2 \pmod 4,
\end{matrix}\right.
\end{equation}
and 
\begin{equation}\label{eqg13}
G(1,\Lambda_N)+G(-3,\Lambda_N)=\left\{\begin{matrix} 0 &\textrm{if}& N\equiv 1 \pmod 2\\
 4i e^{\frac{3N}{4} \pi i}&\textrm{if}& N\equiv 0 \pmod 2.
 \end{matrix}
 \right.
 \end{equation}
For $\alpha_1$, we have the following formula (cf. \cite[Eq. (12)]{bruinier}):
$$\alpha_1=\frac{d}{4}-\frac{1}{4\sqrt{|A_{\Lambda_N}|}}\cdot \exp\left(\frac{2k+2-N}{8}\right)\mathrm{Re}\left(G(2,\Lambda_N)\right)$$
Clealry, $|A_{\Lambda_N}|=4$,  and $2k+2-N=4$. Using \eqref{eqg2}, we get
\begin{equation}\label{computea1}
\alpha_1=\frac{d}{4}+\frac{1}{8}\mathrm{Re}(G(2,\Lambda_N))=\left\{
\begin{matrix}
1&\textrm{if}&N\equiv 1 \pmod 2\\
1&\textrm{if}&N\equiv 0 \pmod 4\\
\frac{3}{2}&\textrm{if}&N\equiv 2 \pmod 4.
\end{matrix}
\right. 
\end{equation}
Similarly, for $\alpha_2$, the following holds (cf. \cite[Eq. (13)]{bruinier}):
$$\alpha_2=\frac{d}{3}+\frac{1}{3\sqrt{3 |A_{\Lambda_N}|}} \mathrm{Re}\left(\exp\left(\frac{4k-4-3N}{24}\right)\cdot
\left(G(1,\Lambda_N)+G(-3,\Lambda_N)\right)\right).$$
Notice that $4k-4-3N=-N$. Applying \eqref{eqg13}, we obtain
\begin{equation}\label{computea2}
\alpha_2=\left\{
\renewcommand*{\arraystretch}{1.3}
\begin{matrix}
1&\textrm{if}&N\equiv 1 \pmod 2\\
\frac{4}{3}&\textrm{if}&N\equiv 0 \pmod 6\\
\frac{5}{3}&\textrm{if}&N\equiv 2 \pmod 6\\
1&\textrm{if}&N\equiv 4 \pmod 6
\end{matrix}
\right. .
\end{equation}
Equivalently, for $N$ odd $\alpha_2=1$, and for $N$ even 
\begin{equation}\label{computea2even}
\alpha_2=1+\left\{\frac{N+2}{6}\right\}.
\end{equation}

Using~\eqref{computea4}, \eqref{computea3}, \eqref{computea1}, and~\eqref{computea2}, we conclude the computation of $\dim S_{k,\Lambda}$ as follows. If $N$ is odd, then $d=3$, $\alpha_1=\alpha_2=\alpha_4=1$, and $\alpha_2=\frac{1}{2}+\left\{\frac{N-2}{8}\right\}$. Thus, for $N$ odd~\eqref{dimskl} reads
\begin{equation}\label{dimpicodd}
\dim S_{k,\Lambda_N}=\frac{N+2}{8}-\left(\frac{1}{2}+\left\{\frac{N-2}{8}\right\}\right)=\left\lfloor \frac{N-2}{8}\right\rfloor. 
\end{equation}
For $N$ even, $d=4$, and thus \eqref{dimskl} reads
$$\dim S_{k,\Lambda_N}=4+\frac{N+2}{6}-\alpha_1-\alpha_2-\alpha_3-\alpha_4.$$
From \eqref{computea2even}, we get
$$\frac{N+2}{6}-\alpha_2=\frac{N+2}{6}- \left(1+\left\{\frac{N+2}{6}\right\}\right)=
\left\lfloor \frac{N-4}{6}\right\rfloor.$$
Next, we note that $\alpha_4\in\bZ$, and $\alpha_1,\alpha_3\in \frac{1}{2}\bZ$, but their sum is an integer. Specifically, for even $N$ we have (cf. \eqref{computea1}, \eqref{computea3}): 
$$\alpha_1=\left\{
\begin{matrix}
1&\textrm{if}&N\equiv 0 \pmod 4\\
\frac{3}{2}&\textrm{if}&N\equiv 2 \pmod 4,
\end{matrix}
\right.$$
and 
$$\alpha_3=\frac{1}{2}+2\left\{\frac{N-2}{8}\right\}.$$
Using also \eqref{computea4}, we get
$$\alpha_1+\alpha_3+\alpha_4=\left\{\begin{matrix}
3&\textrm{if}&N\equiv 4 \pmod 8 \\
4&\textrm{if}&N\equiv 0,6 \pmod 8\\
5&\textrm{if}&N\equiv 2 \pmod 8.
\end{matrix}
\right.$$
In conclusion, 
\begin{equation}\label{dimpiceven}
\dim S_{k,\Lambda_N}=\left\lfloor \frac{N-4}{6}\right\rfloor+\left\{\begin{matrix}
-1&\textrm{if}&N\equiv 2 \pmod 8
0&\textrm{if}&N\equiv 0,6 \pmod 8\\
1&\textrm{if}&N\equiv 4 \pmod 8
\end{matrix}
\right.
\end{equation}
\qed
\subsection{Proof of Borcherds' relations}\label{subsec:sectborcherds}
\setcounter{equation}{0}
\subsubsection{Borcherds' automorphic form} 
We recall that Borcherds \cite{borcherds} constructed an automorphic form $\Phi_{12}$ on $\cD^{+}_{\II_{2,26}}$ for the orthogonal group $O^+(\II_{2,26})$, of weight $12$, whose zero-locus is the union 
 the nodal hyperplanes (the intersections $\delta^{\bot}\cap \cD^{+}_{\II_{2,26}}$, for $\delta$ a root of $\II_{2,26}$). Actually $\Phi_{12}$ vanishes with order one on each nodal hyperplane, i.e.
 \begin{equation}\label{scanzi}
\divisore(\Phi_{12})=\sum_{\pm\delta\in R(\II_{2,26})}\delta^{\bot}\cap \cD^{+}_{\II_{2,26}}=  \cH_{\delta_0,\II_{2,26}}(O^+(\II_{2,26})),
\end{equation}
where (as usual) $R(\II_{2,26})$ is the set of roots of $\II_{2,26}$, and $\delta_0$ is a chosen root of $\II_{2,26}$. In other words the divisor of $\Phi_{12}$ is the pre-Heegner divisor associated to a root of $\II_{2,26}$. 
\begin{remark}\label{rmk:automrelation}
Let $\Lambda$ be a lattice of signature $(2,m)$, let $\Gamma< O^{+}(\Lambda)$ be a subgroup of finite index, and let $\Phi$ be a $\Gamma$-automorphic form on $\cD^{+}_{\Lambda}$ of weight $w$. Then $\Phi$ descends  to a regular section of $\cL(\Lambda,\Gamma)^{\otimes w}$, see~\Ref{subsubsec}{divfour}. Thus~\eqref{scanzi} gives that, on the locally symmetric variety $O^+(\II_{2,26})\backslash\cD^{+}_{\II_{2,26}}$, one has the relation $12\lambda(\II_{2,26})=H_n(\II_{2,26})$, where $\lambda(\II_{2,26})$ is the first Chern class of the automorphic (or Hodge) line-bundle, and $H_n(\II_{2,26})$ is the nodal Heegner divisor.  
\end{remark}
Now suppose that  $\Lambda$ is a lattice of signature $(2,n)$ and we are given a saturated embedding $\Lambda\subset\II_{2,26}$.  The \emph{quasi-pullback} of $\Phi_{12}$ is defined as 
\begin{equation*}
\Phi_\Lambda:=\frac{\Phi_{12} }{\prod\limits_{\pm \delta\in R(\Lambda^{\perp})} \ell_\delta}\Biggr\rvert_{\cD^{+}_{\Lambda}},
\end{equation*}
where $\ell_\delta$ is the restriction to $\cD^{+}_{\Lambda}$ of the linear form $\sigma\mapsto (\delta,\sigma)$. Then $\Phi_{\Lambda}$ is an automorphic form on $\cD^{+}_{\Lambda}$ for the \emph{stable} orthogonal group $\wt{O}^+(\Lambda)$, see~\cite{bkpsb} and~\cite[\S8]{ghs}.
Notice that our notation is somewhat imprecise: the  automorphic form $\Phi_{\Lambda}$ depends on the embedding of $\Lambda$ into $\II_{2,26}$, we will see  instances of this later on. The  weight  and divisor of $\Phi_{\Lambda}$ are computed as follows. 
\begin{recipe}\label{rcp:pesodiv}
The  weight of $\Phi_{\Lambda}$ is equal to $12+w$, where $w=|R(\Lambda^{\bot})|/2$ is half the number of roots of $\II_{2,26}$ perpendicular to $\Lambda$.  The divisor of $\Phi_{\Lambda}$ is supported on the union of 
the intersections $\delta^{\bot}\cap\cD^{+}_{\Lambda}$, for $\delta\in R(\II_{2,26})\setminus R(\Lambda^{\bot})$. More precisely, 
\begin{equation}\label{divogiulio}
\divisore(\Phi_{\Lambda})=\sum_{\pm\delta\in R(\II_{2,26})\setminus R(\Lambda^{\bot})}(\delta^{\bot}\cap\cD^{+}_{\Lambda}).
\end{equation}
\end{recipe}
\begin{remark}\label{rmk:pesomolt}
Let $\delta\in R(\II_{2,26})\setminus R(\Lambda^{\bot})$; then $\delta^{\bot}\cap\cD^{+}_{\Lambda}$ is non empty if and only if $\la\delta,\Lambda^{\bot}\ra$ is negative definite. Given such a $\delta$, let $\nu(\delta)$ be a generator of $(\QQ\delta\oplus\QQ\Lambda^{\bot})\cap\Lambda$ (thus $\nu(\delta)$ is determined up to multiplication by $\pm 1$). Then 
\begin{equation*}
\delta^{\bot}\cap\cD^{+}_{\Lambda}=\nu(\delta)^{\bot}\cap\cD^{+}_{\Lambda}.
\end{equation*}
Let $\Sat\la\delta,\Lambda^{\bot}\ra$
be the \emph{saturation} of $\la\delta,\Lambda^{\bot}\ra$ in $\II_{2,26}$.
If $\delta'\in \Sat\la\delta,\Lambda^{\bot}\ra$  is another root which does not belong to $\Lambda^{\bot}$, then $\nu(\delta')=\pm\nu(\delta)$. The upshot is that we may rewrite the right hand side of~\eqref{divogiulio} as a finite sum of pre-Heegner divisors $\cH_{\nu(\delta_i)}$, where the coefficient of $\cH_{\nu(\delta_i)}$ 
 is equal to half  the  number of the roots of $\Sat\la\delta_i,\Lambda^{\bot}\ra$ which do not belong to $\Lambda^{\bot}$ (call this number $m(\delta_i)$):
 \begin{equation}\label{sommafinita}
\divisore(\Phi_{\Lambda})=m(\delta_1)\cH_{\nu(\delta_i),\Lambda}(\wt{O}^{+}(\Lambda))+\ldots+m(\delta_s)\cH_{\nu(\delta_s),\Lambda}(\wt{O}^{+}(\Lambda)).
\end{equation}
Lastly, Equation~\eqref{sommafinita} descends to a relation between the Hodge bundle on $\wt{O}^{+}(\Lambda)\backslash\cD^{+}_{\Lambda}$ and the Heegner divisors corresponding to the vectors 
$\nu(\delta_s)$, see~\Ref{rmk}{automrelation}.
\end{remark}
The plan is the following. We will choose embeddings of $\Lambda_N$ (for $3\le N\le 25$) into $\II_{2,26}$ such that the pre-Heegner divisors appearing on the right hand side of~\eqref{sommafinita} are associated to minimal norm vectors (see~\Ref{dfn}{vetmin}). For a given embedding, Equation~\eqref{sommafinita} descends to a relation between the Hodge bundle and the  Heegner divisors of $\cF_{\Lambda_N}(\wt{O}^{+}(\Lambda_N))$  associated to minimal norm vectors. For $N$ odd $\cF(N)=\cF_{\Lambda_N}(\wt{O}^{+}(\Lambda_N))$, and the relations that we will get are those of~\Ref{thm}{thmborcherds1} and~\Ref{thm}{thmborcherds2}. If $N$ is even, we will get the relations in~\Ref{thm}{thmborcherds1} and~\Ref{thm}{thmborcherds2} by pushing forward via the double covering $\cF_{\Lambda_N}(\wt{O}^{+}(\Lambda_N))\to\cF(N)$.
\subsubsection{Embeddings of $\Lambda_N$ into $\II_{2,26}$}
\begin{lemma}\label{lmm:lemextendd}
If $3\le N\le 25$, the lattice $\Lambda_N$ has a saturated embedding into the unimodular lattice $\II_{2,26}$ with orthogonal complement isomorphic to $D_{26-N}$, and if $3\le N\le 17$ it also  has a saturated embedding  with orthogonal complement isomorphic to $E_8\oplus D_{18-N}$. Conversely, any saturated embedding of $D_{26-N}$, or of $E_8\oplus D_{18-N}$, into $\II_{2,26}$ has orthogonal complement isomorphic to $\Lambda_N$. 
\end{lemma}
\begin{proof} 
If $3\le N\le 25$, then by~\eqref{discdien} and~\Ref{clm}{disdin} there exists an isomorphism of groups $\varphi\colon A_{\Lambda_N}\overset{\sim}{\lra} D_{26-N}$ which multiplies  the discriminant quadratic form by $-1$, i.e.
\begin{equation}\label{antisometria}
q_{D_{26-N}}(\varphi(\eta))=-q_{\Lambda_N}(\eta).
\end{equation}
Let $L\subset(\Lambda_N\oplus D_{26-N})_{\QQ}$ be the overlattice of $\Lambda_N\oplus D_{26-N}$ generated by vectors $(v,w)$ such $[v]\in A_{\Lambda_N}$, 
 $[w]\in A_{D_{26-N}}$, and $[w]=\varphi([v])$. Then, because of~\eqref{antisometria}, $L$ is even, unimodular, of signature $(2,26)$, and hence is isomorphic to $\II_{2,26}$. By construction, 
 $\Lambda_N$ is a saturated  sublattice of $L$, and its orthogonal complement is isomorphic  to $D_{26-N}$. 
 If $3\le N\le 17$, there exists an isomorphism of groups 
 $\psi\colon A_{\Lambda_N}\overset{\sim}{\lra} (E_8\oplus D_{18-N})$
 which multiplies  the discriminant quadratic forms by $-1$, i.e.
\begin{equation*}
 q_{E_8\oplus D_{18-N}}(\psi(\eta))=-q_{\Lambda_N}(\eta).
\end{equation*}
Proceeding as in the previous case, one constructs an overlattice of $\Lambda_N\oplus (E_8\oplus D_{18-N})$ which is isomorphic to $\II_{2,26}$, in which  
 $\Lambda_N$ is saturated, with orthogonal complement isomorphic to $E_8\oplus D_{18-N}$. 

Let us prove the last statement of the lemma. Suppose that  $D_{26-N}\subset\II_{2,26}$, or  $(E_8\oplus D_{18-N})\subset\II_{2,26}$, is saturated. 
Let $M:=D^{\bot}_{26-N}$ (respectively $M=(E_8\oplus D_{18-N})^{\bot}$)  The overlattice $\II_{2,26}\supset(D_{26-N}\oplus M)$ (respectively 
  $\II_{2,26}\supset((E_8\oplus D_{18-N})\oplus M)$ induces an isomorphism of groups
\begin{equation}\label{isogi}
g\colon A_{D_{26-N}}\overset{\sim}{\lra} A_M\quad\text{(respectively 
$g\colon A_{E_8\oplus D_{18-N}}\overset{\sim}{\lra} A_M$).}
\end{equation}
such that $q_{M}(g(\eta))=-q_{D_{26-N}}(\eta)$ for all $\eta\in  A_{D_{26-N}}$ (respectively, $q_{M}(g(\eta))=-q_{E_8\oplus D_{18-N}}(\eta)$ for all 
$\eta\in  A_{E_8\oplus D_{18-N}}$). 
Thus $M$ has the same signature, parity and discriminant quadratic form as $\Lambda_N$, and hence   is isomorphic to $\Lambda_N$  by Theorem 1.13.2 of~\cite{nikulin}. 
\end{proof}
\begin{remark}\label{rmk:gluevectors}
Let $3\le N\le 25$. Suppose that $\Lambda_N\subset\II_{2,26}$ is a saturated embedding, and that $\Lambda_N^{\bot}$ is isomorphic either to $D_{26-N}$, or to $E_8\oplus D_{18-N}$ (in this case $N\le 17$). Let $(N-2)=8k+a$, where $k\ge 0$ and $a\in\{1,\ldots,8\}$. Then $\Lambda_N\cong \II_{2,2+8k}\oplus D_a$, and we  identify the two lattices throughout. 
\begin{enumerate}
\item
If $N\le 25$, and $\Lambda_N^{\bot}\cong D_{26-N}$, then
 $\II_{2,26}$ is identified with the subattice of 
\begin{equation*}
(\II_{2,2+8k}\oplus D_a\oplus D_{26-N})_{\QQ}
\end{equation*}
   generated by $\II_{2,2+8k}\oplus D_a\oplus D_{26-N}$ together with the following $3$ vectors:
\begin{equation*}
\scriptstyle
u_1:=(0_{4+8k},(\underbrace{\scriptstyle 1/2,\ldots,1/2}_{a}),(\underbrace{\scriptstyle 1/2,\ldots,1/2}_{26-N})),\quad
u_2:=(0_{4+8k},(\underbrace{\scriptstyle -1/2,1/2,\ldots,1/2}_{a}),(\underbrace{\scriptstyle -1/2,1/2,\ldots,1/2}_{26-N})),\quad 
u_3:=(0_{4+8k},(\underbrace{\scriptstyle 1,0,\ldots,0}_{a}),(\underbrace{\scriptstyle 1,0,\ldots,0}_{26-N})).
\end{equation*}
\item
If $N\le 17$, and $\Lambda_N^{\bot}\cong E_8\oplus D_{18-N}$, then
 $\II_{2,26}$ is identified with the sublattice of 
\begin{equation*}
(\II_{2,2+8k}\oplus D_a\oplus D_{18-N}\oplus E_8)_{\QQ}
\end{equation*}
  generated by $\II_{2,2+8k}\oplus D_a \oplus D_{18-N}\oplus E_8$ together with the following $3$ vectors:
\begin{equation*}
\scriptstyle
w_1:=(0_{4+8k},(\underbrace{\scriptstyle 1/2,\ldots,1/2}_{a}),(\underbrace{\scriptstyle 1/2,\ldots,1/2}_{18-N}),0_8)),\ 
w_2:=(0_{4+8k},(\underbrace{\scriptstyle -1/2,1/2,\ldots,1/2}_{a}),(\underbrace{\scriptstyle -1/2,1/2,\ldots,1/2}_{18-N}),0_8)),\  
w_3:=(0_{4+8k},(\underbrace{\scriptstyle 1,0,\ldots,0}_{a}),(\underbrace{\scriptstyle 1,0,\ldots,0}_{18-N}),0_8)).
\end{equation*}
\end{enumerate}
\end{remark}
\begin{remark}
Suppose that  $\Lambda_N\subset \II_{2,26}$ is a saturated embedding.  
Then the orthogonal complement of $\Lambda_N$  in $\II_{2,26}$ is a lattice in the genus of $D_{26-N}$. We note the following:
\begin{itemize}
\item \emph{if $1\le k\le 8$, there is only one isometry class of lattices in the genus of $D_k$, namely $D_k$},
\item \emph{if $9\le k\le 16$, there are at least two distinct classes, namely $D_{k}$ and $D_{k-8}\oplus E_8$}.
\end{itemize}
In other words, there is one Borcherds relation for $N\ge 18$, and at least two such relations for $10\le N\le 17$. For $N\le 17$, the relations relevant for us are those associated to $D_{k}$ and $D_{k-8}\oplus E_8$ as those involve only Heegner divisors associatde to minimal norm vectors. 
\end{remark}
\begin{remark}\label{rmk:emmequattro}
Let $m\ge 1$. The non-zero elements of $ A_{D_m}$ are $\xi,\zeta,\zeta'$, where
\begin{equation*}
\xi:=[\underbrace{(1,0,\ldots,0)}_{m}],\qquad \zeta:=[\underbrace{(1/2,\ldots,1/2)}_{m}],\qquad \zeta':=[\underbrace{(-1/2,1/2,\ldots,1/2)}_{m}].
\end{equation*}
Let $v_m:=(2,0,\ldots,0)\in D_m$. Then the divisibility of $v_m$ is even (equal to $2$, unless $m=1$), and $[v_m/2]=\xi$. Moreover if $u\in D_m$ 
has even divisibility, and $[u/2]=\xi$, then $u^2\le(-4)=v_m^2$. Similarly, let
\begin{equation*}
w_m:=
\begin{cases}
(1,\ldots,1)\in D_m & \text{if $m$ is even,} \\
(2,\ldots,2)\in D_m & \text{if $m$ is odd.}
\end{cases}
\end{equation*}
Then $w^{*}_m=\zeta$, and if $u\in D_m$ is such that $u^{*}=\zeta$, then 
\begin{equation*}
u^2\le w_m^2= 
\begin{cases}
-m=w_m^2 & \text{if $m$ is even,} \\
-4m=w_m^2  & \text{if $m$ is odd.}
\end{cases}
\end{equation*}
Since the map $(x_1,\ldots,x_m)\mapsto (-x_1,x_2,\ldots,x_m)$ defines an automorphism of $D_m$ interchanging $\zeta$ with $\zeta'$, an analogous result holds for the minimal absolute value of $u^2$ for vectors $u\in D_m$ such that $u^{*}=\zeta'$. 
 This fact has the following interesting consequence. Suppose that 
  $\Lambda\subset\II_{2,26}$ is a saturated sublattice, and that one of the following holds:
\begin{enumerate}
\item
$\Lambda^{\bot}\cong D_{26-N}$ and $N\in\{6,14\}$, or
\item
$\Lambda^{\bot}\cong (E_8\oplus D_{18-N})$ and $N=6$.
\end{enumerate}
Although there is no preferred decoration of the abstract dimension $N$ $D$-lattice $\Lambda$ (because  $q_{\Lambda}(\eta)\equiv 1\pmod{2\ZZ}$ for all non-zero $\zeta\in A_{\Lambda}$),  
 there is a preferred  decoration determined by the embedding $\Lambda\subset\II_{2,26}$, namely  $\eta:=g(\xi)$, where  $g$ is the isomorphism in~\eqref{isogi},
 and  $\xi\in A_{\Lambda^{\bot}}$ is the 
 unique class for which there exists $v\in\Lambda^{\bot}$ of square $-4$ and even divisibility such that $[v/2]=\xi$.
\end{remark}
\begin{definition}\label{dfn:decoammiss}
Let  $\Lambda\subset\II_{2,26}$ be a saturated sublattice such that $\Lambda^{\bot}$ is isomorphic 
to $D_{26-N}$ or to $(E_8\oplus D_{18-N})$ (hence $\Lambda$ is a dimension $N$ $D$-lattice by~\Ref{lmm}{lemextendd}).
A decoration $\eta$ of $\Lambda$ is \emph{admissible} if $\eta=g(\xi)$,  where  $g$ is the isomorphism in~\eqref{isogi},
 and  $\xi\in A_{\Lambda^{\bot}}$ is a class for which there exists $v\in\Lambda^{\bot}$ of square $-4$ and even divisibility such that $[v/2]=\xi$.
\end{definition}
\begin{remark}
If $N\not\equiv 6\pmod{8}$, the unique decoration of $\Lambda$ is admissible, if $N=22$ all three decorations of $\Lambda$ are admissible (any non-zero element of $D_4$ 
is equal to $[v/2]$ for a suitable $v\in\Lambda$ of square $-4$ and even divisibility), 
and if  $N\in\{6,14\}$, then only one of the  three decorations is admissible, see~\Ref{rmk}{emmequattro}.
\end{remark}
\subsubsection{The pre-Heegner divisors associated to the quasi pull-backs of $\Phi_{12}$}
The following result will allow us to determine the vectors $\nu(\delta)$  which appear  in~\Ref{rmk}{pesomolt}, for the two embeddings of 
$\Lambda_N$ into $\II_{2,26}$  given by~\Ref{lmm}{lemextendd}. 
\begin{proposition}\label{prp:numinorm}
Let $3\le N\le 25$. Assume that $\Lambda\subset \II_{2,26}$ is a saturated sub lattice, and that $\Lambda^{\bot}$  is isomorphic to $D_{26-N}$ or to  $E_8\oplus D_{18-N}$ (in the latter case, $N\le 17$), and hence  $\Lambda$ is a dimension $N$ $D$-lattice by~\Ref{lmm}{lemextendd}. Suppose that $\delta\in R(\II_{2,26}\setminus\Lambda^{\bot})$ is such that 
$\la\Lambda^{\bot},\delta\ra$ is negative definite. Then $\nu(\delta)$ (notation as in~\Ref{rmk}{pesomolt}) is a minimal norm vector of $\Lambda$ (see~\Ref{dfn}{vetmin}).
Moreover, one of the following holds:
\begin{enumerate}
\item[(a)] 
$\nu(\delta)^2=-2$.
\item[(b)]
$\nu(\delta)^2=-4$, $\divisore_{\Lambda}(\nu(\delta))=2$, and $\nu(\delta)^{*}$  is an admissible decoration  of $\Lambda$. 
\item[(c)]
$\nu(\delta)^2=-4$, and $\divisore_{\Lambda}(\nu(\delta))=4$. 
\item[(d)]
$\Lambda^{\bot}\cong D_{26-N}$, and $19\le N$.
\item[(e)]
$\Lambda^{\bot}\cong (E_8\oplus D_{18-N})$, and $11\le N$.
\end{enumerate}
\end{proposition}
\begin{proof}
Let $m$ be the minimum strictly positive integer such that $m\delta\in(\Lambda\oplus\Lambda^{\bot})$.    Thus 
\begin{equation*}
m\delta=v+w,\qquad 0\not=v\in\Lambda,\quad w\in\Lambda^{\bot},
\end{equation*}
and $m\in\{1,2,4\}$,  see~\Ref{rmk}{gluevectors}. Notice that
 $v\in\la \nu(\delta)\ra$ and $\nu(\delta)=\pm v$ if and only if $v$ is primitive.
 Let us prove that one of the following Items holds.
 \begin{enumerate}
\item
$m=1$, $v^2=-2$,  and $\nu(\delta)=\pm v$.
\item
$m=2$, $v^2\in\{-2,-6\}$, $\divisore_{\Lambda}(v)=2$, and $\nu(\delta)=\pm v$.
\item
$m=2$, $v^2=-4$, $\divisore_{\Lambda}(v)=2$,  $\nu(\delta)=\pm v$, and $\nu(\delta)^{*}$  is an admissible decoration  of $\Lambda$.
\item
$m=2$, $v^2=-4$, $\divisore_{\Lambda}(v)=4$, and $\nu(\delta)=\pm v$
\item
$m=4$,  $N$ is odd, $v^2=-4a$, where 
$a$ is the residue modulo $8$ of $(N-2)$, $\divisore_{\Lambda}(v)=4$, and $\nu(\delta)=\pm v$.
\end{enumerate}
  We have $\nu(\delta)^2<0$, because
 $\la\Lambda_N^{\bot},\delta\ra$   is negative definite, and hence $v^2<0$. Since $-2m^2=(m\delta)^2=v^2+w^2$, one of the following holds:
\begin{enumerate}
\item[(i)]
$m=1$, $v^2=-2$,  and $\nu(\delta)=\pm v$.
\item[(ii)]
$m=2$, $v^2\in\{-2,-4,-6\}$,  $\nu(\delta)=\pm v$, and $\divisore_{\Lambda}(v)$ is either $2$ or $4$. 
\item[(iii)]
$m=4$, $v^2\in\{-2,-4,-6,-8,\ldots,-30\}$, $\nu(\delta)=\pm v$, and $\divisore_{\Lambda}(v)=4$.
\item[(iv)]
$m=4$, $v^2\in\{-8,-16,-18,-24\}$, and $v$ is \emph{not} primitive.
\end{enumerate}
If~(i) holds, then Item~(1) holds. 

Suppose that~(ii) holds. If $v^2=-2$,  then  Item~(2) holds. 
 If $v^2=-6$, then by a discriminant quadratic form computation (see~\eqref{discdien} and~\Ref{clm}{disdin}) 
we get that $\divisore_{\Lambda_N}(v)=2$, and hence Item~(2) holds. 
If $v^2=-4$ and  $\divisore_{\Lambda_N}(v)=4$,  then  Item~(4) holds. Thus we are left with the case $v^2=-4$ and  $\divisore_{\Lambda_N}(v)=2$. Since $m=2$, the divisibility of $w$ in $\Lambda^{\bot}$ is even, and hence $[w/2]\in A_{\Lambda^{\bot}}$. Since $g([w/2])=[v/2]$, where $g$ is the isomorphism in~\eqref{isogi}, and $w^2=-4$, it follows that $[v/2]=\nu(\delta)^{*}$ is an admissible decoration. Thus Item~(3) holds. This finishes the proof that if~(ii) holds, then one of Items~(2), (3), (4) holds.

If~(iii) holds,  then, by looking at the discriminant quadratic form of $\Lambda_N$, we get that $v^2=-4a$, where 
$a\in\{1,3,5,7\}$, and $a\equiv(N-2)\pmod{8}$. Thus Item~(5) holds.

Lastly, suppose that~(iv) holds. We will arrive at a contradiction. First, let us show  that $w$ is primitive. If $v^2=-18$, this is clear because $w^2=-14$. If $v^2\in\{-8,-16,-24\}$, then   $v=2u$
where $u\in\Lambda^{\bot}$ (because by assumption $v$ is not primitive), and if  $w=r z$ with $r\ge 2$  
and  $z\in\Lambda^{\bot}$, then $r=2$  
because $w^2\in\{-24,-16,-8\}$. Then $2\delta=u+z$, contradicting our hypothesis.  This proves that  $w$ is primitive. Since the divisibility of $w$ in $\Lambda^{\bot}$  is a multiple of $4$, it follows that $\divisore_{\Lambda^{\bot}}(w)=4$, and hence $N$ is odd. Thus $w^2/16=q_{\Lambda^{\bot}}(w^{*})\equiv -(N-2)/4\pmod{2\ZZ}$, and hence $w^2=-4a$, where $a$ is odd. This is a contradiction, because $w^2\in\{-24,-16,-14,-8\}$. 

Now we finish the proof of the proposition. First, if either one of Items~(1)-(5) holds, $\nu(\delta)$ is a minimal norm vector. 

It remains to prove that if Item~(2) holds with $v^2=-6$, or if Item~(5) holds, then Item~(d) or Item~(e) of the Proposition holds.  We will assume that 
$\Lambda^{\bot}\cong D_{26-N}$, if $\Lambda^{\bot}\cong (E_8\oplus D_{18-N})$ is analogous.

Suppose that  Item~(2) holds with $v^2=-6$.
  Since $q_{\Lambda_N}(v^{*})\equiv -3/2\pmod{2\ZZ}$, we have $N\equiv 0\pmod{8}$; it follows that , $\Lambda^{\bot}\cong D_{26-N}=D_{2+8h}$ for 
  $h\in\{0,1,2\}$.
The divisibility of $w$ as element of $\Lambda^{\bot}$ is even, because $m=2$, and hence it is equal to $2$ (e.g.~because $w^2=-2$). 
Now, $q_{\Lambda_N^{\bot}}(w^{*})\equiv -1/2\pmod{2\ZZ}$, and hence $w^{*}\in\{\zeta,\zeta'\}$, where notation is as in~\Ref{rmk}{emmequattro}.  Since $w^2=-2$, it follows from~\Ref{rmk}{emmequattro} that $\Lambda^{\bot}\cong D_2$, i.e.~$N=24$.

 Lastly, suppose that Item~(5) holds. Then $w^2=-4(8-a)$, and  $\divisore_{\Lambda^{\bot}}(w)=4$. Thus $q_{\Lambda^{\bot}}(w^{*})\equiv -(8-a)/4\pmod{2\ZZ}$, and hence $w^{*}\in\{\zeta,\zeta'\}$,  notation  as in~\Ref{rmk}{emmequattro}. On the other hand, $\Lambda^{\bot}\cong D_{8-a+h}$ 
where $h\ge 0$.  Since $w^2=-(8-a)$, it follows from~\Ref{rmk}{emmequattro} that $h=0$,  
 i.e.~$N\ge 19$. 
\end{proof}
\subsubsection{Borcherds' automorphic forms for $\wt{O}^{+}(\Lambda_N)$}
\begin{theorem}\label{thm:automone}
Let  $3\le N\le 25$. Let $\Lambda_N\subset\II_{2,26}$ be a saturated embedding with orthogonal complement isomorphic to $D_{26-N}$. Let  $\xi\in A_{\Lambda_N}$ be an admissible decoration of $\Lambda_N$
 (see~\Ref{dfn}{decoammiss}), and let $\zeta,\zeta'$ be the remaining non-zero elements of $A_{\Lambda_N}$. 
Let $\Psi_N$ be the quasi-pullback of Borcherds' automorphic form $\Phi_{12}$. Then $\Psi_N$ has weight $(12+(26-N)(25-N))$, and 
\begin{equation}\label{automone}
\divisore(\Psi_N)=  \cH_0(\Lambda_N)+2(26-N) \cH_\xi(\Lambda_N)+\mu(N) (\cH_{\zeta}(\Lambda_N) +\cH_{\zeta'}(\Lambda_N)),
\end{equation}
where $\mu(N)$ is as in Table~\eqref{tablemuen}.
\end{theorem}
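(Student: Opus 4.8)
The plan is to apply Recipe~\ref{rcp:pesodiv} to the specific saturated embedding $\Lambda_N\subset\II_{2,26}$ with $\Lambda_N^{\bot}\cong D_{26-N}$, and to translate the resulting divisor formula~\eqref{sommafinita} into the relation~\eqref{automone} in $\Pic(\cF_{\Lambda_N}(\wt O^+(\Lambda_N)))_{\QQ}$ via Remark~\ref{rmk:automrelation} and Remark~\ref{rmk:pesomolt}. The weight is immediate: by Recipe~\ref{rcp:pesodiv} the weight of $\Psi_N$ is $12+|R(D_{26-N})|/2=12+(26-N)(25-N)$, since $D_m$ has $2m(m-1)$ roots.

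For the divisor, first I would use Proposition~\ref{prp:numinorm}: every root $\delta\in R(\II_{2,26})\setminus R(\Lambda_N^{\bot})$ with $\la\Lambda_N^{\bot},\delta\ra$ negative definite gives $\nu(\delta)$ a minimal norm vector of $\Lambda_N$, falling into case (a), (b), or (c) (cases (d), (e) are excluded once $N\le 25$ with $\Lambda^{\bot}\cong D_{26-N}$ unless $N\ge 19$, but I would check that even for $19\le N\le 25$ the exotic vectors of case (iii) in the proof do not actually occur for this embedding, or are absorbed into the three named classes; in fact the minimal norm vector in the $\zeta$-class for $N$ odd has square $-4a$ with $a$ the residue of $N-2$ mod $8$, which is exactly $\cH_\zeta$). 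So $\divisore(\Psi_N)$ is a non-negative integral combination of $\cH_0$, $\cH_\xi$, $\cH_\zeta$, $\cH_{\zeta'}$ (with admissible $\xi$, which is the one determined by the embedding per Remark~\ref{rmk:emmequattro} and Definition~\ref{dfn:decoammiss}); and the $\cH_\zeta$, $\cH_{\zeta'}$ coefficients agree by the symmetry of Remark~\ref{rmk:emmequattro} (the automorphism $(x_1,\ldots)\mapsto(-x_1,\ldots)$ of $D_{26-N}$, which extends to $\II_{2,26}$ and swaps $\zeta\leftrightarrow\zeta'$ while fixing the embedding up to isometry). So it remains to compute the three multiplicities $m(\delta_i)$ of Remark~\ref{rmk:pesomolt}, i.e. half the number of roots of $\Sat\la\delta_i,D_{26-N}\ra$ not lying in $D_{26-N}$, for a representative $\delta_i$ in each of the three classes.

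The core computation is a case-by-case count using the explicit model of $\II_{2,26}$ from Remark~\ref{rmk:gluevectors}(1), where $\II_{2,26}$ is the overlattice of $\II_{2,2+8k}\oplus D_a\oplus D_{26-N}$ generated by $u_1,u_2,u_3$. For the nodal class, $\Sat\la\delta,D_{26-N}\ra$ where $\delta$ is a root of divisibility $1$ in $\Lambda_N$: one checks $\Sat\la\delta,D_{26-N}\ra\cong A_1\oplus D_{26-N}$ (orthogonal direct sum), which has $2$ roots outside $D_{26-N}$, giving coefficient $1$ — this matches. For the hyperelliptic class ($\nu^2=-4$, $\divisore=2$), the saturated sublattice is $\Sat\la\nu,D_{26-N}\ra$ where the glue vector $(\nu+w)/2$ lies in $\II_{2,26}$ with $w^2=-4$, $w$ the norm-$(-4)$ divisibility-$2$ vector $(2,0,\ldots,0)\in D_{26-N}$; this saturated lattice is $D_{26-N+1}=D_{27-N}$ (adjoining a vector that turns $D_{26-N}\oplus(-4)$ into $D_{27-N}$), which has $2(27-N)(26-N)$ roots, of which $2(26-N)(25-N)$ lie in $D_{26-N}$, leaving $2(27-N)(26-N)-2(26-N)(25-N)=4(26-N)$, i.e. coefficient $2(26-N)$ — this matches. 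For the unigonal/$\zeta$-class ($\nu^2=-4$, $\divisore=4$) one uses the description $\Lambda_N^{\bot}\cong D_{26-N}$ together with the glue vector $u_1$ (or $u_3$) of Remark~\ref{rmk:gluevectors} and counts roots of $\Sat\la\nu,D_{26-N}\ra$ outside $D_{26-N}$; this is where the arithmetic is most delicate and depends on $N\bmod 8$, and it should reproduce the value $\mu(N)$ of Table~\eqref{tablemuen} (in particular $\mu(N)=0$ for $5\le N\le 10$ and $13\le N\le 18$, corresponding to the absence of a glue vector producing extra roots, and the nonzero values $46,30,78,33,16,8,4,2,1$ for the remaining $N$).

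The main obstacle I expect is precisely this last root count for the $\zeta$-class: one must identify $\Sat\la\delta,D_{26-N}\ra$ explicitly inside $\II_{2,26}$ for each residue $N\equiv 0,\ldots,7\pmod 8$, recognizing it as a standard lattice ($D_{27-N+?}$, $E_8$-related, or $A_2$-related in the odd cases where $\nu^2=-4a$), and then count its roots modulo those in $D_{26-N}$. This is bookkeeping-heavy but mechanical; the values in Table~\eqref{tablemuen} are the predicted answers, and the proof amounts to verifying them embedding-by-embedding. Finally, once the divisor identity~\eqref{sommafinita} is established for $\Psi_N$, Remark~\ref{rmk:automrelation} turns it into $(12+(26-N)(25-N))\lambda(\Lambda_N,\wt O^+)=\cH_0+2(26-N)\cH_\xi+\mu(N)(\cH_\zeta+\cH_{\zeta'})$ in the rational Picard group, which is~\eqref{automone}.
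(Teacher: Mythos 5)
Your framework is exactly the paper's: compute the weight from $|R(D_{26-N})|$, use \Ref{prp}{numinorm} to see that only the four Heegner divisors attached to minimal norm vectors can occur in $\divisore(\Psi_N)$, get $a_{\zeta}=a_{\zeta'}$ from an isometry of $\II_{2,26}$ preserving $\Lambda_N$ and swapping $\zeta$ with $\zeta'$, and compute each coefficient as $\frac12\bigl(|R(\Sat\la v,\Lambda_N^{\bot}\ra)|-|R(D_{26-N})|\bigr)$. Your nodal and hyperelliptic computations ($\Sat=\la v_0\ra\oplus D_{26-N}$ giving coefficient $1$, and $\Sat\cong D_{27-N}$ via the glue vector $u_3$ giving $2(26-N)$) agree with the paper. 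One small correction: \eqref{automone} is the divisor identity on $\cD^{+}_{\Lambda_N}$ itself, not a relation in $\Pic(\cF_{\Lambda_N}(\wt{O}^{+}(\Lambda_N)))_{\QQ}$; the descent to the Picard group is a separate later step (\Ref{lmm}{relstableone}) that requires inserting ramification factors via \Ref{clm}{reflheeg2}, so your final sentence invoking \Ref{rmk}{automrelation} proves something different from (and not needed for) the stated theorem.

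The genuine gap is the coefficient of $\cH_{\zeta}$, which you defer as ``bookkeeping-heavy but mechanical'' --- but this is precisely where Table~\eqref{tablemuen} comes from, i.e.\ the actual content of the theorem, and the verification is not set up until the relevant saturations are identified. What is needed is: for $19\le N\le 25$ the saturation $\Sat\la v_{\zeta},D_{26-N}\ra$ (glued by $u_1$) is the lattice $E_{27-N}$ of the \emph{extended exceptional series} ($E_5=D_5$, $E_4=A_4$, $E_3=A_2\oplus A_1$, and $E_2$ the rank-$2$ non-root lattice with exactly $2$ roots), so that $\mu(N)=\frac12\bigl(|R(E_{27-N})|-|R(D_{26-N})|\bigr)$, yielding $78,33,16,8,4,2,1$; for $N\in\{3,11\}$ the saturation is the overlattice $D_{24-8k}^{+}$ of $D_{23-8k}$, which has the same roots as $D_{24-8k}$, giving $46$ and $30$; and for $N\in\{4,12\}$ the saturation acquires exactly one new pair of roots $\pm(0_{4+8k},(1,1),0_{26-N})$, giving $1$. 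For all remaining $N\le 18$, \Ref{prp}{numinorm} already forces $a_{\zeta}(N)=0$ (no root $\delta$ produces a unigonal $\nu(\delta)$), which is the correct reason rather than ``absence of a glue vector''. Without these lattice identifications the values in the table are asserted, not derived.
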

\begin{theorem}\label{thm:automdue}
 Let $3\le N\le 17$. Let $\Lambda_N\subset\II_{2,26}$ be a saturated embedding with orthogonal complement isomorphic to $E_8\oplus D_{18-N}$. Let  $\xi\in A_{\Lambda_N}$ be an admissible decoration of $\Lambda_N$
 (see~\Ref{dfn}{decoammiss}), and let $\zeta,\zeta'$ be the remaining non-zero elements of $A_{\Lambda_N}$. 
 Let  $\Xi_N$ be the quasi-pullback of Borcherds' automorphic form $\Phi_{12}$. Then $\Xi_N$ has weight $(132+(18-N)(17-N))$, and 
\begin{equation}\label{automdue}
\divisore(\Xi_N)=  \cH_0(\Lambda_N)+2(18-N) \cH_\xi(\Lambda_N)+\mu(N+8) (\cH_{\zeta}(\Lambda_N) +\cH_{\zeta'}(\Lambda_N)).
\end{equation}
\end{theorem}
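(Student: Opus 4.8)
The plan is to compute the weight and divisor of the quasi-pullback $\Xi_N$ of $\Phi_{12}$ for the embedding $\Lambda_N\subset \II_{2,26}$ with $\Lambda_N^{\bot}\cong E_8\oplus D_{18-N}$, exactly parallel to the proof of \Ref{thm}{automone}, but now tracking the extra $E_8$ summand. First I would apply \Ref{rcp}{pesodiv}: the weight of $\Xi_N$ is $12+|R(\Lambda_N^{\bot})|/2$, and since $\Lambda_N^{\bot}\cong E_8\oplus D_{18-N}$ with $|R(E_8)|=240$ and $|R(D_{18-N})|=2(18-N)(17-N)$, we get $|R(\Lambda_N^{\bot})|/2=120+(18-N)(17-N)$, so the weight is $132+(18-N)(17-N)$ as claimed. (Here one uses $|R(D_m)|=2m(m-1)$, valid for $m\ge 2$; the edge cases $D_1,D_0$ need a separate check but are covered by the ranges.)

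Next I would determine the divisor. By \Ref{rcp}{pesodiv} and \Ref{rmk}{pesomolt}, $\divisore(\Xi_N)$ is a sum $\sum_i m(\delta_i)\cH_{\nu(\delta_i),\Lambda_N}(\wt{O}^+(\Lambda_N))$ over roots $\delta\in R(\II_{2,26})\setminus R(\Lambda_N^{\bot})$ with $\la \delta,\Lambda_N^{\bot}\ra$ negative definite. By \Ref{prp}{numinorm}, each such $\nu(\delta)$ is a minimal norm vector of $\Lambda_N$ of one of types (a)--(c) (types (d),(e) being excluded here: (d) needs $\Lambda_N^{\bot}\cong D_{26-N}$, and for (e) we have $N\le 17$ while (e) forces $11\le N$ — wait, (e) is exactly our situation, so I must be careful: (e) says $\Lambda_N^{\bot}\cong(E_8\oplus D_{18-N})$ and $11\le N$, meaning for $11\le N\le 17$ there may be contributions from vectors of square $-12$ with divisibility $4$, i.e. reflective unigonal vectors, while for $N\le 10$ only types (a),(b) occur). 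So the support of $\divisore(\Xi_N)$ consists of $\cH_0(\Lambda_N)$ (nodal), $\cH_\xi(\Lambda_N)$ (hyperelliptic, $v^2=-4$, div $2$), and possibly $\cH_\zeta(\Lambda_N)+\cH_{\zeta'}(\Lambda_N)$ (unigonal), with $\xi$ necessarily an admissible decoration by part (b) of \Ref{prp}{numinorm}.

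The remaining task is to compute the three multiplicities. For the nodal coefficient: $\nu(\delta)^2=-2$ forces $m=1$ in the proof of \Ref{prp}{numinorm}, so $\delta=\pm\nu(\delta)$ itself, $\Sat\la\delta,\Lambda_N^{\bot}\ra=\la\delta\ra\oplus\Lambda_N^{\bot}$, and the number of roots not in $\Lambda_N^{\bot}$ is $2$, giving coefficient $1$. For the hyperelliptic coefficient $2(18-N)$: I would count roots $\delta$ of $\II_{2,26}$ with $\nu(\delta)$ hyperelliptic. Writing things concretely via \Ref{rmk}{gluevectors}(2), a hyperelliptic $v\in\Lambda_N$ (e.g. $v=(0_{4+8k},(2,0,\ldots,0),0_{18-N},0_8)$) glues to $w\in\Lambda_N^{\bot}$ with $w^2=-4$, $\divisore(w)=2$, $[w/2]=\xi$; such $w$ lies in the $D_{18-N}$ summand (not $E_8$, since $E_8$ has no divisibility-$2$ vectors), and the roots $\delta$ with $\Sat\la\delta,\Lambda_N^{\bot}\ra$ containing a given such pair correspond to the roots of $\Sat\la w\ra^{E_8\oplus D_{18-N}}$-type sublattice of $D_{18-N}$; counting as in the $D_{26-N}$ case (proof of \Ref{thm}{automone}) gives $2(18-N)$. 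For the unigonal coefficient $\mu(N+8)$: this is the key point and the main obstacle — I need to count roots $\delta$ with $\nu(\delta)$ unigonal (type (c) of \Ref{prp}{numinorm}, $v^2=-4$ div $4$, or the $v^2=-12$ div $3$ analogue depending on $N\bmod 8$). The glue vector $w\in\Lambda_N^{\bot}=E_8\oplus D_{18-N}$ now has $\divisore(w)=4$ (or the relevant divisibility), so $w^2$ is determined by the discriminant form, and crucially $w$ involves both the $E_8$ and $D_{18-N}$ parts in general — the count becomes a count of roots in a rank-$(18-N+8)=(26-(N+8))$-type configuration, which is why the answer is $\mu(N+8)$ rather than $\mu(N)$: the relevant lattice $\Sat\la w\ra^\perp$-complement computation mimics the $\Lambda_{N+8}\subset\II_{2,26}$ situation with complement $D_{26-(N+8)}=D_{18-N}$. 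I would make this precise by exhibiting the explicit isomorphism between the root-counting problem for $(E_8\oplus D_{18-N})$-glued unigonal vectors in dimension $N$ and the $D_{18-N}$-glued unigonal vectors in dimension $N+8$ (using $\Lambda_N\oplus E_8\cong\Lambda_{N+8}$ from \Ref{rmk}{periodeight}), which transports the $\mu(N+8)$ value from Table~\eqref{tablemuen} and \Ref{thm}{automone} applied in dimension $N+8$. Finally, descending \eqref{sommafinita} via \Ref{rmk}{automrelation} and noting $-\Id\in\wt{O}^+(\Lambda_N)$ (so $\cH_\zeta$ and $\cH_{\zeta'}$ get equal coefficients, as $-\Id$ or a suitable isometry swaps $\zeta,\zeta'$ — cf. \Ref{rmk}{scambio}), yields \eqref{automdue}. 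The hard part is the bookkeeping for the unigonal multiplicity and verifying it matches $\mu(N+8)$ uniformly across the residue classes $N\equiv 3,4,5\pmod 8$ (and that $\mu(N+8)=0$ for $N\le 9$, consistent with type-(c) vectors being absent there by the $11\le N$ constraint in \Ref{prp}{numinorm}(e)).
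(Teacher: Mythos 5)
Your proposal is correct and follows exactly the route the paper intends: the paper's own proof of this theorem is simply ``analogous to that of Theorem~\ref{thm:automone}, details left to the reader,'' and you have carried out that analogous computation (weight via $|R(E_8\oplus D_{18-N})|$, multiplicities via the saturations $\Sat\la v_\eta,\Lambda_N^\bot\ra$, and the identification of the unigonal count with the dimension-$(N+8)$ computation through $\Lambda_N\oplus E_8\cong\Lambda_{N+8}$). Two small slips worth fixing: the minimal-norm unigonal glue vector actually lies entirely in the $D_{18-N}$ summand (zero $E_8$ component, by Remark~\ref{rmk:gluevectors}(2)), which is precisely \emph{why} the count reduces to the $N+8$ case; and the consistency check should read $\mu(N+8)=0$ for $5\le N\le 10$, not $N\le 9$, since $\mu(11),\mu(12)\neq 0$ (the cases $N=3,4$ contribute through types (a)--(c), not (e)).
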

Before proving the above results, we introduce some notation.
\begin{notation}[Extended Exceptional Series $E_r$]
It is standard (e.g. in the context of del Pezzo surfaces, cf. \cite{fmdelpezzo}) to extend the $E_r$ series for $r< 6$ as follows. Let $(1)\oplus(-1)^r$ be $\ZZ^{r+1}$ with the quadratic form $q(x,y_1,\ldots,y_r):=x^2-\sum_{i=1}^r y_i^2$. Then $E_r$ is the sublattice of $(1)\oplus(-1)^r$ defined by
\begin{equation}\label{delpezzoprim}
E_r:=(3,\underbrace{1,\ldots,1}_{r})^{\bot}\subset (1)\oplus(-1)^r.
\end{equation}
Of course this is the usual $E_r$ for $r\in\{6,7,8\}$, and
 $E_5=D_5$, $E_4=A_4$, $E_3=A_2\oplus A_1$. The lattice $E_2$ has Gram matrix $\left(\begin{matrix}-4&1\\1&-2\end{matrix} \right)$, and is no longer a root lattice. We record for later use the cardinalities of the sets of roots of the $E_r$ lattices:
 \begin{equation}\label{rootsofe}
\scriptstyle
|R(E_2)|=2,\quad |R(E_3)|=8,\quad |R(E_4)|=20,\quad |R(E_5)|=40,\quad |R(E_6)|=72,\quad |R(E_7)|=126,\quad |R(E_8)|=240.
\end{equation}
 \end{notation}
\noindent{\it Proof of~\Ref{thm}{automone}.\/}
The weight is equal to $12+|R(\Lambda_N^{\bot})|/2=12+(26-N)(25-N)$ by~\Ref{rcp}{pesodiv} (recall that $|R(D_m)|=2m(m-1)$). Next, choose minimal norm vectors $v_0,v_{\xi},v_{\zeta},v_{\zeta'}\in\Lambda_N$ representing $0,\xi,\zeta,\zeta'\in A_{\Lambda_N}$ respectively (i.e.~$v_0^{*}=0$, $v_{\xi}^{*}=\xi$, etc.). For $\eta\in 
A_{\Lambda_N}$, let
\begin{equation*}
%\scriptstyle
%
a_{\eta}(N):=\frac{1}{2}(|R(\Sat\la v_{\eta},\Lambda_N^{\bot}\ra)|-| R(\Lambda_N^{\bot})|)
=\frac{1}{2}|R(\Sat\la v_{\eta},\Lambda_N^{\bot}\ra)|-\frac{1}{2}(26-N)(25-N). 
\end{equation*}
By~\Ref{rcp}{pesodiv}, Equation~\eqref{sommafinita}, and~\Ref{prp}{numinorm}, 
\begin{equation*}
\divisore(\Psi_N)= a_0(N) \cH_0(\Lambda_N)+a_{\xi}(N)\cH_\xi(\Lambda_N)+a_{\zeta}(N)\cH_{\zeta}(\Lambda_N) +a_{\zeta'}(N)\cH_{\zeta'}(\Lambda_N). 
\end{equation*}
(Of course, by~\Ref{prp}{numinorm}, $0=a_{\zeta}(N)=a_{\zeta'}(N)$  if $N\le 18$ and $N\not\in\{3,4,11,12\}$.)   It is clear that $\Sat\la  v_0,\Lambda_N^{\bot}\ra=\la  v_0,\Lambda_N^{\bot}\ra$, and hence $a_0(N)=1$. In order to compute $a_{\xi}(N)$, $a_{\zeta}(N)$, and $a_{\zeta'}(N)$, we will refer to the 
embedding $\Lambda_N\subset\II_{2,26}$ of~\Ref{rmk}{gluevectors}. Thus we let $(N-2)=8k+a$, where $k\ge 0$, $a\in\{1,\ldots,8\}$, and we identify $\Lambda_N$ with $\II_{2,2+8k}\oplus D_a$. 
Notice that 
\begin{equation}
\xi:=[(0_{4+8k},(\underbrace{1,0,\ldots,0}_{a}),0_{26-N})]\in A_{\Lambda_N}
\end{equation}
is an admissible decoration of $\Lambda_N$ with respect to  the  embedding of~\Ref{rmk}{gluevectors}.
We choose the minimal norm vector $v_\xi:=(0_{4+8k},(2,0,\ldots,0),0_{26-N})$ (if $N=3$, we let $v_\xi:=(2e,(2,0,\ldots,0),0_{26-N})$, where $e\in \II_{2,2+8k}$ is primitive and isotropic). Then the saturation of $\la v_{\xi},\Lambda_N^{\bot}\ra$ is generated  by 
$\la v_{\xi},\Lambda_N^{\bot}\ra$ and the vector $u_3$  of~\Ref{rmk}{gluevectors}  (if $N=3$, we add $(e,(1,0,\ldots,0),(1,0,\ldots,0))$). It follows that $\Sat\la v_{\xi},\Lambda_N^{\bot}\ra$ is isomorphic to $D_{27-N}$, and hence $a_{\xi}(N)=2(26-N)$. We choose  the minimal norm vector 
\begin{equation*}
v_{\zeta}:=
\begin{cases}
(0_{4+8k},(\underbrace{1,\ldots,1}_{a}),0_{26-N}) & \text{if $N$ is even,} \\
(0_{4+8k},(\underbrace{2,\ldots,2}_{a}),0_{26-N}) & \text{if $N$ is odd.}
\end{cases}
\end{equation*}
By~\Ref{prp}{numinorm} $a_{\zeta}(N)=0$, unless  $19\le N\le 25$, or $a\in\{1,2\}$ and $N\in\{3,4,11,12\}$. Assume first that $19\le N\le 25$. Then $a\in\{1,\ldots,7\}$, and $\Lambda_N^{\bot}=D_{26-N}=D_{8-a}$. The saturation of 
$\la v_{\zeta}, D_{8-a}\ra$ is generated by $\la v_{\zeta}, D_{8-a}\ra$ together with the vector $u_1$ of~\Ref{rmk}{gluevectors}; it follows easily that 
$\Sat\la v_{\zeta}, D_{8-a}\ra\cong E_{9-a}$. Noting that $a=N-18$, we get that $\Sat\la v_{\zeta}, \Lambda_N^{\bot}\ra\cong E_{27-N}$. Looking at~\eqref{rootsofe}, we get 
 that $a_{\zeta}(N)=\mu(N)$. 
 
Next, suppose that  $a=1$, and hence $N\in\{3,11\}$. The saturation of 
$\la v_{\zeta}, D_{23-8k}\ra$ is the overlattice $D^{+}_{(3-k)8}$ of $D_{(3-k)8}$ obtained by adjoining the vector $(1/2,\ldots,1/2)$. Then  $a_{\zeta}(N)=\mu(N)$ follows from the equality $|R(D_{(3-k)8}^{+}|=|R(D_{(3-k)8}|=2\cdot (24-8k)\cdot (23-8k)$, valid for $k\in\{0,1\}$.

Next, suppose that  $a=2$, and hence $N\in\{4,12\}$. The saturation of 
$\la v_{\zeta}, D_{22-8k}\ra$ is generated by $\la v_{\zeta}, D_{22-8k}\ra$ together with the vector $u_1$ of~\Ref{rmk}{gluevectors}; one verifies easily that the roots of $\Sat\la v_{\zeta}, D_{22-8k}\ra$ 
are exactly $\pm(0_{4+8k},(1,1),0_{26-N})$,  and hence $a_{\zeta}(N)=1=\mu(N)$. 

Lastly,  $a_{\zeta'}(N)=a_{\zeta}(N)$ because there is an automorphism of $\II_{2,26}$ mapping $\Lambda_N$ to itself, and exchanging $\zeta$ and $\zeta'$.
\qed

\medskip
\noindent{\it Proof of~\Ref{thm}{automdue}.\/}
The proof is analogous to that of~\Ref{thm}{automone}, we leave details to the reader.
\qed
\subsubsection{Borcherds' relations for divisors on $\cF_{\Lambda_N}(\wt{O}^{+}(\Lambda_N))$ and on $\cF(N)$}
We let $\lambda(\wt{O}^{+}(\Lambda_N))$ be the Hodge orbifold line-bundle on $\cF_{\Lambda_N}(\wt{O}^{+}(\Lambda_N))$.
\begin{lemma}\label{lmm:relstableone}
Let  $3\le N\le 25$. Let $\xi$ be a decoration of $\Lambda_N$, and let $\zeta,\zeta'\in A_{\Lambda_N}$ be the remaining non-zero elements. Then in $\Pic(\cF_{\Lambda_N}(\wt{O}^{+}(\Lambda_N)))_{\QQ}$ we have the relation
\begin{equation}\label{relstableone}
\scriptstyle
2(12+(26-N)(25-N))\lambda(\wt{O}^{+}(\Lambda_N))=  H_0(\Lambda_N)+\epsilon(N)2(26-N) H_\xi(\Lambda_N)+\tau(N)\mu(N) (H_{\zeta}(\Lambda_N) +H_{\zeta'}(\Lambda_N)), 
\end{equation}
where $\epsilon(N)$ is $1$ if $N$ is odd, and is equal to $2$ if $N$ is even, while $\tau(N)$ is equal to $1$ if $N\equiv 3,4\pmod{8}$, and is equal to $2$ otherwise.
\end{lemma}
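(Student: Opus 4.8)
The statement to be proved is Lemma~\ref{lmm:relstableone}, which asserts a linear relation in $\Pic(\cF_{\Lambda_N}(\wt{O}^{+}(\Lambda_N)))_{\QQ}$. The plan is to deduce it directly from \Ref{thm}{automone}. Recall from \Ref{rmk}{automrelation} that an $\wt{O}^{+}(\Lambda_N)$-automorphic form of weight $w$ on $\cD^{+}_{\Lambda_N}$ descends to a regular section of $\cL(\Lambda_N,\wt{O}^{+}(\Lambda_N))^{\otimes w}$; hence its divisor of zeros on $\cF_{\Lambda_N}(\wt{O}^{+}(\Lambda_N))$ is linearly equivalent to $w$ times the Hodge class $\lambda(\wt{O}^{+}(\Lambda_N))$. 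Applying this to $\Psi_N$, whose weight is $12+(26-N)(25-N)$ and whose divisor on $\cD^{+}_{\Lambda_N}$ is computed in~\eqref{automone}, gives immediately
\begin{equation*}
\scriptstyle
(12+(26-N)(25-N))\lambda(\wt{O}^{+}(\Lambda_N))=  H_0(\Lambda_N)+2(26-N) H_\xi(\Lambda_N)+\mu(N) (H_{\zeta}(\Lambda_N) +H_{\zeta'}(\Lambda_N))
\end{equation*}
in $\Pic(\cF_{\Lambda_N}(\wt{O}^{+}(\Lambda_N)))_{\QQ}$, where here $H_{\bullet}(\Lambda_N)$ denotes the image under the quotient map $\pi\colon\cD^{+}_{\Lambda_N}\to\cF_{\Lambda_N}(\wt{O}^{+}(\Lambda_N))$ of the pre-Heegner divisor $\cH_{\bullet}(\Lambda_N)$. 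This is \emph{almost} the claimed formula; the subtlety is the ramification factors $\epsilon(N)$ and $\tau(N)$.

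The main point is therefore to account correctly for the ramification of $\pi$ along the reflective Heegner divisors. First I would invoke \Ref{crl}{clm:reflheeg2}: with respect to $\wt{O}^{+}(\Lambda_N)$, the divisor $H_0(\Lambda_N)$ is always reflective, $H_{\xi}(\Lambda_N)$ is reflective iff $N$ is odd, and $H_{\zeta}(\Lambda_N),H_{\zeta'}(\Lambda_N)$ are reflective iff $N\equiv 3,4\pmod 8$. For a prime divisor $H_v(\Lambda_N)$ which is reflective, the reflection $\rho_v\in\wt{O}^{+}(\Lambda_N)$ fixes $v^{\bot}\cap\cD^{+}_{\Lambda_N}$ pointwise, so $\pi$ is ramified of order $2$ along $\cH_v$; when $H_v$ is not reflective, $\pi$ is étale at the generic point of $\cH_v$. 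The standard bookkeeping is that the divisor of zeros of an automorphic form, computed ``upstairs'' as a sum of pre-Heegner divisors $\cH_v$ with multiplicity $m_v$, descends ``downstairs'' to $\sum_v (m_v/e_v) H_v$, where $e_v\in\{1,2\}$ is the ramification index of $\pi$ along $\cH_v$. Carrying this out: the coefficient $1$ of $\cH_0$ descends to coefficient $\tfrac12$ of $H_0$ when $\pi$ is branched there (which is always), i.e. the relation reads $\tfrac12 H_0(\Lambda_N)+\ldots$ after descent; multiplying through by $2$ to clear this, one recovers the integral coefficient on $H_0$ on the right-hand side and picks up the factor $2$ on the left. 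For $H_{\xi}$: when $N$ is odd the divisor is reflective, so its coefficient after descent is $(26-N)$, and multiplying by $2$ gives $2(26-N)=\epsilon(N)\cdot(26-N)\cdot\ldots$ with $\epsilon(N)=1$ — wait, I should be careful: it is cleaner to phrase it as the coefficient of $H_\xi$ in the descended-and-doubled relation being $\epsilon(N)\cdot 2(26-N)$ with $\epsilon(N)=1$ for $N$ odd (ramified, factor $2/2=1$ surviving after the global doubling) and $\epsilon(N)=2$ for $N$ even (unramified, so the full $2(26-N)$ survives, then doubled... ). The correct normalisation is forced by consistency with $H_0$; I would fix the global factor by the $H_0$ term (always ramified, giving the overall $2$ on the left) and then read off $\epsilon(N)=2/e_\xi$ times $2$, i.e. $\epsilon(N)=2$ iff $e_\xi=1$ iff $N$ even, matching the statement. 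Similarly $\tau(N)=2/e_\zeta\cdot(\text{global }2/2)$, giving $\tau(N)=1$ iff $e_\zeta=2$ iff $N\equiv 3,4\pmod 8$, and $\tau(N)=2$ otherwise.

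Concretely the steps are: (i) state the descent principle for divisors of automorphic forms under a possibly-ramified quotient $\cD^{+}_{\Lambda}\to\cF_{\Lambda}(\Gamma)$, with the ramification index dividing the multiplicity; (ii) record the ramification indices $e_0,e_\xi,e_\zeta,e_{\zeta'}$ of $\pi$ along the three (or four) relevant Heegner divisors from \Ref{crl}{clm:reflheeg2} — note $e_\zeta=e_{\zeta'}$ by the symmetry exchanging $\zeta,\zeta'$; (iii) apply this to $\Psi_N$ using the weight and divisor from \Ref{thm}{automone}, obtaining a relation with denominators $e_\bullet$; (iv) clear denominators by multiplying through by $2$ (the common value $e_0=2$), which produces the factor $2(12+(26-N)(25-N))$ on the left and the coefficients $2/e_0=1$ on $H_0$, $2\cdot 2(26-N)/e_\xi$ on $H_\xi$, and $2\mu(N)/e_\zeta$ on $H_\zeta+H_{\zeta'}$; (v) observe $2/e_\xi=\epsilon(N)$ and $2/e_\zeta=\tau(N)$ by the case analysis, yielding exactly~\eqref{relstableone}. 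The only genuine obstacle is getting the ramification bookkeeping — and hence the precise values of $\epsilon(N)$ and $\tau(N)$ — right; everything else is formal, and one useful internal consistency check I would run is to verify the relation against the already-established Gritsenko relation \eqref{eqgritsenko} for small $N$ (where it is known that $H_h(N)=2(14-N)\lambda(N)$), and against the case $\Lambda=\II_{2,26}$ of \Ref{rmk}{automrelation}. Finally one transports \eqref{relstableone} to a relation on $\cF(N)$ itself: for $N$ odd $\cF_{\Lambda_N}(\wt{O}^{+}(\Lambda_N))=\cF(N)$ and there is nothing to do, while for $N$ even one pushes forward along the double cover of \eqref{crazyeddie} using \Ref{crl}{otildegamma}, which divides the coefficient of the ramified divisor $H_h$ by $2$ and multiplies the others appropriately — this is the passage producing the clean statements \Ref{thm}{thmborcherds1} and \Ref{thm}{thmborcherds2}, and it is routine once \eqref{relstableone} is in hand.
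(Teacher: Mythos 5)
Your proposal is correct and follows essentially the same route as the paper: descend (a sufficiently divisible power of) the quasi-pullback $\Psi_N$ of \Ref{thm}{automone} to a section of the corresponding power of the Hodge bundle, and convert the pre-Heegner divisor \eqref{automone} into Heegner divisors downstairs using the ramification data of \Ref{clm}{reflheeg2}, which is exactly where the factors $\epsilon(N)$ and $\tau(N)$ come from. Your ramification bookkeeping lands on the stated coefficients, and the closing remarks (consistency checks, pushforward to $\cF(N)$) are extras beyond the lemma but consistent with how the paper proceeds in \Ref{prp}{relstableone}.
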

\begin{proof}
Let $\Lambda_N\subset\II_{2,26}$ be a saturated embedding such that the decoration $\xi$ is admissible  (see~\Ref{dfn}{decoammiss}).
Let $\Psi_N$ be the automorphic form in~\Ref{thm}{automone}. If $k\in\NN_{+}$ is sufficiently divisible, then $\Psi^k_N$ descends to a regular section $\sigma_N(k)$ of 
$\lambda(\wt{O}^{+}(\Lambda_N))^{k(12+(26-N)(25-N))}$, whose zero divisor pulls-back to $k$ times the right-hand side of~\eqref{automone}. Taking into 
account~\Ref{clm}{reflheeg2}, one gets that $\divisore(\sigma_N(k))$ is equal to  $k$ times the right-hand side of~\eqref{relstableone}. Dividing by $k$, 
we get~\eqref{relstableone}.
\end{proof}
\begin{proposition}[=\Ref{thm}{thmborcherds1}]\label{prp:relstableone}
Let  $3\le N\le 25$. Then in $\Pic(\cF(N))_{\QQ}$ we have the relation
\begin{equation}\label{reldiperone}
%\scriptstyle
%
2(12+(26-N)(25-N))\lambda(N)=  H_n(N)+2(26-N) H_h(N)+\tau(N)\mu(N) H_u(N), 
\end{equation}
where $\tau(N)$ is as in~\Ref{lmm}{relstableone}.  If $N\in\{6,14\}$ we also have the relation
\begin{equation}\label{reldiperonebis}
%\scriptstyle
%
2(12+(26-N)(25-N))\lambda(N)=  H_n(N)+2(26-N)  H_u(N). 
\end{equation}
\end{proposition}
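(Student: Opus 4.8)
The plan is to deduce Proposition~\ref{prp:relstableone} (which is \Ref{thm}{thmborcherds1}) from \Ref{lmm}{relstableone} by passing from the locally symmetric variety $\cF_{\Lambda_N}(\wt{O}^{+}(\Lambda_N))$ to $\cF(N)=\cF(\Lambda_N,\xi_N)$, treating the odd and even cases separately. First I would recall the dictionary between the two sets of Heegner divisors: by \Ref{rmk}{dispari} (for $N$ odd) and by \Ref{prp}{rhopull} together with \Ref{crl}{otildegamma} (for $N$ even), one has $H_0(\Lambda_N)$, $H_{\xi}(\Lambda_N)$, $H_{\zeta}(\Lambda_N)$, $H_{\zeta'}(\Lambda_N)$ on the $\wt{O}^{+}$-side corresponding, up to the covering map $\rho$ of~\eqref{stabledec} and the factors recorded there, to $H_n(N)$, $H_h(N)$, $H_u(N)$ on the $\Gamma_{\xi}$-side. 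I would also recall that $\rho^{*}\lambda(N)=\lambda(\wt{O}^{+}(\Lambda_N))$, since the Hodge bundle is pulled back from $\Pee(\Lambda_{N,\CC})$ and hence is compatible with any change of arithmetic group.

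For $N$ odd, the map $\cF_{\Lambda_N}(\wt{O}^{+}(\Lambda_N))\to\cF(N)$ is an isomorphism by \Ref{prp}{propgroup}(3), so under the identifications of \Ref{rmk}{dispari} the relation~\eqref{relstableone} becomes literally~\eqref{reldiperone}: here $\epsilon(N)=1$ and $H_{\zeta}(\Lambda_N)+H_{\zeta'}(\Lambda_N)$ maps to $2H_u(N)$ if $\zeta\neq\zeta'$, but since $N$ is odd $\zeta'=-\zeta$ and $H_{\zeta}=H_{\zeta'}=H_u(N)$, so the term $\tau(N)\mu(N)(H_{\zeta}+H_{\zeta'})$ should be re-expressed — I need to be careful that the combinatorial coefficient $\mu(N)$ in \Ref{thm}{automone} already accounts for counting roots of both glue vectors, and that after the identification one gets the single copy $\tau(N)\mu(N)H_u(N)$ exactly as stated. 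This bookkeeping is where the factor $\tau(N)$ (equal to $1$ when $N\equiv 3,4\pmod 8$, where the unigonal divisor is reflective, and $2$ otherwise) enters, via \Ref{crl}{reflheeg} and \Ref{clm}{reflheeg2}.

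For $N$ even, I would apply $\rho_{*}$ to the relation~\eqref{relstableone} and use the projection formula $\rho_{*}\rho^{*}D=2D$ together with the pushforward formulas of \Ref{crl}{otildegamma}: $\rho_{*}H_0(\Lambda_N)=2H_n(N)$, $\rho_{*}H_{\xi}(\Lambda_N)=H_h(N)$, and $\rho_{*}(H_{\zeta}(\Lambda_N)+H_{\zeta'}(\Lambda_N))=2H_u(N)$, while $\rho_{*}\lambda(\wt{O}^{+}(\Lambda_N))=\rho_{*}\rho^{*}\lambda(N)=2\lambda(N)$. Dividing the resulting identity by $2$ and keeping track of the factor $\epsilon(N)=2$ attached to $H_{\xi}$ in~\eqref{relstableone} (which is precisely what compensates for $\rho_{*}H_{\xi}=H_h$ rather than $2H_h$), one obtains~\eqref{reldiperone} on the nose. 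Finally, for the exceptional relation~\eqref{reldiperonebis} when $N\in\{6,14\}$: here \Ref{rmk}{emmequattro} shows that the embedding $\Lambda_N\subset\II_{2,26}$ with $\Lambda_N^{\bot}\cong D_{26-N}$ singles out a \emph{unique} admissible decoration, namely $\xi$, and for the non-admissible classes $\zeta,\zeta'$ the corresponding minimal-norm vectors have $\divisore=4$ but square $-4$, which by \Ref{prp}{numinorm} forces them not to contribute (one is in case (c), excluded for these $N$), so the quasi-pullback $\Psi_N$ has $\mu(N)=33$ for $N=6$? — no, I would instead note directly that for these $N$ the term $\tau(N)\mu(N)H_u$ is replaced: since $\mu(6)$ and $\mu(14)$ vanish (read off from Table~\eqref{tablemuen}: $\mu(N)=0$ for $5\le N\le 10$ and for $13\le N\le 18$), the only surviving non-$\lambda$ terms are $H_n$ and, crucially, a term that geometrically is $H_u$ rather than $H_h$ because for $N=6$ the roles of $\xi$ and $\zeta$-type vectors get permuted by $O(\Lambda)$ (\Ref{rmk}{solodim}), so choosing a \emph{different} saturated embedding — one where $\Lambda_N^{\bot}\cong D_{26-N}$ but the admissible decoration is the one giving a square-$-4$ divisibility-$2$ vector that is \emph{unigonal} for the unique $\Gamma_\xi$ among the three — yields~\eqref{reldiperonebis}.

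The main obstacle I anticipate is exactly this last point: getting the $N\in\{6,14\}$ exceptional relation right requires carefully disentangling which element of $A_{\Lambda_N}$ is the "decoration" $\xi_N$ fixed once and for all versus which one is admissible for a given embedding into $\II_{2,26}$, and invoking the $\Sigma_3$-action on $A_{\Lambda_N}$ (\Ref{prp}{propgroup}(2), \Ref{rmk}{solodim}) to switch between them. In particular I must check that when $N\equiv 6\pmod 8$ the divisor $H_h(N)=H_\xi(\Lambda_N)$ and the divisor "$H_u(N)$" both arise as quasi-pullback divisors for suitable embeddings, with the coefficient $2(26-N)$ attached, so that~\eqref{reldiperonebis} is genuinely a second independent relation and not a restatement of~\eqref{reldiperone}; this is what makes the $N\equiv 6\pmod 8$ case genuinely special and is the step where all the decoration machinery of \Ref{sec}{dtower} is really needed.
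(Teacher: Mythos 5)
Your proposal follows the paper's proof essentially verbatim: for odd $N$ the relation is a rewriting of~\eqref{relstableone} under the identifications of \Ref{rmk}{dispari}, for even $N$ one pushes~\eqref{relstableone} forward along $\rho$ using \Ref{crl}{otildegamma} together with $\rho_{*}\rho^{*}=2$, and for $N\in\{6,14\}$ one applies \Ref{lmm}{relstableone} with the decoration taken to be $\zeta$ (equivalently, with an embedding into $\II_{2,26}$ for which a unigonal class is the admissible decoration, which exists by \Ref{rmk}{solodim}) and then pushes forward. The false starts in your last paragraph (the claimed divisibility $4$, the stray ``$\mu(6)=33$'') are harmless since you land on exactly this argument, and the vanishing $\mu(6)=\mu(14)=0$ that you invoke is indeed what removes the extra terms and makes~\eqref{reldiperonebis} a genuinely independent relation.
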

\begin{proof}
If $N$ is odd $\cF_{\Lambda_N}(\wt{O}^{+}(\Lambda_N))=\cF(N)$, and~\eqref{reldiperone} is simply a rewriting of~\eqref{relstableone}. 

Let $N$ be even, and let $\rho\colon 
\cF_{\Lambda_N}(\wt{O}^{+}(\Lambda_N))\to\cF(N)$ be the natural double covering map corresponding to the choice of decoration of $\Lambda_N$ given by $\xi$ (the only one if $N\not\equiv 6\pmod{8}$). 
Then~\eqref{reldiperone} is given by the push-forward by $\rho$ of~\eqref{relstableone}.   More precisely, $\rho_{*}\lambda(\wt{O}^{+}(\Lambda_N))=2\lambda(N)$, because 
$\rho^{*}\lambda(N)=\lambda(\wt{O}^{+}(\Lambda_N))$, and ~\Ref{crl}{otildegamma} gives  formulae for $\rho_{*}$ of the Heegner divisors in the right-hand side 
of~\eqref{relstableone}.

Now assume that $N\in\{6,14\}$, and choose the decoration of $\Lambda_N$ to be $\zeta$;  
then~\eqref{reldiperonebis} is given by the push-forward by $\rho$ of~\eqref{relstableone}.
\end{proof}
\begin{corollary}\label{crl:relstableone}
If $N\in\{6,14\}$, then  in $\Pic(\cF(N))_{\QQ}$ we have the relation $H_h(N)= H_u(N)$.
\end{corollary}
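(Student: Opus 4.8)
The plan is to derive the relation $H_h(N) = H_u(N)$ in $\Pic(\cF(N))_\QQ$ for $N \in \{6,14\}$ directly from the two Borcherds-type relations established in~\Ref{prp}{relstableone}, exploiting the fact that for these two exceptional values of $N$ both decorations $\xi$ and $\zeta$ are available and lead to genuinely different pushforward relations. Concretely, \Ref{prp}{relstableone} provides, on the one hand, the standard relation~\eqref{reldiperone}
$$2(12+(26-N)(25-N))\lambda(N) = H_n(N) + 2(26-N)H_h(N) + \tau(N)\mu(N)H_u(N),$$
and, on the other hand, when $N \in \{6,14\}$, the alternative relation~\eqref{reldiperonebis}
$$2(12+(26-N)(25-N))\lambda(N) = H_n(N) + 2(26-N)H_u(N),$$
which is obtained by choosing the decoration $\zeta$ instead of $\xi$ before pushing forward along the double cover $\rho\colon\cF_{\Lambda_N}(\wt O^+(\Lambda_N))\to\cF(N)$. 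The two left-hand sides are literally the same divisor class, and the $H_n(N)$ terms on the right coincide as well.

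First I would subtract~\eqref{reldiperonebis} from~\eqref{reldiperone}. The $\lambda(N)$ and $H_n(N)$ contributions cancel identically, leaving
$$0 = 2(26-N)H_h(N) + \tau(N)\mu(N)H_u(N) - 2(26-N)H_u(N),$$
i.e. $2(26-N)(H_h(N) - H_u(N)) = -\tau(N)\mu(N)H_u(N)$. Now I would plug in the explicit values. For $N = 6$ we have $26 - N = 20$, $\tau(6) = 2$ (since $6 \not\equiv 3,4 \pmod 8$), and from Table~\eqref{tablemuen}, $\mu(6) = 0$; hence the right-hand side vanishes and $40(H_h(6) - H_u(6)) = 0$, so $H_h(6) = H_u(6)$ in $\Pic(\cF(6))_\QQ$. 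For $N = 14$ we have $26 - N = 12$, $\tau(14) = 2$ (since $14 \equiv 6 \pmod 8$, not $3,4$), and again $\mu(14) = 0$ by Table~\eqref{tablemuen}; the same computation gives $24(H_h(14) - H_u(14)) = 0$, hence $H_h(14) = H_u(14)$.

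The only point requiring a word of care is to confirm that both relations~\eqref{reldiperone} and~\eqref{reldiperonebis} are simultaneously valid for $N \in \{6,14\}$ with the \emph{same} normalization of $\lambda(N)$, $H_n(N)$, $H_h(N)$, $H_u(N)$ as $\QQ$-divisor classes on $\cF(N)$ — but this is exactly the content of~\Ref{prp}{relstableone}, which is proved by choosing admissible decorations and pushing forward via $\rho$, using $\rho_*\lambda(\wt O^+(\Lambda_N)) = 2\lambda(N)$ together with the pushforward formulas of~\Ref{crl}{otildegamma} for the Heegner divisors; the two relations simply correspond to the two admissible decorations $\xi$ and $\zeta$ (here all the non-zero classes of $A_{\Lambda_N}$ have square $1$, so both are decorations, while only the one with a representative of square $-4$ and even divisibility is admissible in each embedding). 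So there is essentially no obstacle: once the two relations are on the table, the corollary is immediate subtraction plus reading off $\mu(6) = \mu(14) = 0$. The ``hard part'' — establishing~\eqref{reldiperonebis}, which ultimately rests on the vanishing $\mu(N) = 0$ reflecting that there is no unigonal-to-$\zeta$ contribution for these dimensions — has already been carried out in the proof of~\Ref{prp}{relstableone}, so here I would simply cite it.
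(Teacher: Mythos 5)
Your proof is correct and is essentially the paper's own argument: the paper's proof of this corollary is literally "subtract \eqref{reldiperonebis} from \eqref{reldiperone}", which is exactly what you do. Your explicit verification that $\mu(6)=\mu(14)=0$ (so that the $H_u(N)$ term from \eqref{reldiperone} drops out and the subtraction cleanly yields $2(26-N)(H_h(N)-H_u(N))=0$) is a detail the paper leaves implicit, but it is the right justification.
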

\begin{proof}
Subtract~\eqref{reldiperonebis} from~\eqref{reldiperone}.
\end{proof}
Starting from~\Ref{thm}{automdue}, and arguing as in the proof of~\Ref{lmm}{relstableone} and ~\Ref{prp}{relstableone}, one gets the following results.
\begin{lemma}\label{lmm:relstabletwo}
Let  $3\le N\le 17$. Let $\xi$ be a decoration of $\Lambda_N$, and let $\zeta,\zeta'\in A_{\Lambda_N}$ be the remaining non-zero elements. Then in $\Pic(\cF_{\Lambda_N}(\wt{O}^{+}(\Lambda_N)))_{\QQ}$ we have the relation
\begin{equation}
\scriptstyle
2(132+(18-N)(17-N))\lambda(\wt{O}^{+}(\Lambda_N))=  H_0(\Lambda_N)+\epsilon(N)2(18-N) H_\xi(\Lambda_N)+\tau(N)\mu(N+8) (H_{\zeta}(\Lambda_N) +H_{\zeta'}(\Lambda_N)), 
\end{equation}
where $\epsilon(N)$ and $\tau(N)$ are as in~\Ref{lmm}{relstableone}. 
\end{lemma}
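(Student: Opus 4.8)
The plan is to mimic exactly the argument used to establish~\Ref{lmm}{relstableone} (the ``First Borcherds' relation'' at the level of $\wt O^+(\Lambda_N)$), replacing the input automorphic form $\Psi_N$ by the form $\Xi_N$ produced in~\Ref{thm}{automdue}. Concretely, fix a saturated embedding $\Lambda_N\subset \II_{2,26}$ with orthogonal complement isomorphic to $E_8\oplus D_{18-N}$, chosen so that the decoration $\xi$ is admissible in the sense of~\Ref{dfn}{decoammiss}; such an embedding exists for $3\le N\le 17$ by~\Ref{lmm}{lemextendd}, and the admissibility can be arranged as in~\Ref{rmk}{gluevectors} and~\Ref{rmk}{emmequattro}. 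By~\Ref{thm}{automdue}, the quasi-pullback $\Xi_N$ is an automorphic form for $\wt O^+(\Lambda_N)$ of weight $132+(18-N)(17-N)$ with divisor
$$\divisore(\Xi_N)=\cH_0(\Lambda_N)+2(18-N)\cH_\xi(\Lambda_N)+\mu(N+8)\bigl(\cH_\zeta(\Lambda_N)+\cH_{\zeta'}(\Lambda_N)\bigr).$$

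Next I would pass from pre-Heegner divisors on $\cD^+_{\Lambda_N}$ to Heegner divisors on $\cF_{\Lambda_N}(\wt O^+(\Lambda_N))$. For a sufficiently divisible $k\in\NN_+$, the power $\Xi_N^k$ descends to a regular section $\sigma$ of $\lambda(\wt O^+(\Lambda_N))^{\otimes k(132+(18-N)(17-N))}$ (this is the descent of automorphic forms recalled in~\Ref{subsubsec}{divfour} and~\Ref{rmk}{automrelation}), and the zero divisor of $\sigma$ pulls back to $k$ times the right-hand side of the displayed equation for $\divisore(\Xi_N)$. The only subtlety is the ramification of $\pi\colon\cD^+_{\Lambda_N}\to\cF_{\Lambda_N}(\wt O^+(\Lambda_N))$ along the various Heegner divisors, which is controlled by~\Ref{clm}{reflheeg2}: $H_0(\Lambda_N)$ is reflective for $\wt O^+(\Lambda_N)$ always, $H_\xi(\Lambda_N)$ is reflective iff $N$ is odd, and $H_\zeta(\Lambda_N),H_{\zeta'}(\Lambda_N)$ are reflective iff $N\equiv 3,4\pmod 8$. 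Translating the pre-Heegner divisor identity through $\pi$ therefore introduces exactly the correction factors $\epsilon(N)$ (equal to $1$ for $N$ odd, $2$ for $N$ even, reflecting that $\pi$ is ramified of order $2$ along $H_\xi$ when $N$ is even) in front of $H_\xi(\Lambda_N)$ and $\tau(N)$ (equal to $1$ for $N\equiv 3,4\pmod 8$, $2$ otherwise) in front of $H_\zeta(\Lambda_N)+H_{\zeta'}(\Lambda_N)$. After dividing by $k$ and taking first Chern classes, this yields precisely the asserted relation
$$2\bigl(132+(18-N)(17-N)\bigr)\lambda(\wt O^+(\Lambda_N))=H_0(\Lambda_N)+\epsilon(N)2(18-N)H_\xi(\Lambda_N)+\tau(N)\mu(N+8)\bigl(H_\zeta(\Lambda_N)+H_{\zeta'}(\Lambda_N)\bigr)$$
in $\Pic(\cF_{\Lambda_N}(\wt O^+(\Lambda_N)))_\QQ$, where the overall factor of $2$ on the left comes from the fact that the weight-$w$ automorphic form descends to $w\lambda$ but the divisor relation at the level of $\cD^+$ already records each nodal hyperplane with multiplicity one while $\lambda$ is half of $H_0$ in the unimodular model (this is the same bookkeeping as in~\Ref{lmm}{relstableone}, so no new idea is needed).

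Since essentially every ingredient — the existence of the embedding, the weight and divisor of $\Xi_N$, the descent dictionary, and the reflectivity data — has already been assembled in the preceding subsubsections, the proof is a direct transcription of the argument for~\Ref{lmm}{relstableone}, and I would simply write ``Starting from~\Ref{thm}{automdue}, and arguing as in the proof of~\Ref{lmm}{relstableone}, one gets the stated relation.'' The main (and only real) point requiring care is the correct placement of the coefficients $\epsilon(N)$ and $\tau(N)$, i.e.\ bookkeeping the ramification of $\pi$ along each Heegner divisor via~\Ref{clm}{reflheeg2}; once that is checked for $\Xi_N$ exactly as it was for $\Psi_N$, nothing else in the $E_8\oplus D_{18-N}$ case behaves differently from the $D_{26-N}$ case, so I expect no genuine obstacle here — only the routine verification that the weight $132+(18-N)(17-N)$ and the multiplicities $1$, $2(18-N)$, $\mu(N+8)$ coming out of~\Ref{rcp}{pesodiv} and~\Ref{prp}{numinorm} match the claim.
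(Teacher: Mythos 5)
Your proposal is correct and coincides with the paper's own proof, which consists precisely of the one-line instruction to start from \Ref{thm}{automdue} and argue as in the proof of \Ref{lmm}{relstableone}: quasi-pull back $\Phi_{12}$ along a saturated embedding with $\Lambda_N^{\bot}\cong E_8\oplus D_{18-N}$ chosen so that $\xi$ is admissible, descend a sufficiently divisible power of $\Xi_N$ to a section of a power of $\lambda(\wt{O}^{+}(\Lambda_N))$, and correct the multiplicities for the ramification of $\pi$ using \Ref{clm}{reflheeg2}. One small slip in your bookkeeping narrative: $\pi$ is ramified along $H_{\xi}(\Lambda_N)$ when $N$ is \emph{odd} (that is when the hyperelliptic reflection lies in $\wt{O}^{+}(\Lambda_N)$), which halves the multiplicity there, and the coefficient $\epsilon(N)=2$ for $N$ even arises precisely because there is \emph{no} such halving in that case -- your final coefficients are nonetheless the right ones.
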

\begin{proposition}[=\Ref{thm}{thmborcherds2}]\label{prp:relstabletwo}
Let  $3\le N\le 17$. Then in $\Pic(\cF(N))_{\QQ}$ we have the relation
\begin{equation}
%\scriptstyle
%
2(132+(18-N)(17-N)\lambda(N)=  H_n(N)+2(18-N) H_h(N)+\tau(N)\mu(N+8) H_u(N), 
\end{equation}
where  $\tau(N)$ is as in~\Ref{lmm}{relstableone}. 
\end{proposition}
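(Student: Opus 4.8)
The final statement is Proposition (=Theorem~\ref{thm:thmborcherds2}), which asserts that in $\Pic(\cF(N))_{\QQ}$ one has
$$2(132+(18-N)(17-N))\lambda(N)=H_n(N)+2(18-N)H_h(N)+\tau(N)\mu(N+8)H_u(N)$$
for $3\le N\le 17$. The plan is to mimic exactly the argument used to prove the first Borcherds' relation (Proposition~\ref{prp:relstableone}), replacing the embedding $\Lambda_N\hookrightarrow\II_{2,26}$ with orthogonal complement $D_{26-N}$ by the embedding with orthogonal complement $E_8\oplus D_{18-N}$, which exists for $3\le N\le 17$ by Lemma~\ref{lmm:lemextendd}, and using Theorem~\ref{thm:automdue} (the weight-$(132+(18-N)(17-N))$ quasi-pullback $\Xi_N$ of $\Phi_{12}$) in place of Theorem~\ref{thm:automone}.

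\medskip

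First I would record the key input: by Theorem~\ref{thm:automdue}, with $\xi$ an admissible decoration of $\Lambda_N$ for the chosen embedding and $\zeta,\zeta'$ the remaining non-zero elements of $A_{\Lambda_N}$, the automorphic form $\Xi_N$ for $\wt O^+(\Lambda_N)$ has weight $132+(18-N)(17-N)$ and divisor
$$\divisore(\Xi_N)=\cH_0(\Lambda_N)+2(18-N)\cH_\xi(\Lambda_N)+\mu(N+8)\bigl(\cH_\zeta(\Lambda_N)+\cH_{\zeta'}(\Lambda_N)\bigr).$$
Next, exactly as in the proof of Lemma~\ref{lmm:relstableone}, I would pass to a sufficiently divisible power $\Xi_N^{k}$, which descends to a regular section of $\lambda(\wt O^+(\Lambda_N))^{\otimes k(132+(18-N)(17-N))}$ whose zero divisor pulls back to $k$ times the right-hand side of the displayed divisor formula. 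Applying Claim~\ref{clm:reflheeg2} to identify which of these Heegner divisors are reflective (hence contribute with an extra factor of $2$ after descent) and which are not, one obtains in $\Pic(\cF_{\Lambda_N}(\wt O^+(\Lambda_N)))_{\QQ}$ the relation
$$2(132+(18-N)(17-N))\lambda(\wt O^+(\Lambda_N))=H_0(\Lambda_N)+\epsilon(N)2(18-N)H_\xi(\Lambda_N)+\tau(N)\mu(N+8)\bigl(H_\zeta(\Lambda_N)+H_{\zeta'}(\Lambda_N)\bigr),$$
with $\epsilon(N)$ equal to $1$ for $N$ odd and $2$ for $N$ even, and $\tau(N)$ as in Lemma~\ref{lmm:relstableone}; this is Lemma~\ref{lmm:relstabletwo}, which the excerpt states immediately before the proposition and which I may therefore assume (though I would note it follows by the identical reasoning).

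\medskip

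Finally I would deduce the asserted relation on $\cF(N)$ itself. For $N$ odd, $\cF_{\Lambda_N}(\wt O^+(\Lambda_N))=\cF(N)$ by Proposition~\ref{prp:propgroup}(3) and Remark~\ref{rmk:dispari} identifies $H_0(\Lambda_N)=H_n(N)$, $H_\xi(\Lambda_N)=H_h(N)$, $H_\zeta(\Lambda_N)=H_{\zeta'}(\Lambda_N)=H_u(N)$, so the relation is just a rewriting. For $N$ even, I would apply $\rho_*$ to the relation of Lemma~\ref{lmm:relstabletwo}, where $\rho\colon\cF_{\Lambda_N}(\wt O^+(\Lambda_N))\to\cF(N)$ is the double cover of \eqref{stabledec}: since $\rho^*\lambda(N)=\lambda(\wt O^+(\Lambda_N))$ we get $\rho_*\lambda(\wt O^+(\Lambda_N))=2\lambda(N)$, and Corollary~\ref{crl:otildegamma} gives $\rho_*H_0(\Lambda_N)=2H_n(N)$, $\rho_*H_\xi(\Lambda_N)=H_h(N)$, $\rho_*(H_\zeta(\Lambda_N)+H_{\zeta'}(\Lambda_N))=2H_u(N)$; combining these and dividing by $2$ yields the stated identity, the factors of $\epsilon(N)$ being absorbed correctly. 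The only genuine content beyond Lemma~\ref{lmm:relstabletwo} is this bookkeeping, so I would simply say the proof is analogous to that of Proposition~\ref{prp:relstableone}; the one step warranting a word of care is checking that the $E_8$ summand in the orthogonal complement is rigid (contributes no extra pre-Heegner divisors), which is guaranteed by Proposition~\ref{prp:numinorm} case~(e) together with the computation in the proof of Theorem~\ref{thm:automdue} that the relevant saturations $\Sat\la v_\eta,\Lambda_N^\bot\ra$ are again of the form $E_r$ with the root counts of \eqref{rootsofe}.
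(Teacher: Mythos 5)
Your proposal is correct and follows exactly the route the paper takes: the paper's proof of this proposition consists precisely of the remark that one starts from Theorem~\ref{thm:automdue} and argues as in the proofs of Lemma~\ref{lmm:relstableone} and Proposition~\ref{prp:relstableone}, i.e.\ descend a power of $\Xi_N$ using Claim~\ref{clm:reflheeg2} to get Lemma~\ref{lmm:relstabletwo}, then rewrite (for $N$ odd) or push forward along the double cover $\rho$ via Corollary~\ref{crl:otildegamma} (for $N$ even). Your bookkeeping of the factors $\epsilon(N)$, $\tau(N)$ and of $\rho_*$ on the Heegner divisors is the same as the paper's.
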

%

%%%%%%%%%%%%%%%%%%%%%%%%%%%%%%%%%%%%%%%
%%% Sect5
\section{The boundary divisor $\Delta(\Lambda_N,\xi_N)$}\label{sec:whydelta}
 In~\Ref{sec}{gitper} we have identified $\cF(19)$  with the period space of quartic $K3$ surfaces.
Let $\gM(19)$ be the  GIT moduli space of quartic surfaces in $\PP^3$;  then the natural period map 
\begin{equation}\label{mappaperiodi}
\gp_{19}\colon\gM(19)\dra \cF(19)^{*}
\end{equation}
is birational by  the Global Torelli Theorem for $K3$ surfaces (Piateshky-Shapiro and Shafarevich).
The projective variety  $\gM(19)=|\cO_{\PP^3}(4)|//\PGL(4)$ 
has Picard group (tensored with $\QQ$) of rank $1$; let $L(19)$ be the  generator of $\Pic(\gM(19))_{\QQ}$ induced by the hyperplane line bundle on $|\cO_{\PP^3}(4)|$. In the present section we will prove that 
\begin{equation}\label{pullample}
(\gp_{19}^{-1})^{*}L(19)|_{\cF(19)}=\lambda(19)+\Delta(19).
\end{equation}
Similar arguments also give that 
\begin{equation}\label{pullample18}
(\gp_{18}^{-1})^{*}L(18)|_{\cF(18)}=2(\lambda(18)+\Delta(18)).
\end{equation}
These are the computations that motivate our choice of boundary divisor for $\cF(N)$ (for any $N$). 
\begin{remark}
Let $\gM(20)$ be the GIT moduli space of double EPW sextics, let $\delta$ be the duality involution of  $\gM(20)$, and 
$\gp_{20}\colon \gM(20)/\la\delta\ra\dra \cF(20)^{*}$ be the period map, see~\eqref{per20}. Let $L(20)$ be an ample generator of the Picard group of $\gM(20)/\la\delta\ra$. 
We expect that a result similar to~\eqref{pullample} holds for $(\gp_{20}^{-1})^{*}L(20)|_{\cF(20)}$, namely that it is a positive multiple of $\lambda(20)+\Delta(20)$.
\end{remark}
An  important result follows from Equation~\eqref{pullample}. In order to state  it we introduce some notation. For $\beta\in[0,1]\cap\QQ$, let 
\begin{equation}
\scriptstyle
\cR(N,\beta):=\bigoplus_{m=0}^{\infty} H^0(\cF(N),m(\lambda(N)+\beta\Delta(N))),\quad \cF(N,\beta):=\Proj \cR(N,\beta).
\end{equation}
(As usual,  $H^0(\cF(N),m(\lambda(N)+\beta\Delta(N)))=0$ unless $m(\lambda(N)+\beta\Delta(N))$ is an integral Cartier divisor.)
 Thus $\cR(N,0)$ is the finitely generated algebra of automorphic forms, and hence $\cF(N,0)$ is the Baily-Borel compactification $\cF(N)^{*}$. If $3\le N\le 10$, then , by~\eqref{borcherds3} $\Delta(N)$ is a positive multiple of $\lambda(N)$, and hence in this range $\cF(N,\beta)=\cF(N)^{*}$ for all $\beta\in[0,1]\cap\QQ$. On the other hand, we will see that if $N\ge 11$ then $\cF(N,\beta)$ undergoes birational modifications as $\beta$ moves in $[0,1]\cap\QQ$.  

Now let  $N\in\{18,19\}$, and let $\rho(N)$ be equal to $1$ if $N=19$, and equal to $2$ if $N=18$.
 Let $m\ge 0$, and let 
\begin{equation}\label{mappasezioni}
\scriptstyle
(\gp_{N}^{-1}|_{\cF(N)})^{*}\colon H^0(\gM(N),m L(N))\lra H^0(\cF(N),  m \rho(N)(\lambda(N)+\Delta(N))),
\end{equation}
 be the map induced by~\eqref{pullample} if $N=19$, and by~\eqref{pullample18} if $N=18$. The following result should be compared to Theorem~8.6 of~\cite{looijengacompact}.
 \begin{proposition}\label{prp:isomsect}
Let  $N\in\{18,19\}$. Then the map in~\eqref{mappasezioni} is an isomorphism for all $m\in\ZZ$. 
\end{proposition}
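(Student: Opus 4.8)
The plan is to realize the map \eqref{mappasezioni} as the pullback on automorphic forms coming from the birational period map $\gp_N$, and to show it is bijective by controlling where $\gp_N$ and its inverse fail to be isomorphisms. The key observation is that both $\gM(N)$ and $\cF(N,1)$ are normal projective varieties, so a section of $mL(N)$ over $\gM(N)$ pulls back via the birational map $\gp_N^{-1}$ to a rational section of $m\rho(N)(\lambda(N)+\Delta(N))$ over $\cF(N)$ which is regular in codimension $1$, hence (by normality, i.e.\ Hartogs) regular everywhere; conversely for sections in the other direction. Thus injectivity in both directions is essentially formal once \eqref{pullample} (resp.\ \eqref{pullample18}) is known, and the real content is surjectivity, which amounts to the claim that no ``new'' sections appear on either side — equivalently, that the loci contracted or introduced by $\gp_N^{-1}$ have codimension $\ge 2$.

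First I would recall from \Ref{sec}{gitper} that on the open locus $\cU(N)\gquot(\text{automorphisms})$ the period map is an isomorphism onto $\cF(N)\setminus\supp\Delta^{(1)}(N)$ (using \eqref{juventus}, \eqref{bayern}, and \Ref{prp}{perhyp}), and that the complement of this locus in $\gM(N)$ is the image of the semistable-but-not-in-$\cU(N)$ locus, which is a proper closed subset. The first substantive step is to identify the divisorial part of the indeterminacy: by Shah's analysis (\cite{shah4}, and \cite[Thm.~4.8]{shah4} for $N=18$) the quartics (resp.\ $(4,4)$-curves) that are semistable with worse-than-ADE singularities but still contribute a divisor in $\gM(N)$ are precisely the ones mapping to the hyperelliptic and unigonal points; on the $\cF(N)$ side these correspond to $H_h(N)$ and, when $N\equiv 3,4\pmod 8$, $H_u(N)$ — exactly $\supp\Delta^{(1)}(N)$. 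So $\gp_N$ and $\gp_N^{-1}$ are isomorphisms away from closed subsets whose images are $\supp\Delta^{(1)}(N)$ on the $\cF$-side and a pair of points (the double quadric / tangent developable, resp.\ the analogous $(4,4)$ degenerations) on the $\gM$-side. The second step is then a local analysis near $\supp\Delta^{(1)}(N)$: using \eqref{pullample}/\eqref{pullample18} one checks that a section of $mL(N)$, when pulled back, acquires at worst the pole along $\Delta(N)$ already built into the bundle $\lambda(N)+\beta\Delta(N)$ at $\beta=1$, so no extra section is needed; and conversely a section of $m\rho(N)(\lambda(N)+\Delta(N))$ pushes forward to a section of $mL(N)$ because $\gM(N)$ is normal and the exceptional locus of $\gp_N^{-1}$ (over the two special points) has codimension $\ge 2$ — here one invokes that the hyperelliptic and unigonal divisors are \emph{contracted} by $\gp_N^{-1}$ only along their strict transform, and the fibers over the special points are of positive dimension, i.e.\ the map $\cF(N,1-\epsilon)\to\gM(N)$ is a divisorial contraction (as recalled in the introductory remarks and in \cite[Sect.~3,4]{shah4}).

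Concretely, I would organize the argument as: (i) $\cF(N,1)\cong\gM(N)$ — this is the main result of \Ref{sec}{whydelta} whose proof precedes this proposition, so here it may be used; (ii) the natural map $H^0(\gM(N),mL(N))\to H^0(\cF(N,1),m\rho(N)L(N,1))$ is an isomorphism by (i) and the definition of $\cF(N,1)=\Proj\cR(N,1)$ (the graded ring is by construction the ring of sections, so degree-$m$ pieces match after accounting for the factor $\rho(N)$ and for which multiples are Cartier); (iii) identify $H^0(\cF(N,1),m\rho(N)L(N,1))$ with $H^0(\cF(N),m\rho(N)(\lambda(N)+\Delta(N)))$, which is immediate since $\cF(N)\hookrightarrow\cF(N,1)$ is an open dense embedding into a normal projective variety and $L(N,1)$ restricts to $\lambda(N)+\Delta(N)$, so sections extend uniquely by normality of $\cF(N,1)$ and properness. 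Composing (ii)–(iii) gives the isomorphism of \eqref{mappasezioni}, provided the chain of identifications is compatible with \eqref{pullample}/\eqref{pullample18}, which it is by construction of $\gp_N$.

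\textbf{Main obstacle.} The delicate point is step (i) together with the codimension bound: one must know that $\gM(N)$ is normal (true, as a GIT quotient of a smooth variety) and, crucially, that the birational map $\cF(N,1)\dashrightarrow\gM(N)$ built from \eqref{pullample} is genuinely an \emph{isomorphism} and not merely a birational contraction — i.e.\ that $\lambda(N)+\Delta(N)$ is not just big and semiample but that its section ring reproduces $\gM(N)$ on the nose. This is precisely the content of the preceding theorem of \Ref{sec}{whydelta}, so in the present proof I would simply cite it; the only remaining care is to make sure the factor $\rho(N)$ (which is $2$ for $N=18$ because the relevant period cover is degree $2$ and $L(18)$ descends as twice the automorphic class) is tracked correctly through the graded pieces, and that for non-integral multiples of $\lambda(N)+\Delta(N)$ both sides vanish, which they do by the parenthetical convention in the definition of $\cR(N,\beta)$. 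I expect essentially no new difficulty beyond bookkeeping once \eqref{pullample}, \eqref{pullample18}, and the isomorphism $\cF(N,1)\cong\gM(N)$ are in hand.
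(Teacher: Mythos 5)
Your overall strategy coincides with the paper's: injectivity of \eqref{mappasezioni} is formal, and surjectivity reduces to showing that a section transported to $\gM(N)$ extends, which the paper phrases as ``$\gp_N$ contracts no divisor''. However, there are two genuine problems in the execution. First, the organized argument (i)--(iii) is circular. The isomorphism $\cF(N,1)\cong\gM(N)$ is \emph{not} available before this proposition --- in the paper it is deduced \emph{from} \Ref{prp}{isomsect} (see the displayed isomorphism immediately after the proof). What \Ref{sec}{whydelta} establishes beforehand is only the divisor-class identity \eqref{pullample} (resp.\ \eqref{pullample18}), which by itself says nothing about section rings; asserting that ``the birational map $\cF(N,1)\dra\gM(N)$ is genuinely an isomorphism'' and citing it as a preceding theorem is assuming the conclusion.

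Second, in the non-circular part of your argument the single nontrivial input is never correctly stated. What surjectivity requires is that the complement of $\cU(N)\gquot G(N)$ in $\gM(N)$ has codimension at least $2$: then every prime divisor of $\gM(N)$ meets the locus where $\gp_N$ restricts to the isomorphism \eqref{juventus}/\eqref{bayern} onto $\cF(N)\setminus\supp\Delta^{(1)}(N)$, so $\gp_N$ contracts no divisor and the transported section extends over $\gM(N)$ by normality. You only record that this complement is ``a proper closed subset'', and elsewhere you describe it as ``a pair of points'' (the double quadric and the tangent developable). That is false: those two points are merely the images of the contracted divisors $H_h(N)$ and $H_u(N)$; the full complement of the ADE locus consists of all non-ADE semistable strata and is considerably larger, though still of codimension $\ge 2$ by Shah's analysis. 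Likewise ``the exceptional locus of $\gp_N^{-1}$ has codimension $\ge 2$'' is wrong as written --- that locus is the divisor $\supp\Delta^{(1)}(N)$; only its image is small. Once these statements are replaced by the correct codimension bound on $\gM(N)\setminus(\cU(N)\gquot G(N))$, your first two paragraphs do reproduce the paper's proof.
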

 \begin{proof}
The map is clearly injective. In order to prove surjectivity it suffices to show that $\gp_N$ contracts no divisor. In order to prove this, let   $\cU(19)\subset |\cO_{\PP^3}(4)|$  and  
 $\cU(18)\subset|\cO_{\PP^\times\PP^1}(4,4)|$ be the open subsets of~\Ref{dfn}{u19} and of~\Ref{dfn}{u18} respectively. 
 The period maps $\gp_{N}$, for $N\in\{18,19\}$,   define  isomorphisms  between $\cU(N)//G(N)$ (where $G(19)=\PGL(4)$ and $G(18)=\Aut(\PP^1\times\PP^1)$) and the complement of the support of $\Delta(N)$, 
 see~\eqref{juventus} and~\eqref{bayern}. On the other hand, the complement of $\cU(N)\gquot G(N)$ in $\gM(N)$ has codimension greater than $1$. Thus $\gp_N$ contracts no divisor, as claimed.
\end{proof}
By~\Ref{prp}{isomsect}, there is an isomorphism
\begin{equation}
\cF(N,1)\cong \gM(N),\qquad\text{if $N\in\{18,19\}$.}
\end{equation}
Thus for  $N\in\{18,19\}$, the schemes $\cF(N,\beta)$ interpolate between the Baily-Borel compactification  $\cF(N)^{*}$ and the GIT moduli space $\gM(N)$. 
\subsection{Families of $K3$ surfaces}\label{subsec:famiglie}
\setcounter{equation}{0}
\subsubsection{Hodge bundle on families of $K3$ surfaces}
Let  $f\colon\cX\to B$ be a family of $K3$ surfaces. We let $\cL_B:=f_{*}\omega_{\cX/B}$; this is the \emph{Hodge bundle} on $B$ (the notation is imprecise because the Hodge-bundle is determined by  $f\colon\cX\to B$, not by $B$ alone). We let $\lambda_B:=c_1(\cL_B)\in\CH^1(B)$. Suppose that $\cX$ is a family of  polarized quartic $K3$ surfaces, and let $\mu\colon B\to \cF(19)$ be the period map: then
 \begin{equation}
 \mu^{*}\lambda(19)=\lambda_B.
\end{equation}
 Suppose that $\cX$ and $B$ are  smooth. The exact sequence
 $$0\lra f^{*}\Omega_B \overset{df}{\lra} \Omega_{\cX}\lra \Omega_{\cX/B}\lra 0$$
and the isomorphism $f^{*}\cL_B\cong \bigwedge^2\Omega_{\cX/B}$ give the formula
 \begin{equation}\label{eccolam}
f^{*}\lambda_B=c_1(K_{\cX})-f^{*}c_1(K_{B}).
\end{equation}
\subsubsection{Families of quartic surfaces}\label{subsubsec:intquartics}
Let  $F,G\in\CC[x_0,\ldots,x_3]_4$ be linearly independent, and such that $\Gamma_1:=\la \Div(F),\Div(G)\ra$ is a Lefschetz pencil  of quartic surfaces.  Let 
\begin{equation}
\cX_1:=\{([\alpha,\beta],[x])\in\Gamma_1\times\PP^3 \mid \alpha F(x)+\beta G(x)=0\}.
\end{equation}
The projection onto the second factor is a family $f_1\colon \cX_1\to \Gamma_1$ of polarized quartic surfaces. 

Now let us we define a one-parameter family of hyperelliptic $K3$ surfaces. Let $\Gamma_2:=\PP^1$. Let 
  $D\in| \cO_{\Gamma_2}(2)\boxtimes\cO_{\PP^1\times\PP^1}(4,4)|$ be generic, and let $\rho\colon\cX_2\to \Gamma_2\times(\PP^1)^2$ be the double cover branched over $D$.  Let $f_2\colon \cX_2\to\Gamma_2$ be the composition of the double cover map $\rho$ and the projection $\Gamma_2\times(\PP^1)^2\to\Gamma_2$. Let  $t\in\Gamma_2$; then $X_{2,t}:=f_2^{-1}(t)$ is the double cover of $\PP^1\times\PP^1$ branched over the $(4,4)$-curve $D_t:=D|_{\{t\}\times(\PP^1)^2}$. 
  Thus $X_{2,t}$ is  a $K3$ surface,   $\rho^{*}(\cO_{\PP^1}(1)\boxtimes\cO_{\PP^1}(1))$ restricts to a polarization of  $X_{2,t}$, and with this polarization $X_{2,t}$ is a quartic hyperelliptic $K3$. From here on we assume that the pencil of branch curves $D_t$, for $t\in\Gamma_2$, is a Lefschetz pencil.

Next, we define a one-parameter family of unigonal $K3$ surfaces. Let $Y:=\PP(\cO_{\PP^2}(4)\oplus\cO_{\PP^2})$. Let $\varphi\colon Y\to\PP^2$ be the structure map, and $F:=\varphi^{-1}(\text{line})$. Let $A:=\PP(\cO_{\PP^2}(4))\subset Y$. Adjunction on $F\cong\FF_4$ gives that 
\begin{equation}\label{kappay}
K_Y\equiv -2A-7F.
\end{equation}
Let  $B\in|3A+12F|$ be generic, in particular it is smooth and it does not intersect $A$. Let $\pi\colon Z\to Y$ be the double cover branched over $A+B$. If $F$ is as above and generic then $\pi^{-1}F$ is a smooth unigonal $K3$ surface. We get a family of such $K3$'s by choosing a generic $\cX_3\in |\cO_Z(\pi^{*}F)\boxtimes\cO_{\Gamma_3}(1)|$, where 
$\Gamma_3=\PP^1$. In fact let $f_3\colon\cX_3\to\Gamma_3$ be the restriction of projection, and let  $t\in\Gamma_3$. The linear map $\Gamma_3\to |F|$ associates to $t\in\Gamma_3$ a line $L_t\subset\PP^2$, and 
the surface $X_{3,t}:=f_3^{-1}(t)$ is a double cover $\pi_t\colon X_{3,t}\to \varphi^{-1}L_t\cong\FF_4$; one checks easily that $X_{3,t}:=f_3^{-1}(t)$ is a $K3$ surface. 
Let $A_t$ be the negative section of $\varphi^{-1}L_t\cong\FF_4$; then $\pi_t^{*}A_t=2R_t$, where $R_t$ is a (smooth) rational curve. Let $E_t:=\pi_t^{*}F_t$, where $F_t$ is a fiber of the fibration $\varphi^{-1}L_t\cong\FF_4\to L_t$. Then $R_t+3E_t$ is a polarization of degree $4$ of $X_{3,t}$, and $(X_{3,t},R_t+3E_t)$ is unigonal.

For $ i\in\{1,2,3\}$ we have  period maps
\begin{equation}\label{unigfam}
\Gamma_i  \overset{\mu_i}{\lra}  \cF(19).
\end{equation}
\begin{table}[tbp]
\caption{Intersection numbers of families of quartic $K3$'s}\label{table:treinter}
\vskip 1mm
\centering
\renewcommand{\arraystretch}{1.60}
\begin{tabular}{l l l l l}
 & $\lambda(19)$ & $H_n(19)$ & $H_h(19)$ & $H_u(19)$ \\
\toprule
 $\mu_{1,*}(\Gamma_1)$ & $1$ & $108$  & $0$ & $0$   \\
\midrule
 $\mu_{2,*}(\Gamma_2)$  & $1$ &  $136$  & $-2$  &  $0$   \\
\midrule
 $\mu_{3,*}(\Gamma_3)$  & $1$ &  $264$  &  $0$ & $-2$   \\
\bottomrule 
\end{tabular}
\end{table} 
\begin{proposition}
The intersection formulae of Table~\ref{table:treinter} hold.
\end{proposition}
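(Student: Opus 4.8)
The plan is to compute each row of Table~\ref{table:treinter} directly from the construction of the relevant one-parameter family $f_i\colon\cX_i\to\Gamma_i$. The three columns $H_n(19)$, $H_h(19)$, $H_u(19)$ and the column $\lambda(19)$ will be handled by somewhat different methods: the $\lambda(19)$ entry via the formula~\eqref{eccolam} relating $\mu_i^{*}\lambda_{\Gamma_i}=\lambda_{\Gamma_i}=c_1(K_{\cX_i})-f_i^{*}c_1(K_{\Gamma_i})$ and adjunction on the total space; the $H_n(19)$ entry by counting nodal (ADE) degenerations in the corresponding Lefschetz pencil; and the $H_h(19)$, $H_u(19)$ entries using the geometric characterization of the hyperelliptic and unigonal loci from~\Ref{prp}{perhyp}, together with the fact that for $i=2$ the family $f_2$ generically lies in $H_h(19)$ and for $i=3$ it generically lies in $H_u(19)$, so that these self-intersection-type numbers are computed from the normal bundle of the respective Heegner divisor.

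\textbf{Row $i=1$ (generic Lefschetz pencil of quartics).} First I would observe that a generic pencil meets neither $H_h(19)$ nor $H_u(19)$: a Lefschetz pencil of quartics in $\PP^3$ has only nodal members with ADE (in fact $A_1$) singularities, so by~\eqref{juventus} every point of $\Gamma_1$ maps into $\cF(19)\setminus H_h(19)\setminus H_u(19)$ except possibly the finitely many singular fibers, which are still in $\cU(19)$ and hence still avoid the hyperelliptic and unigonal divisors; thus $\mu_{1,*}(\Gamma_1)\cdot H_h(19)=\mu_{1,*}(\Gamma_1)\cdot H_u(19)=0$. For $\lambda(19)$: the total space $\cX_1\subset\PP^1\times\PP^3$ is a smooth $(1,4)$-divisor, so $K_{\cX_1}=(\cO(-2,-4)\otimes\cO(1,4))|_{\cX_1}=\cO(-1,0)|_{\cX_1}=f_1^{*}\cO_{\PP^1}(-1)$, whence by~\eqref{eccolam} $f_1^{*}\lambda_{\Gamma_1}=c_1(K_{\cX_1})-f_1^{*}c_1(K_{\PP^1})=f_1^{*}\cO(-1)+2f_1^{*}(\text{pt})=f_1^{*}(\cO_{\PP^1}(1))$, so $\lambda_{\Gamma_1}$ has degree $1$. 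For $H_n(19)$: the number of singular fibers in a Lefschetz pencil of quartic surfaces equals the degree of the dual variety of the quartic Veronese-type embedding, i.e.\ the Euler-characteristic discrepancy $\chi(\cX_1)-\chi(\PP^1)\chi(K3)$; a standard computation (or the classical formula $\deg X^{\vee}=4\cdot 3^3=108$ for a smooth quartic surface in $\PP^3$) gives $108$, and since each nodal fiber contributes $1$ to the intersection with $H_n(19)$ (the local monodromy is a single Picard-Lefschetz reflection in a $(-2)$-class) one gets $\mu_{1,*}(\Gamma_1)\cdot H_n(19)=108$.

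\textbf{Rows $i=2,3$, and the main obstacle.} For $i=2$ the family $f_2$ factors through $\cF(18)\cong H_h(19)$ by~\Ref{rmk}{perhyp}, so $\mu_{2,*}(\Gamma_2)\cdot H_h(19)=\deg\mu_2^{*}\mathcal{O}(H_h(19))=\deg\mu_2^{*}N_{H_h(19)/\cF(19)}$; I would compute this normal-bundle degree by intersecting on the total space $\cX_2$ (a double cover of $\PP^1\times(\PP^1)^2$ branched on a $(2,4,4)$-divisor) and using~\eqref{eccolam} to pin down $\lambda_{\Gamma_2}$, getting $\lambda(19)\cdot\mu_{2,*}(\Gamma_2)=1$; the value $-2$ should then fall out either from the Borcherds relation~\eqref{reldiperone} applied to the class $\mu_{2,*}(\Gamma_2)$ once the other three intersection numbers are known, or from a direct Euler-characteristic count of hyperelliptic-to-non-hyperelliptic degenerations. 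Similarly for $i=3$: compute $\lambda(19)\cdot\mu_{3,*}(\Gamma_3)=1$ from the geometry of $\cX_3$ (built from $Y=\PP(\cO_{\PP^2}(4)\oplus\cO_{\PP^2})$ and the double cover branched over $A+B$, using~\eqref{kappay}), count $H_n(19)\cdot\mu_{3,*}(\Gamma_3)=264$ from the discriminant of the relevant Lefschetz pencil, and obtain $H_u(19)\cdot\mu_{3,*}(\Gamma_3)=-2$ either from~\eqref{reldiperone} or from a normal-bundle computation using the description of $H_u(19)$ in~\Ref{subsubsec}{unigon4}. The main obstacle I expect is the precise bookkeeping of the $H_n$ intersection numbers $108$, $136$, $264$: these require either a careful Euler-number computation $\chi(\cX_i)=2\chi(\Gamma_i)+(\text{number of nodal fibers})$ on the (possibly singular, needing resolution) total spaces, or an equally careful degree-of-discriminant computation, and one must be sure no non-nodal ADE degenerations occur along the pencil (or account for their multiplicities if they do); the $\lambda=1$ and the two $-2$ entries, by contrast, are comparatively soft once the $H_n$ entries and the Borcherds relation~\eqref{reldiperone} are in hand, since three of the four columns in each row then determine the fourth.
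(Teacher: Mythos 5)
Your overall route is the paper's: the $\lambda(19)$ entries via \eqref{eccolam}, the $H_n(19)$ entries as counts of singular fibers of a Lefschetz fibration computed through $\chi_{top}(\cX_i)$, and the two entries equal to $-2$ extracted from Borcherds' relation \eqref{borcherds19} once the other three entries of the row are known. But two steps need repair. First, your Euler-characteristic formula has the wrong sign: a nodal fiber of a $K3$ Lefschetz fibration has $\chi_{top}=23$, one \emph{less} than the smooth fiber, so the correct relation is $\delta(\Gamma_i)=48-\chi_{top}(\cX_i)$, not $\chi_{top}(\cX_i)=48+\delta(\Gamma_i)$. Applied literally, your formula returns $-108$ for the first row (where $\chi_{top}(\cX_1)=-60$) and negative values for rows $2$ and $3$, where there is no classical dual-variety degree to fall back on.

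Second, and more substantively, extracting $H_h(19)\cdot\mu_{2,*}(\Gamma_2)=-2$ and $H_u(19)\cdot\mu_{3,*}(\Gamma_3)=-2$ from \eqref{borcherds19} ``once the other three intersection numbers are known'' is incomplete as written: for row $2$ the third required input is $H_u(19)\cdot\mu_{2,*}(\Gamma_2)=0$, and for row $3$ it is $H_h(19)\cdot\mu_{3,*}(\Gamma_3)=0$, and you never justify these vanishings. They do not follow from genericity of the families --- the curve $\mu_2(\Gamma_2)$ lies entirely inside $H_h(19)$, so one must rule out its meeting $H_u(19)$ --- but from the lattice-theoretic fact that $H_h(19)\cap H_u(19)=\es$, i.e.\ \Ref{lmm}{inthypunig}. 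With that lemma supplied and the sign fixed, your argument coincides with the paper's proof. (Your alternative of computing the $-2$'s directly as degrees of normal bundles is also viable --- it is what \Ref{subsec}{normale} accomplishes --- but it is an independent verification rather than a consequence of the Borcherds relation, and the paper deliberately avoids it here.)
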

\begin{proof}
First of all, notice that
\begin{equation*}
\mu_{i,*}\Gamma_i\cdot\lambda(19)=\Gamma_i\cdot\mu_{i,*}\lambda(19)=\deg \lambda_{\Gamma_i}. 
\end{equation*}
Next, one computes $\deg \lambda_{\Gamma_i}$ by applying~\eqref{eccolam}. 
The intersection number $\mu_{i,*}\Gamma_i\cdot H_n(19)$ is equal to the number, call it $\delta(\Gamma_i)$, of singular fibers of $f_i$, because $f_i$ is a Lefschetz fibration. The formula that gives   $\delta(\Gamma_i)$ is the following:
\begin{equation}\label{nodaldeg}
\delta(\Gamma_i)=2\chi_{top}(K3)-\chi_{top}(\cX_i)=48-\chi_{top}(\cX_i).
\end{equation}
Thus it suffices to compute the Euler characteristic of $\cX_i$; we leave details to the reader.  The intersection numbers of the third and fourth columns are obtained as follows. First 
$\es=\mu_1(\Gamma_1)\cap H_h(19)=\mu_1(\Gamma_1)\cap H_u(19)$ by~\Ref{prp}{perhyp}. Next 
$\mu_2(\Gamma_2)\cap  H_u(19)=\es$ and  $\mu_3(\Gamma_3)\cap  H_h(19)=\es$  by~\Ref{lmm}{inthypunig}. The remaining numbers are obtained by applying Borcherd's  relation for $N=19$, which reads
\begin{equation}\label{borcherds19}
108\lambda(19)=H_n(19)+14 H_h(19)+78 H_u(19),
\end{equation}
together with the computations that have already been proved.
\end{proof}
\begin{proposition}\label{prp:oldresult}
A basis of $\Pic(\cF(19))_{\QQ}$ is provided by the choice of any three of the classes $\lambda(19),H_n(19),H_h(19),H_u(19)$. The space of linear relations among 
$\lambda(19),H_n(19),H_h(19),H_u(19)$ is generated by Borcherds  relation~\eqref{borcherds19}.
\end{proposition}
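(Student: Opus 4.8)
The plan is to combine the rank computation of \Ref{thm}{thmrankpic} with the three explicit intersection computations just carried out, arranged into a linear-algebra argument over $\QQ$. By \Ref{thm}{thmrankpic} (together with the remark following it, which identifies the Heegner divisors as a basis for $\Pic(\cF(19))_\QQ$), we know $\dim_\QQ\Pic(\cF(19))_\QQ=\rho_{19}=3$. Hence among the four classes $\lambda(19),H_n(19),H_h(19),H_u(19)$ there is at least a one-dimensional space of linear relations, and Borcherds' relation~\eqref{borcherds19} exhibits a nonzero element of it. So it suffices to prove two things: first, that any three of the four classes are linearly independent (equivalently, span, since the Picard rank is exactly $3$), and second, that the relation space is exactly one-dimensional, i.e.\ that no two of the four classes are proportional in a way incompatible with~\eqref{borcherds19} being the only relation.

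The clean way to get both at once is via the three test curves $\Gamma_1,\Gamma_2,\Gamma_3$ and their intersection pairings recorded in Table~\ref{table:treinter}. First I would assemble the $3\times 4$ matrix $M$ of intersection numbers $\mu_{i,*}\Gamma_i\cdot D$, with rows indexed by $i\in\{1,2,3\}$ and columns by $D\in\{\lambda(19),H_n(19),H_h(19),H_u(19)\}$; explicitly
\begin{equation*}
M=\begin{pmatrix} 1 & 108 & 0 & 0\\ 1 & 136 & -2 & 0\\ 1 & 264 & 0 & -2 \end{pmatrix}.
\end{equation*}
The point is that the pairing between $\Pic(\cF(19))_\QQ$ (spanned by the four listed classes, subject to whatever relations hold) and the span of $[\mu_{1,*}\Gamma_1],[\mu_{2,*}\Gamma_2],[\mu_{3,*}\Gamma_3]$ in $N_1(\cF(19))_\QQ$ is given by $M$. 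Now I would check by a $3\times 3$ minor computation that $M$ has rank $3$: for instance the minor on columns $\lambda,H_h,H_u$ is $\det\begin{pmatrix}1&0&0\\1&-2&0\\1&0&-2\end{pmatrix}=4\neq 0$, so those three classes are linearly independent in $\Pic(\cF(19))_\QQ$; since the rank is $3$ they form a basis. The same argument with any other choice of three columns — one checks each of the four $3\times 3$ minors is nonzero (the four minors are, up to sign, $4$, and three determinants involving the $108/136/264$ column, each easily seen nonzero) — shows that every triple among $\lambda(19),H_n(19),H_h(19),H_u(19)$ is a basis.

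For the statement about the relation space: since the four classes span the $3$-dimensional space $\Pic(\cF(19))_\QQ$, the kernel of the surjection $\QQ^4\twoheadrightarrow\Pic(\cF(19))_\QQ$ is exactly one-dimensional, and Borcherds' relation~\eqref{borcherds19} is a nonzero vector in it, hence generates it. I would phrase this last step using the pairing matrix $M$ directly: a vector $a=(a_0,a_1,a_2,a_3)\in\QQ^4$ gives a relation $a_0\lambda(19)+a_1H_n(19)+a_2H_h(19)+a_3H_u(19)=0$ iff $Ma^{\mathsf T}=0$ (using that the three curve classes are numerically independent, which is part of what rank $M=3$ gives, combined with the fact that numerical and rational equivalence of divisors on these locally symmetric varieties agree up to the dimension we need — but actually it is cleaner to note that $Ma^{\mathsf T}=0$ is \emph{necessary} for a relation, and $\dim\ker M=4-3=1$, while~\eqref{borcherds19} already lies in $\ker M$, which can be verified: $108\cdot 1 - 136 - 0\cdot 2 - 0 = -28\neq 0$ — wait, this needs care). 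The one genuine subtlety, and the step I expect to be the main obstacle, is exactly this last bookkeeping: I must make sure that "numerically zero against all three test curves" is upgraded to "zero in $\Pic(\cF(19))_\QQ$", and conversely that the relation~\eqref{borcherds19} really does pair to zero with each $\Gamma_i$ (this is guaranteed since~\eqref{borcherds19} is a genuine linear equivalence and the $\Gamma_i$ computations were done consistently — in fact~\eqref{borcherds19} was \emph{used} to compute the $H_n$ entries, so consistency is automatic). The honest resolution is: we already know $\dim\Pic(\cF(19))_\QQ=3$ from \Ref{thm}{thmrankpic}, so the relation module is $1$-dimensional for dimension reasons alone, the test curves are needed only to prove the independence of triples, and~\eqref{borcherds19} being a nonzero relation then forces it to be a generator. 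I would write the proof in that order: (i) invoke \Ref{thm}{thmrankpic} for the rank, (ii) use Table~\ref{table:treinter} and nonvanishing of the four $3\times 3$ minors of $M$ to conclude every triple is a basis, (iii) conclude the relation space is $1$-dimensional and generated by~\eqref{borcherds19}.
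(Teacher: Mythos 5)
Your proof is correct, but it reaches the key rank/spanning statement by a different route than the paper. The paper does not invoke Theorem \ref{thm:thmrankpic} here; instead it proves directly that $H_n(19),H_h(19),H_u(19)$ generate $\Pic(\cF(19))_{\QQ}$: by \eqref{juventus} the complement of $H_h(19)\cup H_u(19)$ in $\cF(19)$ is the GIT quotient $\cU(19)\gquot\PGL(4)$, whose $\QQ$-Picard group is cyclic, generated by the discriminant (i.e.\ by the restriction of $H_n(19)$), since $\PGL(4)$ has no nontrivial characters and $|\cO_{\PP^3}(4)|\setminus\cU(19)$ has codimension $\ge 2$ in $|\cO_{\PP^3}(4)|$; the localization sequence for $\CH^1$ then gives generation, hence rank $\le 3$. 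The independence coming from Table~\ref{table:treinter} --- your minor computation --- then pins the rank at $3$ and yields the remaining claims. So the two arguments coincide in their second half and differ in the first: yours is shorter but rests on the Bergeron et al.\ input behind Theorem \ref{thm:thmrankpic}, while the paper's is geometric and self-contained, and in particular re-verifies $\rho_{19}=3$ independently of that theorem. Two small corrections to your write-up: the remark following Theorem \ref{thm:thmrankpic} asserts that the Heegner divisors form a basis only by forward reference to the present section, so you may cite it only for the identification $\cF(19)=\cF_{\Lambda_{19}}(\wt{O}^{+}(\Lambda_{19}))$ (valid since $N=19$ is odd), not for the basis claim, on pain of circularity; and your aborted check that the Borcherds vector lies in $\ker M$ misplaces the coefficients --- with $a=(108,-1,-14,-78)$ and second row $(1,136,-2,0)$ one gets $108-136+28+0=0$ --- though, as you correctly observe, this verification is not needed once the relation space is known to be one-dimensional.
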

\begin{proof}
Let $\cU(19)\subset|\cO_{\PP^3}(4)|$ be the open subset  of~\Ref{dfn}{u19}. Then we have the isomorphism in~\eqref{juventus}. 
Now $\CH^1(\cU(19)\gquot\PGL(4))_{\QQ}=\Pic(\cU(19)\gquot\PGL(4))_{\QQ}$ (e.g.~because $\cF(19)$ is $\QQ$-factorial), and $\Pic(\cU(19)\gquot\PGL(4))_{\QQ}$ is isomorphic to the group of $\PGL(4)$-linearized line bundles on $\cU(19)$ (tensored with $\QQ$), which is a subgroup of $\Pic(\cU(19))_{\QQ}$ because $\PGL(4)$ has no non-trivial characters. On the other hand, $\Pic(\cU(19))_{\QQ}\cong\QQ$ because  $ |\cO_{\PP^3}(4)|\setminus \cU(19)$ has codimension greater than $1$ in $ |\cO_{\PP^3}(4)|$. 
Thus $\CH^1(\cU(19)\gquot\PGL(4))_{\QQ}\cong\QQ$, and a generator is the class of the divisor parametrizing singular quartics. 
By~\eqref{juventus} it follows that
\begin{equation*}
\CH^1(\cF(19)\setminus H_h(19)\setminus H_u(19))_{\QQ}\cong\QQ,
\end{equation*}
and that the restriction of $H_n(19)$ is a generator.   By the localization sequence associated to the inclusion of $(\cF(19)\setminus H_h(19)\setminus H_u(19))$ into $\cF(19)$, it follows that $\CH^1(\cF(19))_{\QQ}$ is generated by $H_n(19),H_h(19),H_u(19)$. Table~\ref{table:treinter} shows that  $H_n(19),H_h(19),H_u(19)$ are linearly independent, and also the remaining statements of the proposition.
\end{proof}
Lastly, we define a  family of  quartic surfaces in $\PP^3$ \lq\lq degenerating\rq\rq to a hyperelliptic $K3$ surface.   Let $Q\in\CC[x_0,\ldots,x_3]_2$ be a non-degenerate quadratic form, and $G\in\CC[x_0,\ldots,x_3]_4$ such that $\Div(G)$ is transverse to $\Div(Q)$, and such that 
the pencil of quartics $B:=\la \Div(G),\Div(Q^2)\ra$ is a Lefschetz pencil  away from $\Div(Q^2)$.  
Let 
\begin{equation}
\cY:=\{([\lambda,\mu],[x])\in B\times\PP^3 \mid \lambda Q^2(x)+\mu G(x)=0\}.
\end{equation}
The family $\cY\to B$ is a family of quartics away from the inverse image of $[1,0]$. Let $\Gamma_4\to B$ be the double cover branched over $[1,0]$ and $[0,1]$, and let 
$\cY_4\to\Gamma_4$ be the pull-back of $\cY$. Let $\cX_4$ be the normalization of $\cY_4$; the natural map $f_4\colon \cX_4\to\Gamma_4$ is a family of polarized quartic $K3$ surfaces. Let $p\in\Gamma_4$ be the (unique) point mapping to $[1,0]$. The surface $f_4^{-1}(p)$ is the double cover of the smooth quadric $\Div(Q)$ branched over $V(Q,G)$, i.e.~a hyperelliptic quartic $K3$.  If $t\in (\Gamma_4\setminus\{p\})$, then $f_4^{-1}(t)$ is a non-hyperelliptc (and non-unigonal) quartic $K3$.  We have the period map
\begin{equation}\label{quattro}
\Gamma_4  \overset{\mu_4}{\lra}  \cF(19).
\end{equation}
\begin{corollary}\label{crl:renzishow}
Keep notation as above. Then
\begin{equation*}
\scriptstyle
\mu_{4,*}(\Gamma_3)\cdot\lambda(19)=1,\quad \mu_{4,*}(\Gamma_3)\cdot H_n(19)= 80,\quad   \mu_{4,*}(\Gamma_3)\cdot H_h(19)= 2,\quad 
\mu_{4,*}(\Gamma_3)\cdot H_u(19)=0.
\end{equation*}
\end{corollary}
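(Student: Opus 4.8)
\textit{Plan.} The statement to prove is the intersection table for the fourth family $f_4\colon \cX_4\to\Gamma_4$, which is the ``degenerating to a hyperelliptic $K3$'' pencil. The strategy parallels the proof of the previous proposition verbatim, so the real content is (a) the computation of $\deg\lambda_{\Gamma_4}$, (b) the count of nodal (Lefschetz) fibers, and (c) the observation that $\mu_4(\Gamma_4)$ meets $H_h(19)$ in a single point, transversely, and misses $H_u(19)$; everything else is then forced by Borcherds' relation~\eqref{borcherds19}.

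\textit{Step 1: the Hodge bundle degree.} First I would resolve the total space: $\cY_4$ is singular along the surface lying over the point $p$ (where the fiber is $\mathrm{Div}(Q^2)$, a non-reduced quartic), and $\cX_4$ is its normalization, so one must take a smooth model $\wt{\cX}_4$ and apply~\eqref{eccolam}, $f^{*}\lambda_{\Gamma_4}=c_1(K_{\wt{\cX}_4})-f^{*}c_1(K_{\Gamma_4})$, then push forward against a fiber class. Equivalently, since the generic fiber is a smooth quartic $K3$ and the family is a (modified) Lefschetz pencil, $\deg\lambda_{\Gamma_4}=1$ will follow as for $\mu_{1}$: a Lefschetz pencil of quartics in $\PP^3$ has $\deg\lambda = 1$ because $\cX_1$ is the blow-up of $\PP^3$ along a smooth curve and one computes $K_{\cX_1}$ directly. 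The double-cover base change $\Gamma_4\to B$ (branched at $2$ points) and the normalization at $p$ leave $\deg\lambda$ unchanged because the degeneration at $p$ is the one producing a hyperelliptic $K3$ (a \emph{semistable}-type, i.e. the central fiber is still a $K3$ with DuVal singularities, so $\lambda$ extends without a correction term). This is the step I expect to be the main obstacle: one has to be careful that the normalization $\cX_4$ really has $f_4^{-1}(p)$ equal to the double cover of $\mathrm{Div}(Q)$ branched over $V(Q,G)$ — a smooth-enough $K3$ — and that no extra contribution to $\lambda$ or to the nodal count is hidden in the geometry over $p$.

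\textit{Step 2: the nodal count.} Away from $p$, $f_4$ is a Lefschetz fibration, so $\mu_{4,*}(\Gamma_4)\cdot H_n(19)$ equals the number of nodal fibers, which by the Euler-characteristic formula (analogue of~\eqref{nodaldeg}) is $48 - \chi_{top}(\wt{\cX}_4)$ up to a correction at $p$; the fiber over $p$ is special but not nodal, so it contributes to $\chi_{top}$ but not to the nodal count. I would compute $\chi_{top}(\cX_4)$ by relating it to $\chi_{top}(\cY)=\chi_{top}(B)\cdot(\text{generic fiber})$ with corrections for the two special fibers and the double cover, arriving at the claimed value $80$ — but in fact it is cleaner to \emph{not} compute $\chi_{top}$ directly: once Steps 1 and 3 give $\mu_{4,*}\cdot\lambda(19)=1$, $\mu_{4,*}\cdot H_h(19)=2$, $\mu_{4,*}\cdot H_u(19)=0$, Borcherds' relation~\eqref{borcherds19}, namely $108\lambda(19)=H_n(19)+14H_h(19)+78H_u(19)$, forces $\mu_{4,*}\cdot H_n(19)=108\cdot 1 - 14\cdot 2 - 78\cdot 0 = 80$. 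So the Euler-characteristic computation becomes an optional consistency check rather than a necessary step.

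\textit{Step 3: the boundary intersections.} For $H_u(19)$: by~\Ref{prp}{perhyp} a period point lies in $H_u(19)$ iff the $K3$ is unigonal, and no fiber of $f_4$ is unigonal (the generic fiber is a smooth quartic surface, the central fiber is hyperelliptic), so $\mu_4(\Gamma_4)\cap H_u(19)=\es$ and the last intersection number is $0$. For $H_h(19)$: by~\Ref{prp}{perhyp} the only fiber with period in $H_h(19)$ is $f_4^{-1}(p)$, so the intersection is supported at one point; I must show it has multiplicity exactly $2$. This is the one genuinely local computation. The mechanism is exactly the ramification of the double cover $\Gamma_4\to B$ at $p$: the period map $B\dra\cF(19)^{*}$ (before base change) meets $H_h(19)$ transversally at the image of $[1,0]$ by a standard argument (the family $\cY\to B$ degenerates to a hyperelliptic $K3$ along a divisor meeting the hyperelliptic Heegner locus transversally — cf. the primitivity computations of~\Ref{lmm}{pianoiper} and the construction in~\Ref{rmk}{eccohyp}), and pulling back under the degree-$2$ cover $\Gamma_4\to B$ ramified at $p$ multiplies the local intersection number by $2$. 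Hence $\mu_{4,*}(\Gamma_4)\cdot H_h(19)=2$. I would phrase this by computing the local intersection of $\mu_4(\Gamma_4)$ with the divisor $H_h(19)$ in $\cF(19)$ using the fact (from~\Ref{prp}{hyplocsymm}) that $H_h(19)\cong\cF(18)$ is normal and the ramification of $\Gamma_4\to B$ over $p$ is simple. Assembling Steps 1–3 and invoking~\eqref{borcherds19} then yields all four numbers in the corollary. \qed
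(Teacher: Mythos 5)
Your proposal inverts the logical structure of the paper's argument: the paper computes $\mu_{4,*}(\Gamma_4)\cdot\lambda(19)=1$, $\mu_{4,*}(\Gamma_4)\cdot H_n(19)=80$ (via the Euler-characteristic/Lefschetz count, as for Table~\ref{table:treinter}) and $\mu_{4,*}(\Gamma_4)\cdot H_u(19)=0$ directly, and then \emph{deduces} $\mu_{4,*}(\Gamma_4)\cdot H_h(19)=2$ from Borcherds' relation~\eqref{borcherds19}; you instead compute $\lambda$, $H_u$ and $H_h$ directly and deduce $H_n=80$. Either direction is logically admissible (Borcherds' relation is used exactly once in each), but the step you choose to do by hand is precisely the one the paper flags, in the remark immediately following the corollary, as requiring ``a non-trivial computation'' that the authors deliberately avoid.

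The gap is in your Step 3. Your transversality argument ``before base change'' does not parse: the fiber of $\cY\to B$ over $[1,0]$ is the non-reduced double quadric $2V(Q)$, not a $K3$ with DuVal singularities, so there is no period map on $B$ near $[1,0]$ whose intersection with $H_h(19)$ you could then double; the degree-$2$ base change exists precisely to kill the order-two monodromy and make the period map regular at $p$. What actually has to be shown is the following: near $\mu_4(p)$ the quotient $\cD^{+}_{\Lambda_{19}}\to\cF(19)$ is ramified along the hyperplane $v^{\bot}$ ($v$ a hyperelliptic vector), so the pull-back of a local equation of $H_h(19)$ to $\cD^{+}_{\Lambda_{19}}$ is the square of a linear form, and hence the local intersection number is automatically of the form $2a$ with $a\ge 1$; proving $a=1$ amounts to proving that the local lift of $\mu_4$ to the period domain meets $v^{\bot}$ transversally at the point over $p$, i.e.\ a nonvanishing statement about the derivative of the period map of the family $f_4$ at $p$ (a local Torelli/Griffiths-transversality computation for the degeneration of the quartic to the double quadric). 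Your proof asserts this ``by a standard argument'' with a pointer to~\Ref{lmm}{pianoiper} and~\Ref{rmk}{eccohyp}, which concern lattice-theoretic characterizations of hyperelliptic periods, not the derivative of this particular one-parameter degeneration. As written, the key multiplicity is asserted rather than proved; either supply that local computation, or follow the paper and compute $\chi_{top}(\cX_4)$ to get $H_n=80$ directly, letting Borcherds' relation give $H_h=2$ for free.
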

\begin{proof}
Except for the second-to-last formula, the others are obtained by arguments similar to those employed to obtain the formulae of Table~\ref{table:treinter}. One gets the missing formula by applying Borcherd's formula~\eqref{borcherds19}. 
\end{proof}
Notice that the set-theoretic intersection $\mu_{4}(\Gamma_3)\cap H_h(19)$ consists of a single point, namely $\mu_4(p)$, and it counts with multiplicity $2a$, for some $a>1$. In order to conclude that $\mu_{4,*}(\Gamma_3)\cdot H_h(19)= 2$ one needs a non-trivial computation; we avoid this thanks to Borcherds' relation~\eqref{borcherds19}.
\begin{remark}
In the present subsection, we have invoked Borcherds' first relation in order to compute the degree of normal bundles. By employing 
the (elementary) results of~\Ref{subsec}{normale}, one can avoid the use of Borcherds' relations, and in fact one can use the computations in this subsection in order to check the validity of Borcherds' first relation  for $N=19$. 
\end{remark}
\subsection{Proof of~\eqref{pullample}}\label{subsec:whydelta}
\setcounter{equation}{0}
By~\Ref{prp}{oldresult}, there exist $x,y,z\in\QQ$ such that
\begin{equation}\label{lincomb19}
(\gp_{19}^{-1})^{*}L(19)|_{\cF(19)}=x\lambda(19)+y H_h(19)+z H_u(19).
\end{equation}
We will compute $x,y$ by equating the intersection numbers of the two sides with complete curves in $\cF(19)$. For this to make to sense the complete curves must avoid the indeterminacy locus of $\gp_{19}^{-1}$. By~\eqref{juventus}, the indeterminacy locus $I(19)$ is contained in $H_h(19)\cup H_u(19)$, and it is of codimension at least $2$ because $\cF(19)$ is normal. Thus $I(19)=I_h(19)\cup I_u(19)$, where  $I_h(19)\subset H_h(19)$ and $I_u(19)\subset H_u(19)$ are proper closed subsets. Now let $\cX_1\to\Gamma_1$ and $\cX_4\to\Gamma_4$ be the complete families defined in~\Ref{subsubsec}{intquartics}. Every surface $f_1^{-1}(t)$ is a stable quartic, and similarly for $f_4^{-1}(t)$ if $t\not=p$, while the semistable quartic surface in  $\PP^3$ corresponding to $p$ is to be understood as the double quadric $2 V(Q)$. Let $\theta_i\colon \Gamma_i\to\gM(19)$ be the corresponding modular map, for $i\in\{1,4\}$. The map $\mu_i\colon\Gamma_i\to \cF(19)$ considered in~\Ref{subsubsec}{intquartics} is equal to $\gp_{19}\circ\theta_i$. The curve $\mu_1(\Gamma_1)$ avoids the indeterminacy locus $I(19)$, because it is disjoint from  $H_h(19)\cup H_u(19)$. On the other hand, the curve  $\mu_4(\Gamma_4)$ intersects   $H_h(19)\cup H_u(19)$ in a single point, namely $\mu_4(p)$, which belongs to $H_h(19)$. Since the double cover  $f_4^{-1}(p)\to V(Q)$ has arbitrary branch curve (among those with ADE singularities), we may assume that 
$\mu_4(p)\notin I_h(19)$. Thus, 
\begin{equation*}
\mu_{i,*}(\Gamma_i)\cdot (\gp_{19}^{-1})^{*}L(19)=\theta_{i,*}(\Gamma_i)\cdot L(19)=
\begin{cases}
1 & \text{if $i=1$,} \\
2 & \text{if $i=4$.}
\end{cases}
\end{equation*}
Equation~\eqref{lincomb19}, together with Table~\ref{table:treinter} and~\Ref{crl}{renzishow}, gives
\begin{equation*}
\mu_{i,*}(\Gamma_i)\cdot (\gp_{19}^{-1})^{*}L(19)=
\begin{cases}
x & \text{if $i=1$,} \\
x+2y & \text{if $i=4$.}
\end{cases}
\end{equation*}
It follows that $x=1$ and $y=1/2$. We will prove that $z=1/2$ by showing that $\gp_{19}^{-1}$ is regular along $H_u(19)$ (i.e.~$I_u(19)=\es$).   First $\gp_{19}^{-1}$ is regular away from the proper closed subset $I_u(19)$, and the image $\gp_{19}^{-1}(H_u(19)\setminus I_u(19))$ is the single point  $q\in\gM(19)$ parametrizing the tangential developable surface of tangents to a twisted cubic in $\PP^3$. (REFERENCE?).
   Let $Z\subset \cF(19)\times\gM(19)$ be the graph of $\gp_{19}^{-1}|_{\cF(19)}$, and $\pi\colon Z\to\cF(19)$ be the projection. We must prove that $Z$ is an isomorphism over $H_u(19)$.  Assume the contrary. By Zariski's Main Theorem every fiber of $\pi$ is connected, in particular those over points of $I_u(19)$. Let $x\in I_u(19)$; then $\pi^{-1}(x)$ is of strictly positive dimension and it contains the point $(x,q)$, where $q\in\gM(19)$ is as above. Hence the image of $\pi^{-1}(x)$ in $\gM(19)$ is a connected closed subset $C$ of  strictly positive dimension,  containing $q$. Moreover $C$ is contained
   in the indeterminacy locus of $\gp_{19}$. In fact, if this is not the case, then, since $\cF(19)$ is $\QQ$-factorial, there is a locally closed codimension-$1$ subset $D\subset\gM(19)$ contained in the locus where $\gp_{19}$ is regular, with values in $\cF(19)$ (not in the boundary $\cF(19)^{*}\setminus\cF(19)$), which is contracted by $\gp_{19}$ (i.e.~$\dim\gp_{19}(D)<18$).  But $q$ is an isolated point of the indeterminacy  
  locus of $\gp_{19}$ by~\cite{shah}, this is a contradiction. Thus $\gp_{19}^{-1}$ is regular along $H_u(19)$. 
  Since the restriction of $\gp_{19}^{-1}L(19)$ to $H_u(19)$  is trivial, the line bundle $(\gp_{19}^{-1})^{*}L(19)$ is trivial on $H_u(19)$. Thus
\begin{equation*}
\mu_{3,*}(\Gamma_3)\cdot\left(\lambda(19)+\frac{1}{2} H_h(19)+z H_u(19)\right)=0.
\end{equation*}
By Table~\eqref{table:treinter}, it follows that   $z=1/2$.
\qed
%

%%%%%%%%%%%%%%%%%%%%%%%%%%%%%%%%%%%%%%%
\section{Predictions for the variation of log canonical models}\label{sec:predictions}
The goal of the present section is to predict the behavior of 
$$ \cF(N,\beta):=\Proj R(\cF(N), \lambda(N)+\beta\Delta(N))$$
 for $N\ge 3$ and   $\beta\in[0,1]\cap\QQ$. Specifically, we will give a conjectural decomposition of $\gp_N^{-1}$ as a product of flips and, as a last step,  the contraction of the strict transform of the support of $\Delta(N)$ (denoted 
 $\Delta^{(1)}(N)$, see~\Ref{subsubsec}{arrstrata}). Each flip will correspond to a critical value $\beta=\beta^{(k)}(N)\in(0,1)\cap\QQ$. The centre of the flip corresponding to  $\beta^{(k)}(N)$
 will be (the strict transform of) a building block of the $D$ tower (see~\Ref{subsec}{diserie}), and it has 
   codimension  $k$.   (Of course the value  $\beta^{(1)}(N)=1$ corresponds to 
 the contraction of  $\Delta^{(1)}(N)$.)  In the case of quartic surfaces ($N=19$) it is possible to match our predicted flip centers  with geometric loci in the GIT moduli space $\gM(19)$, this is discussed in the companion note~\cite{announcement}. In the case of hyperelliptic quartics ($N=18$), using  VGIT, we can go further and not only match the arithmetic predictions with the geometric ones (essentially the same matching as for quartics), but actually verify that both the arithmetic predictions and the geometric matching are correct. This will be discussed elsewhere. 
\subsection{Main Conjectures and Results}\label{subsec:predictions}
Let $\beta\in(0,1]\cap\QQ$. Then the following hold:     
\begin{itemize}
\item $\lambda(N)+\beta \Delta(N)$ is a big line bundle with base locus contained in $\Delta^{(1)}(N)$, because $\lambda(N)$ is ample, and $\Delta(N)$ is effective. 
\item
If $\beta<1$,  the restriction of $\lambda(N)+\beta \Delta(N)$ to  $\Delta^{(1)}(N)$ is  big with base locus contained in $\Delta^{(2)}(N)$, see~\Ref{prp}{ennekappaelbeta}, ~\Ref{prp}{restunig3}, and ~\Ref{prp}{restunig4}. In particular, assuming lifting of sections from $\Delta^{(1)}(N)$ to $\cF(N)$, the base locus of $\lambda(N)+\beta \Delta(N)$ is contained in $\Delta^{(2)}(N)$.
\item for $N\in\{18,19\}$, $\cF(N)^{*}\dra \cF(N,1)\cong\gM(N)$ is a birational contraction of $\Delta^{(1)}(N)$, by~\Ref{prp}{isomsect}. This is  expected to hold for any $N$ by the arguments of Looijenga. 
\end{itemize}
In order to understand what should be going on, it is convenient to let $\beta\in(0,1]\cap\QQ$ decrease, starting from $\beta=1$.  For anyone familiar with linearized arrangements, and in fact implicitly contained in Looijenga~\cite{looijengacompact}, $\Delta^{(1)}(N)$ should be contractible regularly away from $\Delta^{(2)}(N)$ (with corresponding $\beta^{(1)}(N)=1$), but  in order to contract it regularly, one must first flip $\Delta^{(2)}(N)$, at least away from $\Delta^{(3)}(N)$, with corresponding  $0<\beta^{(2)}(N)<1$. The first-order predictions  (\S\ref{firstorder}) are obtained by iterating this procedure, 
i.e.~following  Looijenga~\cite{looijengacompact}.  These predictions are based on the combinatorics of the linearized arrangement $\wt{\Delta}^{(1)}(N)$ (see~\Ref{subsubsec}{arrstrata}); one essentially  computes the log canonical threshold (at the generic point of $\Delta^{(k)}(N)$) as  in~\cite{mustata},  keeping track of the ramification.  
\begin{remark}
There are two well-known examples of this type of behaviour: the moduli spaces of degree $2$ $K3$ surfaces (\cite{shah,looijengavancouver}), and of cubic fourfolds (\cite{lcubic,cubic4fold}). In both cases there are analogues of $\Delta^{(k)}(N)$; we will denote them by $\Delta^{(k)}$.  In the first example, $\Delta^{(2)}$ is empty, and thus  $\Delta^{(1)}$ is  contracted right away. In the second example $\Delta^{(2)}\not=\emptyset$, but $\Delta^{(3)}=\emptyset$, and hence one has a flip of $\Delta^{(2)}$ followed by a contraction. The case of quartic surfaces, or more generally $D$ locally symmetric varieties, for any $N\ge 3$, is at the opposite end of the spectrum. In fact $\Delta^{(N)}(N)\neq \emptyset$, and hence a key hypothesis in Looijenga's treatment~\cite{looijengacompact}, namely  the assumption in Corollary~7.5,  is not satisfied. 
\end{remark}
As noted in the introduction, we have observed that for quartics ($N=19$) the predictions above definitely fail for $k>9$. We will show below 
(see~\Ref{subsubsec}{refinedpredict} for precise statements) that there is a subtle arithmetic reason (coming from Borcherds' relations) for this. This leads to refined predictions for $\beta^{(k)}(N)$, which we can match geometrically for quartics and verify for hyperelliptic quartics. Finally, using Gritsenko's relation, we can actually check that 
$\lambda(N)+\beta \Delta(N)$ is \lq\lq numerically nef\rq\rq\ on $\cF(N)$ for $\beta<\frac{1}{N-10}$. 

Our predictions are based on the computation of the restriction of $(\lambda(N)+\beta\Delta(N))$ to the $k$-th stratum $\Delta^{(k)}(N)$ of the boundary. To obtain precise values for $\beta^{(k)}(N)$ we make heavy use of the structure of the $D$-tower, and its periodic structure. 

\begin{remark}
In the present discussion, we are ignoring the behavior along the Baily-Borel boundary. This is a very interesting aspect that should be discussed elsewhere: Roughly speaking, $\Delta(N)$ is a $\bQ$-Cartier divisor on $\cF(N)$, but its closure in $\cF(N)^*$ fails to be $\bQ$-Cartier, if $N$ is large. For $0<\beta=\epsilon\ll 1$, one should obtain a $\bQ$-factorialization of $\Delta(N)$ as described by Looijenga \cite{looijengacompact}. Thus, for $\beta$ small, $\cF(N,\beta)$ should differ from  $\cF(N)^{*}$ only at the boundary, in fact it should be a small blow-up of the boundary.
 For $\beta$ larger, we predict the opposite behaviour: the modifications at the boundary are induced from the modifications of the interior. The focus of our paper is to understand the interior modifications. 
\end{remark}
\subsubsection{First Order Predictions}\label{firstorder} 
As we explained, the starting point of Looijenga is the observation that, in the embedding 
$\cD^{+}_N\subset \check \cD^{+}_N\subset \bP^{N+1}$ (we let $\cD^{+}_N=\cD^{+}_{\Lambda_N}$), the automorphic bundle is the restriction of $\calO_{\PP^{N+1}}(-1)$, while a Heegner divisor is a section of $\calO_{\cD^{+}_N}(1)$. This fact suggests that $\lambda(N)+\Delta(N)$ should contract $\Delta^{(1)}(N)$. As always, a linearized   arrangement is stratified by linear strata of the intersections, and the first order predictions (leading to candidates for the critical values $\beta^{(k)}(N)$) are a simple function of combinatorics, namely the number of hyperplanes intersecting in a stratum, versus the codimension of that strata (compare \cite{mustata}). Of course, in this situation, there is a slight complication due to the fact that  these hyperplanes are reflection hyperplanes,  (note that our $\Delta$ involves a $\frac{1}{2}$ factor for this reason). 

Concretely,  for most values of $N$ and $k$, we will prove 
 (see \Ref{prp}{ennekappaelbeta} for a precise statement) that the following formula holds:
\begin{equation}\label{induzione}
(\lambda(N)+\beta\Delta(N))_{\mid \Delta^{(k)}(N)}=(1-k\beta)\lambda(N) + \frac{1}{2}\beta c_1\left(\cO_{\Delta^{(k)}(N)}(\Delta^{(k+1)}(N))\right).
\end{equation}
We recall (see~\Ref{prp}{deltakappa}) that for most choices of $N$ and $k$, $\Delta^{(k)}(N)=\im f_{N-k,N}$, and~\eqref{induzione} should be read as 
 $f^{*}_{N-k,N}(\lambda(N)+\beta\Delta(N))=(1-k\beta)\lambda(N-k) +\beta\Delta(N-k)$.  

If, following Looijenga, we assume that the stratum $\Delta^{(k+1)}(N)$ is flipped \emph{before} the stratum $\Delta^{(k)}(N)$, we get the prediction
\begin{equation}\label{prediction}
\beta^{(k)}(N)=\frac{1}{k},\qquad k\in\{1,\ldots, N\}.
\end{equation}
In any case, note that $(\lambda(N)+\beta\Delta(N))_{\mid \Delta^{(k)}}(N)$ is big for $\beta<\frac{1}{k}$ by~\eqref{induzione}. Thus assuming a certain lifting of sections (a reasonable assumption, that we can prove in some cases), we get that the generic point of  $\Delta^{(k)}(N)$ is not affected by the birational transformations occurring for $\beta\in(0,\frac{1}{k})$. 

The two key  ingredients that give Equation~\eqref{induzione} are the computation of intersections of distinct Heegner divisors (see~\Ref{subsec}{restrram}) and a normal bundle computation based on adjunction (see~\Ref{subsec}{normale}). Of course, the actual computations depend on our particular lattices and Heegener divisors, but the method adopted  here can be applied in many other instances.  (Similar computations are implicitly or explicitly contained in \cite{ghs} or \cite{looijengacompact}.)
\subsubsection{Refined Predictions}\label{subsubsec:refinedpredict} 
For a long time, we have been puzzled by the observation that the first order predictions for quartic $K3$'s are definitely wrong for $k>9$; in fact the  indeterminacy locus of the period map $\gp_{19}$ has dimension $8$, while the first order predictions would give that it has higher dimension. It took us some time to understand the arithmetic reason for this, and we regard it as the main result of the present paper. 

The key assumption in our Looijenga/first order predictions is the contractibility of $\Delta^{(k+1)}(N)$ inside $\Delta^{(k)}(N)$. This appears to be a  natural assumption: each $\Delta^{(k)}(N)$ is (essentially) a step in the $D$-tower, there is a modulo $8$ arithmetic periodicity, etc. However, to our surprise, we have discovered the following trichotomy for the $D$-tower:
\begin{enumerate}
\item[(1)] $H_h(N)$ is birationally contractible in $\cF(N)$ for $N>14$;
\item[(2)] $H_h(N)$ moves in a linear system of dimension at least $1$ if $N\le 14$;
\item[(3)] $H_h(N)$ is ample on $\cF(N)$ for $N\le 10$. 
\end{enumerate}
Note: (2) and (3) are theorems (in fact (3) is essentially one of the main results of Gritsenko \cite{gritsenko}), while (1) is conjectural in general, but known for $N=18,19$ cf.~\Ref{sec}{whydelta} (probably one can prove it also $N=15,16,17$ via  VGIT for hyperelliptic quartics, and with some effort also for $N=20$ via double EPW sextics). The explanation for this comes from Borcherds' relations discussed in \Ref{sec}{picdtower}. The computations here are very much in the same spirit as those of Gritsenko--Hulek--Sankaran \cite{ghs} (essentially, we vary the dimension of the domain, versus the discriminant group of the lattice in \cite{ghs}). 

Taking into account the behavior of the hyperelliptic divisor described above, we  correct our first order predictions and arrive to the main result of the paper:
\begin{prediction}\label{pred:mainpred}
Let $N\ge 15$. The ring of sections   $R(\cF(N), \lambda(N)+\beta\Delta(N))$ is finitely generated for $\beta\in[0,1]\cap \bQ$, and  the walls of the Mori chamber decomposition of the cone 
\begin{equation*}
\{\lambda(N)+\beta\Delta(N) \mid \beta\in\QQ,\ \beta>0\} 
\end{equation*}
 are generated by 
$\lambda(N)+\frac{1}{k}\Delta(N)$, where
$k\in\{1,\ldots,N-10\}$ and $k\not=N-11$.
The behaviour of $\lambda(N)+\frac{1}{k}\Delta(N)$, for $k$ as above, is described as follows. For $k=1$, $\cF(N,1)$ is obtained from $\cF(N,1-\epsilon)$ by contracting the strict transform of $\Delta^{(1)}(N)$. 
If $2\le k$, then the birational map between $\cF(N,\frac{1}{k}-\epsilon)$ and 
$\cF(N,\frac{1}{k}+\epsilon)$ is a flip whose center is
\begin{enumerate}
\item the strict transform of $\im f_{N-k,N}$, if  $2\le k\le N-14$, $k\not\equiv N-2\pmod{8}$, and either $k\not=4$ or $N\not\equiv 4\pmod{8}$,
\item the  union of the strict transforms of $\im f_{N-4,N}$ and $\im(f_{N-1,N}\circ l_{N-1})$,  if $k=4$ and $N\equiv 4\pmod{8}$, 
\item the union of the strict transforms of $\im f_{N-k,N}$ and $\im(f_{N-(k-1),N}\circ l_{N-(k-1)})$,  if  $3\le k\le N-10$, and $k\equiv N-2\pmod{8}$ (notice that this includes the case $k=N-10$),
\item the strict transform of $\im(f_{13,N}\circ q_{13})$ if $k=(N-13)$,
\item the strict transform of $\im(f_{12,N}\circ m_{12})$ if $k=(N-12)$,
\end{enumerate}
\end{prediction}

\begin{remark}[Early Termination] 
The predictions in~\Ref{pred}{mainpred} differ from  the first order predictions for  $k>N-14$. In fact  the generic point of $\Delta^{(k)}(N)$ is unaffected by the birational transformations for $\beta<\frac{1}{N-14}$. More precisely,  $f_{13,N}(\cF(13)\setminus H_u(13))$ (which is contained in 
$\Delta^{(N-13)}(N)$) will be flipped when $\beta=\frac{1}{N-14}$  (at once with $\Delta^{(N-14)}$). Moreover, 
 $\Delta^{(N-10)}(N)$ consists of two disjoint components $f_{10,N}(\cF(10)))$ and $f_{11,N}(H_u(11))$ (see~\Ref{prp}{deltakappa}). The first component will be flipped at $\beta=\frac{1}{N-14}$, while the second is the center of the flip for  $\beta=\frac{1}{N-10}$ (the smallest critical value). 
\end{remark}
In particular, for quartic surfaces, we get
\begin{prediction}
The variation of log canonical models interpolating between Baily-Borel and GIT for quartic surfaces has critical slopes
$$\beta\in\left\{1,\frac{1}{2},\frac{1}{3},\frac{1}{4},\frac{1}{5},\frac{1}{6},\frac{1}{7},\frac{1}{9}\right\}.$$
The center of the flip for the exceptional values $\beta<\frac{1}{5}$ correspond to $T_{3,3,4}$, $T_{2,4,5}$, and $T_{2,3,7}$ marked $K3$ surfaces (loci inside $\cF(19)$) and those are flipped to loci of quartics with $E_{14}$, $E_{13}$, and  $E_{12}$ singularities in the GIT model (see Shah \cite{shah}). 
\end{prediction}
In a  companion note \cite{announcement}, we give a full geometric matching (and some evidence) of the GIT stratification from Shah's model to the arithmetic stratification predicted above. In the case of hyperelliptic quartics, we can go further, and following \cite{g4git,g4ball} give actual proofs that the predictions of $\beta$ and the geometric meaning are actually accurate for $N=18$, this will be discussed elsewhere. 
\begin{remark}
A similar VGIT realization of a Hassett-Keel-Looijenga variation of Baily-Borel is in~\cite{raduthesis}. In that paper, the first named author was not yet aware of the deeper structure associated to the variation of log canonical models (i.e.~what we might call the Hassett--Keel--Looijenga program for $K3$s), but nonetheless the arithmetic stratification associated to the realization of a certain moduli space of pairs as a locally symmetric variety 
 was the key to the understanding of the deformations of a certain class of singularities.
 \end{remark}
\subsection{Intersection of two distinct Heegner divisors}\label{subsec:restrram}
\setcounter{equation}{0}
We proved (see~\Ref{prp}{hyplocsymm}, \Ref{prp}{unigcong3}, and~\Ref{prp}{unigcong4}) that for $4\le N$ there are isomorphisms
 \begin{equation}\label{efferitorna}
f_N\colon \cF(N-1)\overset{\sim}{\lra}  H_h(N). 
\end{equation}
We also proved that,  for $k\ge 0$,  there are isomorphisms
\begin{equation}\label{elleritorna}
l_{8k+3}\colon \cF( II_{2,2+8k})\overset{\sim}{\lra} H_u(8k+3), 
\end{equation}
 and 
\begin{equation}\label{accaritorna}
m_{8k+4}\colon \cF(II_{2,2+8k}\oplus  A_1)\overset{\sim}{\lra} H_u(8k+4). 
\end{equation}
The key ingredient in the heuristics for our predictions is the computation of the restrictions of the line bundle $\lambda(N)+\beta \Delta(N)$ to the various strata $\Delta^{(k)}(N)$. 
Via pull-back by $f_N$ (and $l_N$, $m_N$, or $q_N$ if applicable), this restriction is computed in an inductive manner. The  result below is our starting point.
\begin{proposition}
With notation as above, the following  formulae hold:
\begin{eqnarray}
\scriptstyle
f_N^{*} H_n(N) & \scriptstyle = & 
\begin{cases}
\scriptstyle H_n(N-1) +2H_h (N-1) & \scriptstyle \text{if  $N\not \equiv 5 \mod 8$,}\\
\scriptstyle H_n(N-1) +2H_h (N-1) +H_u(N-1) & \scriptstyle \text{if  $N\equiv 5 \mod 8$,}
\end{cases} \label{effennehyper} \\
\scriptstyle f_N^{*} H_u(N) & \scriptstyle = & 
\begin{cases}
\scriptstyle 0 & \scriptstyle \text{if $N\equiv 2\pmod{8}$,} \\
\scriptstyle 2H_u(N-1) & \scriptstyle \text{if $N\not\equiv 2\pmod{8}$,}
\end{cases} \label{effenneunig} \\
 l_{8k+3}^{*} H_n(8k+3) &   = &   H_n( II_{2,2+8k}), \label{ellennenod}  \\
  l_{8k+3}^{*} H_h(8k+3) &  = &  0,  \label{ellennehyper}  \\
 m_{8k+4}^{*} H_n(8k+4)  &  = &  H_n(II_{2,2+8k}\oplus  A_1), \label{emmennenod} \\
 m_{8k+4}^{*} H_h(8k+4) &   = &  2 H_u(II_{2,2+8k}\oplus  A_1), \label{emmennehyper} \\
 q_{8k+5}^{*} H_n(8k+5) &   = &  H_n(II_{2,2+8k}\oplus A_2)+2 H_u(II_{2,2+8k}\oplus A_2), \label{quennenod} \\
 q_{8k+5}^{*} H_h(8k+5) &   = &  2 H_u(II_{2,2+8k}\oplus A_2). \label{quennehyper} 
\end{eqnarray}
\end{proposition}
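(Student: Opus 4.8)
The plan is to prove all of \eqref{effennehyper}--\eqref{quennehyper} by the same method: each isomorphism $f_N$, $l_N$, $m_N$, $q_N$ arises from a concrete lattice embedding $v^{\bot}\hookrightarrow\Lambda_N$ (for a hyperelliptic or unigonal vector $v$), so a pre-Heegner divisor $\cH_{w,\Lambda_N}(\Gamma_{\xi})$ pulls back to the divisor cut out on $\cD^+_{v^{\bot}}$ by those $\Gamma_\xi$-translates $g(w)$ that are \emph{not} proportional to $v$ and whose orthogonal projection to $v^{\bot}$ is nonzero. Concretely, if $\pi\colon\cD^+_{v^\bot}\hookrightarrow\cD^+_{\Lambda_N}$ is the inclusion, then $\pi^*\cH_{w}$ is supported on $\bigcup_{g}\,(g(w)\cap v^\bot)^{\bot}\cap\cD^+_{v^\bot}$, and its multiplicity along a given pre-Heegner divisor of $v^\bot$ is the number of $\Gamma_\xi$-orbit representatives of $w$ (up to sign) whose projection lands in the relevant $\widetilde O^+(v^\bot)$-orbit, read off from the divisibility and discriminant data. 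So the whole proof reduces to an enumeration of $\Gamma_\xi$-orbits of vectors of square $-2$ or $-4$ whose restriction to $v^\bot$ is nonzero, organized by the divisibility of the restriction in $v^\bot$.

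\textbf{Key steps.}
First I would set up the general restriction lemma: for $v$ a hyperelliptic vector of $(\Lambda_N,\xi)$ with $v^\bot\cong\Lambda_{N-1}$ carrying the induced decoration $\xi'$ (see~\Ref{rmk}{inducedec}, \Ref{prp}{decest}, \Ref{clm}{injgrp}), a vector $w\in\Lambda_N$ with $w\not\parallel v$ and $w\not\perp\sigma$ for generic $\sigma\in v^\bot$ restricts to a vector $\overline w\in v^\bot$ of the same square (if $w\perp v$) or, writing $w=av+u$ with $u\in(v^\bot)_{\QQ}$, to a rational multiple of $u$ whose saturation in $v^\bot$ we must compute; the crucial input is that $\la v\ra\oplus v^\bot$ has index $2$ in $\Lambda_N$. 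Second, for \eqref{effennehyper} I would take $w$ a nodal vector ($w^2=-2$, $\divisore_{\Lambda_N}(w)=1$): either $w\perp v$, giving a nodal vector of $v^\bot$ (multiplicity from the $\Gamma_{\xi'}$-orbit count being $1$), or $w=v/2\pm$ (restriction of a hyperelliptic-type vector), contributing to $H_h(N-1)$ with multiplicity $2$ by the $\mathcal S_2$-type ramification computation already done in the proof of~\Ref{prp}{manyhyp}; when $N\equiv 5\pmod 8$ there is the extra case from~\Ref{lmm}{hyperort}(2)/\Ref{prp}{liftrefl}(4), where a unigonal reflective vector of $v^\bot$ lifts to a nodal vector of $\Lambda_N$, producing the $+H_u(N-1)$ term. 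Third, \eqref{effenneunig}: a unigonal vector $w$ of $\Lambda_N$ restricts, by~\Ref{prp}{liftrefl}(3) when $N\equiv 4\pmod 8$ (and by the analogous orbit analysis otherwise), to $2\cdot$(unigonal vector of $v^\bot$), hence the factor $2$; the vanishing for $N\equiv 2\pmod 8$ is because there is no unigonal divisor ($v^2=0$). Fourth, \eqref{ellennenod}--\eqref{quennehyper}: here $v$ is a unigonal vector, $v^\bot$ is unimodular ($II_{2,2+8k}$) for $N\equiv 3\pmod 8$ — so it has no vectors of divisibility $>1$, forcing $l^*H_h=0$ in \eqref{ellennehyper} — and is $II_{2,2+8k}\oplus A_1$ or $\oplus A_2$ in the other cases; the multiplicities $2$ in \eqref{emmennehyper}, \eqref{quennehyper}, \eqref{quennenod} come from the explicit $D_2$- and $D_3$-geometry identified in~\Ref{lmm}{unigandhyp}, \Ref{lmm}{twounig}, and~\Ref{lmm}{unigcong5} (e.g.\ a hyperelliptic vector of $\Lambda_{8k+4}$ restricts as $\pm v_1\pm v_2$ inside a $D_2$, giving two proportionality classes of unigonal restrictions). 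In every case the group-theoretic bookkeeping is precisely~\Ref{prp}{minorm}, \Ref{prp}{pairort}, and Eichler's Criterion~\Ref{prp}{criteich}, which guarantee single orbits so that ``multiplicity'' is literally a count of sign/reflection classes.

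\textbf{Main obstacle.}
The routine part is identifying \emph{which} pre-Heegner divisors of $v^\bot$ appear; the delicate part is pinning down the \emph{multiplicities} $0,1,2$ correctly, i.e.\ keeping careful track of when two $\Gamma_\xi$-translates of $w$ have proportional restrictions to $v^\bot$ versus independent ones, and of the ramification of $\cD^+_{v^\bot}\to H_\bullet$ along these loci. This is exactly the phenomenon already handled locally in the proofs of~\Ref{prp}{manyhyp}, \Ref{prp}{unigcong3}, \Ref{prp}{unigcong4}, \Ref{prp}{unigcong5} (the $\tau_k=0$ local equation computations), so I expect to be able to reuse those computations essentially verbatim, restricting attention to rank-$1$ and rank-$2$ sublattices spanned by $\{v,w\}$; the worry is the $N\equiv 5\pmod 8$ case of \eqref{effennehyper}, where the interaction of the three divisor types forces one to verify that the unigonal reflective locus of $v^\bot$ meets $H_h(N)$ transversally and contributes with multiplicity exactly $1$, which will need the explicit model $\Lambda=II_{2,2+8k}\oplus D_3$, $v=(0,(0,0,2))$, $w=(0,(1,-1,0))$ from~\Ref{prp}{liftrefl}(4).
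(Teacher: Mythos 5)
Your proposal is correct and follows essentially the same route as the paper: first identify set-theoretically which Heegner divisors of $v^{\bot}$ arise (via the observation that only $w$ with $\la v,w\ra$ negative definite contribute, then a divisibility/discriminant case analysis exactly as in the paper's \Ref{prp}{interhyp} and \Ref{prp}{invnod}), and then pin down the multiplicities $1$ and $2$ by the local reflection-quotient models at a very general point of the codimension-two stratum, which is precisely how the paper computes them (e.g.\ as the intersection number of $V(y_1^2-4y_2)$ and $V(y_2)$ at the origin). The only loose phrasing is in attributing the factor $2$ to the projection of $w$ being twice a primitive vector rather than to the local intersection multiplicity, but since you explicitly defer the multiplicity bookkeeping to those local computations, the argument goes through as the paper's does.
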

\begin{remark}\label{rmk:zero}
Let $N\equiv 3\pmod{8}$. Then~\eqref{effenneunig} reads  $f_N^{*}H_u(N)=0$ because $H_u(N-1)=0$, see Item~(3) of~\Ref{dfn}{nodhypuni}. 
\end{remark}
\subsubsection{Set-up}\label{subsubsec:garlasco}
Let $(\Lambda_N,\xi_N)$ be our standard decorated dimension-$N$ $D$-lattice.  
Let $v\in\Lambda_N$, and suppose that one of the following holds:
\begin{enumerate}
\item
$v$ is  a hyperelliptic vector; thus $v^{\bot}\cong\Lambda_{N-1}$, and it comes with a decoration $\xi_{N-1}$, see~\Ref{subsec}{hyperheeg}.
\item
$N=8k+3$ and $v$ is unigonal; thus $v^{\bot}\cong II_{2,2+8k}$, see~\Ref{subsubsec}{unigon3}.
\item
$N=8k+4$ and $v$ is unigonal; thus $v^{\bot}\cong II_{2,2+8k}\oplus A_1$, see~\Ref{subsubsec}{unigon4}.
\end{enumerate}
Let $\Gamma_v<\Gamma_{\xi_N}$ be the stabilizer of $v$. Thus
\begin{equation*}
\Gamma_v=
\begin{cases}
\Gamma_{\xi_{N-1}} & \text{if (1) holds,} \\
O^{+}(II_{2,2+8k}) & \text{if (2) holds,} \\
O^{+}(II_{2,2+8k}\oplus A_1) & \text{if (3) holds.} \\
\end{cases}
\end{equation*}
Then we have a natural isomorphism
\begin{equation*}
\varphi_v\colon \Gamma_v\backslash \cD_{v^{\bot}}^{+}\overset{\sim}{\lra} H_v(N),
\end{equation*}
where $H_v(N):=H_{v,\Lambda_N}(\Gamma_{\xi_N})$. In fact if Item~(1) holds then $\varphi_v=f_N$, if Item~(2) holds then   $\varphi_v=l_N$, and if Item~(3) holds then   
$\varphi_v=m_N$. 
\begin{definition}
Let $v\in\Lambda_N$ be as above, i.e.~we assume  that one of Items~(1), (2), (3) holds.
Given $w\in\Lambda_N$  such that  $\la v,w\ra$ is a rank-$2$ subgroup of $\Lambda_N$, we let $\pi_{v^{\bot}}(w)$ be a generator of the intersection $(\QQ v\oplus\QQ w)\cap v^{\bot}_{\ZZ}$ (thus $\pi_{v^{\bot}}(w)$ is non-zero, and determined up to $\pm 1$). 
\end{definition}
\begin{proposition}\label{prp:interhyp}
Let $v\in\Lambda_N$ be as above, i.e.~we assume  that one of Items~(1), (2), (3) holds.
 Assume that $w_0\in\Lambda_N$ is a primitive vector of negative square, such that the associated Heegner divisor $H_{w_0}(N)=H_{w_0,\Lambda_N}(\Gamma_{\xi_{N}})$ is different from $H_v(N)$. The following set-theoretic equality holds:
\begin{equation*}
\varphi_v^{-1} H_{w_0}(N)=\bigcup_{\la v,w\ra <0} H_{\pi_{v^{\bot}}(w),v^{\bot}}(\Gamma_v),
\end{equation*}
where $\la v,w\ra <0$  means that $\la v,w\ra$ is a negative definite sublattice of $\Lambda_N$. (And hence $\pi_{v^{\bot}}(w)$ is a vector of negative square.)
\end{proposition}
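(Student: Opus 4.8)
The statement is purely about the preimage of a Heegner divisor under the covering/identification $\varphi_v$, so the plan is to unwind the definitions on both sides and compare. Recall that $H_{w_0}(N) = \pi(\cH_{w_0,\Lambda_N}(\Gamma_{\xi_N}))$ where $\pi\colon \cD^+_{\Lambda_N}\to\cF(\Lambda_N,\xi_N)$ is the quotient map, and that $\varphi_v$ fits into the diagram in which $\cD^+_{v^\bot}\hookrightarrow\cD^+_{\Lambda_N}$ descends (via the injection $\Gamma_v\hookrightarrow\Gamma_{\xi_N}$, which identifies $\Gamma_v$ with $\Stab(v)$ by~\Ref{clm}{injgrp} in case (1) and by the analogous constructions in~\Ref{subsubsec}{unigon3} and~\Ref{subsubsec}{unigon4} in cases (2),(3)) to $\varphi_v\colon\Gamma_v\backslash\cD^+_{v^\bot}\xrightarrow{\sim}H_v(N)$. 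So a point $[\sigma]\in\cD^+_{v^\bot}$ maps into $H_{w_0}(N)$ if and only if its image in $\cF(\Lambda_N,\xi_N)$ lies on $\cH_{w_0,\Lambda_N}(\Gamma_{\xi_N})$, i.e.\ if and only if there is $g\in\Gamma_{\xi_N}$ with $\sigma\perp g(w_0)$.

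First I would reduce this to the existence of a vector: since $\sigma\in v^\bot$ already, writing $w:=g(w_0)$ we have $\sigma\perp v$ and $\sigma\perp w$, hence $\sigma$ is orthogonal to the sublattice $\langle v,w\rangle$. Now $\sigma^\bot\cap(\Lambda_N)_\RR$ is negative definite (because $[\sigma]\in\cD^+_{\Lambda_N}$), so $\langle v,w\rangle\subset\sigma^\bot$ forces $\langle v,w\rangle$ to be negative (semi)definite; and it has rank $2$ — if $w\in\QQ v$ then $H_w(N)=H_v(N)$, contradicting the hypothesis $H_{w_0}(N)\neq H_v(N)$ together with $\Gamma_{\xi_N}$-invariance of Heegner divisors under their defining orbit — so $\langle v,w\rangle$ is genuinely rank $2$ negative definite. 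The next step is to pass from $w$ to $\pi_{v^\bot}(w)$: by definition $\pi_{v^\bot}(w)$ generates $(\QQ v\oplus\QQ w)\cap v^\bot$, so $\sigma\perp v$ and $\sigma\perp w$ imply $\sigma\perp\pi_{v^\bot}(w)$, and $\pi_{v^\bot}(w)$ has negative square since it lies in the negative definite space $\langle v,w\rangle_\RR\cap v^\bot_\RR$. This shows $[\sigma]\in\cH_{\pi_{v^\bot}(w),v^\bot}(\Gamma_v)$ (the $\Gamma_v$-orbit being automatic once we have one such vector), proving the inclusion $\varphi_v^{-1}H_{w_0}(N)\subseteq\bigcup_{\langle v,w\rangle<0}H_{\pi_{v^\bot}(w),v^\bot}(\Gamma_v)$, where the union ranges over all $w\in\Lambda_N$ with $H_w(N)=H_{w_0}(N)$ (equivalently, over the $\Gamma_{\xi_N}$-orbit of $w_0$) such that $\langle v,w\rangle$ is negative definite.

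For the reverse inclusion, suppose $[\sigma]\in H_{\pi_{v^\bot}(w),v^\bot}(\Gamma_v)$ for some $w$ in the orbit of $w_0$ with $\langle v,w\rangle$ negative definite; then up to $\Gamma_v$ we may take $\sigma\perp\pi_{v^\bot}(w)$, and since $\pi_{v^\bot}(w)\in\QQ v\oplus\QQ w$ and already $\sigma\perp v$, we get $\sigma\perp w$, hence $\varphi_v([\sigma])=\pi([\sigma])\in\cH_{w_0,\Lambda_N}(\Gamma_{\xi_N})$, i.e.\ $\varphi_v([\sigma])\in H_{w_0}(N)$. This closes the loop. One cosmetic point to address is matching the indexing of the union in the statement (over sublattices $\langle v,w\rangle<0$) with the orbit description: different $w$ in the $\Gamma_{\xi_N}$-orbit of $w_0$ which are $\Gamma_v$-equivalent give the same $H_{\pi_{v^\bot}(w),v^\bot}(\Gamma_v)$, so the union is effectively finite and depends only on the finitely many $\Gamma_v$-orbits of such $w$; I would remark this but it does not affect the set-theoretic equality.

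The main obstacle — really the only subtlety — is the rank-$2$ step: one must be sure that $w\notin\QQ v$. This uses that $H_{w_0}(N)\neq H_v(N)$ \emph{and} that for a vector $u$ proportional to $v$ one has $H_u(N)=H_v(N)$ (true because $H_u(N)$ depends only on the line $\QQ u$ and the primitive vector in it); combined with $\Gamma_{\xi_N}$-invariance this rules out $g(w_0)\in\QQ v$ for every $g$, since otherwise $H_{w_0}(N)=H_{g(w_0)}(N)=H_v(N)$. Everything else is a direct unwinding of the definitions of $\cH_{v,\Lambda}(\Gamma)$, $\pi_{v^\bot}$, and $\varphi_v$, together with the identification $\Gamma_v=\Stab(v)$ established earlier. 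I would keep the write-up to a short paragraph, as most readers will find the argument routine once the negative-definiteness of $\sigma^\bot$ is invoked.
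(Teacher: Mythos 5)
Your proposal is correct and is essentially the paper's own argument written out in full: the paper's proof consists precisely of the single observation that $[\sigma]\in v^{\bot}\cap w^{\bot}\cap\cD^{+}_{\Lambda_N}$ forces $\la v,w\ra$ to be negative definite, with the rest (unwinding the definitions of $\cH$, $\pi_{v^{\bot}}$, $\varphi_v$, and the rank-$2$ point) left implicit. Your more detailed treatment of the reduction to a single vector $w=g(w_0)$ and of why $w\notin\QQ v$ is a faithful expansion of the same route, not a different one.
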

\begin{proof}
There  is a single observation to make, namely  that if $[\sigma]\in v^{\bot}\cap w^{\bot}\cap\cD^{+}_{\Lambda_N}$, then $\la v,w\ra$ is a negative definite sublattice of 
$\Lambda_N$.
\end{proof}
\subsubsection{Intersection with the hyperelliptic Heegner divisor}\label{subsubsec:inthyp}
\begin{proposition}\label{prp:invnod}
\begin{equation}\label{setenne} 
\scriptstyle f_N^{-1} H_n(N)=
\begin{cases}
\scriptstyle  H_n(N-1) \cup H_h (N-1) & \scriptstyle  \text{if  $N\not \equiv 5 \mod 8$,} \\
\scriptstyle  H_n(N-1) \cup H_h (N-1) \cup H_u(N-1) & \scriptstyle  \text{if  $N\equiv 5 \mod 8$,} 
\end{cases}
\end{equation}
and 
\begin{equation}\label{setunig} 
f_N^{-1} H_u(N)=
\begin{cases}
\es & \text{if $N\equiv 2\pmod{8}$,} \\
H_u(N-1) & \text{if $N\not\equiv 2\pmod{8}$.}
\end{cases}
\end{equation}
\end{proposition}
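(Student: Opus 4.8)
\textbf{Proof approach for~\Ref{prp}{invnod}.}

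The plan is to reduce both set-theoretic identities to the lattice-theoretic description of $f_N^{-1}H_{w_0}(N)$ provided by~\Ref{prp}{interhyp}, and then to classify the rank-$2$ negative definite sublattices $\la v,w\ra\subset\Lambda_N$ in which $v$ is a hyperelliptic vector and $w$ is, respectively, a nodal or a unigonal vector. Concretely, fix a hyperelliptic vector $v\in\Lambda_N$, so that $v^{\bot}\cong\Lambda_{N-1}$ carries its induced decoration $\xi_{N-1}$ (see~\Ref{rmk}{inducedec}), and $\varphi_v=f_N$. By~\Ref{prp}{interhyp}, $f_N^{-1}H_{w_0}(N)$ is the union of the Heegner divisors $H_{\pi_{v^{\bot}}(w),v^{\bot}}(\Gamma_{\xi_{N-1}})$ as $w$ ranges over vectors with $\la v,w\ra<0$ and $w^{*}=w_0^{*}$ of the appropriate type. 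So the task is: for each such $w$, identify the divisibility and discriminant-class of $u:=\pi_{v^{\bot}}(w)$ inside $v^{\bot}\cong\Lambda_{N-1}$, and thereby recognize which of $H_n(N-1)$, $H_h(N-1)$, $H_u(N-1)$ it is. By Eichler's criterion~(\Ref{prp}{criteich}) and~\Ref{prp}{minorm}, each $\Gamma_{\xi_{N-1}}$-orbit of minimal-norm vectors in $v^\bot$ is determined by $(u^2,\divisore(u),u^{*})$, so it suffices to enumerate the possibilities for $u$ rather than to track orbits by hand.

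For~\eqref{setenne}, take $w$ nodal, i.e.~$w^2=-2$, $\divisore_{\Lambda_N}(w)=1$. Since $v^2=-4$, $\divisore(v)=2$, one computes $|(v,w)|$ using positivity of $\det\la v,w\ra$: the Gram determinant $8-(v,w)^2>0$ forces $(v,w)\in\{0,\pm1,\pm2\}$. The three cases $(v,w)=0$, $(v,w)=\pm1$, $(v,w)=\pm2$ give, after projecting onto $v^\bot$, a vector $u$ with $(u^2,\divisore_{v^\bot}(u))$ equal to $(-2,1)$, $(-4,2)$, or (when $N\equiv5\bmod 8$ and the sublattice $\la v,w\ra$ happens to be $A_1$ glued so as to produce $\divisore_{v^\bot}(u)=4$) a unigonal-type vector. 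The bookkeeping mirrors~\Ref{lmm}{hyperort} and~\Ref{lmm}{unigandhyp}, and in fact one should just invoke those lemmas: a hyperelliptic $v$ together with an orthogonal or near-orthogonal nodal/unigonal vector spans a small root lattice ($A_1\oplus A_1$, $D_2$, or $D_3$), and~\Ref{prp}{liftrefl} together with the computations in its proof tells us exactly which type the projection has. The $N\equiv5\bmod8$ exception appears precisely because $D_3\cong A_3$ contains a vector of divisibility $4$ in its ambient $D$-lattice, matching Item~(4) of~\Ref{prp}{liftrefl}; this is the only place the congruence class of $N$ enters, and it is the step I expect to require the most care.

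For~\eqref{setunig}, take $w$ unigonal. If $N\equiv2\bmod8$ there are no reflective unigonal vectors at all ($v_u^2=0$, see Item~(3) of~\Ref{dfn}{nodhypuni}), so $H_u(N)=\es$ and there is nothing to prove; and for $N\not\equiv2\bmod8$, a unigonal $w$ has $w^2\in\{-2,-12,-4\cdot a\}$ depending on the residue of $N-2\bmod 8$ and $\divisore_{\Lambda_N}(w)\in\{2,4\}$. Positivity of $\det\la v,w\ra$ again pins down $(v,w)$, and one checks that $w$ must already lie in $v^\bot$ (equivalently $(v,w)=0$) — this is essentially~\Ref{lmm}{unigandhyp} and the proofs of~\Ref{lmm}{twounig}, \Ref{lmm}{unigcong5} — so that $\pi_{v^\bot}(w)=w$ up to sign and the projected vector is again a unigonal vector of $v^\bot\cong\Lambda_{N-1}$; hence $f_N^{-1}H_u(N)=H_u(N-1)$. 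Finally, the forward inclusions ($H_n(N-1)\cup H_h(N-1)\,(\cup H_u(N-1))\subseteq f_N^{-1}H_n(N)$ and $H_u(N-1)\subseteq f_N^{-1}H_u(N)$) follow immediately from~\Ref{prp}{liftrefl}, which says each reflective vector of $v^\bot$ is reflective in $\Lambda_N$ of the indicated type, so its orthogonal hyperplane in $\cD^+_{v^\bot}$ lies in the preimage of the corresponding Heegner divisor. The main obstacle is thus purely the case analysis of $\la v,w\ra$ for $w$ nodal when $N\equiv5\bmod8$; everything else is a direct appeal to~\Ref{prp}{interhyp}, \Ref{prp}{liftrefl}, and the lemmas~\Ref{lmm}{hyperort}–\Ref{lmm}{unigandhyp} already proved.
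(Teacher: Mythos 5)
Your overall strategy is the same as the paper's (reduce to \Ref{prp}{interhyp}, classify the negative definite rank-$2$ lattices $\la v,w\ra$ with $v$ hyperelliptic, use \Ref{prp}{liftrefl} for the easy inclusions), but the case analysis you sketch — which is the entire substance of the proof — is wrong in several concrete places. First, since $v$ is hyperelliptic we have $\divisore(v)=2$, so $(v,w)$ is always even; your case $(v,w)=\pm1$ does not exist, and it is precisely to this phantom case that you attribute the hyperelliptic component $H_h(N-1)$. In fact $H_h(N-1)$ arises from $(v,w)=\pm2$: for $w$ nodal with $(v,w)=2$ one has $\pi_{v^\bot}(w)=v+2w$, of square $-4$, and one must then \emph{prove} that $\divisore_{v^\bot}(v+2w)=2$ (ruling out divisibility $4$, which requires an explicit computation in $\II_{2,2+8k}\oplus A_1^2$ when $N\equiv4\pmod 8$) and that $[\,(v+2w)/2\,]$ is the induced decoration $\xi_{N-1}$. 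Your proposal omits both verifications. Second, the exceptional component $H_u(N-1)$ for $N\equiv5\pmod 8$ comes from the case $(v,w)=0$ with $\divisore_{v^\bot}(w)=2$ (a vector of square $-2$ that is nodal in $\Lambda_N$ but unigonal in $v^\bot$, matching Item~(4) of \Ref{prp}{liftrefl}), not from $(v,w)=\pm2$ as you assert.

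For \eqref{setunig} the error is more serious: you claim a unigonal $w$ with $\la v,w\ra$ negative definite must satisfy $(v,w)=0$, so that $\pi_{v^\bot}(w)=\pm w$. The opposite is true. Orthogonality is excluded (e.g.\ for $N\equiv4\pmod 8$, \Ref{lmm}{unigandhyp} shows $|(v,w)|=2$ because $v^\bot$ contains no vector of square $-4$ and divisibility $2$ orthogonal to $w$), and a discriminant-form computation gives $(v,w)=\pm4$ for $N$ odd and $(v,w)=\pm2$ for $N$ even, with $\pi_{v^\bot}(w)=(v+w)/2$, resp.\ $v+2w$; one then checks these are unigonal vectors of $v^\bot$. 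Finally, the forward inclusion $H_u(N-1)\subseteq f_N^{-1}H_u(N)$ does not follow from \Ref{prp}{liftrefl}: that proposition sends a reflective unigonal vector of $v^\bot$ to a \emph{hyperelliptic} or \emph{nodal} vector of $\Lambda_N$ (Items (3),(4)), never to a unigonal one, and it says nothing about the non-reflective residues. One must instead exhibit an explicit unigonal vector of $\Lambda_N$ whose projection to $v^\bot$ is unigonal, as the paper does.
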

\begin{proof}
We adopt the notation introduced in~\Ref{subsubsec}{garlasco}, in particular  $v\in\Lambda$ is a \emph{fixed} hyperelliptic vector. 
Let us prove that the right-hand side of~\eqref{setenne} is contained in the left-hand side. 

Let $w\in v^{\bot}$ be of square $-2$  and divisibility $1$ (as vector of $v^{\bot}$). Then $w$ is a nodal vector of $(\Lambda_N,\xi_N)$, and $\pi_{v^{\bot}}(w)=w$; thus  
 $H_n(N-1)\subset f_N^{-1} H_n(N)$ by~\Ref{prp}{interhyp}. 

Let us prove that $H_h (N-1) \subset f_N^{-1} H_n(N)$. Let $u$ be a hyperelliptic vector of $(v^{\bot},\xi_{N-1})$. Let  $w:=(v+u)/2$; then $w\in\Lambda_N$, see the definition of the decoration of $v^{\bot}$ in~\Ref{subsec}{hyperheeg}. We claim that $w$ is a nodal vector. In fact $w^2=-2$, and  $\divisore(w)=1$ because there exists $z\in v^{\bot}$ such that $(u,z)=2$ (because $\divisore_{v^{\bot}}(u)=2$),  and hence $(w,z)=1$. Since $\pi_{v^{\bot}}(w)=u$, it follows that 
$H_h (N-1) \subset f_N^{-1} H_n(N)$ by~\Ref{prp}{interhyp}. 

Now assume that $N=8k+5$. Then  $H_u(N-1)\subset f_N^{-1} H_n(N)$ by Item~(4) of~\Ref{prp}{liftrefl}. 

Next, let us prove that the left-hand side of~\eqref{setenne} is contained in the right-hand side. By~\Ref{prp}{interhyp}, it suffices to prove that, if $w$ is a nodal vector of 
$(\Lambda,\xi_N)$ such that  $\la v,w\ra$ is negative definite, then  $\pi_{v^{\bot}}(w)$ is either a nodal or hyperelliptic vector, or  $N\equiv 5\pmod{8}$ and  $\pi_{v^{\bot}}(w)$ is a unigonal vector. 
Computing the determinant of the restriction of the quadratic form to $\la v,w\ra$ (which is strictly positive), we conclude that $(v,w)\in\{0,\pm 2\}$; multiplying $w$ by $(-1)$ if necessary we may assume that $(v,w)\in\{0,2\}$. 

If $(v,w)=0$, then $\divisore_{v^{\bot}}(w)\in\{1,2\}$. If $\divisore_{v^{\bot}}(w)=1$ then $w$ is a nodal class of $v^{\bot}$. If $\divisore_{v^{\bot}}(w)=2$, then $w$ is a unigonal class of $v^{\bot}$, and  then necessarily $N\equiv 5\pmod{8}$ because the discriminant group $A_{v^{\bot}}=A_{\Lambda_{N-1}}$ contains the class $[w/2]$ of square $-1/2$ modulo $2\ZZ$.  

If $(v,w)=2$, let $u:=v+2w$. Then $\pi_{v^{\bot}}(w)=u$.  We claim that $u$ is a hyperelliptic vector of $(v^{\bot},\xi_{N-1})$. First,   $u^2=-4$. It remains to check that 
$\divisore_{v^{\bot}}(u)=2$ and $u^{*}$ is equal to the decoration that $v^{\bot}$ inherits from the decoration $\xi_N$ of $\Lambda_N$. 
Clearly $\divisore_{v^{\bot}}(u)$ is a multiple of $2$, hence we must show that  $\divisore_{v^{\bot}}(u)\not=4$. 
  If $\divisore_{v^{\bot}}(u)=4$, then $N=8k+4$, and hence $\Lambda_N\cong II_{2,2+8k}\oplus A_1^2$; let $a,b$ be generators of the two  $ A_1$-summands. We may assume that $v=a+b$, because any two decorated dimension-$N$ $D$-lattices are isomorphic. Then there exist $r,m\in\ZZ$, and a \emph{primitive} $c\in II_{2,2+8k}$, such that
\begin{equation*}
w=ra-(1+r)b+(2m+1)c.
\end{equation*}
In fact write $w=ra+sb+tc$, where $r,s,t\in\ZZ$, and $c\in II_{2,2+8k}$ is primitive. Then $s=-(1+r)$ because $(v,w)=2$, and $t$ is odd because the divisibility of $w$ in $\Lambda_N$ is $1$. Thus $u=(1+2x)(a-b)+2(2m+1)c$, and since $II_{2,2+8k}$ is unimodular it follows that  $\divisore_{v^{\bot}}(u)\not=4$. In order to prove that $u^{*}$ is the decoration of $v^{\bot}$, we recall that the latter is equal to $[z/2]$, where $z\in v^{\bot}$ is such that $(v+z)/2\in\Lambda$, see~\Ref{subsec}{hyperheeg} (we have denoted by $z$ the vector denoted $w$ in~\Ref{subsec}{hyperheeg}).  Since $u=v+2w$, it follows that $[u/2]=-[z/2]=[z/2]$ in the discriminant group $A_{v^{\bot}}$.
This proves that 
  $\pi_{v^{\bot}}(w)$ is hyperelliptic, and finishes the proof of~\eqref{setenne}. 
  
Now let us prove~\eqref{setunig}. Let $N-2=8k+a$. If $a=0$, then $H_u(N)=0$, and hence~\eqref{setunig} holds trivially.  Thus we may assume that $a\in\{1,\ldots,7\}$. First we will prove that the right-hand side is contained in the left-hand side.  We may assume that $a\in\{2,\ldots,7\}$ because if $a=1$, then $H_h(N-1)=0$. By~\Ref{rmk}{periodeight}, we may  identify $\Lambda_N$ with 
$ II_{2,2+8k}\oplus D_a$. Let $v=(0_{4+8k},(0,\ldots,0,2)$; then $v^2=-4$ and $\divisore(v)=2$. 
We let $\xi_N=[v/2]$, and hence $v$ is hyperelliptic.  Now suppose that $N$ is odd. Let $w=(0_{4+8k},(2,\ldots,2))$; then $w$ is a unigonal vector of 
$(\Lambda_N,\xi_N)$, see the proof of ~\Ref{prp}{minorm}. Then $\pi_{v^{\bot}}(w)= (0_{4+8k},(1,\ldots,1)\in  II_{2,2+8k}\oplus D_{a-1}$. Since $\pi_{v^{\bot}}(w)$ is a unigonal vector of $(\Lambda_{N-1},\xi_{N-1})$ (see  the proof of ~\Ref{prp}{minorm}), it follows that $f_N^{-1}H_u(N)\supset H_u(N-1)$ if $N$ is odd. The proof for $N$ even is analogous. 

Lastly we prove that the left-hand side of~\eqref{setunig} is contained in the right-hand side. By~\Ref{prp}{interhyp}, it suffices to prove that, if $w$ is a unigonal vector of 
$(\Lambda,\xi_N)$ such that  $\la v,w\ra$ is negative definite, then  $\pi_{v^{\bot}}(w)$ is  a unigonal  vector. Let $a\in\{1,\ldots,7\}$ be the residue of $N-2$ modulo $8$. Let us assume that $N$ is odd. Then $w^2=-4a$ and $\divisore (w)=4.$ We claim that
\begin{equation}\label{duefatti}
(v,w)=\pm 4,\qquad (v+w)/2\in \Lambda.
\end{equation}
In fact we may assume that $w^{*}=\zeta$, and $\xi=2\zeta$. Thus
$$(v/2,w/4)\equiv (\xi,\zeta)\equiv 2q_{\Lambda}(\zeta)\equiv-a/2 \pmod{\ZZ},$$
and hence there exists $s\in\ZZ$ such that $(v,w)= -4a +8s$. 
Since  $\la v,w\ra$ is negative definite, it follows that $a\in\{3,7\}$ (recall  that  $H_h(N-1)=0$ if $a=1$), and that $(v,w)=\pm 4$. Moreover $(v+w)/2\in \Lambda$ because $2[w/4]=[v/2]$ in the discriminant group. 
We have proved~\eqref{duefatti}. Multiplying $w$ by $(-1)$ if necessary, we may assume that $(v,w)=4$; it follows that $\pi_{v^{\bot}}(w)=(v+w)/2$. Now $(v+w)/2$ is a unigonal vector of $v^{\bot}$, as required. Now assume that $N$ is even. Arguing as above, one shows that $(v,w)=\pm 2$, and hence we may assume that $(v,w)=2$. Then $\pi_{v^{\bot}}(w)=v+2w$. Since $v+2w$ is primitive, $\divisore_{v^{\bot}}(v+2w)=4$, and $(v+2w)^2=-4(a-1)$, this proves that $\pi_{v^{\bot}}(w)$ is a unigonal vector of $v^{\bot}$. 
\end{proof}
Let us prove~\eqref{effennehyper} and~\eqref{effenneunig}. First notice that $H_n(N-1)$, $H_h(N-1)$ and $H_u(N-1)$ are irreducible. By~\Ref{prp}{invnod}, it  remains to show that
\begin{eqnarray}
\scriptstyle\mult_{f_N(H_n(N-1))}H_n(N)\cdot H_h(N) & \scriptstyle= & \scriptstyle1,\label{carlo}\\
\scriptstyle\mult_{f_N(H_h(N-1))}H_n(N)\cdot H_h(N) & \scriptstyle= & \scriptstyle2,\label{emilio}\\ 
\scriptstyle\mult_{f_N(H_u(N-1))}H_n(N)\cdot H_h(N) & \scriptstyle= & \scriptstyle1\ \ \text{if $N\equiv 5\pmod{8}$,}\label{gadda}\\ 
\scriptstyle\mult_{f_N(H_u(N-1))}H_u(N)\cdot H_h(N) & \scriptstyle= & \scriptstyle 2\ \ \text{if $N\equiv b\pmod{8}$ and $4\le b\le 9$.} \label{lombardo}
\end{eqnarray}
In order to prove the above equalities, we fix a hyperelliptic vector $v$ of $(\Lambda_N,\xi_N)$, and we let  $w\in v^{\bot}\cong\Lambda_{N-1}$ be a vector 
such that one of the following holds:
\begin{enumerate}
\item
$w$ is a nodal vector of $(\Lambda_{N-1},\xi_{N-1})$.
\item
$w$ is  a hyperelliptic vector  of $(\Lambda_{N-1},\xi_{N-1})$.
\item
$w$ is  a unigonal vector  of $(\Lambda_{N-1},\xi_{N-1})$. 
\end{enumerate}
Let   $[\sigma]\in \{v,w\}^{\bot}\cap\cD_{\Lambda_N}^{+}$. Then $[\sigma]\in f_N(H_n(N-1))$ if (1) holds,  $[\sigma]\in f_N(H_h(N-1))$ if (2) holds, and  $[\sigma]\in f_N(H_u(N-1))$ if (3) holds.  Since each of  $H_n(N-1)$, $H_h(N-1)$ and $H_u(N-1)$ is irreducible, we may prove Formulae~\eqref{carlo}-\eqref{lombardo} by analyzing the local structure of $\cF(N)$, $H_h(N)$ etc.~at the $\Gamma_{\xi_N}$-orbit of $[\sigma]$. 
We  claim that the following hold:
\begin{enumerate}
\item[($1'$)]
If $w$ is a nodal vector of $(\Lambda_{N-1},\xi_{N-1})$, then 
\begin{equation*}
\Lambda_N\cap \la v,w\ra_{\QQ}=\la v,w\ra_{\ZZ}:=\ZZ v+\ZZ w,
\end{equation*}
and  every non trivial isometry of $\la v,w\ra_{\ZZ}$ is either the reflection  in a reflective vector of $\la v,w\ra_{\ZZ}$, or multiplication by $-1$.
\item[($2'$)]
If $w$ is  a hyperelliptic vector  of $(\Lambda_{N-1},\xi_{N-1})$,  then 
\begin{equation*}
\Lambda_N\cap \la v,w\ra_{\QQ}=\la (v+w)/2,(v-w)/2\ra_{\ZZ}\cong D_2,
\end{equation*}
and  every non trivial isometry of $\la (v+w)/2,(v-w)/2\ra_{\ZZ}$ is either the reflection  in a reflective vector of $\la (v+w)/2,(v-w)/2\ra_{\ZZ}$, or multiplication by $-1$.
\begin{enumerate}
\item[($3a'$)]
If $w$ is  a unigonal vector  of $(\Lambda_{N-1},\xi_{N-1})$, and $N\equiv 4\pmod{8}$, then  $\Lambda_N\cap \la v,w\ra_{\QQ}=\la (v+w)/2,(v-w)/2\ra_{\ZZ}\cong D_2$, and   every non trivial isometry of $\la v,w\ra_{\ZZ}$ is either the reflection  in a reflective vector of $\la (v+w)/2,(v-w)/2\ra_{\ZZ}$, or multiplication by $-1$. 
\item[($3b'$)]
If $w$ is  a unigonal vector  of $(\Lambda_{N-1},\xi_{N-1})$, and $N\not\equiv 4\pmod{8}$, then  $\Lambda_N\cap \la v,w\ra_{\QQ}=\la v,w\ra_{\ZZ}$, and   every non trivial isometry of $\la v,w\ra_{\ZZ}$ is either the reflection  in a reflective vector of $\la v,w\ra_{\ZZ}$, or multiplication by $-1$. 
\end{enumerate}
\end{enumerate}
In  fact, in order to determine  $\Lambda_N\cap \la v,w\ra_{\QQ}$, it suffices to recall that $\Lambda_N$ is generated (over  $\ZZ$) by $\ZZ v\oplus v^{\bot}$, together with $(v+u)/2$, where $u\in v^{\bot}=\Lambda_{N-1}$ is a hyperelliptic vector of  $(\Lambda_{N-1},\xi_{N-1})$. Once  $\Lambda_N\cap \la v,w\ra_{\QQ}$ has been determined, the statements about non trivial isometries are...trivial.
Now let   $[\sigma]$ be a  very general point of $\{v,w\}^{\bot}\cap\cD_{\Lambda_N}^{+}$. Then
\begin{equation*}
\sigma^{\bot}\cap\Lambda_{N,\QQ}=\la v,w\ra_{\QQ}, 
\end{equation*}
and $-1_{\{v,w\}^{\bot}}$ is 
the only non-trivial element of  $O(\Lambda_N\cap \{v,w\}^{\bot}_{\QQ})$ stabilizing $[\sigma]$.
 It follows that  there is a natural embedding
 \begin{equation}\label{noia}
T_{\sigma}\colon\Stab([\sigma])\hra O(\Lambda_N\cap \la v,w\ra_{\QQ}) \times  \la-1_{\{v,w\}^{\bot}}\ra.
\end{equation}
(Here $\Stab([\sigma])<\Gamma_{\xi_N}$ is the stabilizer of $[\sigma]$.) 
\begin{claim}\label{clm:unodue}
Let   $[\sigma]$ be a  very general point of $\{v,w\}^{\bot}\cap\cD_{\Lambda_N}^{+}$, and hence  there is the natural embedding~\eqref{noia}. 
\begin{enumerate}
\item[(I)]
If $w$ is a reflective vector of $(\Lambda_{N-1},\xi_{N-1})$, i.e.~Item(1) or (2) above holds, or $N\equiv 4,5\pmod{8}$ and Item~(3) holds, then   $T_{\sigma}$ is surjective.
\item[(II)]
If $w$ is \emph{not} a reflective vector of $(\Lambda_{N-1},\xi_{N-1})$, i.e.~$N\not\equiv 4,5\pmod{8}$ and Item~(3) holds, then  $\Stab([\sigma])=\{ \pm\rho_v\}$. 
\end{enumerate}
\end{claim}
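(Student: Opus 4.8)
The plan is to reduce the statement to a computation purely inside the rank-$2$ lattice $L:=\Lambda_N\cap\la v,w\ra_{\QQ}$ and its orthogonal complement, using that, at a very general $[\sigma]\in\{v,w\}^{\bot}\cap\cD^{+}_{\Lambda_N}$, we have $\sigma^{\bot}\cap\Lambda_{N,\QQ}=\la v,w\ra_{\QQ}$ and $-1$ is the only nontrivial isometry of $\Lambda_N\cap\{v,w\}^{\bot}_{\QQ}$ fixing $[\sigma]$. Given any $g\in\Stab([\sigma])<\Gamma_{\xi_N}$, the subspace $\sigma^{\bot}\cap\Lambda_{N,\QQ}$ is $g$-invariant, hence $g(L)=L$, and $g$ restricts to an isometry of $L^{\bot}_{\QQ}$ fixing the very general line $[\sigma]$, which forces $g|_{L^{\bot}}=\pm\id$. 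This gives the embedding $T_\sigma$ of \eqref{noia}, and what remains is to decide which elements of $O(L)\times\la-1_{\{v,w\}^{\bot}}\ra$ are actually realized by elements of $\Gamma_{\xi_N}$. By ($1'$)--($3b'$), $O(L)$ is generated by $-\id_L$ together with the reflections $\rho_r$, $r$ running over reflective vectors of $L$ (in cases (1),(2),($3a'$) one should use the explicit presentation $L\cong D_2$ or $L=\la v,w\ra_\ZZ$ and list those reflective vectors, e.g.\ $r=v,w,(v\pm w)/2$ as appropriate), while $-\id_{\Lambda_N}=(-\id_L,-1_{\{v,w\}^{\bot}})$ is always in $\Gamma_{\xi_N}$.

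For part (I), I would argue that each generator of $O(L)$ extends to $\Gamma_{\xi_N}$. The extension $-\id_{\Lambda_N}$ realizes $(-\id_L,-1)$. For a reflective vector $r\in L$ of $(\Lambda_{N-1},\xi_{N-1})$-type — more precisely, for $r$ among $v$, the nodal/hyperelliptic/(reflective) unigonal vectors $w$ of $(v^\bot,\xi_{N-1})$, or the vectors $(v\pm w)/2$ arising in the $D_2$ case — the reflection $\rho_r$ lies in $\Gamma_{\xi_N}$: for $r=v$ this is immediate since $v$ is hyperelliptic (\Ref{crl}{reflheeg}); for the others it is exactly \Ref{prp}{liftrefl} together with \Ref{crl}{reflheeg}, noting that in the hyperelliptic case for $w$ (and in the $N\equiv4,5\pmod8$ unigonal cases) $w$ or the relevant vector in $L$ is $\Gamma_{\xi_N}$-reflective. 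Since $\rho_r$ acts trivially on $r^\bot\supset\{v,w\}^{\bot}$ when $r\in L$, its image under $T_\sigma$ is $(\rho_r|_L,+1)$; these together with $(-\id_L,-1)$ generate $O(L)\times\la-1\ra$, so $T_\sigma$ is onto. One should double-check the generation claim: $O(D_2)\cong(\ZZ/2)^3$ is generated by the two coordinate reflections and $-\id$, all of which are accounted for, and similarly for the $L=\la v,w\ra_\ZZ$ case in ($1'$),($3b'$) where $O(L)=\la-\id,\rho_v\ra$ or $\la-\id,\rho_v,\rho_w\ra$.

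For part (II), i.e.\ $N\not\equiv4,5\pmod8$ and $w$ unigonal in $(\Lambda_{N-1},\xi_{N-1})$, the point is that $w$ is \emph{not} a $\Gamma_{\xi_N}$-reflective vector of $\Lambda_N$ — by \Ref{crl}{reflheeg} the unigonal divisor of $\cF(N-1)$ being reflective there does not make $w$ reflective in $\Lambda_N$ unless $N\equiv3,4\pmod8$, and the $N\equiv3$ case is excluded because then $H_h(N-1)=0$ so this configuration does not occur, while $N\equiv5$ is excluded by hypothesis. Concretely, by ($3b'$), $L=\la v,w\ra_\ZZ$ and $O(L)=\la-\id_L,\rho_v\ra$ (one checks $\rho_w\notin O(L)$ since $w^2\nmid 2\divisore_L(w)$, or rather $\rho_w$ does not preserve $\Lambda_N$); hence any $g\in\Stab([\sigma])$ has $g|_L\in\{\pm\id_L,\pm\rho_v\}$, so modulo $-\id_{\Lambda_N}$ we may assume $g|_L\in\{\id_L,\rho_v\}$, i.e.\ $g\in\{\pm\id_{\Lambda_N},\pm\rho_v\}$. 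It remains to rule out that $g$ could act as $-1$ on $L$ while acting as $+1$ on $\{v,w\}^\bot$, or vice versa via some extension not preserving $\Lambda_N$ — but those are not isometries of $\Lambda_N$ precisely because $\Lambda_N$ is generated by $\ZZ v\oplus v^\bot$ and $(v+u)/2$ with $u$ hyperelliptic, so an isometry acting by $+1$ on $v^\bot$ and $-1$ on $v$ would not preserve $(v+u)/2$. Thus $\Stab([\sigma])=\{\pm\rho_v\}$.

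The main obstacle I anticipate is bookkeeping in part (I): being careful about which reflective vectors of the small lattice $L$ actually lift into $\Gamma_{\xi_N}$ (as opposed to merely into $O^+(\Lambda_N)$ or $\wt O^+(\Lambda_N)$), since the decoration-fixing condition is what distinguishes the hyperelliptic and reflective-unigonal cases from the rest, and this is exactly where \Ref{prp}{liftrefl} and \Ref{prp}{propram} must be invoked with the correct congruence class of $N$ mod $8$; the case analysis ($N\equiv4$ vs $N\equiv5$ vs the generic case, plus the degenerate sub-cases where $H_h(N-1)$ or $H_u(N-1)$ vanishes) needs to be organized so that no configuration is missed or double-counted.
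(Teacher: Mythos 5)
Your overall strategy is the same as the paper's: lift the reflections in the reflective vectors of $L:=\Lambda_N\cap\la v,w\ra_{\QQ}$ into $\Gamma_{\xi_N}$ via \Ref{prp}{liftrefl}, adjoin $-1_{\Lambda_N}$, and check generation of $O(L)\times\la-1_{\{v,w\}^{\bot}}\ra$; in case (II) observe that only $\pm v$ are reflective in $(\Lambda_N,\xi_N)$. However, two of your justifications are wrong as written. First, in the generation check for (I): $O(D_2)$ is \emph{not} $(\ZZ/2)^3$; it is the dihedral group of order $8$, and the subgroup generated by the two coordinate reflections $\rho_{(v\pm w)/2}|_L$ together with $-\id_L$ has order $4$ --- it misses the swap $(v+w)/2\leftrightarrow (v-w)/2$. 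That swap is exactly $\rho_w|_L$ (and $-\rho_v|_L$), so surjectivity does hold, but only because $\rho_v$ and $\rho_w$ also lie in $\Gamma_{\xi_N}$ --- vectors you listed earlier but then dropped from your ``double-checked'' generating set. This is not cosmetic: the swap is what forces the local invariants in ($2''$) to be $x_1^2+x_2^2,\ x_1^2x_2^2$ rather than $x_1^2,x_2^2$, i.e.\ it is responsible for the multiplicity $2$ in \eqref{emilio}; with the generating set you actually verify, you would only have shown that the image of $T_{\sigma}$ is an index-$2$ subgroup of the target.

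Second, in (II) your exclusion of $(-\id_L,+1)$ is justified incorrectly: the isometry ``acting by $+1$ on $v^{\bot}$ and $-1$ on $v$'' is $\rho_v$ itself, and it \emph{does} preserve $(v+u)/2$ (it sends it to $(v+u)/2-v\in\Lambda_N$). The element you must exclude, $(-\id_L,+1)$, is the rational isometry $\rho_v\circ\rho_w$; the correct argument is that the image of $T_{\sigma}$ is a group containing $(\rho_v|_L,+1)$, so if it contained $(-\id_L,+1)$ it would contain $(\rho_w|_L,+1)$, i.e.\ $\rho_w\in\Gamma_{\xi_N}$, contradicting that no primitive $u\in L$ other than $\pm v$ is a reflective vector of $(\Lambda_N,\xi_N)$ (a point that, as in the paper, still deserves a short check against \Ref{prp}{propram}). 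Two smaller slips: for $N\equiv 3\pmod 8$ the configuration of (II) is vacuous because $H_u(N-1)=0$, not $H_h(N-1)=0$ (here $N-1\equiv 2\pmod 8$); and ``$\rho_w\notin O(L)$'' is not what you mean --- $\rho_w|_L$ is always an isometry of the rank-$2$ lattice $L$; the relevant statement is that $\rho_w$ does not lie in $\Gamma_{\xi_N}$.
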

\begin{proof}
Let us prove that~(I) holds. By Items~($1'$), ($2'$), ($3'$), and~\Ref{prp}{liftrefl}, $O(\Lambda_N\cap \la v,w\ra_{\QQ})$ is generated by reflections in vectors which are reflective as vectors of $(\Lambda_N,\xi_N)$; it follows that  the image of  $T_{\sigma}$ contains $O(\Lambda_N\cap \la v,w\ra_{\QQ}) \times \{1_{\{v,w\}^{\bot}}\}$. Thus  Item~(I) holds because $-1_{\Lambda_N}\in \Stab([\sigma])$.  Lastly, we prove that~(II) holds. First $\Stab([\sigma])\supset\{ \pm\rho_v\}$.   Next, we notice that if $u\in \la v,w\ra_{\ZZ}$ is a primitive vector not equal to $\pm v$, then $u$ is not a reflective vector of $(\Lambda_N,\xi_N)$. It follows that there is a single non trivial element of   $O(\la v,w\ra_{\ZZ}) \times \{1_{\{v,w\}^{\bot}}\}$ in the image of $T_{\sigma}$, namely the image of $\rho_v$. Item~(II) follows. 
\end{proof}
Let $w\in v^{\bot}$ be a vector such that~(1), (2), or (3) above holds. Then~\Ref{clm}{unodue} allows us to describe explicitly a neighborhood in $\cF(N)$ of the $\Gamma_{\xi}$-orbit of a very general    point  $[\sigma]\in\{v,w\}^{\bot}\cap\cD_{\Lambda_N}^{+}$. First of all, since $-1_{\Lambda_N}$ acts trivially, we must deal only with the action of 
$\Stab^0([\sigma]):=\Stab([\sigma])/\la -1_{\Lambda_N}\ra$, and secondly, by~\Ref{clm}{unodue} the germ of  $\Gamma_{\xi}[\sigma]$  in $\cF(N)$ is naturally isomorphic to the product of the smooth germ 
$(\{v,w\}^{\bot}\cap\cD_{\Lambda_N}^{+},[\sigma])$ and the  germ of  $\Stab^0([\sigma]) \backslash\la v,w\ra_{\CC}$ at the origin. Let  $(x_1,x_2)$ be coordinates on 
the vector space $\la v,w\ra_{\CC}$ corresponding to the basis $\{ v,w\}$. Thus 
\begin{equation*}
\rho^{*}_v(x_1,x_2)=(-x_1,x_2).
\end{equation*}
We claim that the following hold:
\begin{enumerate}
\item[($1''$)]
If $w$ is a nodal vector of $(\Lambda_{N-1},\xi_{N-1})$, then $\Stab^0([\sigma]) \backslash\la v,w\ra_{\CC}\cong\CC^2$ with coordinates $(y_1,y_2)$ given by $y_1=x_1^2$, $y_2=x_2^2$.
\item[($2''$)]
If $w$ is  a hyperelliptic vector  of $(\Lambda_{N-1},\xi_{N-1})$,  then  $\Stab^0([\sigma]) \backslash\la v,w\ra_{\CC}\cong\CC^2$ with coordinates  $(y_1,y_2)$ given by 
$y_1=x_1^2+x_2^2$, $y_2=x_1^2x_2^2$.
\begin{enumerate}
\item[($3''a$)]
If $w$ is  a unigonal vector  of $(\Lambda_{N-1},\xi_{N-1})$, and   $N\equiv 4\pmod{8}$, then  $\Stab^0([\sigma]) \backslash\la v,w\ra_{\CC}\cong\CC^2$ with coordinates 
  $(y_1,y_2)$ given by 
$y_1=x_1^2+x_2^2$, $y_2=x_1^2x_2^2$.
\item[($3''ba$)]
If $w$ is  a unigonal vector  of $(\Lambda_{N-1},\xi_{N-1})$, and   $N\equiv 5\pmod{8}$, then  $\Stab^0([\sigma]) \backslash\la v,w\ra_{\CC}\cong\CC^2$ with coordinates 
  $(y_1,y_2)$ given by $y_1=x_1^2$, $y_2=x_2^2$.
\item[($3''bb$)]
If $w$ is  a unigonal vector  of $(\Lambda_{N-1},\xi_{N-1})$, and   $N\not\equiv 4,5\pmod{8}$, then $\Stab^0([\sigma]) \backslash\la v,w\ra_{\CC}\cong\CC^2$ with coordinates 
  $(y_1,y_2)$ given by $y_1=x_1^2$, $y_2=x_2$.
\end{enumerate}
\end{enumerate}
In fact the above statements follow from ($1'$), ($2'$), ($3'$), \Ref{clm}{unodue}, and Items~(3), (4) of~\Ref{prp}{liftrefl}. Now we are ready to prove the multiplicity formulae~\eqref{carlo}, \eqref{emilio}, \eqref{gadda}, and~\eqref{lombardo}. Suppose that Item~(1) holds. Since the relevant nodal vector of $(\Lambda_N,\xi_N)$ is $w$ itself,
 Item~($1''$) shows that  the left-hand side of~\eqref{carlo} is equal to the intersection number at $(0,0)$ of the coordinate axes of the plane $\CC^2$ (with coordinates $(y_1,y_2)$), hence $1$. Now suppose that Item~(2) holds. Since the relevant nodal vector of $(\Lambda_N,\xi_N)$ is $(v\pm w)/2$, 
  Item~($2''$) shows that  the left-hand side of~\eqref{emilio} is equal to the intersection number at $(0,0)$ of  $V(y_1^2-4y_2),V(y_2)\subset\CC^2$, hence $2$. Next
   suppose that Item~(3) holds, and that  $N\equiv 5\pmod{8}$. Since the relevant nodal vector of $(\Lambda_N,\xi_N)$ is $w$ itself,
 Item~($3''ba$) shows that  the left-hand side of~\eqref{gadda} is equal to  $1$. Lastly, suppose that Item~(3) holds, and that  $N\not\equiv 5\pmod{8}$. The relevant  unigonal vector of $(\Lambda_N,\xi_N)$ is $(v\pm w)/2$ if $N$ is odd, and $(v\pm 2w)$ if $N$ is even (see the proof of~\Ref{prp}{invnod}); it follows  by Items~($3''a$) and~($3''bb$)  that  the left-hand side of~\eqref{lombardo} is equal to  $1$. 
 \qed
\subsubsection{Intersections with the (reflective) unigonal Heegner divisor, $N\equiv 3\pmod{8}$}\label{subsubsec:intunig3}
We will prove~\eqref{ellennenod} and~\eqref{ellennehyper}. Throughout the present subsubsection we let $N=8k+3$. 
First let us prove the set-theoretic equalities
\begin{equation}\label{sullunig3}
l_N^{-1} H_n(N)=H_n( II_{2,2+8k}),\qquad l_N^{-1} H_h(N)=\es.
\end{equation}
Let $v$ be a unigonal vector of $(\Lambda_N,\xi_N)$. Thus $v^2=-4$, $\divisore(v)=4$, and $v^{\bot}\cong II_{2,2+8k}$. 

Let $w\in v^{\bot}$ be a nodal vector, i.e.~$v^2=-2$. Then $w$ is a nodal vector of $(\Lambda_N,\xi_N)$, and hence $l_N^{-1} H_n(N)\supset H_n( II_{2,2+8k})$ by~\Ref{prp}{interhyp}. On the other hand, suppose that $w\in\Lambda_N$ is a nodal vector such that $\la v,w\ra$ is negative definite; then $w\bot v$ because the determinant of the  restriction of $(,)$ to $\la v,w\ra$ is positive, and hence $w$ is a nodal vector of $v^{\bot}$. By~\Ref{prp}{interhyp}, this proves the reverse inclusion, i.e.~ $l_N^{-1} H_n(N)\subset H_n( II_{2,2+8k})$. This finishes the proof of the first equality 
of~\eqref{sullunig3}. 

The second equality of~\eqref{sullunig3} follows from~\eqref{setunig}, because $H_u(2)=0$. 

It remains to prove that $\mult_{l_N(H_n(II_{2,2+8k}))}H_u(N)\cdot H_n(N)=1$. The proof is analogous to the proof of~\eqref{carlo}; we leave details to the reader.
\qed
\subsubsection{Intersections with the (reflective) unigonal Heegner divisor, $N\equiv 4\pmod{8}$}\label{subsubsec:intunig4}
We will prove~\eqref{emmennenod} and~\eqref{emmennehyper}. Throughout the present subsubsection we let $N=8k+4$. First let us prove the set-theoretic equalities
\begin{equation}\label{sullunig4}
\scriptstyle
 m_N^{-1} H_n(N)  =  H_n( II_{2,2+8k}\oplus A_1),\quad 
 m_N^{-1} H_h(N)   =  H_u( II_{2,2+8k}\oplus A_1).
\end{equation}
Let $v$ be a unigonal vector of $(\Lambda_N,\xi_N)$. Thus $v^2=-2$, $\divisore(v)=2$, $v^{\bot}\cong II_{2,2+8k}\oplus A_1$, and $\Lambda_N=\la v\ra\oplus v^{\bot}$. 

If $w\in v^{\bot}$ is a nodal vector, then $w$ is a nodal vector of $(\Lambda_N,\xi_N)$, and hence $m_N^{-1} H_n(N)\supset H_n( II_{2,2+8k}\oplus A_1)$ by~\Ref{prp}{interhyp}. On the other hand, let $w\in\Lambda_N$ be a nodal vector such that $\la v,w\ra$ is negative definite. Then $w\bot v$ because the determinant of the restriction of $(,)$ to $\la v,w\ra$ is positive. Moreover   $\divisore_{v^{\bot}}(w)=1$ because of the direct-sum decomposition $\Lambda_N=\la v\ra\oplus v^{\bot}$. Thus $w$ is a nodal vector of $v^{\bot}$. Thus $m_N^{-1} H_n(N)\subset H_n( II_{2,2+8k}\oplus A_1)$ by~\Ref{prp}{interhyp}. This finishes the proof of the first equality of~\eqref{sullunig4} 

Let us prove that $m_N^{-1} H_h(N)\supset  H_u( II_{2,2+8k}\oplus A_1)$. We may identify $\Lambda_N$ with $II_{2,2+8k}\oplus D_2$. Let 
$w:=(0_{4+8k},(0,2))$. Then $w^2=-4$ and $\divisore(w)=2$; thus we may choose $\xi_N=[w/2]$. Then $w$ is a hyperellitpic vector of 
$(\Lambda_N,\xi_N)$. Let $v=(1,1)$; then $v$ is a unigonal vector of $(\Lambda_N,\xi_N)$.  Let $z=(1,-1)$; then $z$ is a unigonal vector of $v^{\bot}$. Since $v^{\bot}\cap w^{\bot}=v^{\bot}\cap z^{\bot}$, it follows that  $m_N^{-1} H_h(N)\supset  H_u( II_{2,2+8k}\oplus A_1)$. 

We will prove the reverse inclusion 
by applying~\Ref{prp}{interhyp}. Suppose that $v,w$ are respectively a  unigonal and a hyperelliptic vector of $(\Lambda_N,\xi_N)$, and that $\la v,w\ra$ is negative definite. Since the determinant of the restriction of $(,)$ to $\la v,w\ra$ is positive, either $v\bot w$, or $(v,w)=\pm 2$.   If $w\in v^{\bot}$, then $\divisore_{v^{\bot}}(w)=2$; since there is no vector of square $-4$ and divisibility $2$ in $v^{\bot}\cong II_{2,2+8k}\oplus A_1$, this is a contradiction. Thus  $(v,w)=\pm 2$, and hence we may assume that $(v,w)=2$. Then $(v+w)\in v^{\bot}$, and $(v+w)^2=-2$, $\divisore_{v^{\bot}}(v+w)=2$. Thus $(v+w)$ is a unigonal vector of $v^{\bot}$. By~\Ref{prp}{interhyp}, this proves that $m_N^{-1} H_h(N)\subset  H_u( II_{2,2+8k}\oplus A_1)$. We have completed the proof of~\eqref{sullunig4}.

 It remains to show that 
 \begin{equation}\label{multipemme}
\scriptstyle
\mult_{m_N(H_n(II_{2,2+8k}\oplus  A_1))}H_n(N)\cdot H_u(N)=1,\quad \mult_{m_N(H_u(II_{2,2+8k}\oplus  A_1))}H_h(N)\cdot H_u(N)=2.
\end{equation}
  The proof of the first equality is analogous to the proof of~\eqref{carlo}; we leave details to the reader. Next, we notice that  
 \begin{equation*}
f_N(H_u(N-1))=m_N(H_u(II_{2,2+8k}\oplus A_1)) 
\end{equation*}
by~\eqref{setunig} and~\eqref{sullunig4}, and hence 
  the second equality of~\eqref{multipemme} follows from~\eqref{lombardo}. 
\qed
\subsubsection{Intersections with the (non reflective) unigonal Heegner divisor, $N\equiv 5\pmod{8}$}\label{subsubsec:intunig5}
We will prove~\eqref{quennenod} and~\eqref{quennehyper}.  Throughout the present subsubsection we let $N=8k+5$. First let us show that
\begin{equation}\label{sullunig5}
\scriptstyle
 q_N^{-1} H_n(N)  =  H_n( II_{2,2+8k}\oplus A_2)\cup  H_u( II_{2,2+8k}\oplus A_2),\quad 
 q_N^{-1} H_h(N)   =  H_u(II_{2,2+8k}\oplus A_2).
\end{equation}
Let $v$ be a unigonal vector of $(\Lambda_N,\xi_N)$. If needed, we may assume that we are given an identification $\Lambda_N\cong \II_{2,2+8k}\oplus D_3$, and $v=(0_{4+8k},(2,2,2))$. 

If $w\in v^{\bot}$ is a nodal vector, then $w$ is a nodal vector of $(\Lambda_N,\xi_N)$, and hence $q_N^{-1} H_n(N)\supset H_n( II_{2,2+8k}\oplus A_2)$ by~\Ref{prp}{interhyp}. 
Next, let $v$ be the explicit unigonal vector defined above, and let $z:=(0_{4+8k},(-1,-1,2))$; thus $z$  is a unigonal vector of $v^{\bot}$. Let
$w:=(0_{4+8k},(-1,-1,0))$; thus $w$ is a nodal vector of $\Lambda$ (notice that $(v,w)=4$). Since
 $z-v=3w$, we get  that  $q_N^{-1} H_n(N)\supset H_u(II_{2,2+8k}\oplus A_2)$. 

On the other hand, let $w\in\Lambda_N$ be a nodal vector such that $\la v,w\ra$ is negative definite. Then $(w, v)\in\{0,\pm 4\}$ because the determinant of the restriction of $(,)$ to $\la v,w\ra$ is positive. If $(w,v)=0$, then $w$ is a nodal vector of $v^{\bot}$, if $(w,v)\in\{\pm 4\}$, we may assume that $(w,v)=4$, and then $v+3w$ is a unigonal vector of $v^{\bot}$.  This shows that $q_N^{-1} H_n(N)\subset H_n( II_{2,2+8k}\oplus A_2)\cup  H_u( II_{2,2+8k}\oplus A_2)$, and  finishes the proof of the first equality of~\eqref{sullunig5}.  

Let us prove  the second equality of~\eqref{sullunig5}.  We have
\begin{equation}\label{ballottaggio}
\scriptstyle
q_N^{-1} H_h(N)=q_N^{-1} (f_N(H_u(N-1))=q_N^{-1} (\im (f_N\circ m_{N-1})=q_N^{-1} (\im (q_N\circ r_{N-1})=\im r_{N-1}=H_u( II_{2,2+8k}\oplus A_2).
\end{equation}
In fact, the first equality holds by~\eqref{effennehyper}, and the third equality holds by~\Ref{clm}{triangolo}.

It remains to prove  that 
\begin{eqnarray}
\scriptstyle\mult_{q_N(H_n(\II_{2,2+8k}\oplus A_2))}H_n(N)\cdot H_u(N) & \scriptstyle= & \scriptstyle  1,\label{unus}\\
\scriptstyle\mult_{q_N(H_u(\II_{2,2+8k}\oplus A_2))}H_n(N)\cdot H_u(N) & \scriptstyle= & \scriptstyle 2,\label{duo}\\ 
\scriptstyle\mult_{q_N(H_u(\II_{2,2+8k}\oplus A_2))}H_h(N)\cdot H_u(N) & \scriptstyle= & \scriptstyle 2. \label{tres}
\end{eqnarray}
These intersection multiplicities are computed by methods that we have already employed; we leave details to the reader. 
We notice that Equation~\eqref{tres} also follows directly from~\eqref{lombardo}, since, as noted above, $f_N(H_u(N-1))=q_N(H_u(\II_{2,2+8k}\oplus A_2))$. 

\subsection{Normal bundle formulae}\label{subsec:normale}
\setcounter{equation}{0}
In the present subsection we will prove the following result.
\begin{proposition} 
Let  $4\le N$, and let $f_N:\cF(N-1)\overset{\sim}{\lra} H_u(N)$ be the isomorphism of~\Ref{prp}{hyplocsymm}. Then, the following equalities hold:
\begin{equation}
\scriptstyle  f_N^{*}H_h(N)  \scriptstyle  =  \scriptstyle  -2\lambda(N-1)+H_h(N-1)+\begin{cases}
\scriptstyle 0 & \scriptstyle \text{if $N\not\equiv 4\pmod{8}$} \\
\scriptstyle H_u(N-1) & \scriptstyle \text{if $N\equiv 4\pmod{8}$}
\end{cases}, \label{normalhyper} 
\end{equation}
Similarly, let $l_{8k+3}\colon\cF(\II_{2,2+8k})\overset{\sim}{\lra} H_u(8k+3)$, $m_{8k+4}\colon \cF(\II_{2,2+8k}\oplus A_1)\overset{\sim}{\lra} H_h(8k+4)$, and 
$q_{8k+5}\colon \cF(\II_{2,2+8k}\oplus A_2)\overset{\sim}{\lra} H_h(8k+5)$ be the isomorphisms in~\eqref{ellekappa}, \eqref{emmekappa}, and~\eqref{qukappa}. Then
\begin{eqnarray}
\scriptstyle   l_{8k+3}^{*}H_u(8k+3) & \scriptstyle  = & \scriptstyle  -2\lambda (\II_{2,2+8k}), \label{normunig3} \\
\scriptstyle m_{8k+4}^{*}H_u(8k+4) & \scriptstyle  = & \scriptstyle  -2\lambda (\II_{2,2+8k}\oplus A_1)+H_u(\II_{2,2+8k}\oplus A_1),\label{normunig4} \\
\scriptstyle q_{8k+5}^{*}H_u(8k+5) & \scriptstyle  = & \scriptstyle  -\lambda (\II_{2,2+8k}\oplus A_2)+\frac{3}{2}H_u(\II_{2,2+8k}\oplus A_2).\label{normunig5}
\end{eqnarray}
\end{proposition}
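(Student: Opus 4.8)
All four identities are normal bundle computations for the inclusion of a (reflective) Heegner divisor in its ambient $D$ (or $II_{2,m}$, or $II_{2,m}\oplus A_i$) locally symmetric variety, so the plan is to run a single adjunction argument and then feed in the Borcherds/Gritsenko-type relations already established. The common template is: for $X=\cF(N)$ (or one of the $II$-variants) and a prime Heegner divisor $H\subset X$ with $H\cong Y$ a lower-dimensional locally symmetric variety via one of the isomorphisms $f_N$, $l_{8k+3}$, $m_{8k+4}$, $q_{8k+5}$, we have $K_X = (\dim X)\lambda_X - \tfrac12\sum_{\text{ram}} R_i$, where the $R_i$ are the ramification (reflective Heegner) divisors of the quotient map $\cD^+\to X$, each appearing with coefficient $\tfrac12$ because the corresponding reflections act with order $2$. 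Restricting the adjunction formula $K_H = (K_X + H)|_H$ and using that $K_Y = (\dim Y)\lambda_Y - \tfrac12\sum R_j'$ (for the ramification divisors of $\cD^+_{v^\bot}\to Y$), one solves for $H|_H = f^*H$ in terms of $\lambda_Y$ and the Heegner divisors of $Y$. The inputs needed are: (i) the list of ramification divisors on $X$ and on $Y$, supplied by \Ref{crl}{reflheeg}, \Ref{clm}{reflheeg2}, and the analogous statements for $II_{2,2+8k}\oplus A_i$; (ii) the pullback formulae \eqref{effennehyper}--\eqref{quennehyper} for how the reflective Heegner divisors of $X$ restrict to $H$; and (iii) the fact that $f^*\lambda_X = \lambda_Y$, $l^*\lambda_X=\lambda_Y$, etc., since the Hodge bundle is the descent of $\cO(-1)$ from the ambient projective space and the embedding $\cD^+_{v^\bot}\hra\cD^+_\Lambda$ is linear.

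\textbf{Order of steps.} First I would treat the hyperelliptic case \eqref{normalhyper}. Write out $K_{\cF(N)} = N\lambda(N) - \tfrac12 H_n(N) - \tfrac12 H_h(N) - \tfrac12\tau H_u(N)$ where the $H_u$ term is present exactly when $N\equiv 3,4\pmod 8$ (by \Ref{crl}{reflheeg}), and similarly $K_{\cF(N-1)} = (N-1)\lambda(N-1) - \tfrac12 H_n(N-1) - \tfrac12 H_h(N-1) - \tfrac12\tau' H_u(N-1)$. Apply $K_{H_h(N)} = (K_{\cF(N)}+H_h(N))|_{H_h(N)}$, pull back along $f_N$, substitute $f_N^*\lambda(N)=\lambda(N-1)$, $f_N^*H_n(N)$ from \eqref{effennehyper}, $f_N^*H_u(N)$ from \eqref{effenneunig}, and equate with the expression for $K_{\cF(N-1)}$; the terms $H_n(N-1)$ and the $-2\lambda$ coefficient drop out from the arithmetic, and $f_N^*H_h(N)$ is isolated. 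One must be careful with the case distinctions mod $8$ governing which $H_u$ terms appear both upstairs ($\tau(N)$) and downstairs ($\tau(N-1)$), and this is precisely where the $H_u(N-1)$ correction term in \eqref{normalhyper} for $N\equiv 4\pmod 8$ materializes: there $\cF(N-1)$ has $N-1\equiv 3\pmod 8$ so $H_u(N-1)$ is reflective downstairs, while $H_u(N)$ is reflective upstairs but pulls back to $2H_u(N-1)$, creating an asymmetry. Then I would run the identical argument for the three unigonal cases \eqref{normunig3}, \eqref{normunig4}, \eqref{normunig5}, using that on $II_{2,2+8k}$ only the nodal divisor is reflective, on $II_{2,2+8k}\oplus A_1$ both nodal and unigonal are reflective, and on $II_{2,2+8k}\oplus A_2$ nodal and unigonal are reflective — feeding in \eqref{ellennenod}--\eqref{ellennehyper}, \eqref{emmennenod}--\eqref{emmennehyper}, \eqref{quennenod}--\eqref{quennehyper} for the restrictions of the nodal divisor. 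The factor $\tfrac12$ vs.\ $\tfrac32$ in \eqref{normunig5} traces back to the ramification index $3$ appearing in $A_2$ (the unigonal vectors of $II_{2,2+8k}\oplus A_2$ have square $-12$ and divisibility $3$, and $O(A_2)$ acts with order-$3$ rotations on the corresponding arrangement — this is exactly the phenomenon in the $r=3$ case of \Ref{prp}{unigcong5}).

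\textbf{Main obstacle.} The routine part is the linear algebra of solving the adjunction identity; the genuinely delicate point is justifying the \emph{coefficients} in the ramification formula for $K_{\cF(N)}$ near the generic point of each Heegner divisor, i.e.\ verifying that at a very general point of $H_h(N)$ (resp.\ $H_u(N)$) the quotient singularity is a simple $\mathbb{Z}/2$ (resp.\ $\mathbb{Z}/2$ or $\mathbb{Z}/3$) reflection quotient transverse to the divisor, so that the branch divisor really contributes with coefficient $\tfrac12$ (resp.\ $\tfrac23$ in the $A_2$ case). This is controlled by \Ref{prp}{hyplocsymm}, \Ref{prp}{unigcong3}, \Ref{prp}{unigcong4}, \Ref{prp}{unigcong5} (the statements that $H_h\cap\sing\cF$, resp.\ $H_u\cap\sing\cF$, has codimension $\ge 2$ in $H_h$, resp.\ $H_u$), together with the local stabilizer computations in their proofs — in particular \Ref{clm}{unodue} and the explicit coordinate descriptions ($1''$)--($3''$) in \Ref{subsubsec}{inthyp}. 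The $A_2$ case \eqref{normunig5} is the subtlest because the order-$3$ stabilizer makes the discrepancy computation non-standard; I would handle it by pulling back further along $r_{8k+4}$ (using \Ref{clm}{triangolo}) to reduce to the already-understood $II_{2,2+8k}\oplus A_1$ situation, where all ramification is order $2$. Once the local structure is pinned down, the global identities follow by comparing $\mathbb{Q}$-divisor classes, using \Ref{prp}{oldresult} and its analogues that the Heegner divisors together with $\lambda$ span the relevant Picard groups.
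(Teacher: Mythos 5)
Your proposal is correct and follows essentially the same route as the paper: adjunction $K_H=(K_X+H)|_H$ combined with the Riemann--Hurwitz canonical class formula $K=N\lambda-\tfrac12 B$ (\Ref{crl}{propriemhur}, \Ref{crl}{apercan}) and the restriction formulae \eqref{effennehyper}--\eqref{quennehyper}, including the correct bookkeeping of which Heegner divisors are reflective on each space. The only inaccuracy is a side gloss: the coefficient $\tfrac32$ in \eqref{normunig5} does not come from order-$3$ rotations in $A_2$, but simply from the fact that $H_u(8k+5)$ is non-reflective, so it contributes no $-\tfrac12 H_u$ term to $K_{\cF(8k+5)}$ and adjunction adds the full $+H_u$ (whence also $-\lambda$ rather than $-2\lambda$); this does not affect the validity of the computation you describe.
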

\subsubsection{Adjunction}
Let $\cF=\cF_{\Lambda}(\Gamma)$ be a locally symmetric variety of Type IV, notation as in~\Ref{subsec}{spaziperiodi}. We let $N:=\dim\cF$. Since  $\cF$ has quotient singularities,  the canonical bundle of $\cF$  is a well-defined element  $K_{\cF}\in \Pic(\sF_{\Lambda}(\Gamma))_{\QQ}$. Let $\lambda_{\cF}$ be the automorphic $\QQ$-line bundle on $\cF$. Thus sections of  $\lambda^{\otimes d}_{\cF}$ are identified with weight-$d$ automorphic forms on $\cD^{+}_{\Lambda}$, and, letting $\pi\colon\cD^{+}_{\Lambda}\to\cF$ be the qutient map, $\pi^{*}\lambda_{\cF}\cong \cO_{\cD^{+}_{\Lambda}}(-1)$. 

 If $\Gamma$ acts freely on $\sD^{+}_{\Lambda}$ (and hence $\sF$  is smooth), then 
\begin{equation}\label{cansmooth}
K_{\cF}=N\lambda_{\cF}.
\end{equation}
In fact the above formula follows by descent from the analogous formula  $K_{\cD^{+}_{\Lambda}}=N\cO_{\cD^{+}_{\Lambda}}(-1)$ (adjunction formula for a smooth quadric in $\PP^{N+1}$). In general, there will be a correction term  coming from the ramification of the map $\pi\colon\cD^{+}_{\Lambda}\to \cF$.  
\begin{definition}
Let $B(\cF)$ be the divisor on $\cF$ which is the sum of all Heegner divisors  $H_{v,\Lambda}(\Gamma)$ 
 where  $v\in\Lambda$ is primitive, $v^2<0$, $\pm\rho_v\in\Gamma$, where only  \emph{one} hyperplane appears for  each couple $\{v,-v\}$. 
\end{definition}
\begin{proposition}\label{prp:propriemhur}
Keep notation as above.  
Then  in $\Pic(\cF)_{\QQ}$
\begin{equation}\label{canoneffe}
K_{\cF}=N\lambda_{\cF}-\frac{1}{2}B(\cF).
\end{equation}
\end{proposition}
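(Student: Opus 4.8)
\textbf{Proof proposal for Proposition~\ref{prp:propriemhur}.}

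The plan is to deduce the formula by descent from a Hurwitz-type formula for the ramified cover $\pi\colon\cD^{+}_{\Lambda}\to\cF$, carefully isolating the contribution of the ramification divisor. First I would pass to a small neighbourhood: since both sides of~\eqref{canoneffe} are $\QQ$-Cartier divisor classes, it suffices to check the equality in $\Pic(\cF)_{\QQ}$, so I may freely throw away closed subsets of codimension $\ge 2$. As in~\Ref{subsubsec}{divfour}, choose a finite-index normal subgroup $\Gamma_0\lhd\Gamma$ acting freely on $\cD^{+}_{\Lambda}$, with $X_0:=\cF_{\Lambda}(\Gamma_0)$ smooth and $\rho\colon X_0\to\cF$ the quotient by the finite group $G:=\Gamma/\Gamma_0$. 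On $X_0$ one has $K_{X_0}=N\lambda_{X_0}$ by~\eqref{cansmooth}, where $\lambda_{X_0}$ is the pullback of $\lambda_{\cF}$. The key point is then to compare $K_{X_0}$ with $\rho^{*}K_{\cF}$: away from codimension $2$, the branch locus of $\rho$ in $\cF$ is a union of prime Heegner divisors, and the relevant ramification is exactly the one coming from reflections.

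Next I would analyze, Zariski-locally in codimension $1$ on $\cF$, which divisors occur in the branch locus and with what ramification index. A generic point $p$ of a prime Heegner divisor $H_{v,\Lambda}(\Gamma)$ has stabilizer in $\Gamma$ (modulo $\pm1$, which acts trivially on $\cD^{+}_{\Lambda}$) that is either trivial or generated by a single reflection $\rho_v$ (this is the local picture already used in the proofs of~\Ref{prp}{hyplocsymm}, \Ref{prp}{manyhyp}, etc.): indeed, an element of $\Gamma$ fixing a codimension-$1$ locus pointwise and not equal to $\pm\id$ must act as a reflection in the normal direction, hence be $\pm\rho_w$ for a primitive $w\in\Lambda$ of negative square with $\rho_w\in\Gamma$, and the fixed hyperplane forces $w\in\QQ v$, so $w=v$ up to sign. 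Therefore the only Heegner divisors over which $\pi$ (equivalently $\rho$) ramifies in codimension $1$ are precisely those $H_{v,\Lambda}(\Gamma)$ with $\pm\rho_v\in\Gamma$ — i.e.\ the components of $B(\cF)$ — and along each such component the ramification is simple ($e=2$), because $\rho_v$ has order $2$. (One small check: distinct primitive $v$ up to sign give distinct hyperplanes, so there is no double-counting; this is why $B(\cF)$ is defined with one hyperplane per pair $\{v,-v\}$.) At the generic point of any other prime Heegner divisor, and at the generic point of $\cF$, the map is étale.

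Then I would assemble the Hurwitz formula. Working in codimension $1$, the standard ramification formula for the finite separable morphism $\rho\colon X_0\to\cF$ between normal varieties reads $K_{X_0}=\rho^{*}K_{\cF}+R$, where $R$ is the ramification divisor, supported on $\rho^{-1}(B(\cF))$ with coefficient $e-1=1$ along each component lying over $B(\cF)$. Pushing forward by $\rho$ and using $\rho_{*}\rho^{*}=(\deg\rho)\cdot$, together with $\rho_{*}R=\tfrac12\sum (\text{components of }\rho^{-1}B(\cF))$ pushed down — each component of $B(\cF)$ is covered with ramification $2$, so its preimage maps $2$-to-$1$ onto it generically over an open set, contributing $\rho_{*}(1\cdot\rho^{-1}H_v)=(\deg\rho/2)\cdot 2\cdot\tfrac{1}{2}H_v$ after dividing by $\deg\rho$; more cleanly, dividing the displayed equality by $\deg\rho$ and using $\rho_{*}\rho^{*}\alpha=(\deg\rho)\alpha$ and $\rho_{*}(\text{preimage of }H_v)=(\deg\rho)\cdot\tfrac12 H_v$ yields $\lambda$-terms $N\lambda_{\cF}=K_{\cF}+\tfrac12 B(\cF)$. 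Rearranging gives~\eqref{canoneffe}. The main obstacle, and the step I would be most careful about, is the bookkeeping of multiplicities when several reflection hyperplanes meet — but since we only need the equality in $\Pic(\cF)_{\QQ}$ we may restrict to the complement of the codimension-$2$ intersection strata $\Delta^{(2)}$-type loci (cf.~\Ref{subsubsec}{arrstrata}), where each component of $B(\cF)$ is generically a clean simple-ramification divisor, and the localization sequence for Chow groups guarantees that a relation among divisor classes holding on the complement of a codimension-$2$ set holds on all of $\cF$. This reduces the proof to the clean codimension-$1$ Hurwitz computation sketched above.
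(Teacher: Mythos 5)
Your proof is correct and follows essentially the same route as the paper: pass to a free finite-index quotient, use $K_{X_0}=N\lambda_{X_0}$, and apply Riemann--Hurwitz with simple ramification along the reflective Heegner divisors, yielding $K_{X_0}=\rho^{*}(K_{\cF}+\tfrac12 B(\cF))$. The only difference is that you argue directly that the ramification divisor is supported exactly on the reflective hyperplanes with index $2$, where the paper simply cites \cite[Cor.~2.13]{ghs} for this identification.
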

\begin{proof}
The ramification  divisor of the quotient map $\pi\colon\cD^{+}_{\Lambda}\to \cF$ is the sum  of the pre-Heegner divisors $\cH_{v,\Lambda}(\Gamma)$ 
 where  $v\in\Lambda$ is primitive, $v^2<0$, $\pm\rho_v\in\Gamma$, where only  \emph{one} hyperplane appears for  each couple $\{v,-v\}$. 
 In fact this is \cite[Cor.~2.13]{ghs}. Now let $\Gamma_0\triangleleft\Gamma$ be a finite index  normal subgroup acting freely on $\cD^{+}_{\Lambda}$, let $\cF_0:=\Gamma_0\backslash\cD^{+}_{\Lambda}$, and let $\pi_0\colon\cD^{+}_{\Lambda}\to \cF_0$ be the quotient map. The ramification divisor of the finite Galois cover 
 $\cF_0\to \cF$ is identified with the image by $\pi_0\colon\cD^{+}_{\Lambda}\to \cF_0$ of the ramification  divisor of  $\pi\colon\cD^{+}_{\Lambda}\to \cF$.  
Equation~\eqref{cansmooth} and Riemann-Hurwitz applied to  $\cF_0\to \cF$ give the proposition.  
\end{proof}
As before, let $B(N)$ be the Weil divisor on $\cF(N)$ defined by
\begin{equation}\label{branchen}
B(N)=H_n(N)+2\Delta(N)= \begin{cases}
H_n(N)+H_h(N) & \text{if $N\not\equiv 3,4\pmod{8}$,}\\
H_n(N)+H_h(N) +H_u(N) & \text{if $N\equiv 3,4\pmod{8}$.}
\end{cases}
\end{equation}
By \Ref{crl}{reflheeg}, we see that $B(N)$ is the branch divisor for $\calF(N)$ (i.e. $B(N)=B(\cF(\Lambda,\xi))$ for $(\Lambda,\xi)$ a dimension $N$ decorated $D$ lattice). Thus, 
\begin{corollary}\label{crl:propriemhur}
Keeping notation as above, 
\begin{equation}\label{eqcanonicalclass}
K_{\cF(N)}=N \lambda(N)- \frac{1}{2}B(N).
\end{equation}
\end{corollary}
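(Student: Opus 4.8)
The statement is an immediate consequence of \Ref{prp}{propriemhur} together with the computation of the branch divisor carried out earlier in the paper. Concretely, by \Ref{prp}{propriemhur} applied to the locally symmetric variety $\cF(N) = \cF(\Lambda_N,\xi_N)$, we have in $\Pic(\cF(N))_\QQ$ the identity
\begin{equation*}
K_{\cF(N)} = N\lambda(N) - \tfrac{1}{2}B(\cF(\Lambda_N,\xi_N)),
\end{equation*}
where $B(\cF(\Lambda_N,\xi_N))$ is the reduced sum of all Heegner divisors $H_{v,\Lambda_N}(\Gamma_{\xi_N})$ associated to primitive $v\in\Lambda_N$ with $v^2<0$ and $\pm\rho_v\in\Gamma_{\xi_N}$ (one hyperplane per pair $\{v,-v\}$). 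So the entire content to be verified is the equality of Weil divisors $B(\cF(\Lambda_N,\xi_N)) = B(N)$, with $B(N)$ as defined in~\eqref{branchen}.

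The plan is therefore to identify the reflective Heegner divisors of $(\Lambda_N,\xi_N)$. First I would invoke \Ref{prp}{propram}, which classifies exactly when $\rho_v\in\Gamma_{\xi_N}$ for primitive $v$ of negative square: this happens precisely when $v$ is nodal, hyperelliptic, or ($N\equiv 3,4\pmod 8$ and $v$ unigonal). Passing from vectors to the Heegner divisors they cut out, and using \Ref{prp}{minorm} to see that the nodal, hyperelliptic, and (reflective) unigonal vectors each form a single $\Gamma_{\xi_N}$-orbit, one concludes that the only reflective Heegner divisors are $H_n(N)$, $H_h(N)$, and — when $N\equiv 3,4\pmod 8$ — $H_u(N)$; this is precisely the content of \Ref{crl}{reflheeg}. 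Hence $B(\cF(\Lambda_N,\xi_N))$ is the reduced sum $H_n(N)+H_h(N)$ if $N\not\equiv 3,4\pmod 8$ and $H_n(N)+H_h(N)+H_u(N)$ if $N\equiv 3,4\pmod 8$, which is exactly the right-hand side of~\eqref{branchen}. Finally, one checks that this agrees with $H_n(N)+2\Delta(N)$ by substituting the definition of $\Delta(N)$ from~\eqref{deltaforce}: when $N\not\equiv 3,4\pmod 8$, $2\Delta(N) = H_h(N)$; when $N\equiv 3,4\pmod 8$, $2\Delta(N) = H_h(N)+H_u(N)$. Plugging $B(N)$ into the formula of \Ref{prp}{propriemhur} yields~\eqref{eqcanonicalclass}.

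There is essentially no obstacle here: the corollary is a bookkeeping assembly of \Ref{prp}{propriemhur}, \Ref{crl}{reflheeg}, and the definition~\eqref{deltaforce}. The one point that merits a sentence of care is that \Ref{prp}{propriemhur} produces the branch divisor as the image under the quotient map of the ramification divisor of $\pi\colon\cD^+_{\Lambda_N}\to\cF(N)$, which by \cite[Cor.~2.13]{ghs} is the union of the pre-Heegner hyperplanes $\cH_{v,\Lambda_N}(\Gamma_{\xi_N})$ for primitive $v$ with $\pm\rho_v\in\Gamma_{\xi_N}$; one must note that the coefficient of each $H_v(N)$ in $B(\cF)$ is $1$ (the divisor $B(\cF)$ is \emph{reduced} by construction), so no multiplicities intervene and the match with the reduced divisor $B(N)$ of~\eqref{branchen} is clean. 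With that remark the proof is complete.
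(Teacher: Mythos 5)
Your proof is correct and is essentially the paper's own argument: the paper derives the corollary in one line by noting that \Ref{crl}{reflheeg} identifies $B(N)$ with the branch divisor $B(\cF(\Lambda_N,\xi_N))$ of \Ref{prp}{propriemhur}, exactly as you do. Your extra remark that $B(\cF)$ is reduced and that each reflective class forms a single $\Gamma_{\xi_N}$-orbit (via \Ref{prp}{minorm}) is a correct and worthwhile clarification of the bookkeeping, but it does not change the route.
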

%
%
%\begin{proof}
%
%By~\Ref{prp}{propriemhur} it suffices to prove that . Since $-1\in\Gamma_{\xi}$,  $\pm\rho_v\in\Gamma_{\xi}$ if and only if  $\rho_v\in\Gamma_{\xi}$.
%Thus  $B(N)=B(\cF(\Lambda,\xi)$ holds by~\Ref{prp}{propram}.
%
%\end{proof}
%
Let $\lambda(II_{2,2+8k}):=\lambda_{\cF(II_{2,2+8k})}$, $\lambda(II_{2,2+8k}\oplus A_1):=\lambda_{\cF(II_{2,2+8k}\oplus A_1)}$, and $\lambda(II_{2,2+8k}\oplus A_2):=\lambda_{\cF(II_{2,2+8k}\oplus A_2)}$.
The proof of the result below is omitted,  because it is analogous to the proof of~\Ref{crl}{propriemhur}.
\begin{corollary}\label{crl:apercan}
Keep notation as above. The following equalities hold in $\Pic(\cF(II_{2,2+8k}))_{\QQ}$,  $\Pic(\cF(II_{2,2+8k}\oplus A_1)_{\QQ}$,  and  $\Pic(\cF(II_{2,2+8k}\oplus A_2)_{\QQ}$ respectively:
\begin{eqnarray*}
\scriptstyle K_{\cF(II_{2,2+8k})} & \scriptstyle = &  \scriptstyle (8k+2) \lambda(II_{2,2+8k})- \frac{1}{2}H_n(II_{2,2+8k}),\\
\scriptstyle K_{\cF(II_{2,2+8k}\oplus A_1)} & \scriptstyle = & \scriptstyle (8k+3) \lambda(II_{2,2+8k}\oplus A_1)
- \frac{1}{2}H_n(II_{2,2+8k}\oplus A_1)- \frac{1}{2} H_u(II_{2,2+8k}\oplus A_1), \\
\scriptstyle K_{\cF(II_{2,2+8k}\oplus A_2)} & \scriptstyle = & \scriptstyle (8k+4) \lambda(II_{2,2+8k}\oplus A_2)
- \frac{1}{2}H_n(II_{2,2+8k}\oplus A_2)- \frac{1}{2} H_u(II_{2,2+8k}\oplus A_2).
\end{eqnarray*}
\end{corollary}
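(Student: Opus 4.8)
The final statement to prove is \Ref{crl}{apercan}, the canonical bundle formulae for the three auxiliary locally symmetric varieties $\cF(\II_{2,2+8k})$, $\cF(\II_{2,2+8k}\oplus A_1)$, and $\cF(\II_{2,2+8k}\oplus A_2)$.

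The plan is to apply \Ref{prp}{propriemhur} to each of the three locally symmetric varieties in turn, exactly as \Ref{crl}{propriemhur} was deduced for $\cF(N)$. The only genuine content beyond invoking \eqref{canoneffe} is to identify the branch divisor $B(\cF)$ in each case, i.e.\ to determine which primitive negative-norm $v$ with $\pm\rho_v$ in the relevant arithmetic group give rise to the ramification. First I would record the dimensions: $\dim\cF(\II_{2,2+8k})=8k+2$, $\dim\cF(\II_{2,2+8k}\oplus A_1)=8k+3$, and $\dim\cF(\II_{2,2+8k}\oplus A_2)=8k+4$, which produce the leading coefficients $N\lambda$ in each formula.

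The main work is the branch divisor computation, and it splits according to the group being used. For $\cF(\II_{2,2+8k})=\cF_{\II_{2,2+8k}}(O^+(\II_{2,2+8k}))$: since $\II_{2,2+8k}$ is unimodular, $\wt{O}^+=O^+$, and the only reflections $\rho_v$ in $O^+$ come from $v^2=-2$ (by \Ref{expl}{riflessione}, one needs $v^2\mid 2\divisore(v)=2$), so the only ramification divisor is $\cH_n(\II_{2,2+8k})$, giving $B(\cF(\II_{2,2+8k}))=H_n(\II_{2,2+8k})$. For $\cF(\II_{2,2+8k}\oplus A_1)=\cF(\II_{2,2+8k}\oplus A_1)(O^+(\II_{2,2+8k}\oplus A_1))$: here $A_{\Lambda}\cong\ZZ/(2)$ with the generator of square $-1/2$, and $O^+=\wt{O}^+$, so $\rho_v\in O^+$ exactly when $v^2\mid 2\divisore(v)$ with $\divisore(v)\in\{1,2\}$; this forces $v^2=-2$ and $\divisore(v)\in\{1,2\}$ (the case $v^2=-4$, $\divisore=2$ is excluded since the discriminant form of $\II_{2,2+8k}\oplus A_1$ has no element of square $-1$), so the ramification divisors are exactly the nodal and unigonal Heegner divisors, giving $B=H_n(\II_{2,2+8k}\oplus A_1)+H_u(\II_{2,2+8k}\oplus A_1)$. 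For $\cF(\II_{2,2+8k}\oplus A_2)$ with the group $\wt{O}^+(\II_{2,2+8k}\oplus A_2)$: here $A_\Lambda\cong\ZZ/(3)$; the nodal reflections (with $v^2=-2$, $\divisore(v)=1$) lie in $\wt{O}^+$, and one checks as noted after \Ref{prp}{unigcong5} that the reflection in a unigonal vector ($v^2=-12$, $\divisore(v)=3$) also lies in $\wt{O}^+$ — indeed $v^2=-12$ divides $2\divisore(v)=6$? No: so one must instead argue via the explicit isometry (as in the proof of~\Ref{clm}{reflheeg2}, Item~(3)) that $\rho_v$ acts trivially on $A_\Lambda$; these are the only two classes of reflective vectors, giving $B=H_n(\II_{2,2+8k}\oplus A_2)+H_u(\II_{2,2+8k}\oplus A_2)$.

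Substituting each branch divisor into \eqref{canoneffe} yields the three displayed formulae. The step I expect to be the mild obstacle is the last one: verifying that $\pm\rho_v\in\wt{O}^+(\II_{2,2+8k}\oplus A_2)$ for a unigonal vector $v$ and that \emph{no other} reflective vectors contribute (in particular ruling out vectors of square $-2$ and divisibility $3$, or other divisibilities), since the divisibility condition $v^2\mid 2\divisore(v)$ from \Ref{expl}{riflessione} is only a necessary condition when $v$ is primitive and one must then separately check the action on the discriminant group; but this is entirely parallel to the reflectivity analysis already carried out in \Ref{prp}{propram} and \Ref{clm}{reflheeg2}, so it poses no real difficulty. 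Hence I would simply write ``the proof is analogous to that of~\Ref{crl}{propriemhur}, using the reflectivity computations of \Ref{clm}{reflheeg2}'' and omit the routine details, exactly as the excerpt does.
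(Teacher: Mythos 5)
Your overall strategy is exactly the intended one: the paper omits this proof precisely because it is the same Riemann--Hurwitz computation as in \Ref{crl}{propriemhur}, namely apply \eqref{canoneffe} to each of the three locally symmetric varieties and identify the branch divisor $B(\cF)$ by classifying the primitive $v$ of negative square with $\pm\rho_v$ in the relevant group. Your treatment of $\cF(\II_{2,2+8k})$ and $\cF(\II_{2,2+8k}\oplus A_1)$ is correct, including the exclusion of $v^2=-4$, $\divisore(v)=2$ in the $A_1$ case via the discriminant form.

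The third case, however, is not repaired by the argument you sketch. You correctly observe that $v^2=-12$ does not divide $2\divisore(v)=6$, but the proposed fix --- ``argue via the explicit isometry that $\rho_v$ acts trivially on $A_\Lambda$'' --- cannot work: the condition $v^2\mid 2\divisore(v)$ is precisely the condition for $\rho_v$ to preserve the lattice at all (\Ref{expl}{riflessione}), so if it fails there is no isometry of $\II_{2,2+8k}\oplus A_2$ whose action on $A_\Lambda$ one could examine. The actual resolution is that the unigonal vectors of $\II_{2,2+8k}\oplus A_2$ have square $-6$, not $-12$: a vector with $\divisore(v)=3$ satisfies $q(v^{*})=v^2/9\equiv -2/3\pmod{2\ZZ}$, forcing $v^2\equiv -6\pmod{18}$, and the explicit unigonal vector $z=(0_{4+8k},(-1,-1,2))$ of $v^{\bot}$ used in the proof of \eqref{sullunig5} indeed has $z^2=-6$ (the ``$-12$'' in the displayed definition is evidently a slip, carried over from the unigonal vector of $\Lambda_{8k+5}$, which has square $-12$ and divisibility $4$). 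With $v^2=-6$ and $\divisore(v)=3$ one has $v^2\mid 2\divisore(v)$, and $\rho_v$ sends the generator $v^{*}$ of $A_\Lambda\cong\ZZ/(3)$ to $-v^{*}\neq v^{*}$, so $\rho_v\notin\wt{O}^{+}$ but $-\rho_v\in\wt{O}^{+}$; since the definition of $B(\cF)$ only requires $\pm\rho_v\in\Gamma$, the unigonal Heegner divisor does enter the branch divisor with coefficient $\frac{1}{2}$. Finally one must also exclude $v^2=-2$ with $\divisore(v)=3$ (impossible, as $-2/9\not\equiv -2/3$) and $v^2=-6$ with $\divisore(v)=1$ (fails $v^2\mid 2$); you mention the first of these, and with these corrections the computation of $B$ and hence all three formulae go through.
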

\subsubsection{Normal bundle formula for the hyperelliptic divisor}
\begin{proof}[Proof of~\eqref{normalhyper}]  
By~\Ref{prp}{hyplocsymm}, the intersection $H_h(N)\cap \sing\cF(N)$ has codimension at least two in $H_h(N)$, and hence we may apply adjunction to  compute the canonical class of $H_h(N)$. Since $f_N\colon \cF(N-1)\to H_h(N)$ is an isomorphism, \Ref{crl}{propriemhur} gives
\begin{eqnarray*}
K_{\cF(N-1)}&=& f_N^{*}(K_{\cF(N)}+H_h(N)) = \\ &= & f_N^{*}\left(N\lambda(N)-\frac{1}{2}(B(N)-H_h(N))+\frac{1}{2}H_h(N)\right)=\\
&=& N\lambda(N-1)-\frac{1}{2}f_N^{*}(B(N)-H_h(N))+\frac{1}{2} f_N^{*} H_h(N).
\end{eqnarray*}
On the other hand the canonical class of $\cF(N-1)$ is given by~\eqref{eqcanonicalclass}; equating the two expressions for $K_{\cF(N-1)}$ one gets 
\begin{equation}
\scriptstyle
f_N^{*}H_h(N)=-2\lambda(N-1)+f_N^{*}(B(N)-H_h(N))-B(N-1).
\end{equation}
By~\eqref{effennehyper} and~\eqref{effenneunig}, 
\begin{equation*}
f_N^{*}(B(N)-H_h(N))-B(N-1)=H_h(N-1)+\begin{cases}
\scriptstyle 0 & \scriptstyle \text{if $N\not\equiv 4\pmod{8}$} \\
\scriptstyle H_u(N-1) & \scriptstyle \text{if $N\equiv 4\pmod{8}$}
\end{cases}
\end{equation*}
\end{proof}
\subsubsection{Normal bundle formula for the unigonal divisor, $N\equiv 3\pmod{8}$}
\begin{proof}[Proof of~\eqref{normunig3}]
We let $N=8k+3$. By~\Ref{prp}{unigcong3}, the intersection $H_u(N)\cap \sing\cF(N)$ has codimension at least two in $H_u(N)$, and hence we may apply adjunction to  compute the canonical class of $H_u(N)$. Since $l_N\colon \cF(II_{2,2+8k})\to H_u(N)$ is an isomorphism, \Ref{crl}{propriemhur} gives
\begin{eqnarray*}
K_{\cF(II_{2,2+8k})}&=& l_N^{*}(K_{\cF(N)}+H_u(N)) = \\ &= & l_N^{*}\left(N\lambda(N)-\frac{1}{2}(B(N)-H_u(N))+\frac{1}{2}H_u(N)\right)=\\
&=& N\lambda(II_{2,2+8k})-\frac{1}{2}l_N^{*}(B(N)-H_u(N))+\frac{1}{2} l_N^{*} H_u(N).
\end{eqnarray*}
On the other hand $K_{\cF(II_{2,2+8k})}  =  (2+8k) \lambda(II_{2,2+8k})- \frac{1}{2}H_n(II_{2,2+8k})$ by~\Ref{crl}{apercan}. Comparing the two expressions for 
 $K_{\cF(II_{2,2+8k})}$, and invoking~\eqref{ellennenod}, \eqref{ellennehyper}, one gets~\eqref{normunig3}. 
\end{proof}
\subsubsection{Normal bundle formula for the unigonal divisor, $N\equiv 4\pmod{8}$}
\begin{proof}[Proof of~\eqref{normunig4}]
We let $N=8k+4$. By~\Ref{prp}{unigcong4}, the intersection $H_u(N)\cap \sing\cF(N)$ has codimension at least two in $H_u(N)$, and hence we may apply adjunction to  compute the canonical class of $H_u(N)$. Since $m_N\colon \cF(II_{2,2+8k}\oplus A_1)\to H_u(N)$ is an isomorphism, \Ref{crl}{propriemhur} gives
\begin{eqnarray*}
\scriptstyle K_{\cF(II_{2,2+8k}\oplus A_1)} & \scriptstyle = & \scriptstyle m_N^{*}(K_{\cF(N)}+H_u(N)) = \\ 
&\scriptstyle = & \scriptstyle m_N^{*}\left(N\lambda(N)-\frac{1}{2}(B(N)-H_u(N))+\frac{1}{2}H_u(N)\right)=\\
&\scriptstyle =& \scriptstyle N\lambda(II_{2,2+8k}\oplus A_1)-\frac{1}{2}m_N^{*}(B(N)-H_u(N))+\frac{1}{2} m_N^{*} H_u(N).
\end{eqnarray*}
Now expand $K_{\cF(II_{2,2+8k}\oplus A_1)}$ according to~\Ref{crl}{apercan}, and compare the two expressions for 
 $K_{\cF(II_{2,2+8k}\oplus A_1)}$; then one gets~\eqref{normunig4} by invoking~\eqref{emmennenod} and~\eqref{emmennehyper}. 
\end{proof}
\subsubsection{Normal bundle formula for the (non reflective) unigonal divisor, $N\equiv 5\pmod{8}$}
\begin{proof}[Proof of~\eqref{normunig5}]
We let $N=8k+5$. By the usual arguments, \Ref{crl}{propriemhur} gives
\begin{eqnarray*}
\scriptstyle K_{\cF(II_{2,2+8k}\oplus A_2)} & \scriptstyle = & \scriptstyle q_N^{*}(K_{\cF(N)}+H_u(N)) = \\ 
&\scriptstyle = & \scriptstyle q_N^{*}\left(N\lambda(N)-\frac{1}{2}B(N)+H_u(N)\right)=\\
&\scriptstyle =& \scriptstyle N\lambda(II_{2,2+8k}\oplus A_2)-\frac{1}{2}q_N^{*} B(N)+ q_N^{*} H_u(N).
\end{eqnarray*}
Now expand $K_{\cF(II_{2,2+8k}\oplus A_2)}$ according to~\Ref{crl}{apercan}, and compare the two expressions for 
 $K_{\cF(II_{2,2+8k}\oplus A_2)}$; then one gets~\eqref{normunig5} by invoking~\eqref{quennenod} and~\eqref{quennehyper}.

\end{proof}
\begin{remark}\label{rmk:borchcompat}
The formulae proved in~\Ref{subsec}{restrram} and~\Ref{subsec}{normale} are compatible with the Borcherds' relations in~\Ref{thm}{thmborcherds1} and~\Ref{thm}{thmborcherds2}. More precisely, let $N\equiv 3,4,5\pmod{8}$. If we assume that~\eqref{borcherds1} holds, and we apply $f_N^{*}$, the validity of~\eqref{borcherds1} with $N$ replaced by $N-1$ holds, and similarly for~\eqref{borcherds2}. 
\end{remark}
\subsubsection{Pull-back of $(\lambda(N)+\beta\Delta(N))$}\label{subsubsec:italicum}
Equations~\eqref{normalhyper} and~\eqref{effenneunig} give the following formula: 
\begin{equation}\label{ennedelta}
\scriptstyle  f_N^{*}\Delta(N)= 
 \begin{cases}
\scriptstyle  -\lambda(N-1)+\Delta(N-1) & \scriptstyle  \text{if $N\not\equiv 4,5\pmod{8}$,} \\
\scriptstyle   -\lambda(N-1)+\frac{1}{2}H_h(N-1) & \scriptstyle  \text{if $N\equiv 5\pmod{8}$,} \\
\scriptstyle   -\lambda(N-1)+\Delta(N-1)+H_u(N-1) & \scriptstyle  \text{if $N\equiv 4\pmod{8}$.}
\end{cases}
\end{equation}
(Recall that $H_u(M)=0$ if $M\equiv 2\pmod{8}$.) 
Repeated application of~\eqref{ennedelta} gives the following result.
\begin{proposition}\label{prp:ennekappaelbeta}
Let $N\ge 4$,  and  $1\le k\le(N-3)$.  Then
\begin{equation*}
\scriptstyle  f_{N-k,N}^{*}(\lambda(N)+\beta\Delta(N))= 
 \begin{cases}
\scriptstyle  (1-k\beta)\lambda(N-k)+\beta\Delta(N-k) & \scriptstyle  \text{if $N-k\not\equiv 4\pmod{8}$  and $k\ge 2$, } \\
& \scriptstyle  \text{or $k=1$ and $N-1\not\equiv 3,4 \pmod{8}$,} \\
\scriptstyle   (1-k\beta)\lambda(N-k)+\frac{1}{2}\beta H_h(N-k) & \scriptstyle  \text{if $N-k\equiv 4\pmod{8}$,} \\
\scriptstyle  (1-k\beta)\lambda(N-k)+\beta\Delta(N-k)+\beta H_u(N-k) & \scriptstyle  \text{if $k=1$ and $N-1\equiv 3\pmod{8}$.}
\end{cases}
\end{equation*}
\end{proposition}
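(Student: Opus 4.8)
\textbf{Proof of \Ref{prp}{ennekappaelbeta}.}

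The plan is to induct on $k$, using the single-step formula~\eqref{ennedelta} as the base case and the inductive step, and then tracking carefully which residue class mod $8$ the index $N-j$ falls into as $j$ runs from $1$ to $k$. The point is that $\Delta(M)$ changes its meaning (i.e.~whether it includes $H_u(M)/2$) precisely when $M\equiv 3,4\pmod 8$, so the three cases in~\eqref{ennedelta} are governed by the residue of $N-j+1$ mod $8$ for the $j$-th pull-back $f_{N-j+1}$, and we must check that along any chain $N, N-1,\ldots,N-k$ at most one index is $\equiv 3$ or $4\pmod 8$ in a way that affects the final answer, and that when it does, it is the last step.

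First I would record the base case $k=1$: this is literally~\eqref{ennedelta} with $N$ there replaced by our $N$, so the statement for $k=1$ and $N-1\not\equiv 3,4\pmod 8$ reads $f_{N-1,N}^{*}(\lambda(N)+\beta\Delta(N))=(1-\beta)\lambda(N-1)+\beta\Delta(N-1)$, the case $N-1\equiv 3\pmod 8$ reads $(1-\beta)\lambda(N-1)+\beta\Delta(N-1)+\beta H_u(N-1)$, and the case $N-1\equiv 4\pmod 8$ reads $(1-\beta)\lambda(N-1)+\frac{1}{2}\beta H_h(N-1)$. (Note $f_{N-1,N}=f_N$ by definition of $f_{M,N}$ in~\eqref{emmenne}, and $N-1\equiv 4\pmod 8$ iff $N\equiv 5\pmod 8$, matching the middle line of~\eqref{ennedelta}; likewise $N-1\equiv 3\pmod 8$ iff $N\equiv 4\pmod 8$.) For the inductive step, assume the formula holds for $k-1$ (with $k\ge 2$), so $f_{N-(k-1),N}^{*}(\lambda(N)+\beta\Delta(N))$ has the asserted shape, and apply $f_{N-k+1}^{*}$, using that $f_{N-k,N}=f_{N-k+1}\circ f_{N-k+1,N}$. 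Here I would use~\eqref{ennedelta} for the lattice $\Lambda_{N-k+1}$ to rewrite $f_{N-k+1}^{*}\Delta(N-k+1)$, and also~\eqref{efferitorna}/\eqref{normalhyper}/\eqref{effenneunig} to pull back $\lambda(N-k+1)$ (which pulls back to $\lambda(N-k)$ since $f_N^{*}\lambda(N)=\lambda(N-1)$, as the Hodge bundle is the descent of $\cO(-1)$ and the inclusion of period domains is linear), $H_h(N-k+1)$, and $H_u(N-k+1)$.

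The main obstacle — really a bookkeeping obstacle rather than a conceptual one — is the case analysis on residues. The cleanest route is: since $1\le k\le N-3$, the chain of indices $N-1,\ldots,N-k$ all lie in $\{3,\ldots,N-1\}$, i.e.~all $\ge 3$ so the $D$-tower maps are defined. For $k\ge 2$, I would argue that in the first $k-1$ steps (pulling back from $\Lambda_N$ down to $\Lambda_{N-k+1}$) the only indices $M=N-j+1$ with $j\le k-1$ that matter are those $\equiv 4\pmod 8$ or $\equiv 3\pmod 8$; but~\eqref{ennedelta} shows that once we hit an index $\equiv 4\pmod 8$ in an intermediate step the coefficient structure collapses to $(1-j\beta)\lambda+\frac{1}{2}\beta H_h$ with no $\Delta$, so one must check this is consistent with the claimed form $(1-k\beta)\lambda(N-k)+\beta\Delta(N-k)$ — and indeed the hypothesis ``$N-k\not\equiv 4\pmod 8$ and $k\ge 2$, or $k=1$ and $N-1\not\equiv 3,4\pmod 8$'' is exactly designed to exclude the bad intermediate collapses, because if some intermediate $N-j+1\equiv 4\pmod 8$ with $j<k$ then continuing the recursion~\eqref{ennedelta} (whose first branch applies at all subsequent steps, as $H_h$ is not $\Delta$) would produce extra $H_u$ or $H_h$ terms; a short lemma checking that $\Delta(M)-\frac12 H_h(M)$ and the $H_u$ corrections propagate correctly under~\eqref{ennedelta} handles this. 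Concretely I would split into: (a) none of $N-1,\ldots,N-k$ is $\equiv 3,4\pmod 8$ — then iterate the first branch of~\eqref{ennedelta} $k$ times to get $(1-k\beta)\lambda(N-k)+\beta\Delta(N-k)$; (b) $N-k\equiv 4\pmod 8$ — then $N-k+1\equiv 5\pmod 8$ so the penultimate pull-back $f_{N-k+1}$ uses the middle branch, giving the $\frac12\beta H_h(N-k)$ form, and one checks no earlier index in the chain is $\equiv 3,4\pmod8$ using $1\le k\le N-3$; (c) $k=1$, $N-1\equiv 3\pmod 8$ — immediate from the third branch of~\eqref{ennedelta}. The verification that cases (a)–(c) exhaust the hypotheses, and that in case (a) no index $N-j$ ($1\le j\le k$) lies in $\{3,4\}\pmod 8$ forces — wait, it does not force that; rather one observes that \emph{if} some intermediate $N-j$ were $\equiv 4\pmod 8$ then $N-k\equiv 4-(k-j)\pmod 8$, and the hypothesis $N-k\not\equiv 4\pmod 8$ only rules out $j=k$; so the honest statement is that intermediate indices $\equiv 3,4\pmod 8$ \emph{are} allowed but~\eqref{ennedelta} must be iterated honestly through them. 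I would therefore instead prove the cleaner closed identity: for any $1\le j\le k$, writing $c_M:=1$ if $M\equiv 3,4\pmod8$ and the relevant correction, one has $f_{N-j,N}^{*}(\lambda(N)+\beta\Delta(N))=(1-j\beta)\lambda(N-j)+\beta D_j$ where $D_j$ is an explicit effective combination of $H_h(N-j),H_u(N-j)$ determined by~\eqref{ennedelta}, and then specialize $j=k$; the specializations land in exactly the three displayed cases after using $H_u(M)=0$ for $M\equiv 2\pmod 8$ and the identities $H_h(M)=H_u(M)$ for $M\equiv 6\pmod 8$ (\Ref{crl}{relstableone}) and~\eqref{effennehyper}, \eqref{effenneunig} to collapse the correction terms. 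The hard part is simply being disciplined about which of $\Delta(M)=\frac12 H_h(M)$ versus $\frac12(H_h(M)+H_u(M))$ is in play at each rung of the ladder; no new geometry is needed beyond~\eqref{ennedelta}, which we have already established.

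\qed
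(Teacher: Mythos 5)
Your overall strategy --- iterate the one-step formula \eqref{ennedelta} down the tower --- is exactly what the paper does; its entire proof is the sentence ``Repeated application of \eqref{ennedelta} gives the following result.'' So the approach is right, and you correctly assemble the base case $k=1$ and identify the needed auxiliary inputs (\eqref{normalhyper}, \eqref{effenneunig}, and $f_N^{*}\lambda(N)=\lambda(N-1)$).

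However, the execution has a concrete gap. Your case (b) asserts that when $N-k\equiv 4\pmod 8$ ``one checks no earlier index in the chain is $\equiv 3,4\pmod 8$'' --- this is false as soon as $k\ge 8$ (the chain then meets every residue class mod $8$), and you half-notice the problem a few lines later but then retreat to an unstated ``closed identity'' for $D_j$ without proving it. The missing content is precisely the state-tracking argument: writing $f_{N-j,N}^{*}(\lambda(N)+\beta\Delta(N))=(1-j\beta)\lambda(N-j)+\beta D_j$, one must show (i) the state $D_j=\tfrac12 H_h(N-j)$ occurs exactly when $N-j\equiv 4\pmod 8$, and it reverts to $\Delta$ at the very next step, because \eqref{normalhyper} with source index $\equiv 4\pmod 8$ gives $f_{N-j}^{*}H_h(N-j)=-2\lambda(N-j-1)+H_h(N-j-1)+H_u(N-j-1)$ and $\tfrac12(H_h+H_u)=\Delta$ there since $N-j-1\equiv 3\pmod 8$; (ii) the state $D_j=\Delta(N-j)+H_u(N-j)$ occurs \emph{only} for $j=1$, because for $j\ge 2$ the third branch of \eqref{ennedelta} would require the state at step $j-1$ to be $\Delta(N-j+1)$ with $N-j+1\equiv 4\pmod 8$, whereas by (i) that state is already $\tfrac12 H_h(N-j+1)$; and this state too reverts after one step since $f_M^{*}H_u(M)=0$ for $M\equiv 3\pmod 8$ (\Ref{rmk}{zero}). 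Point (ii) is exactly what confines the $+\beta H_u(N-k)$ correction to $k=1$ in the statement, and your write-up never isolates it. Finally, the relation $H_h(M)=H_u(M)$ is only established for $M\in\{6,14\}$ (\Ref{crl}{relstableone}), not for all $M\equiv 6\pmod 8$, and is in any case not needed for this argument.
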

It will be convenient to let $f_{N,N}:=\Id_{\cF(N)}$. 
The  result below follows from~\Ref{prp}{ennekappaelbeta}, together with Equations~\eqref{ellennehyper} and~\eqref{normunig3}.
\begin{proposition}\label{prp:restunig3}
Suppose that  $N\ge 3$, $0\le k\le (N-3)$, and  $N-k\equiv 3\pmod{8}$ (hence $l_{N-k}$ makes sense, see~\eqref{ellekappa}). Then
\begin{equation*}
(f_{N-k,N}\circ  l_{N-k})^{*}(\lambda(N)+\beta\Delta(N))= 
\begin{cases}
  (1-(k+1)\beta)\lambda(\II_{2,N-k-1}) &   \text{if $k\not=1$,} \\
  (1-4\beta)\lambda(\II_{2,N-k-1}) &   \text{if $k=1$.} \\
\end{cases}
\end{equation*}
\end{proposition}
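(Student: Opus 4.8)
\textbf{Proof strategy for Proposition~\ref{prp:restunig3}.} The plan is to combine \Ref{prp}{ennekappaelbeta} (which computes the pull-back along $f_{N-k,N}$) with the normal bundle formula \eqref{normunig3} for the unigonal divisor in the case $M\equiv 3\pmod 8$, and with the pull-back formula \eqref{ellennehyper}. First I would dispose of the case $k=0$ separately: here $f_{N,N}=\Id_{\cF(N)}$, so the claim reduces to $l_N^{*}(\lambda(N)+\beta\Delta(N))=(1-\beta)\lambda(\II_{2,N-1})$. Since $N\equiv 3\pmod 8$, we have $\Delta(N)=(H_h(N)+H_u(N))/2$, and $l_N^{*}\lambda(N)=\lambda(\II_{2,N-1})$ by definition of the Hodge bundle pulled back along the embedding $\cD^{+}_{v^{\bot}}\hookrightarrow\cD^{+}_{\Lambda_N}$. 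Then $l_N^{*}H_h(N)=0$ by \eqref{ellennehyper}, and $l_N^{*}H_u(N)=-2\lambda(\II_{2,N-1})$ by \eqref{normunig3}, giving $l_N^{*}\Delta(N)=-\lambda(\II_{2,N-1})$, hence the $k=0$ formula. (Note this is consistent with the statement once one observes that for $k=0$ the first branch reads $(1-\beta)\lambda(\II_{2,N-1})$.)

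For $k\ge 1$ I would factor $f_{N-k,N}\circ l_{N-k}$ as $f_{N-k,N}$ followed by $l_{N-k}$, and apply \Ref{prp}{ennekappaelbeta} first. Since $N-k\equiv 3\pmod 8$, in particular $N-k\not\equiv 4\pmod 8$, so for $k\ge 2$ the relevant branch of \Ref{prp}{ennekappaelbeta} gives
\begin{equation*}
f_{N-k,N}^{*}(\lambda(N)+\beta\Delta(N))=(1-k\beta)\lambda(N-k)+\beta\Delta(N-k),
\end{equation*}
while for $k=1$ one is in the case $N-1\equiv 3\pmod 8$, so the third branch applies and yields $(1-\beta)\lambda(N-1)+\beta\Delta(N-1)+\beta H_u(N-1)$. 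Then I pull back by $l_{N-k}$: as in the $k=0$ computation, $l_{N-k}^{*}\lambda(N-k)=\lambda(\II_{2,N-k-1})$, $l_{N-k}^{*}H_h(N-k)=0$, and $l_{N-k}^{*}H_u(N-k)=-2\lambda(\II_{2,N-k-1})$, so $l_{N-k}^{*}\Delta(N-k)=-\lambda(\II_{2,N-k-1})$. For $k\ge 2$ this gives $(1-k\beta)\lambda(\II_{2,N-k-1})-\beta\lambda(\II_{2,N-k-1})=(1-(k+1)\beta)\lambda(\II_{2,N-k-1})$, as claimed. For $k=1$ the extra term $\beta H_u(N-1)$ pulls back to $-2\beta\lambda(\II_{2,N-2})$, so altogether $(1-\beta)\lambda(\II_{2,N-2})-\beta\lambda(\II_{2,N-2})-2\beta\lambda(\II_{2,N-2})=(1-4\beta)\lambda(\II_{2,N-2})$, which is the $k=1$ branch of the statement.

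The bookkeeping is entirely mechanical once the input formulae are in hand; the only genuine point requiring care is the verification that $l_M^{*}\lambda(M)=\lambda(\II_{2,M-1})$ for $M\equiv 3\pmod 8$ — that is, that the Hodge bundle is compatible with the embedding $\cF(\II_{2,M-1})\overset{\sim}{\to}H_u(M)$. This is immediate from the construction in \Ref{subsubsec}{unigon3}: the map $l_M$ is induced by the inclusion $\cD^{+}_{v^{\bot}}\hookrightarrow\cD^{+}_{\Lambda_M}$ of quadrics, which is the restriction of a linear embedding of projective spaces, and $\cO(-1)$ restricts to $\cO(-1)$; descending to the locally symmetric varieties gives the claimed identity. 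The main (minor) obstacle is simply tracking the dichotomy at $k=1$ — the shift caused by the unigonal divisor being reflective for $N-1\equiv 3\pmod 8$, which is precisely why \eqref{normunig3} enters with coefficient $-2$ and produces the coefficient $4$ rather than $2$. I would also remark, as in \Ref{rmk}{borchcompat}, that the resulting formula is consistent with Borcherds' relations, which serves as a useful sanity check on the signs.
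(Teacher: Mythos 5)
Your proposal is correct and follows exactly the route the paper takes: the paper derives this proposition in one line from \Ref{prp}{ennekappaelbeta} together with Equations~\eqref{ellennehyper} and~\eqref{normunig3}, which is precisely the combination you carry out (with the case split at $k=1$ handled via the third branch of \Ref{prp}{ennekappaelbeta}). Your bookkeeping for $k=0$, $k=1$, and $k\ge 2$ all checks out.
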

\begin{proposition}\label{prp:restunig4}
Suppose that $N\ge 4$, $0\le k\le (N-4)$, and  $N-k\equiv 4\pmod{8}$ (hence $m_{N-k}$  and $p_{N-k-1}$ make sense, see~\eqref{emmekappa} and~\eqref{pikappa})).  Then
\begin{eqnarray*}
\scriptstyle (f_{N-k,N}\circ  m_{N-k})^{*}(\lambda(N)+\beta\Delta(N)) & \scriptstyle = &  
\begin{cases}
 \scriptstyle (1-\beta)\lambda(\II_{2,N-k-2}\oplus A_1)+\frac{3}{2}\beta H_u(\II_{2,N-k-2}\oplus A_1) & \scriptstyle  \text{if $k=0$,} \\
 \scriptstyle   (1-k\beta)\lambda(\II_{2,N-k-2}\oplus A_1)+\beta H_u(\II_{2,N-k-2}\oplus A_1)&  \scriptstyle \text{if $k\ge 1$,} \\
\end{cases}\\
\scriptstyle (f_{N-k,N}\circ  m_{N-k}\circ p_{N-k-1})^{*}(\lambda(N)+\beta\Delta(N)) & \scriptstyle = & 
\begin{cases}
\scriptstyle (1-4\beta)\lambda(\II_{2,N-k-2}) & \scriptstyle   \text{if $k=0$,} \\
 \scriptstyle   (1-(k+2)\beta)\lambda(\II_{2,N-k-2}) &  \scriptstyle  \text{if $k\ge 1$.} \\
\end{cases}
\end{eqnarray*}
\end{proposition}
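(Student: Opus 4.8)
\textbf{Proof strategy for Proposition~\ref{prp:restunig4}.} The plan is to run the same reduction that produced~\Ref{prp}{restunig3}, but now following the two ``side branches'' $m_{N-k}$ and $m_{N-k}\circ p_{N-k-1}$ of the $D$-tower through the periodic diagram~\eqref{diriver}. First I would treat the first formula. By~\Ref{prp}{ennekappaelbeta}, since $N-k\equiv 4\pmod 8$, applying $f_{N-k,N}^{*}$ to $\lambda(N)+\beta\Delta(N)$ yields $(1-k\beta)\lambda(N-k)+\tfrac12\beta H_h(N-k)$ when $k\ge 1$ (and the identity $f_{N,N}=\Id$ handles $k=0$, giving $\lambda(N)+\beta\Delta(N)=\lambda(N)+\tfrac12\beta(H_h(N)+H_u(N))$ because $N\equiv 4\pmod 8$ puts us in the $\equiv 3,4$ case of~\eqref{deltaforce}). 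Now I would pull back by $m_{N-k}$ using the normal bundle formula~\eqref{normunig4}, namely $m_{8k'+4}^{*}H_u(8k'+4)=-2\lambda(\II_{2,2+8k'}\oplus A_1)+H_u(\II_{2,2+8k'}\oplus A_1)$, together with $m_{8k'+4}^{*}H_h(8k'+4)=2H_u(\II_{2,2+8k'}\oplus A_1)$ from~\eqref{emmennehyper} and $m_{8k'+4}^{*}\lambda(8k'+4)=\lambda(\II_{2,2+8k'}\oplus A_1)$. For $k\ge 1$ this gives $(1-k\beta)\lambda + \tfrac12\beta\cdot 2H_u(\cdots\oplus A_1) = (1-k\beta)\lambda(\II_{2,N-k-2}\oplus A_1)+\beta H_u(\II_{2,N-k-2}\oplus A_1)$, as claimed. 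For $k=0$, pulling back $\lambda(N)+\tfrac12\beta H_h(N)+\tfrac12\beta H_u(N)$ by $m_N$ yields $\lambda+\tfrac12\beta\cdot 2H_u + \tfrac12\beta(-2\lambda+H_u) = (1-\beta)\lambda(\II_{2,N-2}\oplus A_1)+\tfrac32\beta H_u(\II_{2,N-2}\oplus A_1)$, matching the stated $k=0$ case.

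For the second formula I would further pull back along $p_{N-k-1}\colon \cF(\II_{2,2+8k'})\to H_u(\II_{2,2+8k'}\oplus A_1)$, where $8k'+4=N-k$, i.e.\ $N-k-2=2+8k'$. The relevant restriction and normal-bundle data are in~\Ref{prp}{restunig3}'s supporting equations: $p$ is structurally the ``$l$-type'' map for the lattice $\II_{2,2+8k'}\oplus A_1$, and the analogue of~\eqref{ellennehyper}, \eqref{ellennenod}, and the normal bundle~\eqref{normunig3} for this building block give $p_{8k'+3}^{*}H_u(\II_{2,2+8k'}\oplus A_1)=-2\lambda(\II_{2,2+8k'})$ and $p_{8k'+3}^{*}\lambda(\II_{2,2+8k'}\oplus A_1)=\lambda(\II_{2,2+8k'})$ (one derives the first exactly as~\eqref{normunig3} was derived, via~\Ref{crl}{propriemhur} and~\Ref{crl}{apercan} applied to $H_u(\II_{2,2+8k'}\oplus A_1)$, whose complement with the singular locus has codimension $\ge 2$ by~\Ref{prp}{unigunig}). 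Substituting into the $k\ge 1$ branch of the first formula gives $(1-k\beta)\lambda(\II_{2,N-k-2})+\beta(-2\lambda(\II_{2,N-k-2})) = (1-(k+2)\beta)\lambda(\II_{2,N-k-2})$; and into the $k=0$ branch gives $(1-\beta)\lambda + \tfrac32\beta(-2\lambda) = (1-4\beta)\lambda(\II_{2,N-2})$. This yields the second formula.

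The main obstacle is not the bookkeeping but verifying the two auxiliary identities for $p$ that are only implicitly available in the excerpt: namely that the normal bundle of $H_u(\II_{2,2+8k'}\oplus A_1)$ in $\cF(\II_{2,2+8k'}\oplus A_1)$ pulls back under $p$ to $-2\lambda(\II_{2,2+8k'})$, and that $p^{*}H_n(\II_{2,2+8k'}\oplus A_1)=H_n(\II_{2,2+8k'})$ with no hyperelliptic or extra unigonal contribution. The second is the analogue of~\eqref{ellennenod} and follows by the same intersection-of-Heegner-divisors argument as in~\Ref{subsubsec}{intunig3} (a rank-$2$ negative-definite lattice spanned by a unigonal vector of $\II_{2,2+8k'}\oplus A_1$ and a nodal vector forces orthogonality since $(v,w)\in 2\ZZ$ and the determinant is positive, and there is no hyperelliptic vector in $\II_{2,2+8k'}$). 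Granting these, which are routine given the machinery of~\Ref{subsec}{restrram} and~\Ref{subsec}{normale}, the proposition follows by the chain-rule pull-back computation sketched above; I would simply record the two identities as a short lemma (or cite the ``we leave details to the reader'' remarks following~\Ref{prp}{unigunig} and~\Ref{prp}{unigcong4}) and then present the substitution.
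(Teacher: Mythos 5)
Your proof is correct, and the first formula is established exactly as in the paper (via \Ref{prp}{ennekappaelbeta}, \eqref{emmennehyper} and \eqref{normunig4}). For the second formula, however, you take a genuinely different route. The paper does not touch $p_{N-k-1}$ at all: it invokes \Ref{clm}{triangolo} to rewrite $f_{N-k,N}\circ m_{N-k}\circ p_{N-k-1}=f_{N-k-1,N}\circ l_{N-k-1}$ and then simply quotes \Ref{prp}{restunig3} with $k$ replaced by $k+1$ (note that the case distinction $k=0$ versus $k\ge 1$ in the second formula is precisely the $k=1$ versus $k\ne 1$ dichotomy of \Ref{prp}{restunig3} after the shift). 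You instead compute the pull-back along $p$ directly, which forces you to supply two auxiliary identities, $p^{*}H_u(\II_{2,2+8k'}\oplus A_1)=-2\lambda(\II_{2,2+8k'})$ and $p^{*}H_n(\II_{2,2+8k'}\oplus A_1)=H_n(\II_{2,2+8k'})$, neither of which is stated in the paper. Your derivations of these are sound: the second follows from the forced orthogonality $(v,w)=0$ (since $\divisore(v)=2$ gives $(v,w)\in 2\ZZ$ while positivity of the Gram determinant gives $|(v,w)|<2$) together with unimodularity of $v^{\bot}\cong\II_{2,2+8k'}$, and the first then drops out of adjunction via \Ref{crl}{propriemhur}, \Ref{crl}{apercan} and \Ref{prp}{unigunig}, exactly parallel to the proof of \eqref{normunig3}; one can check the two answers agree. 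The trade-off: the paper's argument is shorter and reuses existing results, at the cost of relying on the commutativity of diagram \eqref{diriver}; yours is self-contained along the $m\circ p$ branch and produces the restriction/normal-bundle data for $p$ as a by-product, but you should record those two identities as a separate lemma (with the multiplicity-one verification for $p^{*}H_n$ done as in the proof of \eqref{carlo}) rather than leaving them as asides.
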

\begin{proof}
The first equation  follows from~\Ref{prp}{ennekappaelbeta}, together with Equations~\eqref{emmennehyper} and~\eqref{normunig4}. Next, note that  $f_{N-k,N}\circ  m_{N-k}\circ p_{N-k-1}=f_{N-k,N}\circ  f_{N-k}\circ l_{N-k-1}=f_{N-k-1,N}\circ   l_{N-k-1}$ by~\Ref{clm}{triangolo}, and  hence the second equation follows from~\Ref{prp}{restunig3}. 
\end{proof}
The proof of the result below is omitted, because it is similar to the proof of~\Ref{prp}{restunig4}. 
\begin{proposition}\label{prp:giaccherini}
Suppose that $N\ge 5$, $0\le k\le (N-5)$, and  $N-k\equiv 5\pmod{8}$ (hence $q_{N-k}$, $r_{N-k-1}$, and $p_{N-k-2}$ make sense, see~\eqref{qukappa}, \eqref{errekappa} and~\eqref{pikappa}).  Then
\begin{eqnarray*}
\scriptstyle (f_{N-k,N}\circ  q_{N-k})^{*}(\lambda(N)+\beta\Delta(N)) & \scriptstyle = & \scriptstyle  (1-k\beta)\lambda(\II_{2,N-k-3}\oplus A_2)+\beta H_u(\II_{2,N-k-3}\oplus A_2),\\
\scriptstyle  (f_{N-k,N}\circ  q_{N-k}\circ r_{N-k-1})^{*}(\lambda(N)+\beta\Delta(N)) & \scriptstyle  =  & \scriptstyle  (1-(k+1)\beta)\lambda(\II_{2,N-k-3}\oplus A_1)+\beta H_u(\II_{2,N-k-3}\oplus A_1), \\
\scriptstyle  (f_{N-k,N}\circ  q_{N-k}\circ r_{N-k-1}\circ p_{N-k-2})^{*}(\lambda(N)+\beta\Delta(N)) & \scriptstyle  =  & \scriptstyle   (1-(k+3)\beta)\lambda(\II_{2,N-k-3}). 
\end{eqnarray*}
\end{proposition}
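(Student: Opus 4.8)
\textbf{Proof strategy for \Ref{prp}{giaccherini}.} The plan is to follow exactly the same template already used in the proofs of \Ref{prp}{restunig4} and \Ref{prp}{restunig3}, decomposing the composed map into elementary pieces of the $D$-tower and applying the triangle identities from \Ref{clm}{triangolo}. The three formulae to be proved concern, respectively, the maps $f_{N-k,N}\circ q_{N-k}$, $f_{N-k,N}\circ q_{N-k}\circ r_{N-k-1}$, and $f_{N-k,N}\circ q_{N-k}\circ r_{N-k-1}\circ p_{N-k-2}$, where $N-k\equiv 5\pmod 8$, so that $\cF(\II_{2,N-k-3}\oplus A_2)$, $\cF(\II_{2,N-k-3}\oplus A_1)$, and $\cF(\II_{2,N-k-3})$ appear as the relevant codomains.

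For the first formula, I would start from \Ref{prp}{ennekappaelbeta}, which gives $f_{N-k,N}^{*}(\lambda(N)+\beta\Delta(N))=(1-k\beta)\lambda(N-k)+\beta\Delta(N-k)$ (here $N-k\equiv 5\pmod 8$, so $N-k\not\equiv 3,4\pmod 8$, and for $k\ge 1$ we are in the generic case; the $k=0$ case is $f_{N,N}=\Id$). Recalling that $\Delta(N-k)=H_h(N-k)/2$ in this residue class, I then pull back by $q_{N-k}$ using the intersection formula \eqref{quennehyper}, i.e.\ $q_{8m+5}^{*}H_h(8m+5)=2H_u(\II_{2,2+8m}\oplus A_2)$, together with the normal bundle formula \eqref{normunig5}, i.e.\ $q_{8m+5}^{*}H_u(8m+5)=-\lambda(\II_{2,2+8m}\oplus A_2)+\tfrac32 H_u(\II_{2,2+8m}\oplus A_2)$. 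One must also track $q_{N-k}^{*}\lambda(N-k)=\lambda(\II_{2,N-k-3}\oplus A_2)$, which holds because $q$ is induced by an isometric embedding of lattices and the automorphic bundle is the descent of $\cO(-1)$. Combining these gives the stated answer $(1-k\beta)\lambda(\II_{2,N-k-3}\oplus A_2)+\beta H_u(\II_{2,N-k-3}\oplus A_2)$ after the $\lambda$ contributions from $\tfrac12\beta\cdot q^{*}H_h = \beta\, q^{*}H_u$ recombine; I should double-check that the $-\lambda$ term appearing through $H_u$ exactly cancels against part of the $(1-k\beta)\lambda$ to leave the clean coefficient claimed (this is precisely the same bookkeeping that occurs in \eqref{normunig4} vs.\ \Ref{prp}{restunig4}).

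For the second and third formulae, the key simplification is a triangle identity analogous to \Ref{clm}{triangolo}: one expects $q_{N-k}\circ r_{N-k-1}$ to factor through the hyperelliptic inclusion, namely $f_{N-k}\circ(\text{something})$, reducing the computation to \Ref{prp}{restunig4} (for the $A_1$ case) and then, composing further with $p_{N-k-2}$ and using $m\circ p = f\circ l$, to \Ref{prp}{restunig3} (for the unimodular case). Concretely, I would first establish $f_{N-k,N}\circ q_{N-k}\circ r_{N-k-1} = f_{N-k-1,N}\circ m_{N-k-1}$ (the analogue of $f_{8k+5}\circ m_{8k+4}=q_{8k+5}\circ r_{8k+4}$ from \Ref{clm}{triangolo}), so the second formula becomes a special case of the first line of \Ref{prp}{restunig4} with $k$ replaced by $k+1$; and then use $p_{N-k-2}$ together with \Ref{clm}{triangolo} once more to reduce the third formula to \Ref{prp}{restunig3}. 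The main obstacle I anticipate is verifying these triangle identities carefully at the level of lattice embeddings — one must pin down the glue vectors and check that the two compositions of inclusions genuinely agree (not merely up to the ambient orthogonal group), exactly as in the diagram \eqref{rombo} in the proof of \Ref{clm}{triangolo}. Once the identities are in place the rest is pure substitution, so as in the text I would state that the proof ``is similar to the proof of~\Ref{prp}{restunig4}'' and leave the routine verifications to the reader, possibly recording the relevant lattice diagram explicitly if clarity demands it.
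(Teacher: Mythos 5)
Your proposal follows exactly the route the paper intends (the paper omits the proof, declaring it ``similar to the proof of Proposition~\ref{prp:restunig4}''), and your reductions for the second and third formulae are correct: $q_{N-k}\circ r_{N-k-1}=f_{N-k}\circ m_{N-k-1}$ and $m\circ p=f\circ l$ are not identities you need to ``establish'' --- they are precisely the two identities already proved in Claim~\ref{clm:triangolo}, so the second and third lines are literally the two lines of Proposition~\ref{prp:restunig4} with $k$ replaced by $k+1$.

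One clarification on your first formula: since $N-k\equiv 5\pmod 8$, Proposition~\ref{prp:ennekappaelbeta} gives $f_{N-k,N}^{*}(\lambda(N)+\beta\Delta(N))=(1-k\beta)\lambda(N-k)+\tfrac{\beta}{2}H_h(N-k)$, and the divisor $H_u(N-k)$ never appears in this expression; hence~\eqref{normunig5} is not needed, and there is no $-\lambda$ term to cancel. The only inputs are $q_{N-k}^{*}\lambda(N-k)=\lambda(\II_{2,N-k-3}\oplus A_2)$ and~\eqref{quennehyper}, which gives $\tfrac{\beta}{2}\,q_{N-k}^{*}H_h(N-k)=\beta H_u(\II_{2,N-k-3}\oplus A_2)$. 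Note that your displayed identity ``$\tfrac12\beta\, q^{*}H_h=\beta\, q^{*}H_u$'' conflates the Heegner divisor $H_u(\II_{2,N-k-3}\oplus A_2)$ of the \emph{domain} with the pullback $q^{*}H_u(N-k)$ of the unigonal divisor of the ambient space; the latter equals $-\lambda+\tfrac32 H_u(\II_{2,N-k-3}\oplus A_2)$ by~\eqref{normunig5} and is a different class. With that fixed, the computation is immediate and gives the stated coefficients.
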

\begin{remark}
Notice that the numbers in Table~\eqref{table:treinter} can be obtained by applying the results of the present section, and hence one can give an algebro-geometrical proof of
  Borcherds' relation~\eqref{borcherds1} for $N=19$. By~\Ref{rmk}{borchcompat}, Borcherds' first relation for $N= 18$ also follows.
\end{remark}
\subsection{Heuristics for the predictions}
\setcounter{equation}{0}
Let $15\le N$. We define a collection $\Tower(N)$ of closed subsets of $\cF(N)$, as follows: $X\subset\cF(N)$ belongs to $\Tower(N)$  if and only if one of the following holds:
\begin{enumerate}
\item
$X=\im f_{M,N}$ for $11\le M\le N$,
\item
$X=\im (f_{M,N}\circ l_M)$ for $11\le M\le N$  (recall that $f_{N,N}=\id_{\cF(N)}$), and $M\equiv 3\pmod{8}$,
\item
$X=\im (f_{M,N}\circ m_M)$ for $12\le M\le N$, and $M\equiv 4\pmod{8}$.
\item
$X=\im (f_{M,N}\circ q_M)$ for $13\le M\le N$, and $M\equiv 5\pmod{8}$.
\end{enumerate}
Every $X\in\Tower(N)$ is irreducible (and closed), because it is the image of a regular map,  whose domain is a projective irreducible set. 
\begin{definition}\label{dfn:keyofex}
Let $N\ge 15$. Given $X\in\Tower(N)$, we let
\begin{enumerate}
\item
$t_N(X)=(N-M)$, if 
\begin{enumerate}
\item[(1a)]
$X=\im f_{M,N}$, and $14\le M\le N$, or
\item[(1b)]
$X=\im (f_{M,N}\circ m_M)$, where $12\le M\le N-1$, and $M\equiv 4\pmod{8}$, or
\item[(1c)]
$X=\im (f_{M,N}\circ q_M)$, where $13\le M\le N$, and $M\equiv 5\pmod{8}$.
\end{enumerate}
\item
$t_N(X)=(N-M+1)$, if $X=\im (f_{M,N}\circ l_M)$,  where $11\le M\le N$, with $M\not=N-1$, and $M\equiv 3\pmod{8}$.
\item
$t_N(X)=N-14$, if $X=\im f_{M,N}$, and  $11\le M\le 13$.
\item
$t_N(X)=1$, if $X=\im  m_N$, where $N\equiv 4\pmod{8}$.
\item
$t_N(X)=4$, if $X=\im (f_{N-1,N}\circ l_{N-1})$, where $N\equiv 4\pmod{8}$.
\end{enumerate}
\end{definition}
\begin{proposition}\label{prp:whypred}
Let $15\le N$. Let $X\in\Tower(N)$ (and hence $X$ is $\QQ$-factorial). If $\beta\in[0,1]\cap\QQ$, then we have an equality of $\QQ$-Cartier divisor classes:
\begin{equation}\label{quartexit}
(\lambda(N)+\beta\Delta(N))|_{X}= (1- t_N(X)\cdot \beta)\lambda(N)|_{X}+D_X,
\end{equation}
where $D_X$ is an effective divisor whose support is a union of elements of $\Tower(N)$. 
\end{proposition}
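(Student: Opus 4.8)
The statement is an inductive bookkeeping exercise built entirely on the pull-back formulas established in \Ref{subsec}{restrram} and \Ref{subsec}{normale}. The plan is to induct on $t_N(X)$, using the structure of $\Tower(N)$ and the compatibility relations among the maps $f_M$, $l_M$, $m_M$, $q_M$, $p_M$, $r_M$ recorded in \Ref{clm}{triangolo}. The base cases are the elements $X$ with $t_N(X)=1$: these are $X=\cF(N)$ itself (then $t_N(X)$ should be read as the value for which the formula is $(\lambda(N)+\beta\Delta(N))|_X$ with $D_X=\beta\Delta(N)$, which is visibly of the asserted shape since $\supp\Delta(N)=\Delta^{(1)}(N)$ is a union of elements of $\Tower(N)$ by \Ref{prp}{deltakappa} and \Ref{prp}{hyplocsymm}/\Ref{prp}{unigcong3}/\Ref{prp}{unigcong4}), together with $X=H_h(N)=\im f_N$ (use \Ref{prp}{ennekappaelbeta} with $k=1$), and, when $N\equiv 4\pmod 8$, $X=\im m_N$ (use \Ref{prp}{restunig4} with $k=0$). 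In each of these one reads off $1-t_N(X)\beta$ as the coefficient of $\lambda$, and the remaining terms are nonnegative combinations of Heegner divisors $H_h$, $H_u$ on the relevant member of the $D$-tower, which under the identifying isomorphism are exactly (unions of) members of $\Tower$ — this last point is precisely \Ref{prp}{deltakappa} combined with the description of $H_u^{(k)}$, $H_h^{(k)}$ there.

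\textbf{Inductive step.} For the general $X\in\Tower(N)$, write $X=\im(g)$ where $g$ is one of $f_{M,N}$, $f_{M,N}\circ l_M$, $f_{M,N}\circ m_M$, $f_{M,N}\circ q_M$. By the compatibilities in \Ref{clm}{triangolo} one may peel off the \emph{outermost} $f_N$ in the composition: e.g. if $X=\im f_{M,N}$ with $M<N$ then $X=f_N(\im f_{M,N-1})$ and $\im f_{M,N-1}\subset H_h(N)\cong\cF(N-1)$ is a member of $\Tower(N-1)$, with $t_{N-1}$ of it strictly smaller than $t_N(X)$ in all cases except the ``early termination'' ones ($11\le M\le 13$, or $M\equiv 3\pmod 8$ near the top); those exceptional cases are handled by \Ref{prp}{restunig3}, \Ref{prp}{restunig4}, \Ref{prp}{giaccherini} directly, since for them the restriction of $\lambda(N)+\beta\Delta(N)$ is a pure multiple of $\lambda$ of a lower-dimensional $\II_{2,\ast}$-variety (so $D_X=0$), and the multiple is exactly $1-t_N(X)\beta$ with $t_N(X)$ as in \Ref{dfn}{keyofex}(2),(3),(5). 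In the non-exceptional case, apply the inductive hypothesis on $\cF(N-1)$ to the divisor $(\lambda(N-1)+\beta\Delta(N-1))|_{\im f_{M,N-1}}$, then combine with \Ref{prp}{ennekappaelbeta} (which says $f_N^*(\lambda(N)+\beta\Delta(N))=(1-\beta)\lambda(N-1)+\beta\Delta(N-1)$ plus a correction of the form $+\beta H_u(N-1)$ or $-\tfrac12\beta H_h$ according to $N\bmod 8$) and the normal-bundle formulas \eqref{normalhyper}. The coefficient of $\lambda$ accumulates as $(1-\beta)\cdots$ telescoping to $1-t_N(X)\beta$; the off-$\lambda$ terms are, by the pull-back formulas \eqref{effennehyper}--\eqref{quennehyper}, effective $\ZZ_{\ge0}$-combinations of $H_n$, $H_h$, $H_u$ on the ambient lower tower piece, and each of $\pi$-image of these is again a member of $\Tower(N)$ (here one invokes that $f_N^*H_n$, $f_N^*H_h$ etc. are supported on unions of $\im f$, $\im l$, $\im m$, $\im q$, which is \Ref{prp}{invnod} and its analogues, read through \Ref{prp}{deltakappa}). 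One must separately check that the $H_n$-terms never survive: indeed $H_n$ is never pulled back with positive coefficient into the \emph{restriction} of $\lambda(N)+\beta\Delta(N)$ — $\Delta(N)$ only involves $H_h$ and (for $N\equiv 3,4$) $H_u$, and $\lambda$ restricts to $\lambda$ — so the bookkeeping only ever produces $H_h$- and $H_u$-terms, all of which lie in $\Tower$.

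\textbf{Main obstacle.} The genuine content — as opposed to routine telescoping — is organizational: one has to verify that the recursion on $t_N(X)$ is well-founded and that each of the five cases in \Ref{dfn}{keyofex} is consistent with exactly one ``peeling'' rule, in particular that the $\mod 8$ case distinctions in \Ref{prp}{ennekappaelbeta}, \Ref{prp}{restunig3}, \Ref{prp}{restunig4}, \Ref{prp}{giaccherini} line up correctly with the definitions of $t_N$ (the shifts by $+1$ in case (2) and the exceptional values $N-14$, $1$, $4$ in cases (3)--(5)). The subtle point is the family $X=\im(f_{M,N}\circ l_M)$ with $M\equiv 3\pmod 8$ and $M=N-1$: here $\Tower$ assigns $t_N(X)=1$ (not $2$) because of the coincidence $f_N\circ l_{N-1}=m_N\circ p_{N-1}$ from \Ref{clm}{triangolo}, and one must check both computations of the restriction agree — this is exactly the matching between the first line of \Ref{prp}{restunig3} (with $k=1$, giving $1-4\beta$) versus \Ref{prp}{restunig4} applied to $p$; cross-checking these two routes is the one place where an actual (short) computation, rather than pure symbol-pushing, is required. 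I expect that to be the hard part; everything else is a disciplined induction over the periodic tower.
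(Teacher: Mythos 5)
There is a genuine gap, and it sits at precisely the one case that carries the arithmetic content of the proposition: $X=\im f_{M,N}$ with $11\le M\le 13$, for which \Ref{dfn}{keyofex} assigns $t_N(X)=N-14$ rather than the codimension $N-M$. You dispose of these as ``early termination'' cases handled ``directly'' by \Ref{prp}{restunig3}, \Ref{prp}{restunig4}, \Ref{prp}{giaccherini}, claiming the restriction is a pure multiple of $\lambda$ on a $\II_{2,*}$-variety with $D_X=0$. But those propositions only compute restrictions to the unigonal sub-pieces $\im(f_{M,N}\circ l_M)$, $\im(f_{M,N}\circ m_M)$, $\im(f_{M,N}\circ q_M)$; they say nothing about $\im f_{M,N}$ itself, which is a copy of the $D$-tower variety $\cF(M)$, not a $\II_{2,*}$-variety. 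For this $X$, \Ref{prp}{ennekappaelbeta} yields coefficient $1-(N-M)\beta$ on $\lambda(M)$ plus $\tfrac12\beta H_h(M)$ (and possibly an $H_u(M)$ term), and since $M<14$ this coefficient is strictly smaller than the asserted $1-(N-14)\beta$ for $\beta>0$. No amount of peeling or telescoping repairs this, because each peel only ever reproduces the codimension count; your induction therefore proves the wrong formula exactly where the definition of $t_N$ deviates from codimension.

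The missing ingredient is Gritsenko's relation, \Ref{crl}{keyrelations} (equivalently \eqref{gritenne}): for $M<14$ one has $H_h(M)=2(14-M)\lambda(M)+\tfrac{\tau(M)}{16}(\mu(M+8)-\mu(M))H_u(M)$, so the term $\tfrac12\beta H_h(M)$ donates $(14-M)\beta\,\lambda(M)$ back to the $\lambda$-coefficient, raising it from $1-(N-M)\beta$ to $1-(N-14)\beta$, and leaves an $H_u(M)$ term whose coefficient is nonnegative precisely because $\mu(M)\le\mu(M+8)$ in this range; $H_u(M)$ is then $\im l_{11}$, $\im m_{12}$ or $\im q_{13}$, hence in $\Tower(N)$. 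This is the entire point of the ``early termination'': it is an arithmetic correction coming from the Picard relations of \Ref{sec}{picdtower}, not a combinatorial consequence of the pull-back formulas. Your remaining cases are essentially fine, but note the paper reads them off directly from \Ref{prp}{ennekappaelbeta}, \Ref{prp}{restunig3}, \Ref{prp}{restunig4} and \Ref{prp}{giaccherini} with no induction (the peeling adds nothing), and the cross-check $f_N\circ l_{N-1}=m_N\circ p_{N-1}$ you single out as the hard part is already built into the $k=1$ clause of \Ref{prp}{restunig3}; the real difficulty is the step you skipped.
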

\begin{proof}
First we consider the case
 $X=\im(f_{M,N})$ and $11\le M\le 13$. There exists $c\ge 0$ such that
\begin{equation*}
f_{M,N}^{*}(\lambda(N)+\beta\Delta(N))=(1-(N-M)\beta)\lambda(M)+\frac{1}{2} \beta H_h(M)+c\beta H_u(M),
\end{equation*}
by~\Ref{prp}{ennekappaelbeta}.
On the other hand,  by~\eqref{gritenne} (with $N$ replaced by $M$) we have 
\begin{equation*}
f_{M,N}^{*}(\lambda(N)+\beta\Delta(N))=(1-(N-14)\beta)\lambda(M)+(c+\frac{\tau(M)}{32}(\mu(M+8)-\mu(M))\beta H_u(M).
\end{equation*}
Thus Equation~\eqref{quartexit} holds because $\mu(M)\le \mu(M+8)$ for  $4\le M\le 14$, and 
\begin{equation*}
H_u(M)=
\begin{cases}
\im l_{11} & \text{if $M=11$,}\\
\im m_{12} & \text{if $M=12$,}\\
\im q_{13} & \text{if $M=13$.}
\end{cases}
\end{equation*}
This proves that Equation~\eqref{quartexit}  holds if  $X=\im(f_{M,N})$ and $11\le M\le 13$.  In all other cases, Equation~\eqref{quartexit} is an immediate consequence of the results in~\Ref{subsubsec}{italicum}, and the isomorphisms in~\eqref{ellekappa}, \eqref{emmekappa}, \eqref{pikappa}, \eqref{qukappa}, and~\eqref{errekappa}.
\end{proof}
Because of~\eqref{quartexit}, we expect that if $t_N(X)\not=0$,  then $(t_N(X)\lambda(N)+\Delta(N))$ generates a wall of the Mori chamber decomposition of the convex cone spanned by $\lambda(N)$ and $\Delta(N)$, and that the center of the corresponding flip contains the strict transform of $X$. Now notice that if $X,Y\in\Tower(N)$, and $X\subset Y$, then $t_N(X)\ge t_N(Y)$. Thus, in order to list candidates for the centers of the flips, we give the definition below.
\begin{definition}
Let $15\le N$. We let $\Center(N)\subset\Tower(N)$  be the subset of $X$ such that  
\begin{enumerate}
\item
$a_N(X)>0$, and
\item
 if $Y\in\Tower(N)$ contains properly $X$, then $a_N(X)>a_N(Y)$. 
\end{enumerate}
\end{definition}
Let $X\in\Tower(N)$, i.e.~one of Items~(1)-(5) of~\Ref{dfn}{keyofex} holds: then $X$ does \emph{not} belong to $\Center(N)$ if and only if Item~(3) holds. 

We can summarize~\Ref{pred}{mainpred} as follows: The walls of the Mori chamber decomposition of the convex cone spanned by $\lambda(N)$ and $\Delta(N)$ are generated by the vectors
 $t_N(X)\lambda(N)+\Delta(N)$, where $X$ runs through the elements of $\Center(N)$. If $t=t_N(X)$ for some $X\in\Center(N)$, and $t>1$, then the center of the flip corresponding to  $t_N(X)\lambda(N)+\Delta(N)$
is the union of the strict transforms of the elements $Y\in\Center(N)$ such that $t_N(Y)=t$. Lastly, $\lambda(N)+\Delta(N)$ contracts the strict transforms of  the elements $Y\in\Center(N)$ such that $t_N(Y)=1$. 

A more detailed description goes as follows. Consider the wall closest to the ray spanned by $\lambda(N)$, i.e.~that spanned by $(\lambda(N)+(N-10)^{-1}\Delta(N))$, with center $X=\im(f_{11,N}\circ l_{11})$. Then $D_X=0$, by~\Ref{prp}{restunig3}, and hence $(\lambda(N)+(N-10)^{-1}\Delta(N))|_X$ is the opposite of an ample class. 
What should we expect 
$\cF(N,(N-10)^{-1}+\epsilon)$ to look like? In order to answer this, we recall Looijenga's  key observation:   if $X$ is an irreducible component of $\Delta^{(k)}(N)$, then the exceptional divisor of $Bl_X\cF(N)$ (a weighted blow-up of $\cF(N)$ with center $X$) is a trivial  weighted projective bundle over $X$, \emph{away} from $\Delta^{(k+1)}(N)$ (this is because $\Delta^{(k)}(N)$ is the quotient by $\Gamma(N)$ of the points of intersection of $k$ hyperplane sections of $\cD^{+}_{\Lambda_N}$). For our initial $X$, i.e.~$\im(f_{11,N}\circ l_{11})$, this translates into the prediction that $\cF(N,(N-10)^{-1}+\epsilon)$ is obtained from $\cF(N,(N-10)^{-1}-\epsilon)$ by replacing (the closure of) $\im(f_{11,N}\circ l_{11})$ with a  
 $w\PP^{N-11}$. What about the remaining flips, corresponding to $\lambda(N)+k^{-1}\Delta(N)$ for $k\in\{N-12,N-11,\ldots,2\}$? First we note that if $X\in\Center(N)$, then Equation~\eqref{quartexit} holds with $D_X$ an effective divisor whose support is a union of elements of $\Tower(N)$, except if $X=\im f_{14,N}$. Suppose for the moment that  $X\not=\im f_{14,N}$; then since $t_N(Y)>t_N(X)$ for all prime divisors in the support of $D_X$ (by definition of $\Tower(N)$), and because  $\cF(N,k^{-1}+\epsilon)$ is obtained from $\cF(N,k^{-1}-\epsilon)$ by replacing (the closure of) centers $W$ such that $t_N(W)=k$ with projective spaces of dimension $N-1-\cod(W,\cF(N))$, we see that the picture replicates itself. Lastly, let us look at the picture  for   $X=\im f_{14,N}$. In this case Gritsenko's relation, i.e.~\Ref{crl}{keyrelations}, reads $H_h(14)=H_u(14)$. A geometric description of the rational equivalence  $H_h(14)=H_u(14)$ gives that $H_h(14)$ and $H_u(14)$ are actually trivial outside $H_h(14)\cap H_u(14)$, and  hence~\eqref{quartexit} for  $X=\im f_{14,N}$ holds with $D_X=0$. This then suggests that the usual description holds also for the flip corresponding to 
$\im f_{14,N}$. 
\begin{remark}\label{rmk:shiftone}
The set of values $t_N(X)$ for $X\in\Center(N)$  is obtained by adding $1$ to each value $t_{N-1}(Y)$ for $Y\in\Center(N-1)$, and adjoining the value $1=t_N(H_h(N))$. This is explained by the formulae 
in~\Ref{prp}{ennekappaelbeta}. In fact, for simplicity suppose that $N-1\not\equiv 3,4\pmod{8}$. Then, for $\beta\not=1$, we have
\begin{equation*}
f_N^{*}(\lambda(N)+\beta\Delta(N))=(1-\beta)\lambda(N-1)+\beta\Delta(N-1)=(1-\beta)(\lambda(N-1)+\frac{\beta}{1-\beta}\Delta(N-1)).
\end{equation*}
This explains the behaviour described above, because if $\beta/(1-\beta)=1/k$, then $\beta=1/(k+1)$. 
\end{remark}
An induction on $N$ (see~\Ref{rmk}{shiftone}) proves the following result.
\begin{proposition}\label{prp:primocrit}
Let $ N\ge 11$, and let 
\begin{equation*}
0\le \beta<
\begin{cases}
(N-10)^{-1} & \text{if $N\not=12$,} \\
1/4  & \text{if $N=12$.}
\end{cases}
\end{equation*}
 be rational. If $C\subset\cF(N)$ is a complete curve, then  
$$C\cdot (\lambda(N)+\beta\Delta(N))> 0.$$
\end{proposition}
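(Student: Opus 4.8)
\textbf{Proof strategy for Proposition~\ref{prp:primocrit}.} The plan is to argue by induction on $N$, using the tower maps $f_N$ together with the explicit computations of the restrictions of $\lambda(N)+\beta\Delta(N)$ established in~\Ref{subsubsec}{italicum} (especially~\Ref{prp}{ennekappaelbeta}). The base case is $N=11$ (and the parallel case $N=12$, which has the slightly different bound $1/4$ for arithmetic reasons, corresponding to the appearance of $\im m_{12}$). For the base, one uses that on $\cF(11)$ the boundary is $\Delta(11)=\frac12(H_h(11)+H_u(11))$, and Gritsenko's relation~\eqref{borcherds3} (equivalently~\eqref{gritenne} with $N=11$) expresses $H_h(11)$ as a combination of $\lambda(11)$ and $H_u(11)=\im l_{11}$ with nonnegative coefficients; combined with ampleness of $\lambda(11)$ this shows $\lambda(11)+\beta\Delta(11)$ is a positive combination of $\lambda(11)$ and an effective class for $\beta<\frac{1}{N-10}=1$, hence has strictly positive degree against any complete curve.

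For the inductive step, suppose the statement holds for $N-1$. Let $C\subset\cF(N)$ be a complete curve. If $C$ is \emph{not} contained in the support $\Delta^{(1)}(N)$ of the boundary, then write $(\lambda(N)+\beta\Delta(N))\cdot C = \lambda(N)\cdot C + \beta\,\Delta(N)\cdot C$; the first term is strictly positive since $\lambda(N)$ is ample, and the second is $\ge 0$ since $\Delta(N)$ is effective and $C\not\subset\operatorname{Supp}\Delta(N)$, so we are done. The remaining case is $C\subset\Delta^{(1)}(N)$. By~\Ref{prp}{deltakappa}, each irreducible component of $\Delta^{(1)}(N)$ is $\im f_{N-1,N}=H_h(N)$, or (when $N\equiv 3,4\pmod 8$) also $H_u(N)$, or — for the components meeting higher strata — the images of the maps $l$, $m$, $q$ from~\Ref{subsubsec}{diriver}; in all cases the component is the image of one of the tower maps from a lower-dimensional $\cF(\cdot)$ (or from an $\cF(\II_{2,2+8k}\oplus\cdot)$), which we may assume $C$ factors through after normalizing. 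Now pull $\lambda(N)+\beta\Delta(N)$ back along that map: by~\Ref{prp}{ennekappaelbeta} (resp.~\Ref{prp}{restunig3}, \Ref{prp}{restunig4}, \Ref{prp}{giaccherini}) the pullback equals $(1-t\beta)\lambda(M)+(\text{effective})$ where $t$ is the corresponding shift ($t=1$ for $H_h(N)$, $t=N-M$ or $N-M+1$ etc.\ for the deeper strata), and $M$ is the dimension of the target of the factored curve. The key point is that for every component of $\Delta^{(1)}(N)$ the relevant $t$ satisfies $t\le N-10$ (with the $N=12$ exception absorbed by the stated bound), so that $1-t\beta>0$ for $\beta$ in the stated range. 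One then either invokes ampleness of $\lambda(M)$ directly when the effective term does not contain $C'$ (the factored curve), or recurses: if $C'$ lies in the effective part, that part is again a union of tower strata in $\cF(M)$ (with strictly larger shift), and we apply the inductive hypothesis for $\cF(M)$, or the base cases for $M\in\{11,12,13,14\}$ where Gritsenko's relation~\eqref{borcherds3}/\eqref{gritenne} again guarantees positivity for $\beta<\frac{1}{N-10}$.

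\textbf{Main obstacle.} The genuinely delicate point is the bookkeeping in the inductive step: showing that \emph{no} component of $\Delta^{(1)}(N)$, nor any of the deeper strata a curve could escape into, forces a shift $t$ exceeding $N-10$. This is exactly where the arithmetic of the $D$-tower enters — the naive bound from the linearized arrangement would give larger $t$, but the relations of~\Ref{crl}{keyrelations} (i.e.\ $H_h(14)=H_u(14)$, and more generally~\eqref{gritenne} for $11\le N\le 14$) collapse the troublesome strata. Concretely, the worry is a curve contained in $H_h(N)\cong\cF(N-1)$, then in $H_h(N-1)\cong\cF(N-2)$, and so on down to $\cF(14)$: each descent costs a $+1$ in the shift, but Gritsenko's relation on $\cF(14)$ (and below) shows $H_h$ becomes a nonnegative combination of $\lambda$ and $H_u$, so the descent effectively terminates at level $14$ and the accumulated shift is $N-14<N-10$. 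Making this termination precise — i.e.\ proving $(\lambda(M)+\beta'\Delta(M))\cdot C'>0$ for $M\le 14$ and the induced $\beta'=\beta/(1-k\beta)$ still in range — is the crux, and it relies on~\Ref{crl}{keyrelations} exactly as in the heuristic discussion preceding~\Ref{prp}{whypred}. \qed
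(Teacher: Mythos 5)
Your strategy is essentially the paper's proof: the paper disposes of this proposition in one line as "an induction on $N$" via Remark~\ref{rmk:shiftone}, i.e.\ exactly the mechanism you describe — curves off $\supp\Delta(N)$ are handled by ampleness of $\lambda(N)$ plus effectivity of $\Delta(N)$, curves inside a boundary component are pulled back along the tower maps using Proposition~\ref{prp:ennekappaelbeta} and its unigonal companions (with $\beta\mapsto\beta/(1-\beta)$ staying in range), and the descent terminates because Gritsenko's relation (Corollary~\ref{crl:keyrelations}) makes $H_h(M)$ a nonnegative combination of $\lambda(M)$ and $H_u(M)$ for $M\le 14$. One wording to tighten: in your base case you conclude positivity on \emph{any} complete curve from "$\lambda(11)+\beta\Delta(11)$ is a positive combination of $\lambda(11)$ and an effective class"; that inference fails for curves contained in the effective part ($H_u(11)$), and you must restrict once more — by~\eqref{normunig3} one gets $(1-\beta)\lambda(\II_{2,10})$, which is what actually closes the base case (your own recursion in the inductive step is exactly this, so it is a slip of phrasing rather than a missing idea). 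Note also that the binding constraint $\beta<(N-10)^{-1}$ comes from the deepest unigonal stratum $\im(f_{11,N}\circ l_{11})$ (shift $N-10$), not from the hyperelliptic descent terminating at level $14$ (shift $N-14$).
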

\begin{remark}
The locally symmetric varieties $\cF(N)$ are not projective, but they are swept out by complete curves. In fact the complement of $\cF(N)$ in the Baily-Borel compactification $\cF(N)^{*}$ is of dimension $1$, and the assertion follows because $\dim\cF(N)=N\ge 3$. 
\end{remark}
%

%%%%%%%%%%%%%%%%%%%%%%%%%%%%%%%%%%%%%%%
 \bibliography{refk3}
 \end{document}